\documentclass[10pt, a4paper]{amsart}

\usepackage{kantlipsum} % for text filler

\calclayout

\usepackage{amsfonts}
\usepackage{mathrsfs}
\usepackage{amscd,amsmath,amssymb,amsfonts}
\usepackage{mathtools}
\usepackage{enumerate}
\usepackage{stmaryrd}
\usepackage[T1]{fontenc}
\usepackage[all]{xy}
\usepackage{natbib}
\usepackage{tikz-cd}
\usepackage[bottom]{footmisc}
\usepackage{adjustbox}
\usepackage{hyperref}
\usepackage{afterpage}

\theoremstyle{plain}
\newtheorem{thm}{Theorem}
\newtheorem*{thm*}{Theorem}

\newtheorem*{theorem*}{Theorem}
\newtheorem{lem}[thm]{Lemma}
\newtheorem{cor}[thm]{Corollary}
\newtheorem{prop}[thm]{Proposition}
\newtheorem*{prop*}{Proposition}

\theoremstyle{definition}

\newtheorem*{definition*}{Definition}
\newtheorem{defn}[thm]{Definition}

\newtheorem{nota}[thm]{Notation}

\newtheorem*{claim*}{Claim}

\numberwithin{thm}{section} 
\numberwithin{equation}{section}

\theoremstyle{plain}
\newtheorem{THM}{Theorem}
\newtheorem{COR}{Corollary}

\newcommand{\codim}{{\rm codim}}

\newcommand{\Hom}{{\rm Hom}}
\newcommand{\Ext}{{\rm Ext}}

\newcommand{\dlog}{{\rm dlog}}

\newcommand{\Spec}{{\rm Spec \,}}

\newcommand{\Tr}{{\rm Tr}}

\newcommand{\sE}{{\mathcal E}}
\newcommand{\sF}{{\mathcal F}}
\newcommand{\sG}{{\mathcal G}}
\newcommand{\sH}{{\mathcal H}}
\newcommand{\sI}{{\mathcal I}}

\newcommand{\sO}{{\mathcal O}}

\newcommand{\sX}{{\mathcal X}}
\newcommand{\sY}{{\mathcal Y}}

\newcommand{\A}{{\mathbb A}}

\renewcommand{\H}{{\mathbb H}}

\renewcommand{\P}{{\mathbb P}}

\newcommand{\X}{{\mathbb X}}

\newcommand{\Z}{{\mathbb Z}}

\newcommand{\bfT}{{\mathbf T}}

\newcommand{\jm}{\jmath}

	\newcommand{\Sh}{{\rm Sh}}

	\newcommand{\cl}{{\rm cl}}

	\newcommand{\vu}{V^{\ast}}
	\newcommand{\vl}{V_{\ast}}
	\newcommand{\fu}{\textrm{F}^{\ast}}
	\newcommand{\fl}{\textrm{F}_{\ast}}
	\newcommand{\f}{\textrm{F}}
	
	\newcommand{\gu}{\textrm{G}^{\ast}}
	\newcommand{\gl}{\textrm{G}_{\ast}}
	\newcommand{\g}{\textrm{G}}

	\newcommand{\Grab}{\textbf{GrAb}}

	\newcommand{\trdeg}{{\rm tr.deg}}
	\newcommand{\CH}{{\rm CH}}
	\newcommand{\CHl}{{\rm CH}_{\ast}}
	\newcommand{\CHu}{{\rm CH}^{\ast}}
	\newcommand{\rmF}{{\rm F}}
	\newcommand{\rmFu}{{\rm F}^{\ast}}
	\newcommand{\rmFl}{{\rm F}_{\ast}}
	\newcommand{\iinf}{i_{\infty}}
	\newcommand{\phik}{\phi^{'}_{(X,\Phi)}}
	\newcommand{\phikk}{\phi^{'}_{(Y,\Psi)}}
	\newcommand{\Zsup}{Z_{\Phi}}
	\newcommand{\Zupp}{Z_{\Psi}}
	
	\newcommand{\tX}{\tilde{X}}
	\newcommand{\tD}{\tilde{D}}
	\newcommand{\tW}{\tilde{W}}
	\newcommand{\ti}{\tilde{i}}
	
	\newcommand{\we}{W_{\epsilon}}

	\newcommand{\phic}{\phi^{'}}
	\newcommand{\ie}{i_{\epsilon}}
	\newcommand{\aep}{\alpha_{\epsilon}}
	\newcommand{\be}{\beta_{\epsilon}}

	\newcommand{\TT}{{\bf T}}
	
	\newcommand{\one}{{\bf 1}}

	\newlength\longest
	
	\newcommand{\NS}{\mathcal{N}_S}

	\newcommand{\ug}{\underline{\Gamma}}
	\newcommand{\ugz}{\underline{\Gamma}_{Z}}
	\newcommand{\ugzinv}{\underline{\Gamma}_{f^{-1}(Z)}}
	
	\newcommand{\ugps}{\underline{\Gamma}_{\Psi}}
	\newcommand{\ugpsinv}{\underline{\Gamma}_{f^{-1}(\Psi)}}

	\newcommand{\rug}{{\rm R}\underline{\Gamma}}
	\newcommand{\rugz}{{\rm R}\underline{\Gamma}_{Z}}
	
	\newcommand{\rugph}{{\rm R}\underline{\Gamma}_{\Phi}}
	\newcommand{\rugps}{{\rm R}\underline{\Gamma}_{\Psi}}
	\newcommand{\rugpsinv}{{\rm R}\underline{\Gamma}_{f^{-1}(\Psi)}}

	\newcommand{\omxs}{\Omega_{X/S}}
	\newcommand{\omxsb}{\Omega_{\bx/S}}

	\newcommand{\omys}{\Omega_{Y/S}}
	
	\newcommand{\omzs}{\Omega_{Z/S}}

	\newcommand{\hu}{\textrm{H}^{\ast}}
	\newcommand{\hl}{\textrm{H}_{\ast}}
	\newcommand{\h}{\textrm{H}}
	\newcommand{\hpu}{\textrm{HP}^{\ast}}
	\newcommand{\hpl}{\textrm{HP}_{\ast}}
	\newcommand{\hp}{\textrm{HP}}

	\newcommand{\qcx}{\rm Qcoh($X$)}
	\newcommand{\qcy}{\rm Qcoh($Y$)}

	\newcommand{\fpush}{f_{\ast}}
	\newcommand{\fbpush}{\bar{f}_{\ast}}

	\newcommand{\gpush}{g_{\ast}}

	\newcommand{\rfbpush}{{\rm R}\fbpush}
	\newcommand{\rgpush}{{\rm R}\gpush}
	
	\newcommand{\rfpush}{{\rm R}\fpush}

	\newcommand{\rmR}{\rm R}

	\newcommand{\fushr}{f^{!}}
	
	\newcommand{\pixushr}{\pi_X^{!}}

	\newcommand{\pixbushr}{\pi_{\bx}^!}

	\newcommand{\rshHom}{\mathrm{R}\mathcal{H}om}
	\newcommand{\rshHomx}{\mathrm{R}\mathcal{H}om_{\sO_X}}
	
	\newcommand{\rshHomy}{\mathrm{R}\mathcal{H}om_{\sO_Y}}

	\newcommand{\bx}{\bar{X}}

	\newcommand{\arr}{\arrow}

	\newcommand{\supp}{{\rm Supp}}
	\newcommand{\Cor}{{\rm Cor}}

	\title{Higher Direct Images of the Structure Sheaf Over a Dedekind Domain}

\author{Grétar Amazeen}
\email{g.amazeen@gmail.com}

\begin{document}

\maketitle

\begin{abstract}
		We prove that for Noetherian, smooth, separated, integral, finite type schemes $X$ and $Y$ 
		over an excellent Dedekind domain $R$, 	that are properly birational over $R$,
		we have $R^i\fpush\sO_X \cong R^i\gpush \sO_Y$ and
		$R^i \fpush \Omega_{X/S}^{d} \cong R^i\gpush
		\Omega_{Y/S}^d$, where $d$ is the relative dimension of $X$  and $Y$ over 
		$S=\Spec(R)$, and $f$ and $g$ are the structure maps of $X$ 
		and $Y$, respectively, as $S$-schemes. As a corollary we obtain the vanishing of higher direct images of 
		the structure sheaf for proper birational morphisms beteween such schemes. These 
		results extend those obtained by Chatzistamatiou--R\"ulling over perfect fields
		of positive characteristic and we obtain them by extending their method of algebraic 	
		correspondences. We furthermore obtain as a corollary that if $K$ is a number field and $\sO_K$ its ring of integers and if $X$ is a smooth and proper
		$K$-scheme with $\sX$ and $\sY$ two smooth proper models of $X$ over some dense 
		open subscheme $U \subseteq S = \Spec(\sO_K)$, that if 
		 $\h^j(\sX,\sO_{\sX})$ is $\sO_S(U)$-torsion-free we have
		$
		\h^j(\sX_t,\sO_{\sX_t}) = 	\h^j(\sY_t,\sO_{\sY_t}),
		$
		for all closed points $t \in U$. 
\end{abstract}
\tableofcontents

\section*{Introduction}

Recall that two integral schemes $X$ and $Y$ over a base scheme $S$  
are called 
properly birational over $S$ if there exists an integral scheme $Z$ over
$S$ and proper birational $S$-morphisms
$$
\xymatrix{
	&Z \ar[dr]\ar[dl] 
	\\X 
	&& Y.
}
$$

The main result proved in this article is the following.

\begin{THM} \label{maintheorem} (See Theorem \ref{Thefinaltheorem})
	Let $S$ be a 
	Noetherian, excellent, regular, separated, irreducible scheme of dimension at 
	most 1.  Let $S'$ be a separated $S$-scheme of finite type,
	and let $X$ and $Y$ be integral, smooth, separated $S$-scheme of finite type,  and
	$f:X\to S'$ and $g:Y\to S'$ be morphisms of $S$-schemes
	such that $X$ and $Y$ are
	properly birational over $S'$. Let $Z$
	be an integral scheme and let $Z\to X$ and $Z\to Y$ be
	proper birational morphisms
	such that 
	$$
	\xymatrix{
		&Z \ar[dr]\ar[dl] 
		\\X\ar[dr]_-f 
		&& Y\ar[dl]^-g
		\\&S'
	}	
	$$
	commutes. We denote the image of $Z$ in $X\times_{S'} Y$ by 
	$Z_0$. Then the action of $Z_0$ induces isomorphisms of $\sO_{S'}$-
	modules
	\begin{align*}
		R^i\fpush\sO_X &\xrightarrow{\cong} R^i\gpush \sO_Y \,\,\, \text{and} \\
		R^i \fpush \Omega_{X/S}^{d} &\xrightarrow{\cong} R^i\gpush
		\Omega_{Y/S}^d,
	\end{align*}
	for all $i$, where $d := \dim_S(X) = \dim_S(Y)$. 
\end{THM}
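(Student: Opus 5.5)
We follow the method of algebraic correspondences of Chatzistamatiou--R\"ulling and adapt it to a base $S$ of dimension at most one. The first step is a category of relative correspondences over $S'$. Its objects are pairs $(X,f)$ with $X$ smooth over $S$ and $f\colon X\to S'$. A morphism $(X,f)\to(Y,g)$ is a Chow class $\alpha\in \CH_{\dim Y}(X\times_{S'}Y)$ with support proper over $X$ (or over $Y$), with the indexing fixed so that composition by pullback, intersection and pushforward is defined. We then build a functor sending $(X,f)$ to $\bigoplus_i R^i\fpush\Omega^{*}_{X/S}$ as a graded $\sO_{S'}$-module. The action of a cycle $\alpha$ is $p_{2*}(\cl(\alpha)\cdot p_1^*(-))$. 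Here $\cl(\alpha)\in \h^{d_Y}_{|\alpha|}(X\times_{S}Y,\Omega^{d_Y}_{X\times Y/S})$ is a Hodge-theoretic cycle class with supports. The pushforward along $p_2$ is the Grothendieck-duality trace for the proper-on-support projection. In degree $0$ and in the top degree $d$ the relevant pieces are $R^i\fpush\sO_X$ and $R^i\fpush\Omega^d_{X/S}$.

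The key inputs are the following.
\begin{enumerate}[(a)]
\item The cycle class must be compatible with proper pushforward, flat pullback, and intersection products. For intersection products we use Fulton/Serre Tor-formula intersections, which are available because $X\times_S Y$ is regular (smooth over the regular scheme $S$).
\item The pushforward must satisfy the projection formula and be functorial for composition of proper maps. This comes from the trace maps of Grothendieck duality for $\pi^!$ of smooth maps, $\pi^!=\Omega^d[d]$.
\item From (a) and (b) we deduce that composition of correspondences goes to composition of the induced maps, and that the diagonal $\Delta_X$ acts as the identity.
\end{enumerate}
Dimension at most one and excellence are used in (a). They guarantee that the moving arguments and the dimension theory of Chow groups over $S$ behave as over a field: $\dim$ is additive along finite-type morphisms of integral schemes over a universally catenary base. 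They also guarantee that the rational-equivalence invariance of the cycle class holds.

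With the functor in place, the proof of Theorem \ref{maintheorem} is formal. Let $Z_0\subset X\times_{S'}Y$ be the image of $Z$ and $Z_0^t$ its transpose. We compute the compositions $Z_0^t\circ Z_0$ and $Z_0\circ Z_0^t$. Since $Z\to X$ and $Z\to Y$ are proper birational, both compositions have the form $\Delta_X+E$ (resp.\ $\Delta_Y+E'$), where $E$ is a cycle whose projections to both factors lie over proper closed subsets. More precisely, $E$ is supported on $W\times_{S'}X$ with $W\subsetneq X$ of smaller dimension. The vanishing needed is then that a correspondence $E$ of this form acts as zero on $R^i\fpush\sO_X$ and on $R^i\fpush\Omega^d_{X/S}$.

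For $\sO$, the action of $E$ factors through the cohomology with supports of $\Omega^{d}$-type classes on $W$. For the top degree, the class of $E$ pulled back from a scheme of smaller dimension kills $\Omega^d$ by degree reasons, and by duality it kills $\sO$. Concretely, the cycle class of a cycle not dominating the second factor has vanishing component in the bidegree that acts $\sO_X\to\sO_Y$. This is the analogue of the "small cycles act trivially on $H^*(\sO)$" lemma of Chatzistamatiou--R\"ulling. Hence $Z_0$ and $Z_0^t$ induce mutually inverse isomorphisms in both cases.

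The main obstacle is this last vanishing together with the construction of the cycle class and pushforward with supports over a mixed-characteristic Dedekind base. Over a field one uses that $W$ has lower dimension to get vanishing of $\h^{j}_W(\Omega^{j'})$ in the relevant range. Over $S$ we must also handle the components of $E$ lying over closed points of $S$ (vertical components). These have codimension one coming from the base and are not of smaller relative dimension. We plan to first treat the part of $E$ that is flat over $S$ by the relative-dimension argument. For the vertical components supported in a fibre $X_s$, we will use the factorization through $i_{s*}$ and the Gysin/purity for the regular divisor $X_s$, with the shift in degree coming from codimension. Our working assumption is that excellence and $\dim S\le 1$ make such components land in degrees where $R^i\fpush\sO$ and the top forms receive no contribution. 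We expect this to be the step that requires the most care.
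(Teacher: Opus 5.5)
\textbf{Verdict: there is a genuine gap.} Your architecture matches the paper's: a Hodge cycle class with supports, the action $\alpha\mapsto p_{2*}(p_1^*(-)\cup\cl(\alpha))$, the decomposition $Z_0^t\circ Z_0=\Delta_X+E$ with $E$ supported in $(X\setminus X')\times_S(X\setminus X')$, and vanishing of $E$. However, the step that actually carries the theorem is not proved. That step is showing that $E$ acts by zero on $\h^i(X,\sO_X)$ and $\h^i(X,\Omega^d_{X/S})$.

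\textbf{The vanishing of $E$.} You leave the vertical components as a working assumption and propose to route them through $i_{s*}$ and purity for $X_s$. But $X_s$ is not smooth over $S$, so it is not even an object of the formalism. The worry is also misdiagnosed. What matters is not relative dimension over $S$ but the codimension of the two projections of the support. For $E$ both codimensions are $\ge 1$, whether a component is horizontal or vertical.

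The missing argument is local, and runs as follows.
\begin{itemize}
\item Since $X\times_SY$ is regular, a depth estimate gives injectivity of $\h^c_V(\Omega^c)\to\h^c_\eta(\Omega^c)$ at the generic point $\eta$ of a component $V$ (Lemma~\ref{injectivityoflocalizationlemma}).
\item Choose a regular system of parameters $t_1,\dots,t_r$ of $\sO_{Y,pr_2(\eta)}$. Complete $1\otimes t_1,\dots,1\otimes t_r$ by $s_{r+1},\dots,s_{d_X}$ to a regular system of parameters of $\sO_{X\times_SY,\eta}$; this is possible since $X\times_SY\to Y$ is smooth.
\item Then the class at $\eta$ is, up to sign, the symbol with numerator $d(1\otimes t_1)\wedge\cdots\wedge d(1\otimes t_r)\wedge ds_{r+1}\wedge\cdots\wedge ds_{d_X}$. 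This form has $Y$-degree $\ge r$, so its K\"unneth component in $pr_1^*\Omega^{d_X-j}_{X/S}\otimes pr_2^*\Omega^j_{Y/S}$ vanishes for $j<r$.
\end{itemize}
This gives vanishing on $\sO$ when the projection to the target is small. Symmetrically, it gives vanishing on $\Omega^d$ when the projection to the source is small; this is proved directly, not ``by duality''. For vertical $V$ the situation is even simpler. The uniformizer $\sigma$ of $S$ can be taken as one of the local equations, and $d\sigma=0$ in $\Omega^1_{X/S}$, so $\cl(V,X)=0$ outright (Lemma~\ref{cycleclasseslivinginfibresvanish}).

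\textbf{The cycle class.} This is the other essential ingredient that you only postulate. Over a perfect field one pushes $1$ forward from the smooth locus of $W$. Over $S$ the regular locus of $W$ need not be smooth over $S$ (a vertical $W$ never is), so no such pushforward exists. The paper instead proceeds in three stages:
\begin{itemize}
\item It defines $\cl(W,X)$ for regular $W$ directly from $\bigwedge^c\sI/\sI^2\to i^*\Omega^c_{X/S}$ and the fundamental local isomorphism.
\item It extends the class to all integral $W$ via normalization (finite because $S$ is excellent) and semi-purity.
\item It checks the axioms of the Existence Theorem (finite pushforward, divisor pullback, products, the $\P^1$ condition) by symbol and Cousin computations.
\end{itemize}
Intersection products come from Fulton's refined Gysin maps, so the moving arguments you invoke are not needed.

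\textbf{The $\sO_{S'}$-module structure.} To get a map of $\sO_{S'}$-modules you must reduce to $S'=S$. This uses that $X\times_{S'}Y\subset X\times_SY$ is closed, because $S'\to S$ is separated. You must also prove $\sO_{S'}$-linearity and compatibility with restriction to opens. That uses the fact that $r'\otimes 1-1\otimes r'$ lies in the ideal of $Z_0$ (Proposition~\ref{rhoOSmodulemap}).
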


By considering $X = Y, S' = Y$ and $g= id_Y$ we get the 
following corollary.

\begin{COR} \label{maincorollary}
	Let $S$ be a 
	Noetherian, excellent, regular, separated, irreducible scheme of dimension at 
	most 1, let $X,Y$ be integral smooth $S$-schemes, and $f:X\to Y$ a 
	proper birational morphism. Then
	$$
	R^if_{\ast}\sO_X = 0 
	$$
	for all $i > 0$.
\end{COR}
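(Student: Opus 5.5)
Corollary \ref{maincorollary} follows from Theorem \ref{maintheorem} by specialising to $S' = Y$. We take $X$ with the given proper birational morphism $f: X \to Y$, and take $Y$ itself with $g = \mathrm{id}_Y : Y \to Y$. Both are $S$-morphisms, and $X$ and $Y$ are properly birational over $S' = Y$: choose $Z = X$, with the maps $\mathrm{id}_X : Z \to X$ and $f : Z \to Y$. Both are proper and birational, and the required square commutes because $g \circ f = f = f \circ \mathrm{id}_X$. We must also check the hypotheses of Theorem \ref{maintheorem}.
\begin{itemize}
\item $S' = Y$ is separated and of finite type over $S$.
\item $X$ and $Y$ are integral, smooth, separated and of finite type over $S$. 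Separatedness and finite type are implicit in ``smooth $S$-schemes'' as used in the paper; if needed, one adds them to the hypotheses.
\item $Z = X$ is integral.
\end{itemize}

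Theorem \ref{maintheorem} then gives an isomorphism of $\sO_Y$-modules $R^i f_\ast \sO_X \cong R^i (\mathrm{id}_Y)_\ast \sO_Y$ for every $i$, induced by the action of $Z_0$. Here $Z_0$ is the image of $X$ in $X \times_Y Y = X$, i.e.\ the graph. Since $\mathrm{id}_Y$ is exact on quasi-coherent sheaves, indeed on all sheaves, $R^i(\mathrm{id}_Y)_\ast \sO_Y = 0$ for $i > 0$. Hence $R^i f_\ast \sO_X = 0$ for all $i > 0$.

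There is no real obstacle. The only point needing care is that the hypotheses of Theorem \ref{maintheorem} are met. In particular, $X$ and $Y$ must have the same relative dimension $d$ over $S$. This holds because $f$ is birational between integral smooth $S$-schemes, so their generic fibres have the same function field; in any case only the $\sO$-statement is needed here.
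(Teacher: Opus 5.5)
Your proposal is correct and follows the paper's own argument: the paper obtains the corollary by specialising Theorem \ref{maintheorem} to $S' = Y$ and $g = \mathrm{id}_Y$, which gives $R^if_\ast\sO_X \cong R^i(\mathrm{id}_Y)_\ast\sO_Y = 0$ for $i>0$. Your explicit choice of $Z = X$ with the maps $\mathrm{id}_X$ and $f$, and your check of the hypotheses, supply details the paper leaves implicit.
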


Over a field of characteristic 0, 
these results are well known and follow from Hironaka's resolution of 
singularities. Over 
perfect fields of positive characteristic, this result was proven 
in \cite{KayAndre}. Since the resolution of singularities in 
positive characteristics is still an open problem, Chatzistamatiou 
and R\"ulling developed different methods, namely by algebraic 
correspondences, to prove Theorem \ref{maintheorem} and in this article we extend this 
strategy to the case of a base scheme $S$ that is a 
Noetherian, excellent, regular, separated, irreducible scheme of dimension at 
most 1. This includes all fields, so in particular reproves these results over fields 
of characteristic 0 without relying on the resolution of singularities,
and all excellent Dedekind domains.

In \cite{KayAndreVanishing}, a vanishing result similar to 
Corollary \ref{maincorollary} is proven 
without any smoothness assumptions on $X$ and $Y$, but 
with the assumption that $f:X \to Y$ is birational and 
\textit{projective}. In \textit{loc. cit.}, Chatzistamatiou and 
R\"ulling pose the question of whether their vanishing result can
be extended to the case of a proper birational morphism. 
We have therefore partially answered that question in the 
affirmative.  In \cite{Lodh} such vanishing is proved for $X,Y$ 
normal, $Y$ regular and quasi-excellent, and $f$ proper and
birational, but only for $R^1f_{\ast}$. The vanishing for higher
direct images does not hold in this generality, and in \cite{Lodh}
a counter example for $i = 2$ and $\dim(Y) = 3$ is cited from
\cite{Cutkosky}.

Another corollary of Theorem \ref{maintheorem} we obtain by cohomology and base change
is the following
\begin{COR}
	Let $K$ be a number field and $\sO_K$ be its ring of integers. Let $X$ be a smooth and proper
	$K$-scheme. Let $\sX$ and $\sY$ be two smooth proper models of $X$ over some dense 
	open subscheme $U \subseteq S = \Spec(\sO_K)$. Then for any dense open $V\subseteq U$
	such that $\h^j(\sX,\sO_{\sX})$ is $\sO_S(V)$-torsion-free we have
	$$
	\h^j(\sX_t,\sO_{\sX_t}) = 	\h^j(\sY_t,\sO_{\sY_t})
	$$
	for all closed points $t \in V$. 
\end{COR}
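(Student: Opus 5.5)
The plan is to deduce this corollary from Theorem \ref{maintheorem}, applied with $S=\Spec(\sO_K)$ (excellent, regular, dimension $1$), then pass from the global statement to the fibres by cohomology and base change.

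\textbf{Step 1: global isomorphism over $V$.} Since $\sX$ and $\sY$ are smooth proper models of the same $X$ over $U$, they are integral (smooth over a Dedekind scheme, with irreducible generic fibre) and birational, because both have function field $K(X)$. Let $Z$ be the closure of the graph of the birational map $\sX\dashrightarrow\sY$ inside $\sX\times_U\sY$. Properness of $\sX,\sY$ over $U$ makes both projections $Z\to\sX$, $Z\to\sY$ proper; they are birational because $Z$ contains the generic graph. So $\sX$ and $\sY$ are properly birational over $S'=V$ after restricting to $V$. Theorem \ref{maintheorem}, with $f,g$ the structure maps to $V$, gives
\[
R^jf_*\sO_{\sX_V}\cong R^jg_*\sO_{\sY_V}.
\]
Since $V$ is affine and $f,g$ are proper, these are the coherent modules associated to $\h^j(\sX_V,\sO)$ and $\h^j(\sY_V,\sO)$ over $A=\sO_S(V)$. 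Flat base change along $V\subseteq U$ identifies $\h^j(\sX_V,\sO)$ with $\h^j(\sX,\sO_{\sX})\otimes\sO_S(V)$. Hence the hypothesis says exactly that $M^j:=\h^j(\sX_V,\sO)$ is torsion-free, so projective over the Dedekind ring $A$. By the isomorphism the same then holds for $\h^j(\sY_V,\sO)$.

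\textbf{Step 2: passing to the fibres.} I would use the derived base change of Mumford/Hartshorne III.12 for the flat proper maps $f,g$. Locally $\mathrm{R}f_*\sO$ is a bounded complex $P^\bullet$ of finite projective $A$-modules, and $\h^j(\sX_t,\sO)=H^j(P^\bullet\otimes k(t))$. Over a Dedekind domain the universal coefficient sequence gives
\[
0\to M^j\otimes k(t)\to \h^j(\sX_t,\sO)\to \mathrm{Tor}_1^A(M^{j+1},k(t))\to 0 .
\]
This is valid because submodules of projectives are projective over $A$, so $P^\bullet$ splits suitably. Torsion-freeness of $M^j$ only controls the left term. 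The right term is the torsion of $M^{j+1}$ at $t$, and this is the main subtlety, since a priori $M^{j+1}$ may have torsion.

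To handle it, note that Theorem \ref{maintheorem} holds for all $i$. So $M^{j+1}\cong N^{j+1}:=\h^{j+1}(\sY_V,\sO)$ as $A$-modules, and therefore $\mathrm{Tor}_1^A(M^{j+1},k(t))\cong\mathrm{Tor}_1^A(N^{j+1},k(t))$ without any torsion-freeness. Similarly $M^j\otimes k(t)\cong N^j\otimes k(t)$. The fibre cohomologies are therefore finite-dimensional $k(t)$-vector spaces that are extensions of isomorphic spaces, so they have equal dimension. This gives the asserted equality $\h^j(\sX_t,\sO_{\sX_t})=\h^j(\sY_t,\sO_{\sY_t})$ as $k(t)$-vector spaces. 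It holds for every closed point $t\in V$, and the torsion-freeness hypothesis is only used to see the simpler identification $M^j\otimes k(t)\cong\h^j(\sX_t,\sO)$ when $M^{j+1}$ is also torsion-free.

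The main obstacle is Step 1's verification that the closure of the graph really gives an \emph{integral} $Z$ with proper birational projections that is compatible over $V$, and that restriction to $V$ preserves the hypotheses of Theorem \ref{maintheorem}. These are standard, but they need care with the separatedness of $\sX\times_U\sY$.
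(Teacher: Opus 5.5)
Your proof is correct. Step 1 (closure of the generic graph in $\sX\times_U\sY$, then Theorem \ref{maintheorem} in all degrees) is exactly what the paper does. The difference is in Step 2.

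The paper replaces $U$ by $V$ and reads the hypothesis as torsion-freeness of $\h^j(\sX,\sO_{\sX})$ for \emph{all} $j$. Then every $R^jf_*\sO_{\sX}$ and $R^jg_*\sO_{\sY}$ is locally free. It runs cohomology and base change downward from $j=\dim\sX+1$ to identify $\h^j(\sX_t,\sO_{\sX_t})$ with $R^jf_*\sO_{\sX}\otimes k(t)$, and similarly for $\sY$. The isomorphism of sheaves then gives the equality, and the resulting identification is induced by the correspondence.

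You instead use that $A=\sO_S(V)$ is Dedekind, hence hereditary. So the perfect complex computing $\mathrm{R}f_*\sO$ splits, and
$$\dim_{k(t)}\h^j(\sX_t,\sO_{\sX_t})=\dim(M^j\otimes k(t))+\dim\mathrm{Tor}_1^A(M^{j+1},k(t)).$$
Since the theorem gives $M^i\cong N^i$ in every degree, the dimensions agree at every closed point.

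Your route buys more: it shows the torsion-freeness hypothesis is superfluous. In particular it covers the literal statement, where torsion-freeness is assumed only in the single degree $j$. The identification $\h^j(\sX_t,\sO_{\sX_t})\cong R^jf_*\sO_{\sX}\otimes k(t)$ that the paper relies on needs torsion-freeness in degree $j+1$, and its downward induction uses it in all degrees above $j$. That is why the paper's proof quietly assumes it for all $j$. What you give up is canonicity: the universal coefficient sequence does not split naturally, so you get only an abstract isomorphism of $k(t)$-vector spaces.

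One small point: your claim that $\sX$ and $\sY$ are integral requires $X$ to be connected, which the statement does not assume. The paper's restated corollary simply adds ``integral''. Otherwise, argue componentwise: the connected components of the regular, $U$-flat schemes $\sX$ and $\sY$ are integral and correspond bijectively to the connected components of $X$.
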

This is something new that we see over a Dedekind domain and hope has some interesting 
arithmetic applications, as the fibres do not have to be properly birational, even though the 
models are.

We will refer the reader to \cite{KayAndre} for details about setting up 
the algebraic correspondences, but we recall here the basic objects. We consider two categories $\vl$ and $\vu$. They have the
same objects, namely pairs $(X, \Phi)$ where $X$ is a smooth, 
separated $S$-scheme of finite type (for conciseness 
we will refer to such schemes as $\NS$-schemes throughout this 
article) and $\Phi$ is a family of supports on $X$. A morphism 
$f: (X,\Phi) \to (Y,\Psi)$ in $\vl$ is a morphism $f$ of $S$-schemes 
such that $f|_{\Phi}$ is proper and $f(\Phi) \subseteq \Psi$, and a 
morphism $g: (X,\Phi) \to (Y,\Psi)$ in $\vu$ is a morphism $g$ of 
$S$-schemes such that $g^{-1}(\Psi) \subseteq \Phi$. A \textit{weak cohomology theory with supports} (abbreviated as WCTS in this 
article) is a quadruple $\f = (\fl, \fu, T, e)$ where  
\begin{align*}
		\fl&: \vl \to \Grab,\,\,\,\text{and} \\
			\fu&: (\vu)^{op} \to \Grab,
\end{align*}
	such that for any $X\in ob(\vl) = ob(\vu)$
	we have 
	$
	\fl(X) = \fu(X) =: \f(X),
	$
	T is a morphism of graded Abelian
		groups for \textit{both} gradings
	$$
		T_{X,Y} : \f(X) \otimes\f(Y) \to \f(X\otimes Y),
	$$
	for every two objects $X,Y \in ob(\vl) = ob(\vu)$, and
	$e$ is a morphism of Abelian groups
	$$
		e:\Z \to F(S).
	$$
	This quadruple is required to satisfy the following conditions:
		\begin{enumerate}
				\item The covariant ``homology'' functor $\fl$
				preserves coproduct and the contravariant
				``cohomology'' functor $\fu$ maps
				coproducts to products. Moreover if we have
				objects $(X,\Phi_1)$ and $(X,\Phi_2)$ with 
				the same underlying scheme and such that 
				the supports don't intersect, $\Phi_1\cap
				\Phi_2 = \emptyset$, then the map
				$$
				\fu(\jm_1)+\fu(\jm_2):\fu(X,\Phi_1)\oplus
				\fu(X,\Phi_2) \to \fu(X,\Phi_1\cup \Phi_2)
				$$
				is required to be an isomorphism. Here 
				the maps $\jm_1$ and $\jm_2$ are the maps
				in $\vu$ 
				$$
				\jm_1:(X,\Phi_1\cup\Phi_2) \to(X,\Phi_1),
				$$
				and 
				$$
				\jm_2:(X,\Phi_1\cup\Phi_2) \to (X,\Phi_2),
				$$
				induced by the identity map on the underlying
				scheme $X$.
				\item The subdata $(\fl,T,e)$ and 
				$(\fu,T,e)$ define (right-lax) symmetric
				monoidal functors.
				\item Let $(X,\Phi) \in ob(\vl)=ob(\vu)$
				be an object such that the underlying scheme
				$X$ is connected. Then the gradings on 
				$\fl(X,\Phi)$ and $\fu(X,\Phi)$ are 
				connected by the equality
				$$
				\f_i(X,\Phi) = \f^{2\dim_S(X)-i}(X,\Phi).
				$$
				\item For all Cartesian diagrams 
				$$
				\xymatrix{
						(X',\Phi') \ar[d]_{g_X} \ar[r]^{f'} &
						(Y',\Psi') \ar[d]^{g_Y} \\
						(X,\Phi) \ar[r]^{f} &
						(Y,\Psi)
					}
				$$	
				of objects in $ob(\vl) = ob(\vu)$ and maps
				$g_X,g_Y \in \vu$ and $f, f' \in \vl$ 
				such that either
				\begin{itemize}
						\item $g_Y$ is smooth, or
						\item $g_Y$ is a closed immersion and 
						$f$ is transversal to $g_Y$
					\end{itemize}
				the following equality holds
				$$
				\fu(g_Y)\circ \fl(f) = \fl(f')\circ \fu(g_X).
				$$
		\end{enumerate}

			Let $\f = (\fl,\fu,T,e)$ and $\g = (\gl,\gu,U,\epsilon)$ be weak cohomology theories
			with supports. A morphism
			$$
			\phi: \f \to \g
			$$
			is a family $\{\phi_X\}$ of morphisms
			$
			\phi_X: \f(X) \to \g(X)
			$
			of graded Abelian groups (for both gradings)
			such that $\phi$ induces a natural transformation
			of (right-lax) symmetric monoidal functors
			$$
			\phi: (\fl,T,e) \to (\gl,U,\epsilon),
			$$
			and 
			$$
			\phi: (\fu,T,e) \to (\gu,U,\epsilon).
			$$
			The category of weak cohomology theories with
			supports and these morphisms is denoted by
			$\TT$.

		Let
		$(X,\Phi_1), (X,\Phi_2) \in ob(\vl)$ be two
		objects with the same underlying $\NS$-scheme
		$X$ and let $\f = (\fl,\fu,T,e) \in \TT$. We define
		$$
		\cup: \f(X,\Phi_1)\otimes_{\Z}\f(X,\Phi_2)
		\xrightarrow[]{T} \f(X\times_S X,\Phi_1\times_S \Phi_2) \xrightarrow[]{\fu(\Delta_X)} \f(X,\Phi_1 \cap \Phi_2),
		$$
		where $\Delta_X: (X,\Phi_1\cap \Phi_2) \to 
		(X\times_S X, \Phi_1 \times_S \Phi_2)$ 
		is induced by the diagonal immersion.
		%\end{defn}
		This cup product is  distributative over addition, associative,
		graded-commutative, and it respects the pullback functor. Furthermore we have the following \textit{projection
			formulas}. Let
			\begin{enumerate}
				\item  $f_1:(X,\Phi_1) \to (Y,\Phi_2)$ in $\vl$, \,\,\, \text{and}
				\item $f_2:(X,f^{-1}(\Psi)) \to (Y,\Psi)$ in $\vu$,
			\end{enumerate}
			be induced by $f$.
			Then $f$ also induces a morphism
			$$
			f_3: (X,\Phi_1\cap f^{-1}(\Psi)) \to (Y,\Phi_2\cap \Psi)
			$$
			in $\vl$, and for all $a\in \f(X,\Phi_1)$ and
			$b \in \f(Y,\Psi)$ the following formulas hold 
			in $\f(Y,\Phi_2\cap \Psi)$
			\begin{enumerate}
				\item  $\fl(f_3)(a \cup \fu(f_2)(b)) = \fl(f_1)(a)\cup b$,
				\item  $\fl(f_3)(\fu(f_2)(b)\cup a) = b \cup \fl(f_1)(a)$.
			\end{enumerate}

		In Section 1 we show that Chow groups give an example 
		of a WCTS. The pushforward $\CHl$ is the 
		proper pushforward and the pullback is given by Fulton's refined
		Gysin morphism. The absence of a well-developed theory of an  
		exterior product over a higher dimensional base scheme 
		forces us to restrict the dimension of $S$ to at most 1. 
		
		In Section 2 we show that Hodge cohomology provides us 
		with a second example of a WCTS. For a morphism
		$f:X\to Y$ of $S$-schemes of finite type, the pullback is 
		essentially induced in cohomology by the map
		$$
		\Omega_{Y/S}^j \to f_{\ast}\Omega_{X/S}^j \to \rfbpush\Omega_{X/S}^j.
		$$
		The pushforward is more involved. We start by defining a  
		\textit{proper pushforward}. Namely, assume
		we have a diagram of separated, finite type $S$-schemes
		$$
		\xymatrix{
			X \ar[rr]^f\ar[dr]_{\pi_X} && Y \ar[dl]^{\pi_Y} \\
			& S}
		$$
		then we define a pushforward 
		$$
		\fpush: \rfpush	\rshHomx(\omxs^k,\pi_X^{!}\sO_S) \to 
		\rshHomy(\omys^k,\pi_Y^{!}\sO_S)
		$$
		for any $k\geq 0$  as the composition of the 
		pullback
		$$
		\rfpush	\rshHomx(\omxs^k,\pi_X^{!}\sO_S) 
		\xrightarrow{(f^{\ast})^{\vee}} 
		\rshHomy(\omys^k,\rfpush\fushr\pi_Y^!\sO_S),
		$$
		and the trace
		$$
		\rshHomy(\omys^k,\rfpush\fushr\pi_Y^!\sO_S)
		\xrightarrow{\Tr_f}
		\rshHomy(\omys^k,\pi_Y^{!}\sO_S).
		$$
		The pushforward is then defined via the Nagata compactification and 
		this proper pushforward.
		
		We then introduce the \textit{pure part} of the Hodge cohomology,
		$(\hpl, \hpu, \times, 1)$
		and show that it satisfies a \textit{semi-purity} 
		condition necessary for the Existence Theorem to hold.

		Apart from some dimension considerations in Lemma 
		\ref{SuslinVoevodsky} and Corollary 
		\ref{ChowExteriorRespectsGrading}, the material in the first 
		two sections is virtually unchanged from \cite{KayAndre}. We note
		that in these first two sections we have used that $S$ 
		has dimension at most 1 for the
		exterior product and that it is excellent (or at least 
		universally catenary), regular, and irreducible for the above-mentioned
		lemma and corollary.

		In Section 3 we encounter the first of two main technical points 
		of this article. We prove the following theorem that gives 
		sufficient conditions for the existence of a morphism 
		of weak cohomology theories with supports $\CH \to \f$.
		\begin{THM}\label{themaintheorem} (See Theorem \ref{existencetheorem})
			Let $S$ be a Noetherian, excellent, 
			regular, separated and irreducible scheme of Krull-dimension
			at most 1. Let  
			$\rmF \in \bfT$ be a weak cohomology theory with
			supports satisfying the semi-purity
			condition in Definition  \ref{semipurity}. Then $\Hom_{\bfT}(\CH,\rmF)$ is non-empty 
			if the following conditions
			hold.
			\begin{enumerate}
				\item \label{condunit} For the $0$-section
				$\imath_0:S \to \P_S^1$ and the 
				$\infty$-section $\imath_{\infty}:S\to 
				\P_S^1$ the following equality holds:
				$$
				\rmFl(\imath_0)\circ e = \rmFl(\imath_{\infty})\circ e.
				$$
				\item \label{condonclasselement} If 
				$X$ is an $\mathcal{N}_S$-scheme and
				$W\subset 
				X$ is an 
				integral closed subscheme then there exists 
				a cycle-class element $\cl(W,X) \in 
				\rmF_{2\dim_S(W)}(X,W)$, and if $W\subset X$
				is any closed subscheme we define
				$$
				\cl(W,X) = \sum_i n_i \cl(W_i,X),
				$$
				where the $W_i$ are the irreducible components of $W$ and $\sum_i n_i [W_i]$
				is the fundamental cycle of $W$\footnotemark, such
				that the following conditions hold: 
				\footnotetext{Technically we should write $\cl(W,X) = \sum_i n_i \fl(i_{W_i})\cl(W_i,X)$ 
					where $i_{W_i}:(X,W_i)\to (X,W)$ enlarges the supports. Notice also that when $W$ is not 
					pure $S$-dimensional the cycle-class element
					$\cl(W,X)$ does not live in $\f_{2\dim_S W}(X,W)$.}
				\begin{enumerate}[i)]
					\item  \label{axiomopenimmersion} For any 
					open $U\subseteq X$ such that
					$U \cap W$ is regular, we have
					$$
					\fu(j)(\cl(W,X)) = \cl(W\cap U,U),
					$$
					where $j:(U,U\cap W) \to (X,W)$ is 
					induced by the open immersion $U\subseteq X$.
					\item \label{axiomsmoothmorphism} If $f:X 
					\to Y$ is a smooth morphism between
					$\NS$-schemes $X$ and $Y$, and 
					$W \subset Y$ 
					is a regular closed subset, then
					$$
					\fu(f)(\cl(W,Y)) = \cl(f^{-1}(W),X).
					$$
					\item \label{axiomdivisor}
					Let $i:X\to Y$ be the closed immersion 
					of an irreducible, regular, closed 
					$S$-subscheme $X$ into an 
					$\NS$-scheme $Y$. For any 
					effective smooth divisor $D \subset Y$
					such that 
					\begin{itemize}
						\item $D$ meets $X$ properly, thus 
						$D\cap X := D\times_Y X$ is a divisor
						on $X$,
						\item $D' := (D\cap X)_{\rm red}$ 
						is regular and irreducible, so 
						$D\cap X = n\cdot D'$ as divisors
						(for some $n\in \Z, n\geq 1$).
					\end{itemize}
					We define
					$g:(D,D') \to (Y,X)$ in $\vu$ as the 
					map induced by the
					inclusion $D\subset Y$. Then the following
					equality holds:
					$$
					\rmFu(g)(\cl(X,Y)) = n\cdot \cl(D',D).
					$$	
					\item \label{axiomfinite} Let $f: X \to 
					Y$ be a 
					morphism of $\NS$-schemes. Let $W\subset X$ be a 
					regular closed 
					subset such that the restricted map
					$$
					f|_W:W \to f(W)
					$$
					is proper and finite of degree $d$. Then 
					$$
					\fl(f)(\cl(W,X)) = d \cdot \cl(f(W),Y).
					$$
					\item \label{axiomproduct} Let $X,Y$ be
					$\NS$-schemes and let
					$W \subset X$ and $V \subset Y$ be
					regular, integral closed subschemes. 
					Then the following equation holds
					$$
					T(\cl(W,X)\otimes_S \cl(V,Y)) = 
					\begin{cases}
						\cl(W\times_S V,X\times_S Y) \qquad\, \text{if $W$ or $V$ is dominant over $S$},
						\\
						0 \qquad\qquad \qquad\qquad \text{otherwise}.
					\end{cases}
					$$
					
					\item \label{axiomone} For the base 
					scheme $S$ we have $\cl(S,S) =1_S$.	
				\end{enumerate}
			\end{enumerate}
		\end{THM}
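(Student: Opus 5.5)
The plan is to follow the strategy of Chatzistamatiou--R\"ulling and construct $\phi:\CH\to\rmF$ on cycles by the cycle class. For an object $(X,\Phi)$ with $X$ an $\NS$-scheme, $\CH(X,\Phi)=\varinjlim_{W\in\Phi}\CH(W)$. On an integral $W\subset X$ with $W\in\Phi$ we set
\[
\phi_{(X,\Phi)}([W])=\rmFl(i_{W\subset\Phi})\,\cl(W,X),
\]
where $i_{W\subset\Phi}:(X,W)\to(X,\Phi)$ enlarges supports. We extend this linearly to cycles. The first task is to show that $\phi$ descends to rational equivalence, and the second is to show that it commutes with pushforward, pullback, the exterior product $T$ and the unit $e$.

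\textbf{Step 1: rational equivalence.} A rational equivalence on $X$ comes from an integral $V\subset X\times_S\P^1_S$ that is dominant over $\P^1$. The relation $[V_0]-[V_\infty]\sim 0$ is reduced via the divisor axiom \ref{axiomdivisor}. The difficulty is that $V$ need not be regular, while the axioms only apply to regular $W$. So we use semi-purity of Definition \ref{semipurity}. It says that $\rmF_{2\dim W}(X,W)$ injects into $\rmF$ of $(U,U\cap W)$ for any open $U$ whose complement in $W$ has smaller dimension. Since $S$ is excellent, the regular locus of $V$ is open and dense. Axiom \ref{axiomopenimmersion} then shows that classes are determined on a regular open part.

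We then argue as follows. Pull back $\cl(S,S)=1$ along the projection, using \ref{axiomsmoothmorphism} and \ref{axiomone}. Use condition (\ref{condunit}), which says $\rmFl(\imath_0)e=\rmFl(\imath_\infty)e$. Combine this with the projection formula to obtain
\[
\rmFu(\imath_0)\cl(V)=\rmFu(\imath_\infty)\cl(V)
\]
after pushing forward. Finally identify $\rmFu(\imath_0)\cl(V)$ with $\cl(V_0)$ via \ref{axiomdivisor}, after restricting to opens where $V$ and $(V_0)_{\rm red}$ are regular.

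\textbf{Step 2: compatibilities.} Proper pushforward reduces to \ref{axiomfinite}. By semi-purity we may restrict to the generic point of $f(W)$ when $f|_W$ is generically finite. When $\dim f(W)<\dim W$, the target group $\rmF_{2\dim W}(Y,f(W))$ vanishes; this degree vanishing is part of semi-purity and uses Lemma \ref{SuslinVoevodsky} to compare dimensions over $S$. Exterior products follow from \ref{axiomproduct}, with the vertical$\times$vertical case vanishing by Corollary \ref{ChowExteriorRespectsGrading}. Smooth pullback follows from \ref{axiomsmoothmorphism}.

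\textbf{Main obstacle.} The hardest part is general pullback, which is given by Fulton's refined Gysin map. Factor $f$ as the graph closed immersion followed by the smooth projection, so only regular closed immersions $i:X\to Y$ remain. Here I would use deformation to the normal cone, as in loc.\ cit. Let $M^\circ$ be the deformation space over $\A^1$, which is an $\NS$-scheme. By the homotopy property, which comes from Step 1 and \ref{axiomdivisor}, the pullback $\rmFu(i)$ is computed by specializing $\cl(W\times\mathbb G_m)$ to $\cl(C_{W\cap X}W)$ in the normal bundle and then applying $\rmFu$ of the zero section. That last map agrees with the Chow zero-section pullback by smooth pullback. Over a one-dimensional base the dimension bookkeeping for the cone components is the delicate point, and semi-purity must be invoked again to discard the irregular loci.
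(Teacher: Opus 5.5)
Your overall architecture matches the paper's. You define $\phi'$ on generators by $\rmFl(i_W)\cl(W,X)$, kill rational equivalence through the $\P^1_S$-homotopy with condition (1) and the projection formula, check pushforward via iv) and semi-purity, and handle pullback by smooth maps plus closed immersions via deformation to the normal cone. There is, however, a genuine gap at the step where you identify $\rmFu(\imath_0)\cl(V)$ with $\cl(V_0)$ ``after restricting to opens where $V$ and $(V_0)_{\rm red}$ are regular''.

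Semi-purity (2) only lets you shrink to opens containing every generic point of the \emph{target} support $V_0$. Condition iii) then needs the ambient integral subscheme $V$ to be regular on that open. Generic regularity of $V$ does not give this: $V_0$ has codimension one in $V$, and $V$ may be singular at a generic point of $V_0$. For instance, over a field take a curve $V\subset X\times\P^1$ dominating $\P^1$ with a node lying over $0$. The node is a component of $V_0$, no neighbourhood of it makes $V$ regular, and semi-purity does not let you discard it.

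The same failure recurs in your last step. The deformation space $\tilde M^0$ can be singular at generic points of $C_WV$, for example when $W$ is a Cartier divisor on $V$ supported on a codimension-one singular locus of $V$. So ``invoking semi-purity to discard the irregular loci'' does not justify $\rmFu(t)\phi([\tilde M^0])=\phi([C_WV])$ either.

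The missing idea is the paper's Step 2: for \emph{every} integral $W\subset X$ and every smooth divisor $D$ meeting it properly, $\rmFu(i_D)\cl(W,X)=\cl(W\cap D,D)$. The argument runs as follows.
\begin{enumerate}
\item Use semi-purity and the disjoint-supports axiom to reduce to $D'$ irreducible and $X$ affine.
\item Replace $W$ by its normalization $\tW$. It is finite over $W$ because $S$ is excellent, hence a closed subscheme of $\tX=X\times_S\P^n_S$.
\item Being normal, $\tW$ is regular in codimension one, so it is regular at the generic points of $\tW\cap\tD$, where $\tD=D\times_S\P^n_S$. Condition iii) applies there.
\item Descend along $pr_1$ using three facts: the cycle-level compatibility of $\phi'$ with proper pushforward; the transversal base-change axiom for the square formed by $D\subset X$ and $pr_1$; and $pr_{1\ast}[\tW\cap\tD]=[W\cap D]$.
\end{enumerate}
This forces you to prove pushforward compatibility on cycles \emph{before} the rational-equivalence step. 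It does not depend on rational equivalence, so your Steps 1 and 2 simply need to be swapped.

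With this lemma, both your rational-equivalence argument and your deformation argument go through. In the deformation argument you also need two further ingredients for the zero section, since smooth pullback compatibility alone does not handle it: homotopy invariance $\CH(X/S,W)\cong\CH(E/S,p^{-1}(W))$ for the vector bundle, and the enlargement of supports from $C_WV$ to $N_XY\times_XW$ via a transversal square.
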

		In this theorem we first see the fundamental difference between
		the case of working over a perfect field like in \cite{KayAndre}, and 
 		in the more general case treated in this article. Namely, the 
 		conditions in \cite[Theorem 1.2.3.]{KayAndre}, 
 		the corresponding existence theorem, are simpler to state. This 
 		is because over a perfect field 
		one can construct the fundamental cycle by a 
		pushforward from its smooth locus and then spreading out.
 		 In the
 		more general case, we can only guarantee a non-empty 
 		\textit{regular locus} and there we have schemes that are not
 		smooth over $S$ and therefore not in $ob(\vl)$.
 		Thus the fundamental class is constructed in an ad hoc way which is not directly related to pushforward along a closed immersion.
 		The proof of Theorem \ref{themaintheorem} follows along the 
 		same lines as in \cite{KayAndre}, namely; A map is first defined
 		on cycles, then shown to pass to a map on the Chow groups, then 
 		that this extends to a natural transformation of right-lax symmetric
 		monoidal functors $(\CHl, \times_S, 1) \to (\fl,T,e)$. This 
 		natural transformation is extended to a morphism of weak cohomology
 		theories with supports by showing that for smooth morphisms, then 
 		closed immersions, and then putting that together to get the general
 		case since all morphisms between smooth $S$-schemes are l.c.i.
 		morphisms.

		In Section 4 we encounter the second main technical point of this 
		article when we construct the cycle class map to Hodge 
		cohomology and show that with this cycle class map, the 
		pure part $\hp$ satisfies the conditions of the Existence Theorem. In 
		\cite{KayAndre} this cycle class map is defined via the pushforward along a closed immersion
		between smooth schemes, and this can not be done here. We use a classical construction of 
		a cycle class map, see \cite[\S III. 3.1]{ElZein1978}, and have to be 
		quite explicit to make sure our construction is in line with duality theory and its signs as
		laid out in \cite{Conrad}. This is done by first considering a regular closed
		subscheme $i: Z \hookrightarrow X$ of codimension $c$ in $X$. We obtain a map
		\begin{equation*} 
			\sO_Z \cong	\bigwedge^c\sI/\sI^2 \otimes_{\sO_Z} \omega_{Z/X}  \xrightarrow{} 
			i^{\ast}\Omega_{X/S}^c\otimes_{\sO_Z} \omega_{Z/X} \cong
			i^{!}(\Omega_{X/S}^c)[c],
		\end{equation*} 
		induced by the map
		\begin{align*}
			\sI/\sI^2 &\to i^{\ast}(\Omega_{X/S}^1) \\
			\bar{a} &\mapsto da.
		\end{align*}
		Adjunction of $Ri_{\ast}$ and $i^!$ yields 
		\begin{equation*}
			i_{\ast}\sO_Z \to \Omega_{X/S}^c[c].
		\end{equation*}
		which induces in cohomology a map 
		\begin{equation*} \label{cycleclassfinalmapregular}
			H^0(Z,\sO_Z) \xrightarrow{\gamma_Z} H^{c}_Z(X,\Omega_{X/S}^c),
		\end{equation*}
		and we define
		$$
		\cl(Z,X) := \gamma_Z(1).
		$$
		This cycle class is then used to obtain a cycle class for 
		irreducible closed subschemes by spreading out from the 
		regular locus. The semi-purity condition, Definition \ref{semipurity}, 
		then ensures that this cycle class is well defined. When $\dim(S) = 1$, $X$ is an 
		$\NS$-scheme, and 
		$V\subset X$ is a regular integral closed subscheme that lies in the fiber over 
		a closed point of $S$, then this cycle class vanishes.
		
		In Section 5 we prove Theorem \ref{maintheorem}. Having set up the
		machinery of the algebraic correspondences in Sections 1-4 the proof
		now is essentially the same as in \cite{KayAndre} but is recorded for
		completeness. Namely, we first prove Theorem 
		\ref{applicationvanishing}, a vanishing theorem that for connected 
		$\NS$-schemes $X$ and $Y$
		and a correspondence $\alpha$ from $X$ to $Y$ then
		\begin{enumerate}
			\item if $\alpha$ projects to an $r$-codimensional subset of 
			$Y$ then $\alpha$ acts as 0 on all $\h^i(X,\Omega_{X/S}^j)$ where
			$j$ is less than $r$, and
			\item if $\alpha$ projects to an $r$-codimensional subset of $X$ 
			then $\alpha$ acts as 0 on all $\h^i(X,\Omega_{X/S}^j)$ for all
			$j$ greater than or equal to $r$.
		\end{enumerate}
		Then the proof of Theorem \ref{maintheorem} follows by  reducing to 
		the case $S' = S$ and considering the compositions
		\begin{align*}
			[Z]&\circ [Z^t], \,\,\,\text{and} \\
			[Z^t]&\circ [Z],
		\end{align*}
		where $[Z]$ is the correspondence from $X$ to $Y$ 
		defined by $Z$ and $[Z^t]$ is 
		the correspondence from $Y$ to $X$ defined by the image 
		of $Z$ under the canonical isomomorphism 
		$X\times_S Y \xrightarrow{\cong} Y\times_S X$. 
		Here we reap the benefits of considering the supports throughout, 
		since we don't need to assume properness for $X$ or $Y$ over $S$, 
		to have proper projections from $X\times_S Y$ to $X$ and $Y$, it is 
		enough that the projections are proper \textit{when restricted to the
		subset $Z$}. We show that 
		these compositions yield 
		\begin{align*}
					\Delta_{Y/S} &+ \varepsilon_Y, \,\,\, \text{and} \\
					\Delta_{X/S} &+\varepsilon_X,
		\end{align*}
		respectively, where the diagonals are the identities as 
		correspondences and $\varepsilon_Y$ and $\varepsilon_X$ are 
		some cycles that act as 0 by the Vanishing Theorem.

\subsection*{Acknowledgements}
The results in this article were part of my PhD. thesis, started at 
the Freie Universit\"at Berlin and finished at the Bergische Universit\"at 
Wuppertal. First and foremost I want to thank my advisor Kay R\"ulling for
 suggesting this
interesting problem, and for his continued and unfailing support, enthusiasm, and 
patience.  I would like to thank the Berlin Mathematical School for their
support as I was supported by a Phase II scholarship for a part of the time this work was conducted. 
I would like to thank Raju Krishnamoorthy, H\'el\`ene Esnault, Fei Ren, Marco 
d'Addezio,
Pedro Castillejo Blasco, Lei Zhang, and all other members of  Freie Universit\"at Berlin
and Bergische Universit\"at Wuppertal that I have had the pleasure of talking to, and 
working with, over the last few years.

\section{Chow Groups as a Weak Cohomology Theory With Supports}

We define two functors:
\begin{align*}
	\CHl: &\vl \to \Grab, \,\,\text{and} \\
	\CHu: &(\vu)^{op} \to \Grab.
\end{align*}
Let
$Z_{\Phi}(X)$ be the free Abelian group on the closed integral subschemes
that lie in $\Phi$, and $Rat_{\Phi}(X)$ the free Abelian group generated by
cycles of the form $div_W(f)$ with $f \in R(W)^{\times}$ and $W\in \Phi$.
and we set 
$$
\CH(X/S,\Phi) := Z_{\Phi}(X)/Rat_{\Phi}(X). 
$$
On each object $\CH(X/S,\Phi)$ we have a grading by $S$-dimension
$$
\CH(X/S,\Phi) := \bigoplus_{d\geq 0} \CH_d(X/S,\Phi)[2d],
$$
where $\CH_d(X/S,\Phi)$ is the subgroup of $\CH_d(X/S)$,  consisting of those $d$-cycles that lie in $\CH(X/S,\Phi)$. 
The bracket $[2d]$ indicates that the group $\CH_d(X/S,\Phi)$ lies in degree $2d$.
Furthermore we have a grading by codimension. Namely if
$X$ is connected we set
$$
\CHu(X/S,\Phi) := \bigoplus_{c\geq 0}\CH^{c}(X/S,\Phi)[2c],
$$
where $\CH^{c}(X/S,\Phi)$ is the subgroup of $\CH(X/S,\Phi)$ consisting of 
cycles $\sum n_i[V_i]$ where each $V_i$ has codimension $c$ in $X$. If
$X$ is not connected, say $X = \amalg X_i$ is a decomposition into 
connected components, then we set
$$
\CHu(X/S,\Phi) := \bigoplus_{i} \CHu(X_i/S,\Phi\cap \Phi_{X_i}).
$$ 
The following lemma follows immediately from
the definitions of the gradings and 
\cite[Lemma 20.1.(2)]{Fultonbook}.

\begin{lem} \label{Chowrelationbetweengradings}
	For a connected $\NS$-scheme $X$ and a 
	family of supports $\Phi$ on $X$ we have 
	$$
	\CH_i(X/S,\Phi) = \CH^{\dim_S(X)-i}(X/S,\Phi).
	$$
\end{lem}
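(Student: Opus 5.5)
The plan is to reduce the statement to a single integral closed subscheme and show that its $S$-dimension and its codimension in $X$ add up to $\dim_S(X)$. Both $\CH_i(X/S,\Phi)$ and $\CH^{\dim_S(X)-i}(X/S,\Phi)$ are, by definition, subgroups of $\CH(X/S,\Phi) = Z_\Phi(X)/Rat_\Phi(X)$. The first is generated by the classes of integral closed subschemes $V \in \Phi$ with $\dim_S(V) = i$. The second is generated by the classes of integral closed subschemes $V\in\Phi$ of codimension $\dim_S(X)-i$ in $X$. It therefore suffices to prove, for every integral closed subscheme $V \subseteq X$,
$$
\dim_S(V) + \codim(V,X) = \dim_S(X).
$$
Granting this, the two sets of generators coincide, so the two subgroups coincide. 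The relations are also compatible on both sides: for $W\in\Phi$ and $f\in R(W)^\times$, the cycle $div_W(f)$ is homogeneous for either grading. It consists of codimension-one subvarieties of $W$, so it has $S$-dimension $\dim_S W - 1$, and by the same formula applied to $W$ it has codimension $\codim(W,X)+1$.

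To prove the formula, recall Fulton's relative dimension over a regular base of dimension at most $1$ (\cite[\S 20.1]{Fultonbook}): for $V$ integral with image closure $T\subseteq S$,
$$
\dim_S(V) = \trdeg(R(V)/R(T)) - \dim(\sO_{S,T}) .
$$
\cite[Lemma 20.1.(2)]{Fultonbook} states that for $V\subseteq X$ with $X$ of finite type over $S$ (and $S$ universally catenary, which follows from excellence), one has $\dim_S(V) = \dim_S(X) - \codim(V,X)$ whenever $X$ is irreducible. Here $X$ is connected and smooth over $S$. Smoothness gives normality, hence $X$ is locally integral, and a connected normal Noetherian scheme is irreducible. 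So the cited lemma applies directly, and we conclude.

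The only point needing care is the hypothesis check for Fulton's lemma: irreducibility of $X$, which comes from connected plus smooth over regular $S$, and universal catenarity of $S$, which comes from excellence. One should also confirm that $\dim_S$ is additive along chains in this setting. This is exactly where excellence and regularity of $S$ enter; otherwise the argument is formal.
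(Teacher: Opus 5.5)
Your proposal is correct and is essentially the paper's argument: the paper derives the lemma directly from the definitions of the two gradings together with \cite[Lemma 20.1.(2)]{Fultonbook}. Your explicit check that a connected $\NS$-scheme is irreducible (regular, hence normal, hence integral when connected) supplies a hypothesis the paper leaves implicit. One small correction: universal catenarity of $S$ already follows from regularity (regular implies Cohen--Macaulay), so excellence is not needed at this step.
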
 \qed

We have functions on objects $\CHl$ resp. $\CHu$ 
sending $(X,\Phi)$ in $\vl$ resp. $(\vu)^{op}$
to $\CHl(X/S,\Phi)$ resp. $\CHu(X/S,\Phi)$.
We define $\CHl$ and $\CHu$ on morphisms and extend $\CHu$ and $\CHl$ to functors.

\subsection{Pushforward}
Let $f:(X,\Phi) \to (Y,\Psi)$ be a morphism in $\vl$ and let $V\in \Phi$ be a 
closed subscheme of $X$. By construction $f|_{\Phi}$ is proper so we  get a pushforward
\begin{align*}
	f_{\ast}:Z_{\Phi}(X) &\subset Z_{\ast}(X) \to Z_{\ast}(Y), \\
	f_{\ast}([V]) &= \deg(V/f(V))\cdot[f(V)].
\end{align*}
Furthermore since $f$ is a morphism in $\vl$ we have $f(V) \in \Psi$ so this 
gives a pushforward on cycles \linebreak
$
f_{\ast}:Z_{\Phi}(X) \to Z_{\Psi}(Y).
$
This pushforward factors through cycles rationally equivalent to zero
and we have a pushforward
$$
\CHl(f):\CH(X/S,\Phi) \to \CH(Y/S,\Psi),
$$
induced by $f_{\ast}$. 
This pushforward respects the grading, and the functoriality of the proper pushforward
shows that this gives us a functor \footnotemark
$$
\CHl: \vl \to \Grab. 
$$

\subsection{Pullback} \label{pullbackchowsection}

We first note that if we let $X$ be a regular $S$-scheme and $Y$ be an $\NS$-scheme. 
	Then any morphism $f:X\to Y$
	over $S$ is an l.c.i. morphism.
This is clear from the factorization 
	of $f$ as 
	$$
	\xymatrix{
		X \ar@/_2pc/[rr]^-{f}\ar[r]^-{\Gamma_f} 
		& X\times_S Y \ar[r]^-{pr_2} & Y,
	}
	$$
	where $\Gamma_f:X\to X\times_S Y$ the graph morphism, which is a closed 
	immersion since $Y\to S$ is separated
	and $pr_2$
	is the projection.

Now we can use the refined Gysin homomorphisms for l.c.i. morphisms, see 
 \cite[\S 6.6]{Fultonbook}, to construct a pullback. 

\begin{defn}
	Let $f:(X,\Phi) \to (Y,\Psi)$ be a morphism in $\vu$. The refined Gysin homomorphism
	defines a morphism for any $V \in \Psi$,
	\begin{align*}
		\CH(Y/S,V) &= \CH(V/S) \\
		&\xrightarrow{f^{!}} \CH(f^{-1}(V)/S)\\
		&= \CH(X,f^{-1}(V)/S)\\ 
		&\xrightarrow{\phi_{f^{-1}(V)}} \varinjlim_{W\in \Phi} \CH(X/S,W) \\
		&= \CH(X/S,\Phi),
	\end{align*}
	where $\phi_{f^{-1}(V)}$ is the natural morphism $\CH(X/S,f^{-1}(V)) \to 
	\varinjlim_{W\in \Phi} \CH(X/S,W)$. Note that if \linebreak$i:V_1 \hookrightarrow V_2$ is a
	closed immersion between two closed subschemes $V_1$ and  $V_2$ of $Y$ s.t.
	$V_1,V_2\in \Psi$ and  \linebreak $j:f^{-1}(V_1) \hookrightarrow f^{-1}(V_2)$ is the 
	induced closed immersion of closed subschemes of $X$, then the square
	$$
	\xymatrix{
		\CH(V_1/S) \ar[r]^-{f^!} \ar[d]^-{i_{\ast}} 
		& \CH(f^{-1}(V_1)/S) \ar[d]^-{j_{\ast}}
		\\ \CH(V_2/S) \ar[r]^-{f^!} 
		& \CH(f^{-1}(V_2)/S),
	}
	$$
	commutes by \cite[Proposition 6.6.(c)]{Fultonbook}.
	Therefore, the universal property of direct limits tells us that there is a 
	unique morphism 
	$$
	\CHu(f): \CH(Y/S,\Psi) \to \CH(X/S,\Phi)
	$$
	compatible with the refined Gysin homomorphisms.
\end{defn}

We furthermore see that this homomorphism $\CHu(f)$ respects the grading by 
codimensions.

The refined Gysin is functorial and it is clear that $\CHu(id) = id$ so
$$
\CHu(f):(\vu)^{op} \to \Grab
$$
is a functor.

\subsection{$\CH = (\CHl,\CHu,\times_S,1)$ is a weak cohomology theory with supports} \, \\
We start by defining the unit $1$, and the product $\times_S$ for Chow groups. 

\begin{defn} 
	The \textit{unit} is the group homomorphism 
	\begin{align*}
		1: \Z &\to \CH(S/S), \\
		1 & \mapsto [S].
	\end{align*}
	Recall the definition of the exterior product from \cite[\S 20.2]{Fultonbook}. 
	Let $(X,\Phi)$ and $(Y,\Psi)$ be $\NS$-schemes 
	with families of supports and let $V \in \Phi$
	and $W \in \Psi$ be integral. The product
	$[V] \times_S [W] \in \CH(X\times_S Y/S,\Phi\times_S\Psi)$ is given by
	$$
	[V] \times_S [W] =
	\begin{cases}
		[V\times_S W], & \text{if}\ V\ \text{or}\ W \ \text{is flat over}\ S,\\
		0, & \text{otherwise.}
	\end{cases}
	$$
\end{defn}
This definition doesn't mention
the gradings on the Chow groups, however to 
see that Chow groups give an example of 
a WCTS we have to show 
that it respects both gradings.  For that we need the following lemma.
\begin{lem} \label{SuslinVoevodsky}
	Let $W\to S$ and $V \to S$ be integral 
	and of finite type and assume
	$S$ is irreducible and Noetherian of Krull
	dimension 0 or 1. Then $W\times_S V$ has 
	pure $S$-dimension.
\end{lem}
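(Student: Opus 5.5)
The plan is to compute the $S$-dimension of each irreducible component of $W\times_S V$ directly and check that it always equals $\dim_S W+\dim_S V-\dim S$ in the appropriate sense. I would split into cases according to $\dim S$ and according to whether $W$ and $V$ dominate $S$. Recall that for an integral $Z$ of finite type over $S$ with closure of image $T$, one has $\dim_S Z=\trdeg(k(Z)/k(T))-\codim(T,S)$. Two tools will do most of the work.

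\textbf{Tool (a).} For integral schemes of finite type $A,B$ over a field $k$, every irreducible component of $A\times_k B$ has dimension $\dim A+\dim B$. This is the standard equidimensionality of products of varieties over an arbitrary field.

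\textbf{Tool (b).} An integral $S$-scheme of finite type dominating $S$, when $S$ is one-dimensional, is torsion free over the local rings of $S$. Hence it is flat over $S$ once these local rings are DVRs, which uses the regularity of $S$ assumed throughout the paper.

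\textbf{Case $\dim S=0$.} Then $S$ has a single point $s$. Both $W$ and $V$ factor through $\Spec k(s)\to S$, because they are integral, hence reduced. Since $S_{\rm red}\to S$ is a monomorphism, $W\times_S V\cong W\times_{k(s)}V$. Tool (a) gives pure dimension, and here $\dim_S$ coincides with the ordinary dimension.

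\textbf{Case $\dim S=1$, with $W$ lying over a closed point $s$.} If $V$ lies over a different closed point, the product is empty. If $V$ lies over $s$ as well, we are in the field case again, and $\dim_S=\dim-1$ on every component. If $V$ dominates $S$, then $W\times_S V=W\times_{k(s)}V_s$, where $V_s$ is the closed fibre.

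Here I need that every irreducible component of $V_s$ has dimension $\trdeg(K(V)/K(S))=\dim_S V$. To get this I would use the flatness from tool (b) together with the dimension formula (EGA IV 5.6.5 / 14.2). This is valid because $S$ is excellent, hence universally catenary. Tool (a) applied to $W$ and each component of $V_s$ then shows that every component has dimension $\dim W+\dim_S V$. Subtracting $\codim(s)=1$ gives a constant $S$-dimension. The symmetric case, with $V$ lying over a closed point, is identical.

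\textbf{Case $\dim S=1$, with both $W$ and $V$ dominant.} By tool (b), both are flat over $S$, so $W\times_S V$ is flat over $S$. Consequently all of its associated points, and in particular the generic points of its components, lie over the generic point $\eta$. Each component is therefore the closure of a component of $W_\eta\times_{K(S)}V_\eta$. By tool (a), each such component has dimension $\dim W_\eta+\dim V_\eta$ over $K(S)$, which equals its $S$-dimension.

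The main obstacle is the fibre-dimension statement in the third case: that all components of the closed fibre of a dominant integral $V$ have the expected dimension. This is where excellence (universal catenarity) and regularity of $S$ must enter, since the lemma as stated only assumes Noetherian and irreducible. I would invoke flatness plus EGA IV's equidimensionality of fibres for flat finite-type morphisms to an irreducible universally catenary base.
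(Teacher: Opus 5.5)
Your proof is correct. It agrees with the paper on the zero-dimensional case and on the two vertical sub-cases: distinct closed points give the empty product, and a common closed point reduces to the field case with $\dim_S=\dim-1$. The case where one factor, say $V$, dominates $S$ is handled differently.

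The paper treats that case uniformly. It cites Suslin--Voevodsky (Prop.~2.1.8) to get that $V\to S$ is universally equidimensional of relative dimension $r=\dim V_\eta$. Hence $W\times_S V\to W$ is equidimensional of dimension $r$, and Weber's additivity $\dim_S Z=\dim_S W+\dim_W Z$ gives $\dim_S Z=\dim_S W+r$ for every component $Z$, whether $W$ is vertical or dominant.

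You instead split according to $W$. For vertical $W$ you rewrite the product as $W\times_{k(s)}V_s$ and prove equidimensionality of the closed fibre via the dimension formula. For dominant $W$ you use flatness to put all generic points of $W\times_S V$ over $\eta$ and work in the generic fibre.

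The paper's version is shorter but rests on two external results. Yours needs only equidimensionality of products over a field, the dimension formula and flatness over a DVR. It is also more accurate in the field case. There the paper claims a product of irreducible schemes is irreducible, which is false over non-closed fields (e.g.\ $\Spec\mathbb{C}\times_{\Spec\mathbb{R}}\Spec\mathbb{C}$). Pure dimension, your tool~(a), is what is true and what is needed.

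One refinement of your closing remark concerns where regularity enters. In the closed-fibre step it is not actually needed. For the generic point $x$ of a component of $V_s$ one has $\dim\sO_{V,x}=1$ automatically: it is at least $1$ because $x$ is not the generic point of $V$, and at most $\dim\sO_{S,s}+\dim\sO_{V_s,x}=1$. Moreover, one-dimensional Noetherian domains are Cohen--Macaulay, hence universally catenary.

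Regularity (or at least normality) is indispensable in the case of two dominant factors. Take $S$ a nodal cubic and $W=V$ its normalization. Then $W\times_S V$ is the diagonal, of $S$-dimension $0$, plus two isolated points over the node, of $S$-dimension $-1$. So the lemma as literally stated needs the paper's standing regularity hypothesis, which its proof uses in the step ``dominant and hence flat''. Your tool~(b) makes that use explicit.
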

\begin{proof}
	If $\dim(S) = 0$ then $S = \Spec(K)$ for some 
	field and the product of two irreducible algebraic
	schemes is again irreducible.
	
	If $\dim(S) = 1$ we have three possibilities.
	\begin{enumerate}[i)]

		\item $W$ maps to the closed point $x \in S$,
		$V$ maps to the closed point $y \in S$ and 
		$x \neq y$,
		\item both $W$ and $V$ map to the same closed 
		point $s \in S$, or
		\item One of the $W, V$ is dominant over $S$.
	\end{enumerate}
	Possibility $i)$ is trivial since
	in this case we have  $W \times_S V = \emptyset$. If possibility
	$ii)$ holds then by the $0$-dimensional case above
	we have that $W\times_S V\to \Spec(k_s)$ is 
	equidimensional (where $k_s$ is the residue field
	of the image $s\in S$) and by \cite[Proposition 2.1.3.(v)]{AndreasWeberThesis} we have
	for any irreducible component $Z$ of $W \times_S
	V$ that
	$$
	\dim_S(Z) = \dim_S(s) + \dim_s(Z) = \dim_s(Z) - 1
	$$
	so $W\times_S V$ has pure $S$-dimension. 
	If possibility $iii)$ holds, then without loss
	of generality we may assume $V\to S$ is dominant
	and hence flat. If $W\times_S V = \emptyset\footnotemark$ then 
	it is again trivially of pure $S$-dimension. \footnotetext{This can happen
		if $W$ is not dominant over $S$ and lies in a fibre over a point
		that is not in the image of $V \to S$.} So we assume $W\times_S V \neq 
	\emptyset$ and let $\eta$ be the generic
	point of $S$. Then the generic fiber
	$V_{\eta} = V\times_S \Spec(k_{\eta})$ is irreducible and the projection
	$
	V\times_S \Spec(k_{\eta}) \to \Spec(k_{\eta})
	$
	is equidimensional. By \cite[Proposition
	2.1.8]{SuslinVoevodsky} we have that 
	$V \to S$ is universally equidimensional of 
	dimension $r:= \dim(V_{\eta})$. This
	implies that the projection
	$
	W\times_S V \to W 
	$
	is equidimensional of dimension $r$. Now consider
	any irreducible component $Z$ of $W\times_S V$. We
	know that $r = \dim(Z_{\mu_W})$, where $\mu_W$ is the generic 
	point of $W$\footnote{See discussion after Definition 2.1.2 in 
		\cite{SuslinVoevodsky}.}, and by \cite[Proposition 2.1.3.(vii)]{AndreasWeberThesis} we have 
	that $\dim(Z_{\mu_W}) = \dim_W(Z)$ so by \cite[Proposition 2.1.3.(v)]{AndreasWeberThesis} we
	get
	\begin{align*}
		\dim_S(Z) &= \dim_S(W)+\dim_W(Z) \\
		&= \dim_S(W)+r.
	\end{align*}
\end{proof}

\begin{cor} \label{ChowExteriorRespectsGrading}
	The exterior product $\times_S$ respects both 
	gradings on the Chow groups.
\end{cor}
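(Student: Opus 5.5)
It suffices to check the two gradings on generators. Let $V\in\Phi$ and $W\in\Psi$ be integral closed subschemes of connected $\NS$-schemes $X$ and $Y$. The product $[V]\times_S[W]$ is either $0$, which is harmless, or the cycle $[V\times_S W]$, where one factor is flat, i.e. dominant, over $S$. More precisely, it is the fundamental cycle $\sum_i n_i[Z_i]$ of $V\times_S W$, summed over its irreducible components $Z_i$. Since rational equivalence is compatible with the exterior product (\cite[\S 20.2]{Fultonbook}), respecting the gradings on cycles gives it on Chow groups.

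\emph{Dimension grading.} I want every $Z_i$ to have $\dim_S(Z_i)=\dim_S(V)+\dim_S(W)$, so that $\CH_a\times\CH_b\to\CH_{a+b}$. Lemma \ref{SuslinVoevodsky} shows that $V\times_S W$ has pure $S$-dimension. Its proof, in the dominant case (say $W\to S$ dominant), computes this dimension as $\dim_S(V)+r$ with $r=\dim(W_\eta)$. It remains to see $r=\dim_S(W)$. For $W$ integral and dominant over $S$, the additivity formula \cite[Proposition 2.1.3.(v)]{AndreasWeberThesis} gives $\dim_S(W)=\dim_S(\eta)+\dim(W_\eta)=\dim(W_\eta)$, since $\dim_S(\eta)=0$. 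If $\dim S=0$, this is just the field case, where dimensions of products of varieties add. I expect this step to be the only real content: it is where irreducibility of $S$, excellence (universal catenarity) and $\dim S\le 1$ enter, through the cited dimension formula.

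\emph{Codimension grading.} For connected $X,Y$, Lemma \ref{Chowrelationbetweengradings} converts codimension into $S$-dimension. A class of codimension $c$ in $X$ has dimension $\dim_S X-c$, and one of codimension $c'$ in $Y$ has dimension $\dim_S Y-c'$. By the previous step, the product has dimension $\dim_S X+\dim_S Y-(c+c')$. Now $X\times_S Y$ is smooth over $S$ of relative dimension $\dim_S X+\dim_S Y$ on each connected component. Applying Lemma \ref{Chowrelationbetweengradings} again, componentwise, gives codimension $c+c'$. This uses the decomposition $\CHu(X\times_S Y)=\bigoplus\CHu$ of the components, on each of which the relative dimension is the sum above. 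This finishes the corollary, modulo the bookkeeping with supports $\Phi\times_S\Psi$, which does not affect degrees.
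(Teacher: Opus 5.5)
Your proposal is correct and is essentially the paper's own proof. Like the paper, you reduce to integral generators, dispose of the non-flat and empty cases, get pure $S$-dimension $\dim_S V+\dim_S W$ from Lemma \ref{SuslinVoevodsky} and the computation in its proof, and then pass to codimensions by applying Lemma \ref{Chowrelationbetweengradings} on each component of $X\times_S Y$.

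One small correction: $\dim(W_\eta)=\dim_S(W)$ for integral dominant $W$ does not follow from the additivity formula (v), since $W$ does not lie over $\eta$. It is the generic-fibre statement \cite[Proposition 2.1.3.(vii)]{AndreasWeberThesis} that the paper cites, and it also follows directly from the definition of $\dim_S$ as a transcendence degree over the function field of $S$.
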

\begin{proof}
	We first consider the covariant grading. Let 
	$(X,\Phi), (Y,\Psi)$ be $\NS$-schemes with 
	supports and let $V\in \Phi$ and $W \in \Psi$ be
	integral, and say $\dim_SV=i$ and $\dim_SW = j$.
	Then $[V] \in \CH_{2i}(X/S,\Phi)$ and $[W] \in 
	\CH_{2j}(Y/S,\Psi)$ and we want to show 
	that 
	$$
	[V]\times_S [W] \in \CH_{2i+2j}(X\times_S Y/S,
	\Phi\times_S \Psi).
	$$
	If neither $V$ nor $W$ is 
	flat over $S$, then $[V]\times_S [W] = 0$ which lies
	in $\CH_{2i+2j}(X\times_S Y/S,\Phi\times_S \Psi)$.
	If, without loss of generality, $V\to S$ is flat,
	then $[V]\times_S [W] = [V\times_S W]$. Either 
	$V\times_SW = \emptyset$ and then
	$$
	[V]\times_S [W] = 0 \in \CH_{2i+2j}(X\times_S Y/S,
	\Phi\times_S \Psi),
	$$
	or  $V\times_S W \neq \emptyset$ and in this case  
	we see,
	by Proposition \ref{SuslinVoevodsky}, that
	$V\times_S W$
	has pure $S$-dimension equal to $\dim_S W + r$ 
	where $V_{\eta}$ is 
	the generic fiber and $r = \dim V_{\eta}$. By \cite[Proposition 2.1.3.(vii)]{AndreasWeberThesis} we have 
	$r = \dim V_{\eta} = \dim_S V - \dim_S S = \dim_S V$, and so $[V\times_S W] \in  \CH_{2i+2j}(X\times_S Y/S,
	\Phi\times_S \Psi)$.
	
	For the contravariant grading, we assume
	that as before we have $\NS$-schemes with
	families of supports $(X,\Phi)$ and $(Y,\Psi)$.
	We may assume that both $X$ and $Y$ are connected.
	Furthermore we assume we have integral 
	$V\in \Phi$ and $W \in Y$ such that $[V] \in 
	\CH^{2i}(X/S,\Phi)$ and $[W] \in \CH^{2j}(Y/S,\Psi)$.
	By definition this means that $\codim(V,X) = i$ 
	and $\codim(W,Y)=j$ and by Lemma 
	\ref{Chowrelationbetweengradings} we have 
	$[V] \in \CH_{2\dim_S X-2i}(X/S,\Phi)$ and 
	$[W]\in \CH_{2\dim_S X-2j}(Y/S,\Psi)$. As before,
	the case where neither $V\to S$ nor $W \to S$ 
	are flat is trivial, so we assume without loss
	of generality that $V\to S$ is flat. By what
	we showed above, we have 
	$$
	[V\times_S W] \in 
	\CH_{2\dim_S X-2i+2\dim_S Y-2j}(X\times_S Y/S, 
	\Phi\times_S \Psi).
	$$
	By Proposition \ref{SuslinVoevodsky}, both
	$X\times_S Y$ and $V\times_S W$ have pure
	$S$-dimension equal to $\dim_S X + \dim_S Y$
	and $\dim_S V+ \dim_S W$ respectively. Because $X\times_S Y$ and $V\times_S W$
	are of pure $S$-dimension, we can restrict to looking
	at one irreducible component $[T]$ of $V\times_SW$
	which lies inside an irreducible component $[Z]$ of 
	$X\times_S Y$, and $[V\times_S W]$ will lie inside
	the graded piece $\CH^c(X\times_SY/S, \Phi\times_S \Psi)$ where $c = \codim(T,Z)$. 
	That is to say, we may restrict to the case where $X\times_S Y$ is connected and 
	then apply
	Lemma \ref{Chowrelationbetweengradings} and get
	\begin{align*}
		[V]\times_S [W] &\in 
		\CH_{2\dim_S X-2i+2\dim_S Y-2j}(X\times_S Y/S, 
		V\times_S W)  \\
		&=\CH_{2\dim_S X\times_S Y-(2i+2j)}(X\times_S Y/S,V\times_S W) \\
		&= \CH^{2i+2j}(X\times_S Y/S, V\times_S W) \\
		&\subset \CH^{2i+2j}(X\times_S Y/S, \phi \times_S \Psi).
	\end{align*}
\end{proof}

It is now easy to show that 

\begin{prop}
	The quadruple $(\CHl,\CHu,\times_S,1)$ is a 
	weak cohomology theory with supports.
\end{prop}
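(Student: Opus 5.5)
The plan is to check the four axioms of a weak cohomology theory with supports one at a time for $(\CHl,\CHu,\times_S,1)$. For each, I will reduce to the corresponding statement in Fulton's intersection theory for schemes of finite type over a regular base of dimension at most $1$, as developed in \cite[\S 20]{Fultonbook}. Before that, one checks that the exterior product is well defined on Chow groups with supports. If $V\in\Phi$ and $W\in\Psi$, then $V\times_S W\in\Phi\times_S\Psi$. Exterior products pass to rational equivalence by \cite[Prop.\ 20.2]{Fultonbook}, where the flatness hypothesis enters. By Corollary \ref{ChowExteriorRespectsGrading}, $\times_S$ respects both gradings, so $T=\times_S$ is a morphism of graded groups for both gradings.

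\textbf{Axiom (1).} Additivity is immediate from the definitions. Cycles on $\amalg X_i$ are sums of cycles on the components, rational equivalence is componentwise, and pushforward and Gysin pullback respect disjoint unions. So $\CHl$ preserves coproducts and $\CHu$ sends them to products. For disjoint supports $\Phi_1\cap\Phi_2=\emptyset$, every closed set in $\Phi_1\cup\Phi_2$ is the union of a member of $\Phi_1$ and a member of $\Phi_2$, and these are disjoint. Hence $Z_{\Phi_1\cup\Phi_2}(X)=Z_{\Phi_1}(X)\oplus Z_{\Phi_2}(X)$, and similarly for $Rat$, since an integral $W$ lies in one of the two pieces. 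The map $\fu(\jm_1)+\fu(\jm_2)$ is the sum of the inclusions induced by the identity, so it is this isomorphism.

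\textbf{Axiom (2).} Monoidality means the following. The product is associative and symmetric; this is clear on cycles, using $(V\times_S W)\times_S U=V\times_S(W\times_S U)$ and the fact that flatness of one factor suffices. The unit $[S]$ is neutral. Finally, $\times_S$ must be compatible with $f_\ast$ and $f^!$. For pushforward, $(f\times g)_\ast(\alpha\times\beta)=f_\ast\alpha\times g_\ast\beta$ by \cite[Prop.\ 20.2]{Fultonbook}. For pullback along l.c.i.\ morphisms, compatibility of refined Gysin maps with exterior products follows from \cite[Prop.\ 6.6, Thm.\ 6.4, \S20.2]{Fultonbook}. Concretely, I would factor $f\times g=(f\times 1)\circ(1\times g)$ and apply compatibility of Gysin maps with flat pullback and with products, each factor being l.c.i.\ by the remark in \S\ref{pullbackchowsection}. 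Condition (3) is exactly Lemma \ref{Chowrelationbetweengradings}.

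\textbf{Axiom (4).} This base-change condition is the main point. When $g_Y$ is smooth, $\CHu(g_Y)$ is flat pullback, up to the support limit. The identity $g_Y^\ast f_\ast=f'_\ast g_X^\ast$ is \cite[Prop.\ 1.7]{Fultonbook}, which holds over a regular base of dimension $\le1$ by \cite[\S20.1]{Fultonbook}. When $g_Y$ is a closed immersion and $f$ is transversal to it, the refined Gysin map commutes with proper pushforward by \cite[Thm.\ 6.2(a)]{Fultonbook}. Transversality makes the excess normal bundle trivial, so $f^!$ for the induced square agrees with $g_X^!$. I expect the main obstacle to be bookkeeping rather than mathematics. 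One must check that these identities, stated for fixed closed subschemes $V$, pass to the direct limit over supports defining $\CHu$. This uses the commutative square after the definition of $\CHu(f)$, together with the fact that $f$ is proper on $\Phi$, so that pushforward is defined on each $V\in\Phi$ compatibly.
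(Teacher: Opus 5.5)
Your proposal is correct and takes the same route as the paper. The paper offers no argument beyond ``It is now easy to show''; implicitly this is a direct check of the four axioms using Fulton's intersection theory over a regular base of dimension at most $1$, as in \cite{KayAndre}, with Lemma~\ref{Chowrelationbetweengradings} and Corollary~\ref{ChowExteriorRespectsGrading} supplying the grading statements exactly where you use them.

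One small slip in the transversal case of Axiom (4): it is the refined Gysin map of $g_Y$ (not $f^!$) that agrees with $\CHu(g_X)$. This holds because transversality makes the excess normal bundle zero, i.e.\ of rank $0$, not merely trivial; a trivial excess bundle of positive rank would instead force the map to vanish.
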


\section{Hodge Cohomology as a Weak Cohomology Theory With Supports}

\subsection{Objects and Grading}
Let $(X,\Phi)$ be an $\NS$-scheme with a family of supports
$\Phi$. We define
$$
H(X,\Phi) = \bigoplus_{i,j}H_{\Phi}^i(X,\omxs^j),
$$
and call this $\Gamma(S,\sO_S)$-module \textit{the
	Hodge cohomology of $X$ with supports in $\Phi$}. We denote by $\hu(X,\Phi)$
the graded abelian group given in degree $n$ by
$$
H^n(X,\Phi) = \bigoplus_{i+j=n} H^i_{\Phi}(X,\omxs^j).
$$ 
We also want a ``covariant grading''. Let $X = \amalg_r X_r$ be the 
decomposition of $X$ into its connected components, then we define 
$\hl(X,\Phi)$ to be the graded abelian group that in degree $n$ is
$$
H_n(X,\Phi) = \bigoplus_r H^{2\dim_S X_r - n}(X_r,\Phi).
$$
\begin{defn} \label{definitionofeforhodge}
	We define a morphism of abelian groups $e: \Z \to H(S,S)$ via the canonical
	ring homomorphism
	$$
	\Z \to \Gamma(S,\sO_S) = H^0(S,\sO_S) \subset H(S,S).
	$$
\end{defn}

\subsection{Pullback}
In this section we want to define a pullback in Hodge cohomology, 
so extend the map of objects $\hu$ to a functor
$$
\hu: \vu\to \Grab
$$ 

We start with a lemma telling us that the functor
$\ugps$ commutes with direct
images.
\begin{lem} \label{hodgepullbacklemmadirectimagecommutes}
	Let $Y$ be a smooth $S$-scheme of finite type with
	a family of supports $\Psi$, let $X$ be a smooth
	$S$-scheme, and let $f:X\to Y$
	be a morphism of $S$-schemes of finite type. Then 
	we have an equality 
	$$
	\ugps\circ f_{\ast} = f_{\ast}\circ \ugpsinv
	$$
	of functors $\Sh(X)\to \Sh(Y)$, or $\qcx \to 
	\qcy$.  
\end{lem}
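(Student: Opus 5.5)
The plan is to check the equality $\ugps\circ f_{\ast} = f_{\ast}\circ \ugpsinv$ directly on sections over an arbitrary open $V\subseteq Y$. First I fix the conventions. A family of supports $\Psi$ on $Y$ is a collection of closed subsets, and for a sheaf $\sG$ on $Y$ the sections of $\ugps\sG$ over $V$ are
$$
\ugps\sG(V) = \{ s\in \sG(V) : \supp(s)\in \Psi|_V\},
$$
where $\Psi|_V$ is the family of those closed subsets of $V$ whose closure in $Y$ lies in $\Psi$. (Equivalently, $\ugps\sG = \varinjlim_{W\in\Psi}\ugz_W\sG$ with $\ugz_W$ sections supported in $W$; I would use whichever description the paper fixes and note that they agree.) The family $f^{-1}(\Psi)$ on $X$ consists of the closed subsets of $X$ contained in $f^{-1}(W)$ for some $W\in\Psi$.

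Let $\sF$ be a sheaf on $X$ and $V\subseteq Y$ open. Both sides are subgroups of $(f_\ast\sF)(V)=\sF(f^{-1}(V))$, so it suffices to show that for $s\in\sF(f^{-1}(V))$ the following are equivalent:
\begin{enumerate}
\item $\supp_V(s)$, the support of $s$ regarded as a section of $f_\ast\sF$ over $V$, lies in $\Psi|_V$;
\item $\supp_{f^{-1}(V)}(s)$, the support of $s$ regarded as a section of $\sF$, lies in $f^{-1}(\Psi)|_{f^{-1}(V)}$.
\end{enumerate}

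The key step is to avoid the naive identification $\supp_V(s)=\overline{f(\supp s)}$, which fails in general. Instead I phrase support conditions as vanishing conditions. The section $s$ has support in a closed $W\cap V$ precisely when $s$ restricts to zero on $V\setminus W$. For $f_\ast\sF$, restricting to zero on $V\setminus W$ means that $s|_{f^{-1}(V)\setminus f^{-1}(W)}=0$, which says exactly that $\supp(s)\subseteq f^{-1}(W)$. Running over all $W\in\Psi$ gives the equivalence of the two conditions, so the two subsheaves of $f_\ast\sF$ coincide. This argument is formal and works for arbitrary sheaves of abelian groups. Since $f_\ast$ and $\underline{\Gamma}$ preserve quasi-coherence for these Noetherian schemes, the statement for $\qcx\to\qcy$ follows by restriction.

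The main obstacle is matching the paper's precise definition of $\ugps$ with the closure and limit formalism. If supports are defined through the colimit over $W\in\Psi$, I would instead prove $\ugz_W\circ f_\ast=f_\ast\circ\ugz_{f^{-1}(W)}$ for each single closed $W$, using the vanishing argument above. I would then pass to the colimit, using two facts. First, $f_\ast$ commutes with filtered colimits of subsheaves here, because both sides are unions of subsheaves of $f_\ast\sF$. Second, the sets $f^{-1}(W)$ for $W\in\Psi$ are cofinal in $f^{-1}(\Psi)$. Neither the smoothness of $X$ and $Y$ nor the finite type hypotheses are needed beyond guaranteeing these set-theoretic identities.
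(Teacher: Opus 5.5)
Your argument is correct and essentially the paper's proof. The paper also first treats a single closed $Z\subseteq Y$: it compares $0\to\ugz(f_\ast\sG)\to f_\ast\sG\to j_\ast j^{-1}f_\ast\sG$ with $f_\ast$ applied to $0\to\ugzinv\sG\to\sG\to j'_\ast (j')^{-1}\sG$, uses $j_\ast j^{-1}f_\ast = f_\ast j'_\ast (j')^{-1}$ (the sheaf-level form of your ``supported in $W$ iff the restriction to the complement vanishes'' check), and then passes to $\varinjlim_{Z\in\Psi}$. One side remark: your caveat that $\supp_V(s)=\overline{f(\supp s)}\cap V$ fails in general is mistaken, since the germ of $s$ at $y$ in $f_\ast\sF$ vanishes iff $s$ vanishes on $f^{-1}(V')$ for some open $V'\ni y$; this does no harm because you never use it.
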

\begin{proof}
	We prove this here for functors $\Sh(X) \to \Sh(Y)$, the case for $\qcx \to \qcy$ is 
	the same with $j^{-1}$ replaced by $j^{\ast}$.
	We start by proving this when the support is a
	closed subset $Z \subset Y$. 
	We denote the compliment $Y\setminus Z$ by $U$ 
	and the canonical open immersion $U \to Y$ by
	$j$. Then for any sheaf $\sF$ of abelian groups 
	on $Y$ we have an exact sequence
	\begin{equation} \label{exactsequencelemmahodgepullback1}
		0 \to \ugz(\sF) \to \sF \to j_{\ast}j^{-1}\sF
	\end{equation}
	Let $\sG$ be a sheaf of abelian groups on $X$.
	We on the one hand plug $f_{\ast}\sG$ in for $\sF$ into 
	\eqref{exactsequencelemmahodgepullback1} and on the other hand 
	apply the left-exact functor $f_{\ast}$ to
	\begin{equation}
		\label{exactsequencelemmahodgepullback2}
		0 \to \ugzinv(\sG) \to \sG \to j^{\prime}_{\ast}(j^{\prime})^{-1}\sG
	\end{equation}
	which is the analog of \eqref{exactsequencelemmahodgepullback1}
	for sheaves on $X$. Now X has support $f^{-1}(Z)$ and $j^{\prime}:X\setminus f^{-1}(Z) \hookrightarrow X$ is the canonical open immersion. 
	In light of \eqref{exactsequencelemmahodgepullback1} and 
	$f_{\ast}$ applied to \eqref{exactsequencelemmahodgepullback2} we see 
	that to show $\ugz  \circ f_{\ast} = f_{\ast}\circ \ugzinv$,
	it suffices to see $j_{\ast}j^{-1}f_{\ast}\sG = 
	f_{\ast}j^{\prime}_{\ast}(j^{\prime})^{-1}\sG$
	for all sheaves $\sG$ of abelian groups on $X$,
	and this is clear.
	
	The desired result
	$
	\ugps \circ \fpush = \fpush \circ \ugpsinv
	$
	follows from taking the direct limit $\varinjlim_{Z \in \Psi}$.
\end{proof}

Let $f:(X,\Phi) \to (Y,\Psi)$ be a morphism in $\vu$. 
For any $j\geq 0$ we have a map
\begin{align*}
	\omys^j &\to \fpush\omxs^j \\
	a\cdot db &\mapsto f^{\ast}(a)\cdot df^{\ast}(b) 
\end{align*}
and a natural map
$$
\fpush\omxs^j \to \rfpush\omxs^j.
$$
Applying $\rugps$ to the composition gives us a map
$$
\rugps\omys^j \to \rugps\rfpush\omxs^j.
$$
If $\sH$ is a flasque sheaf of abelian groups on $X$ then $\fpush\sH$ is
a flasque sheaf on $Y$ and flasque
sheafs are acyclic for $\ugps$ so we have, see
\cite[Theorem III.7.1]{gelfman} for example,
$$
\mathrm{R}(\ugps \circ \fpush) = \rugps\circ \rfpush.
$$
Similary, $\ugpsinv\sH$ is flasque
if $\sH$ is flasque and flasque sheaves are acyclic for the direct image
so we have 
$$
\mathrm{R}(\fpush \circ \ugpsinv) = \rfpush\circ \rugpsinv.
$$
Combining this with Lemma \ref{hodgepullbacklemmadirectimagecommutes},
enlarging supports, and applying
$\mathrm{R}\Gamma(Y,-)$ 
we obtain a map 

\begin{align*}
	\mathrm{R}\Gamma(Y,\rugps\omys^j) &\to \mathrm{R}\Gamma(Y,\rfpush\rugph\omxs^j) \\
	&= \mathrm{R}\Gamma(X,\rugph\omxs^j).
\end{align*}
Summing over the induced maps on the cohomology groups gives the desired map

$$
\hu(f): H(Y,\Psi) \to H(X,\Phi).
$$

The map $\hu(f)$, constructed above, is functorial.
\begin{prop}
	Let $(X,\Phi)$, $(Y,\Psi)$ and $(Z,\Xi)$ be smooth $S$-schemes of finite type 
	and let $f: (X,\Phi) \to (Y,\Psi)$  and $g: (Y,\Psi) \to 
	(Z,\Xi)$ be morphisms in $\vu$. Then 
	\begin{enumerate}[i)]
		\item $\hu(id): H(X,\Phi) \to H(X,\Phi)$ is the identity homomorphism.
		\item $\hu(g\circ f) = \hu(f)\circ \hu(g)$ as morphisms $H(Z,\Xi) \to H(X,\Phi)$. 
	\end{enumerate}
\end{prop}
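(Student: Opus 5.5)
Both parts follow from unwinding the construction of $\hu(f)$. That map is the composite of three things: the sheaf map $\omys^j \to \fpush\omxs^j \to \rfpush\omxs^j$; the identification $\rugps\rfpush = \rfpush\rugpsinv$, obtained from Lemma \ref{hodgepullbacklemmadirectimagecommutes} and the acyclicity of flasque sheaves; and the enlargement of supports $\rugpsinv \to \rugph$, which exists because $f^{-1}(\Psi)\subseteq\Phi$. Finally one applies $\mathrm{R}\Gamma$.

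For i), take $f=id$. The map $\Omega^j_{X/S}\to \Omega^j_{X/S}$, $a\,db\mapsto a\,db$, is the identity, and the adjunction map $\sF\to \mathrm{R}id_\ast\sF$ is the identity. The equality of Lemma \ref{hodgepullbacklemmadirectimagecommutes} is literally the identity functor, and enlarging supports from $\Phi$ to $\Phi$ is the identity. Hence $\hu(id)=id$.

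For ii), we check compatibility of each ingredient with composition.

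\emph{Sheaf level.} Since $(g\circ f)^\ast = f^\ast\circ g^\ast$ on functions, the composite
$$\Omega^j_{Z/S}\to g_\ast\Omega^j_{Y/S}\to g_\ast f_\ast\Omega^j_{X/S}=(gf)_\ast\Omega^j_{X/S}$$
sends $a\,db$ to $(gf)^\ast(a)\,d(gf)^\ast(b)$. It is therefore the map defining $\hu(g\circ f)$.

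\emph{Derived level.} Pass to $\mathrm{R}g_\ast\mathrm{R}f_\ast \cong \mathrm{R}(gf)_\ast$, computed with flasque (e.g.\ Godement) resolutions, which $f_\ast$ preserves. The natural map $(gf)_\ast\to\mathrm{R}(gf)_\ast$ agrees with $g_\ast f_\ast\to g_\ast\mathrm{R}f_\ast\to\mathrm{R}g_\ast\mathrm{R}f_\ast$. To see this, take a flasque resolution $\omxs^j\to\sH^\bullet$: both maps are induced by $(gf)_\ast\omxs^j\to (gf)_\ast\sH^\bullet$. For the same reason, the composite $\omys^j\to f_\ast\omxs^j\to \mathrm{R}f_\ast\omxs^j$ followed by $\mathrm{R}g_\ast$ matches after pushforward by $g$. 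We can realize everything concretely with the Godement resolution on $X$, whose pushforward is a flasque resolution on $Y$ and $Z$.

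\emph{Supports.} Lemma \ref{hodgepullbacklemmadirectimagecommutes} is transitive: by the proof of that lemma,
$$\underline{\Gamma}_\Xi\, g_\ast f_\ast = g_\ast\underline{\Gamma}_{g^{-1}\Xi}f_\ast = g_\ast f_\ast\underline{\Gamma}_{f^{-1}g^{-1}\Xi},$$
and these are equalities of subsheaves, so they compose. Enlarging supports commutes with these equalities, since each step is an inclusion of subsheaves $\underline{\Gamma}_{\Phi_1}\subseteq\underline{\Gamma}_{\Phi_2}$ compatible with $f_\ast$. In particular, the composite enlargement $f^{-1}g^{-1}\Xi\subseteq f^{-1}\Psi\subseteq\Phi$ is the single enlargement used for $g\circ f$.

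Putting these together, the diagram in the derived category whose two paths are the maps defining $\hu(f)\circ\hu(g)$ and $\hu(g\circ f)$ commutes. Applying $\mathrm{R}\Gamma(Z,-)$ and taking cohomology gives ii).

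The main obstacle is the derived-level step: one has to verify that the identifications $\mathrm{R}(\underline{\Gamma}\circ f_\ast)=\mathrm{R}\underline{\Gamma}\circ\mathrm{R}f_\ast$ are compatible with composition, not merely abstractly isomorphic. I would handle this by working throughout with Godement resolutions, where all functors are computed termwise and every comparison map becomes an equality of complexes of subsheaves.
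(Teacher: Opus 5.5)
Your proposal is correct and follows the same route as the paper. The paper's proof of ii) is just the remark that it ``follows easily from the definition of $\hu$ and Lemma~\ref{hodgepullbacklemmadirectimagecommutes}''. You spell out what that remark leaves implicit: the sheaf-level maps on $\Omega^j$ compose correctly, the lemma applied twice gives $\ug_{\Xi}\, g_\ast f_\ast = g_\ast f_\ast \ug_{f^{-1}g^{-1}\Xi}$, enlarging supports is natural, and $\mathrm{R}g_\ast\mathrm{R}f_\ast\cong\mathrm{R}(g\circ f)_\ast$ holds via flasque resolutions.
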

\begin{proof}
	
	\begin{enumerate}[i)]
		\item This is clear.
		\item This follows easily from the definition of 
		$\hu$ and Lemma 
		\ref{hodgepullbacklemmadirectimagecommutes}.
	\end{enumerate}
\end{proof}

\subsection{Pushforward}

By assumption $S$ is Noetherian, regular and 
has Krull-dimension at most 1. It is therefore
Gorenstein of finite Krull dimension and $\sO_S$
is a dualizing complex for $S$. Furthermore any 
smooth scheme $X$ of finite type over $S$ is 
also Gorenstein and of finite Krull dimension
so $\pi_X^{!}\sO_S$ is a dualizing complex for $X$, 
where $\pi_X:X\to S$ is the structure map.

\subsubsection{A Pushforward Map for Proper Morphisms}  \, \\

Assume
we have a diagram of separated, finite type $S$-schemes
$$
\xymatrix{
	X \ar[rr]^f\ar[dr]_{\pi_X} && Y \ar[dl]^{\pi_Y} \\
	& S}
$$
where $f$ is a proper morphism.
We want to be careful with labeling morphisms so 
we recall the following notation:
\begin{nota} \label{notation1} 
	\,
	\begin{itemize}
		\item $c_{f,g}: (gf)^{!}\xrightarrow{\cong} f^{!}g^{!}$. \,\,\,\,\, \text{(See \cite[(3.3.14--3.3.15)]{Conrad})}
		\item  $\Tr_f:\rfpush \fushr \to id$ is the 
		trace map. \,\,\,\,\, \text{(See \cite[\S 3.4.]{Conrad})}
		\item $\beta_u: u^{\ast}\rshHom(-,-) \xrightarrow{\cong} \rshHom(u^{\ast}(-),u^{\ast}(-))$ is the natural
		isomorphism for any \'etale $u$.
		\item $e_f:f^{\#}\xrightarrow{\cong} f^!$ for
		any separated smooth $f$. \,\,\,\,\, \text{(See \cite[(3.3.21)]{Conrad})}\footnote{
			Note that when $u$ is \'etale, then it is clear from the definition of 
			$u^{\#}$ that $u^{\#} = u^{\ast}$, see \cite[(2.2.7)]{Conrad}. In this case
			we also write $e_u: u^{\ast}\xrightarrow{\cong} u^!$ for this isomorphism.}
		\item $h_u:u^{\ast}\omys^k \xrightarrow{\cong} 
		\omxs^k$ for any \'etale $S$-morphism $u:X\to Y$.
	\end{itemize}
\end{nota}

Let $D_X(\sF)$ denote $\rshHomx(\sF,\pi_X^{!}\sO_S)$.
We define a pushforward
$$
\fpush: \rfpush D_X(\omxs^k) \to D_Y(\omys^k)
$$
for any $k\geq 0$ as the composition

\begin{align} \label{definitionofproperpush}
	\rfpush D_X(\omxs^k) 
	&\xrightarrow{c_{f,\pi_Y}} 
	\rfpush\rshHomx(\omxs^k, \fushr\pi_Y^{!}\sO_S) 
	\\
	&\to \rshHomy(\rfpush \omxs^k,\rfpush\fushr\pi_Y^!\sO_S) 
	\notag\\
	&\xrightarrow{(f^{\ast})^{\vee}} \rshHomy(\omys^k,\rfpush\fushr\pi_Y^!\sO_S) 
	\notag \\
	&\xrightarrow{\Tr_f}
	D_Y(\omys^k). \notag
\end{align}

The following proposition tells us that this
pushforward is functorial.

\begin{prop}(\cite[Proposition 2.2.7.]{KayAndre}) 
	\label{properpushpropositionkayandre}
	\begin{enumerate}[(a)]
		\item $id_{\ast} = id$.
		\item Let $f:X\to Y$ and $g:Y\to Z$ be two
		proper morphisms of $\NS$-schemes. Then 
		$$
		(g\circ f)_{\ast} = \gpush\circ \rgpush(\fpush): \rgpush\rfpush D_X(\omxs^k)
		\to D_Z(\omzs^k).
		$$
		\item Let 
		$$
		\xymatrix{
			X'\ar[r]^-{u'} \ar[d]_-{f'} &
			X\ar[d]^-f \\
			Y'\ar[r]^-{u} &
			Y}
		$$
		be a Cartesian square of separated, finite type $S$-schemes with $f$ proper, $u$ 
		\'etale and $X$ of pure $S$-dimension $d$. 
		Then the following diagram commutes
		$$
		\xymatrix{
			u^{\ast}\rfbpush D_X(\omxs^k) 
			\ar[r]^-{u^{\ast}(\fpush)} \ar[d]_-{\cong} &
			u^{\ast}D_Y(\omys^k) \ar[d]^-{\cong} \\
			\rfpush'D_X(\omega_{X'/S}^k) 
			\ar[r]^-{f_{\ast}'} &
			D_{Y'}(\omega_{Y'/S}^k).}
		$$
		The left vertical isomorphism is given by
		\begin{equation} 
			\label{leftverticalisomorphism}
			c_{u',\pi_Y}^{-1}\circ e_{u'}\circ
			(h_{u'}^{\vee})^{-1} \circ
			\beta_{u'} \circ \alpha_{u,f}
		\end{equation}
		where 
		$$
		\alpha_{u,f}: u^{\ast}\rfpush 
		\xrightarrow{\cong}
		\rfpush(u')^{\ast},
		$$
		and the right vertial isomorphism is given by
		\begin{equation}
			\label{rightvertialisomorphism}
			c_{u,\pi_Y}^{-1}\circ e_u  \circ 
			(h_u^{\vee})^{-1}\circ \beta_u.
		\end{equation}
	\end{enumerate}
\end{prop}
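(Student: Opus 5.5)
The statement is \cite[Proposition 2.2.7]{KayAndre} transported to our base $S$. The argument there uses only general Grothendieck duality over a Noetherian base with a dualizing complex, as in \cite{Conrad}, and never uses that the base is a field. The plan is therefore to check that each input is available here. Since $S$ is regular of dimension at most 1, $\sO_S$ is a dualizing complex. We will unwind the definition \eqref{definitionofproperpush} of $\fpush$ as a composite of four natural maps: $c_{f,\pi_Y}$, the evaluation map $\rfpush\rshHomx \to \rshHomy(\rfpush -,\rfpush -)$, the pullback $(f^{\ast})^{\vee}$, and $\Tr_f$. Each of these will be checked for the required compatibility separately.

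For (a), we note that $c_{id,\pi_Y}$ is the identity under the normalization $id^! = id$. Also $\Tr_{id}=id$ by \cite[\S 3.4]{Conrad}, and $id^{\ast}$ on differentials is the identity, so the composite is the identity.

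For (b), we compare $(g\circ f)_{\ast}$ with $\gpush\circ\rgpush(\fpush)$ by building one large diagram and checking that each square commutes. The four pieces are as follows.
\begin{enumerate}[(i)]
\item The pseudofunctoriality isomorphisms satisfy the cocycle condition $c_{f,g}$ versus $c_{gf,\pi_Z}$ \cite[(3.3.14--3.3.15)]{Conrad}.
\item The trace is transitive, $\Tr_{gf}=\Tr_g\circ \rgpush(\Tr_f)$ under $c_{f,g}$ \cite[\S 3.4]{Conrad}.
\item The pullback on forms is functorial, $(gf)^{\ast}=\rgpush(f^{\ast})\circ g^{\ast}$ on $\Omega^k_{Z/S}\to \rgpush\rfpush\omxs^k$.
\item The evaluation maps for $\rshHom$ are natural, and they compose under $\mathrm R(g\circ f)_{\ast}=\rgpush\rfpush$.
\end{enumerate}
These are all formal, and the squares commute by naturality.

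For (c), we paste together three compatibilities. First, the base change isomorphism $\alpha_{u,f}$ is compatible with the trace for étale $u$: \cite{Conrad} states trace compatibility with flat base change, with $e_u$ identifying $u^{\ast}$ and $u^!$. Second, $h_u$ and $h_{u'}$ are compatible with pullback of differential forms, since $u'^{\ast}f^{\ast} = f'^{\ast}u^{\ast}$ on $\Omega^k$. Third, $\beta_u$ is compatible with the evaluation maps.

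The main obstacle is the base change square involving $\Tr_f$ and the $c$ and $e$ isomorphisms. Commutativity there relies on the compatibility of $e_{u'}$ and $e_u$ with $c$, together with the base-change theorem for the trace in \cite{Conrad}. That theorem needs $f$ proper and $u$ flat, which holds here. We expect the sign conventions to be consistent because \cite{KayAndre} used the same references, and the hypothesis that $X$ has pure $S$-dimension $d$ fixes the shifts. So here we essentially verify that the proof of \cite[Proposition 2.2.7]{KayAndre} goes through verbatim.
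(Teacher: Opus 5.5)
Your proposal is correct and follows the same route as the paper, which likewise proves the statement by observing that the argument of \cite[Proposition 2.2.7]{KayAndre} goes through verbatim over $S$, since it relies only on Conrad's duality formalism. The one adjustment the paper explicitly flags, which your sketch leaves implicit in its appeal to Conrad's trace and base-change results, is that the residual complex in the proof of \cite[Lemma 2.2.12]{KayAndre} is now taken to be $K=\pi_Y^{\Delta}\sO_S$ instead of $\pi_Y^{\Delta}k$.
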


\begin{proof}
	The proof in our relative case is exactly like the
	proof in \cite[2.2.7.]{KayAndre} with the obvious change
	that the definition of the residual complex 
	(in the proof of \cite[Lemma 2.2.12.]{KayAndre}) is 
	defined as 
	$$
	K = \pi_Y^{\Delta}\sO_S.
	$$
\end{proof}

\subsubsection{General Pushforward}
Now we look at the case of a morphism 
$$
f:(X,\Phi)\to (Y,\Psi)
$$
in $\vl$. That is, we have a morphism $f$ of $\NS$-schemes 
such that $f|_{\Phi}$ is proper and $f(\Phi) 
\subseteq \Psi$. As before
we denote the $S$-dimension of $X$ by $d_X$, the 
$S$-dimension of $Y$ by
$d_Y$ and the relative $S$-dimension of $f$ by 
$r=d_X-d_Y$.

Recall the Nagata compactification theorem.

\begin{thm*} \label{Nagata}
	Let $X$ be a separated $S$-scheme of finite type 
	with 
	$S$ quasi-compact and quasi-separated. Then there 
	exists an open immersion
	of $S$-schemes $X\to \bx$ such that $X$ is a 
	dense open in $\bx$ and 
	$\bx \to S$ is proper. Furthermore we may chose 
	$\bx$ to be reduced.
\end{thm*}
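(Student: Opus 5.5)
The plan is to reduce to Nagata's original theorem in the form proved by Conrad and Lütkebohmert, and then to add the density and reducedness statements by hand. Since $S$ is quasi-compact and quasi-separated and $X \to S$ is separated of finite type, the Nagata compactification theorem (Nagata; see Conrad's or Lütkebohmert's write-ups, or Deligne's exposition) gives an open immersion $j: X \to \bar{X}'$ of $S$-schemes with $\bar{X}' \to S$ proper. I will take this existence statement as the black box. The remaining work is to modify $\bar{X}'$ so that $X$ becomes dense and $\bar{X}$ becomes reduced, without losing properness.

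For density, let $\bx$ be the scheme-theoretic closure of $j(X)$ in $\bar{X}'$. Because $j$ is quasi-compact ($X$ is Noetherian, being of finite type over the Noetherian $S$, and $\bar{X}'$ is Noetherian), the scheme-theoretic image is a closed subscheme whose underlying set is the topological closure of $j(X)$. A closed immersion is proper, so $\bx \to S$ is proper. Next, $j$ factors through $\bx$, and $X \to \bx$ is still an open immersion: $X = \bx \cap j(X)$ as schemes, since the scheme-theoretic image restricted to the open $j(X)$ is the scheme-theoretic closure of $X$ in itself, which is $X$. Finally, $X$ is dense in $\bx$ by construction.

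For reducedness, assume $X$ is reduced, which holds in our applications since $\NS$-schemes are smooth over a regular base. In that case the scheme-theoretic image of a reduced scheme is automatically reduced: the ideal $\ker(\sO_{\bar{X}'} \to j_{\ast}\sO_X)$ is radical because $j_{\ast}\sO_X$ has no nilpotents. If $X$ is not assumed reduced, the reducedness claim should be read as allowing us to replace $\bx$ by $\bx_{\rm red}$ when $X$ is reduced. Either way, $\bx_{\rm red} \to \bx$ is a closed immersion, so properness is preserved.

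The only real obstacle is the existence of the initial compactification, which is a deep theorem that I would cite rather than reprove. The one subtle point in the rest of the argument is checking that $X$ is open in its scheme-theoretic closure. This uses the fact that the formation of the scheme-theoretic image commutes with restriction to opens, which holds for quasi-compact morphisms.
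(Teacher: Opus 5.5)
Your proposal is correct and takes the same approach as the paper, which also just cites Nagata and Conrad for the existence of a compactification; your passage to the scheme-theoretic closure is the standard way to get the density and reducedness clauses that the paper leaves implicit. Two small points: quasi-compactness of $j$ already follows from $X$ being quasi-compact and $\bar{X}'$ being quasi-separated, so you do not need a Noetherian hypothesis (the statement does not assume one); and the reducedness clause can only hold when $X$ is reduced, since an open subscheme of a reduced scheme is reduced, so for non-reduced $X$ your remark should simply say that no reduced $\bar{X}$ exists.
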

See \cite{Nagata} for a proof in the Noetherian 
case, 
using valuation theory,
and \cite{ConradNagata} for the more general case, 
using scheme-theoretic 
methods.
\vspace{1em}

We consider the Nagata compactification for the 
$Y$-scheme $f:X\to Y$ and 
obtain a $Y$-morphism $j:X\to \bx$ where $\bar{f}: 
\bx \to Y$ is proper and 
$\bx$ is reduced. Since 
$j:X\to \bx$ is a separated morphism of finite type 
over the Notherian
base $Y$ and since each $Z\in \Phi$ is proper over 
$Y$, the image 
$j(Z) \subset \bx$, with the induced subscheme 
structure, is a proper 
subscheme over $Y$ via $\bar{f}:\bx\to Y$. We can 
then view $\Phi$ as 
a family of supports on $\bx$ and the morphism 
$\bar{f}$ 
$$
\bar{f}:(\bx,\Phi) \to (Y,\Psi)
$$
in $\vl$. Furthermore, the structure morphism 
$\bar{\pi}:\bx \to S$ is 
flat. If $\dim S = 0$ this is trivial. If $\dim S = 
1$ then $\bar{\pi}$
is flat if and only if each generic point of $\bx$ 
is 
sent to the 
generic point of $S$. But $X\subset \bx$ is an open 
dense subset 
so any generic point of $\bx$ lies in $X$. The 
morphism $\pi_X:X\to S$ is 
flat by assumption and $\bar{\pi}\circ j = \pi_X$ so 
any generic point
$\eta$ of $\bx$ is sent by $\bar{\pi}$ to 
$\pi_X(\eta)$ which is 
the generic point of $S$.

\vspace{1em}
Our aim is to construct a morphism 
$$
H^i_{\Phi}(X,\omxs^j) \to H^{i-r}_{\Psi}(Y,\omys^{j-r})
$$
Note that we have a morphism
\begin{align*} \label{hodgepushforwardomegastohom}
	m_X:\omxs^j &\to \rshHomx(\omxs^{d_X-j},\omxs^{d_X})\\
	\alpha &\mapsto (\beta \mapsto \alpha \wedge 
	\beta) \notag
\end{align*}
for any $j$, which is an isomorphism since $\pi_X:X\to 
S$ is smooth. Furthermore, again since $X\to S$ is 
smooth, we have an isomorphism \footnote{The maps $m_X$
	and $l_X$ of course depend on
	$j$ so the notation is a bit ambiguous, but this shouldn't cause any problems.}
\begin{equation*} \label{hodgepushforwardomegaospiuppershriek}
	l_X:\omxs^{d_X} \xrightarrow{\cong} \pixushr \sO_S[-d_X]
\end{equation*}
and combining these, we have an isomorphism
\begin{equation*} \label{selfdualityisomorphism}
	\omxs^j \xrightarrow{\cong} D_X(\omxs^{d_X-j})[-d_X].	
\end{equation*}

Consider the following composition:
\begin{align*} \label{definitionofpushfirstcomposition}
	D_X(\omxs^{d_X-j})[-d_X]
	&\xrightarrow{c_{j,\pi_{\bx}}} 
	\rshHomx(\omxs^{d_X-j},j^!\pixbushr\sO_S)[-d_X]\\
	&\xrightarrow{e_j^{-1}}
	\rshHomx(\omxs^{d_X-j},j^{\ast}\pixbushr\sO_S)[-d_X] \notag\\
	&\xrightarrow{h_j^{\vee}} 
	\rshHomx(j^{\ast}\omxsb^{d_X-j},j^!\pixbushr\sO_S)[-d_X] \notag\\
	& \xrightarrow{\beta_j^{-1}}
	j^{\ast}D_{\bx}(\omxsb^{d_X-j})[-d_X], \notag
\end{align*}
where $h_j: j^{\ast}\omxsb^k \to \omxs^k$ is the 
canonical restriction isomorphism for any $k \geq 0$.
Taking the $i$-th cohomology with supports $\Phi$, using
excision, and enlarging supports
gives us a morphism
\begin{equation} \label{definitionofpush0}
	H^i_{\Phi}(\omxs^j) \to H^{i-d_X}_{f^{-1}(\Psi)}(D_{\bx}(\omxsb^{d_X-j})).
\end{equation}

By Lemma \ref{hodgepullbacklemmadirectimagecommutes}
we have 
\begin{equation} \label{definitionofpush4}
	H^{i-d_X}_{f^{-1}(\Psi)}(D_{\bx}(\omxsb^{d_X-j})) 
	= H^{i-d_X}_{\Psi} (\rfbpush D_{\bx}(\omxsb^{d_X-j})).
\end{equation} 
We now use the pushforward for the proper map $\bar{f}$ 
that we constructed in \ref{definitionofproperpush}
to obtain
\begin{equation} \label{definitionofpush5}
	H^{i-d_X}_{\Psi}(\rfbpush D_{\bx}(\omxsb^{d_X-j}))
	\xrightarrow{\fbpush}
	H^{i-d_X}_{\Psi}(D_Y(\omys)^{d_X-j}).
\end{equation}
Finally we use that $\pi_Y:Y\to S$ is smooth to make 
the identification
\begin{align} \label{definitionofpush6}
	H_{\Psi}^{i-d_X}(D_Y(\omys^{d_X-j})) &= 
	H_{\Psi}^{i-r}(\rshHomy(\omys^{d_X-j},\omys^{d_Y})) \\
	&= H^{i-r}_{\Psi}(\omys^{j-r}) \notag.
\end{align}
The composition of \eqref{definitionofpush0} - \eqref{definitionofpush6} gives
$$ 
H^i_{\Phi}(\omxs^j)
\to
H^{i-r}_{\Psi}(\omys^{j-r}),
$$
which is the pushforward, after we sum over all $i$'s 
and $j$'s.

\vspace{1em}

Now that we have this definition of the 
pushforward, there are two important
things we need to show:
\begin{enumerate}[i)]
	\item That this pushforward is well defined. 
	Namely, in the definition we make a choice of a 
	compactification and we need to show
	that the pushforward is independent of this 
	choice.
	\item That this pushforward is functorial.
\end{enumerate}
These are proven in \cite[Proposition 2.3.3.]{KayAndre} and the discussion above it.
\vspace{1em}

\subsection{Hodge Cohomology as a Weak Cohomology Theory with Supports}

	\subsubsection{K\"unneth Morphism}
	
	We wish to construct a map
	$$
	T:H(X,\Phi) \otimes H(Y,\Psi) \to H(X\times_S Y, \Phi\times_S \Psi),
	$$
	for any $\NS$-schemes with supports $(X,\Phi)$ and $(Y,\Psi)$.  We do this by defining a 
	map 
	\begin{equation} \label{Kunnethequation1}
		\times: H^n_{\Phi}(X,\Omega_{X/S}^i)\times H^m_{\Psi}(Y,\Omega_{Y/S}^j)
		\to H^{n+m}_{\Phi\times\Psi}(X\times_S Y,\Omega_{X\times_SY/S}^{i+j}),
	\end{equation}
	and then defining 
	$$
	T(\alpha_{n,i}\otimes\beta_{m,j}) = (-1)^{(n+i)m}(\alpha_{n,i}\times\beta_{m,j}).
	$$
	The map \eqref{Kunnethequation1} is defined as a composition
	\begin{align*}
		H^n_{\Phi}(X,\Omega_{X/S}^i)&\times H^m_{\Psi}(Y,\Omega_{Y/S}^j) \\ &\xrightarrow{\hu(p_1)\times \hu(p_2)}
		H^n_{\Phi \times_S Y}(X\times_S Y,\Omega_{X\times_X Y/S}^i)\times H^m_{X\times_S\Psi}(X\times_S Y,\Omega_{X\times_S Y/S}^j) \\
		&\xrightarrow{t'} H^{n+m}_{\Phi\times_S\Psi}(X\times_S Y, \Omega_{X\times_S Y/S}^i\otimes_{\sO_{X\times_S Y}}^L \Omega_{X\times_S Y/S}^j) \\
		&\xrightarrow{m}  H^{n+m}_{\Phi\times_S\Psi}(X\times_S Y, \Omega_{X\times_S Y/S}^{i+j}),
	\end{align*}
	where the first map is induced by the projections, 
	\begin{align*}
		p_1:X\times_S Y &\to X, \,\,\, \text{and}\\
		p_2:X\times_S Y&\to Y,
	\end{align*}
	and the map $m$ is induced by the wedge product. It is the map $t'$ that we wish to construct. 
	We first construct it for the case where $\Phi = \{V\}$ and $\Psi = \{W\}$, the general
	case follows from this by
	taking colimits in cohomology.
	Let $X$ be some $\NS$-scheme, $V,W$ some closed subsets in $X$ and $\sF$ and 
	$\sG$ be $\sO_X$-modules. Then to find $t'$ it is sufficient to find a map  
	\begin{equation*}
		\Hom_{D(X)}(\sO_X, \rug_V(\sF^{\bullet}))\times \Hom_{D(X)}(\sO_X, \rug_W(\sG^{\bullet}))
		\to \Hom_{D(X)}(\sO_X,\rug_{V\cap W}(\sF^{\bullet}\otimes_{\sO_X}^L\sG^{\bullet})),
	\end{equation*}
	for any complexes $\sF^{\bullet}$ and $\sG^{\bullet}$ of $\sO_X$-modules. To construct $t'$ we then
	use this construction specifically for $\sF^{\bullet} = \Omega_{X\times_S Y/S}^i[n]$ and 
	$\sG^{\bullet} = \Omega_{X\times_S Y/S}^j[m]$. 
	That is, we wish to construct a map
	\begin{equation}\label{Kunnethequation2}
		\sO_X\to \rug_{V\cap W}(\sF^{\bullet}\otimes_{\sO_X}^L\sG^{\bullet}),
	\end{equation}
	from given maps 
	\begin{align*}
		\sO_X &\to\rug_V(\sF^{\bullet})  \,\,\, \text{and} \\
		\sO_X &\to \rug_W(\sG^{\bullet}).
	\end{align*}
	This is essentially just the derived tensor product. Namely, we have a natural map
	$$
	\sO_X \cong \sO_X \otimes_{\sO_X}^L \sO_X \to \rug_V(\sF^{\bullet})\otimes_{\sO_X}^L \rug_W(\sG^{\bullet}),
	$$
	so constructing \eqref{Kunnethequation2} boils down to showing that there 
	exists a natural map
	\begin{equation} \label{Kunnethequation3}
		\rug_V(\sF^{\bullet})\otimes_{\sO_X}^L \rug_W(\sG^{\bullet}) \to \rug_{V\cap W}(\sF^{\bullet}\otimes_{\sO_X}^L\sG^{\bullet}).
	\end{equation}
	
	Let $j_{V}: U_V := X\setminus V \hookrightarrow X$ be the open immersion. 
	Then for any $C^{\bullet} \in D(X)$ we have an exact triangle
	\begin{equation*}
		\rug_V(C^{\bullet}) \to C^{\bullet}  \to Rj_{V\ast}j_V^{\ast}(C^{\bullet}),
	\end{equation*}
	and similarly for $W$ and $V\cap W$. Now let us consider specifically
	$$
	C^{\bullet} = \rug_V(\sF^{\bullet})\otimes_{\sO_X}^L\rug_W(\sG^{\bullet}).
	$$
	Then $j_V^{\ast}(C^{\bullet})
	= 0$ because $j_V^{\ast}$ commutes with the derived tensor product and 
	$j_V^{\ast}\rug_V(\sF^{\bullet}) = 0$, and therefore it follows from the exact triangle that 
	$ \rug_V(C^{\bullet}) = C^{\bullet}$. Similarly we have $\rug_W(C^{\bullet})
	= C^{\bullet}$. To construct \eqref{Kunnethequation3} we have
	\begin{align*} \label{Kunnethequation4}
		C^{\bullet} &= \rug_W(C^{\bullet}) \\ \notag
		&=\rug_V(\rug_W(C^{\bullet})) \\ \notag
		&=\rug_{V\cap W}(C^{\bullet}) \\ \notag
		&\to \rug_{V\cap W}(\sF^{\bullet}\otimes_{\sO_X}^L\sG^{\bullet}), \notag
	\end{align*}
	where the third equality follows from definition and the map is just the 
	composition of the natural enlarging of supports maps $\rug_V(\sF^{\bullet})\to \sF^{\bullet}$
	and $\rug_W(\sG^{\bullet}) \to \sG^{\bullet}$.

	\begin{prop} \label{symmetricmonoidalfunctors}
		The triples $(\hl, T, e)$ and $(\hu,T,e)$ define right-lax symmetric
		monoidal functors, where $e: \Z \to H(S,S)$ is the morphism defined 
		in Definition \ref{definitionofeforhodge}.
	\end{prop}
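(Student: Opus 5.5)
The plan is to verify the axioms of a right-lax symmetric monoidal functor directly. There are four things to check. First, $T$ is natural in both variables, for $\hu$ with respect to pullbacks and for $\hl$ with respect to pushforwards. Second, $T$ is associative. Third, $T$ is compatible with the symmetry isomorphism $\sigma: X\times_S Y\cong Y\times_S X$ up to the Koszul sign. Fourth, $e$ is a unit, i.e. $T(e(1)\otimes\alpha)=\alpha$ under $S\times_S X\cong X$. Since the construction of $T$ is a composition of $\hu(p_1)\times\hu(p_2)$, the map $t'$ built from the derived tensor product via \eqref{Kunnethequation3}, and the wedge product $m$, most axioms for $\hu$ reduce to formal properties of $\otimes^L$ and of $\rug$. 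Here $\rug$ is compatible with composition of supports ($\rug_V\rug_W=\rug_{V\cap W}$), and the pullback commutes with $\rug$ by Lemma \ref{hodgepullbacklemmadirectimagecommutes}.

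For $(\hu,T,e)$ I would carry out the steps in this order.

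\emph{Unit.} The class $e(1)=1\in H^0(S,\sO_S)$ pulls back to $1\in H^0(X,\sO_X)$, and cupping with $1$ via \eqref{Kunnethequation3} is the identity.

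\emph{Naturality.} Given $f:(X',\Phi')\to(X,\Phi)$ and $g:(Y',\Psi')\to(Y,\Psi)$ in $\vu$, the composites $p_i\circ(f\times g)$ equal $f\circ p_1'$, resp. $g\circ p_2'$. Pullback commutes with $\otimes^L$ and with the map \eqref{Kunnethequation3}, since the latter is built from the natural triangles $\rug_V\to\mathrm{id}\to Rj_*j^*$. Pullback of forms commutes with wedge product.

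\emph{Associativity and symmetry.} Both hold at the level of complexes. Associativity follows because $\otimes^L$ and $\wedge$ are associative, and the signs $(-1)^{(n+i)m}$ combine consistently because total degree is additive. For symmetry, the swap of the factors in $\otimes^L$ of shifted complexes $\Omega^i[n]\otimes^L\Omega^j[m]$ contributes $(-1)^{nm}$, and $\alpha\wedge\beta=(-1)^{ij}\beta\wedge\alpha$. Together with the normalizing sign $(-1)^{(n+i)m}$ in $T$, this should give exactly the Koszul sign $(-1)^{(n+i)(m+j)}$ for total degrees. This is a bookkeeping check, and I would carry it out explicitly.

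For $(\hl,T,e)$, the unit, associativity and symmetry statements are the same identities as above, because the underlying groups and $T$ are the same and only the grading is shifted by $2\dim_S$, which is additive by Lemma \ref{SuslinVoevodsky}. The real content is the naturality of $T$ with respect to pushforward: $(f\times g)_*(\alpha\times\beta)=f_*\alpha\times g_*\beta$ for $f,g$ in $\vl$. I would factor $f\times g=(f\times\mathrm{id})\circ(\mathrm{id}\times g)$, so it suffices to treat $f\times\mathrm{id}_Y$. For that, I would choose a Nagata compactification $\bx$ of $X$ over $X'$'s target and use $\bx\times_S Y$ as a compactification of $X\times_S Y$ over $X'\times_S Y$, i.e. the base change along the flat projection. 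By independence of compactification and functoriality (\cite[Proposition 2.3.3]{KayAndre}), the statement then reduces to two facts. One is compatibility of the open-immersion steps \eqref{definitionofpush0} with $p_1^*$, which holds by the étale base change in Proposition \ref{properpushpropositionkayandre}(c) and its analogue for open immersions. The other is a Künneth compatibility of the trace: $\Tr_{\bar f\times\mathrm{id}}$ should be identified with $p_1^*\Tr_{\bar f}$ via flat base change of the trace (\cite{Conrad}), combined with $\omega$-compatibility $\Omega^{d}_{X\times Y}\cong p_1^*\Omega^{d_X}_X\otimes p_2^*\Omega^{d_Y}_Y$ and the isomorphism $l_X$.

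The main obstacle is this last step. One must check that the self-duality isomorphisms $m_X,l_X$ and the base-change isomorphism for $(-)^!$ and $\Tr$ along the flat (not smooth over the base of $\bar f$ in general, but flat) projection fit together with the correct signs. I would follow the argument of \cite[Proposition 2.4.x]{KayAndre} verbatim, since the only input there is Conrad's base-change theory for traces under flat base change, which holds over our $S$ because $\bar\pi$ is flat (as shown above) and all schemes are Gorenstein of finite Krull dimension. The only new point would be verifying that flatness of $\bx\to S$ suffices for $\bx\times_S Y\to\bx$ to be the required flat base change.
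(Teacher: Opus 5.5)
Your proposal is correct and follows essentially the same route as the paper, which simply says that the proof of \cite[Proposition 2.4.1]{KayAndre} carries over with relative dimensions in place of dimensions. Your explicit checks of the formal axioms, and your handling of the one non-formal point (compatibility of $T$ with pushforward, via reduction to $f\times\mathrm{id}$, a compactification and flat base change of the trace), amount to unwinding that reference. One small correction: $\bar{X}\times_S Y\to\bar{X}$ is a base change of the smooth morphism $Y\to S$, so it is automatically smooth and hence flat, and flatness of $\bar{X}\to S$ plays no role in that step.
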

	\begin{proof}
		The proof of \cite[Proposition 2.4.1.]{KayAndre} carries over to our situation with the 
		obvious changes of dimensions to relative dimensions etc.
	\end{proof}

	\subsubsection{Summary and Pure Hodge Cohomology} \, \\
	
	The following proposition is \cite[Proposition 2.3.7]{KayAndre} and the proof is the same as
	in their case. That is; \cite[Lemma 2.2.16.]{KayAndre}, \cite[Proposition 2.2.19.]{KayAndre}, 
	\cite[Corollary 2.2.22.]{KayAndre}, and  \cite[Lemma 2.3.4]{KayAndre} all extend to our
	case with the obvious change
	that we work with the dualizing complex $\pi_Y^!\sO_S$ and not $\pi_Y^!k$, and 
	the proof of the proposition follows from these results. 
	
	\begin{prop} \label{Hodgepushpullproposition}
		Let
		$$
		\xymatrix{
			(X^{\prime},\Phi^{\prime}) \ar[r]^-{f^{\prime}}  \ar[d]_-{g_X} &
			(Y^{\prime}, \Psi^{\prime}) \ar[d]^-{g_Y} \\
			(X,\Phi) \ar[r]^-{f} &
			(Y,\Psi)
		}
		$$
		be a Cartesian square with $f,f^{\prime} \in \vl$ and $g_X, g_Y \in \vu$. Assume 
		that either $g_Y$ is flat or $g_Y$ is a closed immersion with $f$ transversal to $Y^{\prime}$. Then
		$$
		\hu(g_Y) \circ \hl(f) = \hl(f^{\prime})\circ \hu(g_X).
		$$
	\end{prop}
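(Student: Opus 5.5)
To prove Proposition \ref{Hodgepushpullproposition} I will follow the structure of \cite[Proposition 2.3.7]{KayAndre}. The idea is to break the pushforward $\hl(f)$ into its building blocks and check base change for each block separately. Recall that $\hl(f)$ is the composition of four maps:
\begin{itemize}
\item[(1)] the self-duality isomorphism $\omxs^j \cong D_X(\omxs^{d_X-j})[-d_X]$, built from $m_X$ and $l_X$;
\item[(2)] restriction to a Nagata compactification $X\subset\bx$ together with excision;
\item[(3)] the identification of Lemma \ref{hodgepullbacklemmadirectimagecommutes} passing to $\rfbpush$;
\item[(4)] the proper pushforward $\fbpush$ of \eqref{definitionofproperpush}, followed by the self-duality on $Y$.
\end{itemize}

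Since the pushforward does not depend on the chosen compactification, I first fix compatible compactifications. Choose $\bx\to Y$ for $f$. The base change $\bx\times_Y Y'$ contains $X'=X\times_Y Y'$ as an open subscheme and is proper over $Y'$. Take $\bar{X}'$ to be the closure of $X'$ in it. This gives a square
\[
\bar{X}'\to\bx\quad\text{over}\quad Y'\to Y
\]
and reduces the problem to two statements. The first is a base change statement for the proper pushforward $\fbpush$. The second is the compatibility of the self-duality isomorphisms and of excision with the pullbacks $g_X^\ast$ and $g_Y^\ast$. The second is largely formal: pullback of differential forms commutes with wedge product, and $l_X$ is compatible with base change for smooth maps. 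The supports also match, because $g_X^{-1}(\Phi)\subseteq \Phi'$ and the properness of $f|_\Phi$ is preserved under base change.

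For the flat case I treat $g_Y$ smooth; this is the case needed for the WCTS axiom. The argument is local on $Y'$. Factor $g_Y$ locally as an étale map followed by a projection $\A^n_Y\to Y$. The étale part is exactly Proposition \ref{properpushpropositionkayandre}(c). The projection is handled by flat base change for the trace map, $g_Y^\ast\Tr_f=\Tr_{f'}\circ(\text{base change})$; see \cite[Theorem 3.6.5]{Conrad}. That result is stated for arbitrary Noetherian bases, so replacing $\pi_Y^!k$ by $\pi_Y^!\sO_S$ costs nothing. To descend from the local statements to a global one, I use the fact that a map in cohomology with supports is determined locally once it comes from a map of complexes. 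This is how the extensions of \cite[Lemma 2.2.16, Proposition 2.2.19]{KayAndre} are meant to be used.

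For the closed immersion case, let $g_Y:Y'\hookrightarrow Y$ be of codimension $c$, with $f$ transversal to $Y'$. Then $X'\hookrightarrow X$ is also a regular immersion of codimension $c$, and its normal bundle is pulled back from that of $Y'$. The pullback $\hu(g_Y)$ on Hodge cohomology is the restriction of forms. I then need the compatibility of $\Tr_{\bar f}$ with restriction to $Y'$, which is the fundamental-local-isomorphism, or base change for $f^!$ along a Tor-independent square. This is the content of \cite[Corollary 2.2.22]{KayAndre}, which relies on Conrad's base change theorem for Tor-independent squares. A subtlety is that the compactification $\bar X'$ need not be Tor-independent over $Y'$ away from $X'$. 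I will handle this by working with supports in $\Phi$, which are proper and contained in $X$, and by using excision so that only a neighbourhood of $\Phi\cap f^{-1}(Y')$ matters.

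I expect the main obstacle to be tracking signs and identifications in this closed immersion case. Both the trace isomorphisms and the self-duality isomorphisms involve the twists $[-d_X]$ versus $[-d_{X'}]$, and the relative dimension $r$ is unchanged, but the shifts by $c$ must cancel correctly. My first step will be to check the sign compatibility locally, in the case where $Y'$ is cut out by a regular sequence and the forms are explicit, by comparing with Conrad's sign conventions. After that, I will globalize using the locality argument above.
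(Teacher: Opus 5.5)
\textbf{Overall.} Your route is the paper's. The paper proves this proposition by transporting \cite[Proposition 2.3.7]{KayAndre} verbatim, using relative versions (with $\pi_Y^!\sO_S$ in place of $\pi_Y^!k$) of \cite[Lemma 2.2.16, Proposition 2.2.19, Corollary 2.2.22, Lemma 2.3.4]{KayAndre}. Your plan (compatible compactifications, a flat-type case, a transversal closed-immersion case) follows that route.

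\textbf{The gap: you do not cover flat $g_Y$.} The statement is for $g_Y$ \emph{flat}, but you only treat $g_Y$ smooth. Your method, a local factorisation as an \'etale map followed by $\A^n_Y\to Y$, does not exist for flat non-smooth maps such as $t\mapsto t^2$ on $\A^1_S$. That the WCTS axiom only needs smooth $g_Y$ does not prove the proposition as stated. The missing case does follow from the two cases you handle:
\begin{itemize}
\item Write $g_Y=p\circ\gamma$, where $\gamma\colon Y'\hookrightarrow Y'\times_S Y$ is the graph (a closed immersion, since $Y$ is separated over $S$) and $p$ is the smooth projection.
\item Stack two Cartesian squares through $\mathrm{id}\times f\colon (Y'\times_S X,p_X^{-1}(\Phi))\to (Y'\times_S Y,p^{-1}(\Psi))$. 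The left vertical maps are $\gamma_X=(f',g_X)$ and the projection $p_X$. One checks directly that all maps lie in $\vl$ resp.\ $\vu$ with these supports, and that $p\gamma=g_Y$ and $p_X\gamma_X=g_X$.
\item The lower square is your smooth case.
\item For the upper square, work locally on affines $\Spec C\to\Spec A\leftarrow\Spec B$ for $X\to Y\leftarrow Y'\times_S Y$. There $Y'\times_S X$ is $\Spec(B\otimes_A C)$. Since $B$ is $A$-flat, $\mathrm{Tor}^B_i(M,B\otimes_A C)\cong\mathrm{Tor}^A_i(M,C)$, where $M$ is the coordinate ring of the graph. $M$ is $A$-flat through $p\gamma=g_Y$, so these groups vanish for $i>0$.
\item Hence $\mathrm{id}\times f$ is transversal to $\gamma$, with fibre product the $\NS$-scheme $X'$, and your closed-immersion case applies.
\item Functoriality gives $\hu(g_Y)\hl(f)=\hu(\gamma)\hl(\mathrm{id}\times f)\hu(p_X)=\hl(f')\hu(\gamma_X)\hu(p_X)=\hl(f')\hu(g_X)$.
\end{itemize}

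\textbf{Secondary point: the globalisation step.} You need to make this precise. Two morphisms in a derived category that agree on an open cover need not be equal. For example, a nonzero class in $H^1(Y',\sO_{Y'})=\Hom(\sO_{Y'},\sO_{Y'}[1])$ vanishes on every affine open. So ``comes from a map of complexes'' has to mean an honest map of complexes of sheaves. In \cite{KayAndre} this is arranged by working with the residual complex, here $K=\pi_Y^{\Delta}\sO_S$ as in the proof of Proposition \ref{properpushpropositionkayandre}. There the trace and $\underline{\Gamma}_{\Phi}$ are computed on actual complexes, so equality can be checked locally. You should state that this is the mechanism you use.
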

	
	\begin{prop}
		The quadruple $(\hl, \hu, T, e)$ is a weak cohomology theory with supports.
	\end{prop}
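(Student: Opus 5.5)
We must check the four axioms of a weak cohomology theory with supports for $(\hl,\hu,T,e)$, using the functoriality results already established.

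For axiom (1), additivity, we argue directly from the definitions. If $X=\amalg_r X_r$, then cohomology with supports of the sheaves $\omxs^j$ splits as a product over the components. This shows that $\hu$ sends coproducts to products and $\hl$ preserves coproducts. The covariant grading was defined componentwise, so it is compatible with this splitting, and pushforward is compatible with the splitting because the construction via Nagata compactification is local on the components.

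For disjoint supports $\Phi_1\cap\Phi_2=\emptyset$, we use that for closed $V_1\in\Phi_1$ and $V_2\in\Phi_2$ with $V_1\cap V_2=\emptyset$ the canonical map
$$\rug_{V_1}\oplus\rug_{V_2}\to\rug_{V_1\cup V_2}$$
is an isomorphism. This holds already at the level of $\ug$ on injective sheaves: a section supported on $V_1\sqcup V_2$ decomposes uniquely, since the two supports can be separated by the open sets $X\setminus V_2$ and $X\setminus V_1$. Passing to the colimit over $V_1,V_2$ gives the claim. We must also check that the map appearing in the axiom, $\hu(\jm_1)+\hu(\jm_2)$, agrees with this sum of inclusions. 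The maps $\jm_i$ are the identity on $X$, so by the construction of the pullback they are the enlarging-of-supports maps, and the two agree.

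Axiom (2) is Proposition \ref{symmetricmonoidalfunctors}. Axiom (3) holds by the definition of $\hl$: for connected $X$ we set $H_n(X,\Phi)=H^{2\dim_S X-n}(X,\Phi)$. We still need that $\hl(f)$ is graded of degree $0$ for this grading. The pushforward sends $H^i_\Phi(\omxs^j)$ to $H^{i-r}_\Psi(\omys^{j-r})$, so it lowers the total degree $i+j$ by $2r$. Since $r=d_X-d_Y$, a class in covariant degree $2d_X-(i+j)$ is sent to covariant degree $2d_Y-(i+j-2r)=2d_X-(i+j)$, so the degree is preserved. In the same way $T$ respects both gradings, because $\dim_S(X\times_SY)=\dim_SX+\dim_SY$ on connected components by Lemma \ref{SuslinVoevodsky} (here $X$ and $Y$ are flat over $S$). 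Axiom (4) follows from Proposition \ref{Hodgepushpullproposition}, since a smooth morphism is flat.

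The only point needing real care is the identification in (1) of the sum of the $\jm_i$ with the splitting of local cohomology. This is formal, so no genuine obstacle is expected.
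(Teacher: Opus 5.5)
Your proposal is correct and follows the paper's proof, which simply invokes Proposition~\ref{symmetricmonoidalfunctors} for the monoidal structure and Proposition~\ref{Hodgepushpullproposition} (smooth implies flat) for the base-change axiom. Your extra checks of the additivity axiom and of the degree bookkeeping for $\hl(f)$ and $T$ are sound, and they make explicit points the paper treats as immediate from the definitions.
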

	\begin{proof}
		This follows from
		Proposition \ref{symmetricmonoidalfunctors}
		and  Proposition \ref{Hodgepushpullproposition}.
	\end{proof}

	We now define the \textit{pure part} of $\textrm{H}$. Namely consider for any
	$(X,\Phi) \in obj(\vl) = obj(\vu)$ 
	the graded abelian group $\hpu(X,\Phi)$  that is given in degree $2n$ as
	$$
	\textrm{HP}^{2n}(X,\Phi) = \h^n_{\Phi}(X,\Omega_{X/S}^n),
	$$
	and that is zero in odd degrees. We let $\hpl(X,\Phi)$ be the graded
	abelian group which in degree 2n equals
	$$
	\hp_{2n}(X,\Phi) = \bigoplus_{r}\hp^{2\dim_SX_r - n}(X_r,\Phi),
	$$
	where $X = \amalg X_r$ is the decomposition of $X$ into its connected components.

	We now have a quadruple $(\hpl, \hpu, T, e)$ where $T$ and $e$ are the same
	as in $(\hl,\hu,T,e)$ and this defines a WCTS and there is a natural inclusion map 
	$(\hpl, \hpu, T, e) \to (\hl,\hu,T,e)$ that clearly defines a morphism in $\TT$.

\section{Existence Theorem}

We recall from \cite{KayAndre} the following \textit{semi-purity condition}
that we need to assume in the following Existence Theorem.

\begin{defn}[Semi-Purity] \label{semipurity}
	We say that a weak cohomology theory with
	supports $\f = (\fl,\fu,T,e)$ satisfies the
	semi-purity condition if the following 
	two conditions hold.
	\begin{enumerate}
		\item For all $\NS$-schemes $X$ and 
		all irreducible closed subschemes $W \subset X$ the groups 
		$\f_i(X,W)$ vanish if $i > 2\dim_S(W)$.
		\item For all $\NS$-schemes $X$,
		closed subsets $W \subset X$ and open 
		subsets $U \subset X$ such that $U$ 
		contains the generic point of every
		irreducible component of $W$, the map
		$$
		\fu(j): \f_{2\dim_S W}(X,W) \to \f_{2\dim_S W}(U,U\cap W)
		$$
		is injective, where $j:(U,U\cap W) \to 
		(X,W)$ is induced by the open immersion
		$U \subset X$.
	\end{enumerate}
\end{defn}

\begin{thm} \label{existencetheorem}
	Let $S$ be a Noetherian, excellent, 
	regular, separated and irreducible scheme of Krull-dimension
	at most 1. Let  
	$\rmF \in \bfT$ be a weak cohomology theory with
	supports satisfying the semi-purity
	condition in Definition  \ref{semipurity}. Then $\Hom_{\bfT}(\CH,\rmF)$ is non-empty 
	if  the following conditions
	hold.
	\begin{enumerate}
		\item \label{condunit} For the $0$-section
		$\imath_0:S \to \P_S^1$ and the 
		$\infty$-section $\imath_{\infty}:S\to 
		\P_S^1$ the following equality holds:
		$$
		\rmFl(\imath_0)\circ e = \rmFl(\imath_{\infty})\circ e.
		$$
		\item \label{condonclasselement} If 
		$X$ is an $\mathcal{N}_S$-scheme and
		$W\subset 
		X$ is an 
		integral closed subscheme then there exists 
		a cycle-class element $\cl(W,X) \in 
		\rmF_{2\dim_S(W)}(X,W)$, and if $W\subset X$
		is any closed subscheme we define
		$$
		\cl(W,X) = \sum_i n_i \cl(W_i,X),
		$$
		where the $W_i$ are the irreducible components of $W$ and $\sum_i n_i [W_i]$
		is the fundamental cycle of $W$\footnotemark, such
		that the following conditions hold: 
		\footnotetext{Technically we should write $\cl(W,X) = \sum_i n_i \fl(i_{W_i})\cl(W_i,X)$ 
			where $i_{W_i}:(X,W_i)\to (X,W)$ enlarges the supports. Notice also that when $W$ is not 
			pure $S$-dimensional the cycle-class element
			$\cl(W,X)$ does not live in $\f_{2\dim_S W}(X,W)$.}
		\begin{enumerate}[i)]
			\item  \label{axiomopenimmersion} For any 
			open $U\subseteq X$ such that
			$U \cap W$ is regular, we have
			$$
			\fu(j)(\cl(W,X)) = \cl(W\cap U,U),
			$$
			where $j:(U,U\cap W) \to (X,W)$ is 
			induced by the open immersion $U\subseteq X$.
			\item \label{axiomsmoothmorphism} If $f:X 
			\to Y$ is a smooth morphism between
			$\NS$-schemes $X$ and $Y$, and 
			$W \subset Y$ 
			is a regular closed subset, then
			$$
			\fu(f)(\cl(W,Y)) = \cl(f^{-1}(W),X).
			$$
			\item \label{axiomdivisor}
			Let $i:X\to Y$ be the closed immersion 
			of an irreducible, regular, closed 
			$S$-subscheme $X$ into an 
			$\NS$-scheme $Y$. For any 
			effective smooth divisor $D \subset Y$
			such that 
			\begin{itemize}
				\item $D$ meets $X$ properly, thus 
				$D\cap X := D\times_Y X$ is a divisor
				on $X$,
				\item $D' := (D\cap X)_{\rm red}$ 
				is regular and irreducible, so 
				$D\cap X = n\cdot D'$ as divisors
				(for some $n\in \Z, n\geq 1$),
			\end{itemize}
			we define
			$g:(D,D') \to (Y,X)$ in $\vu$ as the 
			map induced by the
			inclusion $D\subset Y$. Then the following
			equality holds:
			$$
			\rmFu(g)(\cl(X,Y)) = n\cdot \cl(D',D).
			$$	
			\item \label{axiomfinite} Let $f: X \to 
			Y$ be a 
			morphism of $\NS$-schemes. Let $W\subset X$ be a 
			regular closed 
			subset such that the restricted map
			$$
			f|_W:W \to f(W)
			$$
			is proper and finite of degree $d$. Then 
			$$
			\fl(f)(\cl(W,X)) = d \cdot \cl(f(W),Y).
			$$
			\item \label{axiomproduct} Let $X,Y$ be
			$\NS$-schemes and let
			$W \subset X$ and $V \subset Y$ be
			regular, integral closed subschemes. 
			Then the following equation holds
			$$
			T(\cl(W,X)\otimes_S \cl(V,Y)) = 
			\begin{cases}
				\cl(W\times_S V,X\times_S Y) \qquad\, \text{if $W$ or $V$ is dominant over $S$},
				\\
				0 \qquad\qquad \qquad\qquad \qquad \, \,\, \text{otherwise}.
			\end{cases}
			$$
			
			\item \label{axiomone} For the base 
			scheme $S$ we have $\cl(S,S) =1_S$.	
		\end{enumerate}
	\end{enumerate}
\end{thm}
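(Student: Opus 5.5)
The morphism $\phi:\CH\to\rmF$ will be built in four stages: first on cycles, then on Chow groups, then as a natural transformation of right-lax symmetric monoidal functors with respect to $\CHl$, and finally compatibly with $\CHu$.

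\textbf{Definition on cycles.} For an $\NS$-scheme $(X,\Phi)$ and an integral $W\in\Phi$, set $\phi_X([W]) := \fl(i_W)\cl(W,X)$, where $i_W:(X,W)\to(X,\Phi)$ enlarges supports, and extend linearly to $Z_\Phi(X)$. The grading by $S$-dimension is respected by construction, since $\cl(W,X)\in\rmF_{2\dim_S W}(X,W)$.

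\textbf{Passage to Chow groups.} We must show that $\phi_X(\mathrm{div}_W(g))=0$ for $W\in\Phi$ and $g\in R(W)^\times$. The standard reduction expresses $\mathrm{div}_W(g)$ as $p_*\bigl([\Gamma]\cdot([0]-[\infty])\bigr)$, where $\Gamma\subset X\times_S\P^1_S$ is the closure of the graph of $g$ and $p$ is the projection.
\begin{enumerate}
\item Write $[\Gamma\cap (X\times 0)]$ as a pullback of $\cl(\Gamma)$ along the inclusion $X\times 0\subset X\times\P^1$, using axiom \ref{axiomdivisor}. The divisor is smooth because $X\times 0\cong X$.
\item Write the class of $X\times 0$ in $X\times\P^1_S$ as $T(1_X\otimes\fl(\imath_0)e)$, using axioms \ref{axiomproduct} and \ref{axiomone}.
\item Apply condition \ref{condunit} together with the projection formula to identify the $0$-fibre and $\infty$-fibre contributions.
\end{enumerate}
Two difficulties arise. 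First, $\Gamma$ and the fibres are in general neither regular nor meeting the divisors nicely. To handle this, shrink to an open $U$ containing the generic points of all components of $\mathrm{div}(g)$ and of $\Gamma$, where everything is regular and, by excellence, the multiplicities $n$ are the correct orders of vanishing. Second, the identity must be transported back to $X$, and this is done by semi-purity (2): an element of $\rmF_{2\dim_S}(X,\mathrm{Supp})$ vanishing on $U$ vanishes, where the degree is correct because $\dim_S$ of the divisor is $\dim_S W-1$. Semi-purity (1) is needed so that the classes of lower-dimensional junk, in the top-degree group, do not contribute; it also shows that $\phi$ factors through the colimit over $\Phi$.

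\textbf{Compatibility with $\CHl$ and $T$.} For proper pushforward, reduce via the open-restriction injectivity of semi-purity (2) to a dense open on $W$ over which $f|_W$ is finite (if generically finite) and regular. There axiom \ref{axiomfinite} gives the degree. If $\dim f(W)<\dim W$, the image lies in $\rmF_{2\dim W}(Y,f(W))$, which is $0$ by semi-purity (1). Compatibility with $T$ and $e$ follows from axioms \ref{axiomproduct} and \ref{axiomone}, again after restricting to the regular loci, using Lemma \ref{SuslinVoevodsky} so that $W\times_S V$ has pure $S$-dimension and semi-purity applies.

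\textbf{Compatibility with pullbacks.} Since every morphism of $\NS$-schemes factors as a graph closed immersion followed by a smooth projection, it suffices to treat these two cases. For smooth $f$, axiom \ref{axiomsmoothmorphism} on regular opens gives $\fu(f)\phi=\phi f^*$, and injectivity extends this to all of $X$. For a regular closed immersion, use deformation to the normal cone: the refined Gysin map is computed by specializing to the normal cone and then pulling back along the zero section. Realize this through the smooth family $M^\circ\to\P^1_S$, using axiom \ref{axiomdivisor} for the fibres over $0$ and $\infty$, the Cartesian pull–push axiom (4), and homotopy invariance for vector bundles. Homotopy invariance itself comes from $\fu$ of the smooth bundle projection together with the $\imath_0=\imath_\infty$ condition.

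I expect this closed-immersion step to be the main obstacle. The cycle classes of the specialized cycles are only controlled on regular loci, and one must check that the multiplicities from axiom \ref{axiomdivisor} match Fulton's intersection multiplicities. The plan is to reduce, by the moving/deformation argument, to the case of a divisor meeting each component properly, where the multiplicities agree generically, and then use semi-purity to conclude.
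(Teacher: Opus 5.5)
Your outline follows the paper's overall architecture: the cycle-level definition, pushforward via axiom (iv) and semi-purity, rational equivalence via $\P^1_S$ and condition (1), monoidality, smooth pullback, deformation to the normal cone, and the graph factorization. But there is a genuine gap in how you invoke axiom (iii).

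That axiom only applies when the subscheme being cut is regular. You propose to reach this situation by shrinking to an open $U$ containing the generic points of the divisor's components, and then transporting back with semi-purity (2). But an integral scheme is regular only generically, not in codimension one. The generic points of the components of $\Gamma\cap(X\times_S\{0\})$ are codimension-one points of $\Gamma$ that may lie in its singular locus, and semi-purity (2) forbids $U$ from avoiding them. For example, if $W\subset\A^2_k$ is the nodal curve $y^2=x^2(x+1)$ and $g=x$, then $\Gamma\cong W$ and $\Gamma\cap(X\times_S\{0\})$ is the node with multiplicity $2$. No admissible $U$ meets $\Gamma$ in a regular scheme.

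The same problem recurs in your closed-immersion step, where you need $\fu(t)(\cl(\tilde M^0,M^0))=\cl(C_WV,N_XY)$. The deformation space can be singular at the generic points of the cone. For instance, take $V=\{y^2=x^5\}\subset\A^2_k$ and $X$ the origin. Then $\tilde M^0$ is locally $\Spec k[X,Y,\tau]/(Y^2-X^5\tau^3)$, the double line $C_WV$ is cut out by $\tau$, and this ring is not regular at the generic point of $\{Y=\tau=0\}$. So the obstacle is regularity in codimension one, not matching multiplicities as you suspect, and a moving argument does not remove it.

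The missing ingredient is a lemma extending axiom (iii) to an arbitrary integral $W\subset X$ and a smooth divisor $D$ meeting it properly: $\fu(i_D)(\cl(W,X))=\cl(W\cap D,D)$. The paper proves this in its Step 2 by normalization:
\begin{enumerate}
\item Reduce to $X$ affine and $(W\cap D)_{\rm red}$ irreducible.
\item Since $S$ is excellent, the normalization $\tW\to W$ is finite, so $\tW$ embeds as a closed subscheme of $X\times_S\P^n_S$.
\item Pushforward compatibility on cycles, which you prove independently and so must come first, gives $\cl(W,X)=\fl(pr_1)(\cl(\tW,X\times_S\P^n_S))$.
\item The transversal base-change axiom, applied to $D\times_S\P^n_S\subset X\times_S\P^n_S$ over $D\subset X$, moves the pullback upstairs.
\item Upstairs $\tW$ is normal, hence regular in codimension one, so shrinking plus axiom (iii) now legitimately applies.
\item Push back down using $pr_{1\ast}[\tW\cap(D\times_S\P^n_S)]=[W\cap D]$.
\end{enumerate}
With this lemma in place, both your rational-equivalence step (for arbitrary $W\subset X\times_S\P^1_S$ dominating $\P^1_{S_W}$) and your deformation-to-the-normal-cone step go through as you outline.
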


\vspace{2em}

\begin{proof}
We break the proof of the theorem into 9 steps. The first four combine to show
that given our assumptions we can construct a natural
transformation of (right-lax) symmetric monoidal 
functors $(\CHl,\times_S,1) \to (\fl,T,e)$. Then the
latter five steps consist of extending this 
natural transformation to a morphism in $\bfT$. The structure of the proof is 
the same as of \cite[Theorem 1.2.3]{KayAndre}, the corresponding existence theorem
over perfect fields of positive characteristic, although each step differs somewhat from 
loc. cit. because of the difference in the conditions assumed. In particular, in steps 1 and 4 we
see dimensional issues that do not occur in the zero-dimensional case.

We start by defining a family of homomorphisms of
Abelian groups
$$
\phik: \Zsup(X) \to \rmF(X,\Phi)
$$
indexed by the elements $(X,\Phi) \in obj(\vu) = obj(\vl)$. Now $\Zsup(X)$ is a free-Abelian group
so it suffices to give the definition of $\phik$
on the generators, which are $[W]$ for the integral
closed subschemes $W \subset X$ such that $W \in \Phi$. 
For these $[W]$ we define
$$
\phik([W]) = \rmFl(i_W)(\cl(W,X)),
$$
where $i_W: (X,W) \to (X,\Phi)$ is induced by $id_X$.

We show in four steps that this family of homomorphisms
extends to the desired natural transformation of (right-lax)
symmetric monoidal functors. In the first step
we show on the level of cycles, this family
$\phi'$ is functorial with the pushforwards. 
Step 2 is a 
technical step to be used in Step 3, wherein we
show that these morphisms $\phi'$ send 
cycles that are rationally equivalent to zero, to 
0 and therefore that the naturality diagram
from Step 1 extends from cycles to the Chow
groups and thus that this family defines a 
natural transformation $\phi$. Finally in Step 4 we 
show that this natural transformation is a 
natural transformation of right-lax symmetric
monoidal functors by showing that it respects
the unit and the product.
\vspace{1em}

\subsection*{\textbf{\textit{Step 1:}}}

We show that for any morphism 
$f:(X,\Phi) \to (Y,\Psi)$ in
$\vl$,  the following 
square commutes\footnotemark:
\begin{equation} \label{step1whattoshow}
	\xymatrix{
		\Zsup(X) \ar[r]^{\phik} \ar[d]_{f_{\ast}}
		& \rmF(X,\Phi) \ar[d]^{\rmFl(f)}\\
		\Zupp(Y)\ar[r]^{\phikk}  & \rmF(Y,\Psi),
	}
\end{equation}

\footnotetext{
	The fact that $f_{\ast}$ is well-defined is 
	clear from the definitions of $\Zsup$, $\Zupp$,
	$\vl$ and the definition of proper 
	pushforwards.}

There are two cases to cover\footnotemark 
\begin{enumerate}[i)]
	\item $\dim_S(f(W)) < \dim_S(W)$, \,and
	\item $\dim_S(f(W)) = \dim_S(W)$.
\end{enumerate}

\footnotetext{
	By \cite[Proposition 2.1.3. (iii)]{AndreasWeberThesis} we can't 
	have $\dim_S(W) < \dim_S(f(W))$.}

\begin{enumerate}[i)]
	\item Let $d:= \dim_S(W)$ and  
	$a:= \phik([W])$.
	By definition
	$$
	a := \phik([W])  = \rmFl(i_W)(cl_{(X,W)}),
	$$
	and since $\cl(W,X) \in \rmF_{2d}(X,W)$,
	and all morphisms are graded of degree 0, we 
	have 
	$$
	a \in \rmF_{2d}(X,\Phi).
	$$
	Furthermore we have a commutative square
	\begin{equation} \label{propstep1case1basicsquare}
		\xymatrix{
			\rmFl(X,W) \ar[rr]^{\rmFl(i_W)} 
			\ar[d]_{\rmFl(f)} &&
			\rmFl(X,\Phi)\ar[d]^{\rmFl(f)} \\
			\rmFl(Y,f(W)) 
			\ar[rr]_-{\rmFl(i_{f(W)})} 
			&& \rmFl(Y,\Psi).
		}
	\end{equation}
	But then $\rmFl(f)(a) = \rmFl(i_{f(W)})(\rmFl(f)(\cl(W,X)))$, 
	and by semi-purity 
	$\rmFl(f)(\cl(W,X)) \in \rmF_{2d}(Y,f(W)) = 0$,
	so 
	$$
	\rmFl(\phik([W])) = \rmFl(f)(a) = 0.
	$$
	On the other hand since $f: (X,\Phi) \to (Y,\Psi)$ is in $\vl$, it is proper when
	restricted to $W \in \Phi$, so by the 
	definition of proper pushforwards we have
	$$
	f_{\ast}([W]) = \deg(W/f(W))[f(W)] = 0
	$$
	since $\deg(W/f(W)) = 0$ because the $S$-dimension
	drops, and this shows that the square \eqref{step1whattoshow}
	commutes when $\dim_S(f(W)) < \dim_S(W)$.

	\item For an integral closed subscheme $W \in \Phi$ Lemma \ref{equaldimlemma} allows us to choose an open 
	$U\subset Y$ such that
	\begin{itemize}
					\item $U \cap f(W) \neq \emptyset$,
					\item $U\cap f(W)$ is regular,
					\item $f^{-1}(U)\cap W$ is regular, and
					\item The map induced from $f$ by 
					restriction
					$$
					f': f^{-1}(U) \cap W \to U\cap f(W)
					$$
			is finite. 
	\end{itemize}
	Consider the maps  
	\begin{align*}
		j&:(U,f(W)\cap U) \to (Y,f(W))\,\,\, \text{and} \\
		j'&:(f^{-1}(U), W\cap f^{-1}(U)) \to (X,W)
	\end{align*}
	in $\vu$ induced by the open immersions
	$U \hookrightarrow Y$ and $f^{-1}(U) \hookrightarrow X$ respectively. 
	By condition (\ref{axiomfinite}) we have 
	\begin{equation*} \label{propstep1case2eq1}
		\fl(f|_{f^{-1}(U)})(\cl(f^{-1}(U)\cap W,f^{-1}(U))) = d\cdot \cl(U\cap f(W),U)
	\end{equation*}
	where $d$ is the degree of the finite morphism.
	Condition (\ref{axiomopenimmersion}) now tells us that 
	\begin{align*} \label{propstep1case2eq2}
		\fu(j)(\cl(f(W),Y)) & = \cl(U\cap f(W),U), \,\,\, \text{and}\\
		\fu(j')(\cl(W,X)) & = \cl(f^{-1}(U)\cap W,f^{-1}(U)). \notag
	\end{align*}	
	Combining these  we 
	obtain
	\begin{equation} \label{propstep1case2eq3}
		\fl(f|_{f^{-1}(U)})(\fu(j')(\cl(W,X))) = d\cdot \fu(j)(\cl(f(W),Y)).
	\end{equation}
	Consider the fibre square
	$$
	\xymatrix{
		f^{-1}(U) \ar[d]_-{j'} \ar[rr]^-{f|_{f^{-1}(U)}} &&
		U \ar[d]^-{j} \\
		X \ar[rr]_{f} &&
		Y.
	}
	$$
	The morphism $j$ is an open immersion, hence smooth, so the from the definition of WCTS we get
	$$
	\fl(f|_{f^{-1}(U)})(\fu(j')(\cl(W,X))) = \fu(j)(\fl(f)(\cl(W,X))).
	$$
	Substituting this into equation \eqref{propstep1case2eq3} we 
	get
	\begin{align} \label{propstep1case2eq4}
		\fu(j)(\fl(f)(\cl(W,X))) &= d\cdot \fu(j)(\cl(f(W),Y)) \\
		& = \fu(j)(d\cdot \cl(f(W),Y)). \notag
	\end{align}
	We have that
	$ \fl(f)(\cl(W,X))$ and  $d\cdot \cl(f(W),Y)$ are in $\f_{2\dim_S(f(W))}(Y,f(W))$
	so by semi-purity, equation \eqref{propstep1case2eq4} implies
	\begin{equation} \label{propstep1case2eq5}
		\fl(f)(\cl(W,X)) = d\cdot \cl(f(W),Y).
	\end{equation}
	We now apply $\fl(i_{f(W)})$ to both sides of 
	\eqref{propstep1case2eq5}, where 
	$i_{(f(W))}:(Y,f(W)) \to (Y,\Psi)$ is 
	induced by $id_Y$, to obtain
	\begin{align} \label{propstep1case2eq6}
		\fl(i_{f(W)})(\fl(f)(\cl(W,X))) 
		%&= \fl(i_{f(W)})(d\cdot \cl(f(W),Y)) \\ \notag
		& = d\cdot \fl(i_{f(W)})(\cl(f(W),Y)) \\ \notag
		& = d\cdot \phic_{(Y,\Psi)}([f(W)]) \\ \notag
		& =\phic_{(Y,\Psi)}\circ f_{\ast}([W]). \notag
	\end{align}
	This last equality holds because 
	by definition we have 
	\begin{align*}
		\deg(f') &= \deg(W\cap f^{-1}(U)/f(W)\cap U)\\ 
		&:= [R(W\cap f^{-1}(U)) : R(f(W)\cap U)],
	\end{align*}
	and since $W\cap f^{-1}(U)$ is an open dense 
	subset of $W$ and $f(W)\cap U$ is an open dense
	subset of $f(W)$, we have 
	\begin{align*}
		R(W\cap f^{-1}(U)) &= R(W), \,\,\,\text{and} \\
		R(f(W)\cap U) &= R(f(W)),
	\end{align*}
	so 
	\begin{equation*}
		d = \deg(f') 
	%	&= [R(W\cap f^{-1}(U)) : R(f(W)\cap U)] \\
	%	&= [R(W):R(f(W))] \\
		= \deg(f).
	\end{equation*} 
	Furthermore, by looking at the commutative square 
	\eqref{propstep1case1basicsquare} we see that 
	\begin{align} \label{propstep1case2eq7}
		\fl(i_{f(W)})(\fl(f)(\cl(W,X))) & = \fl(f)(\fl(i_W)(\cl(W,X))) \\
		& = \fl(f)\circ \phic_{(X,\Phi)}([W]). \notag
	\end{align}
	Combining \eqref{propstep1case2eq6} and 
	\eqref{propstep1case2eq7}
	we obtain
	\begin{equation*} \label{propstep1case2finaleq}
		\fl(f)\circ \phic_{(X,\Phi)}([W]) = 
		\phic_{(Y,\Psi)}\circ f_{\ast}([W]).
	\end{equation*}

\end{enumerate}

\subsection*{\textbf{\textit{Step 2:}}} 

Now let $X$ be an $\NS$-scheme, $W \subset X$ an
integral closed subscheme, and $D$ a smooth
divisor intersecting $W$ properly, so that 
$W\cap D := W \times_X D$ is an effective Cartier
divisor on $W$. We denote by $[W\cap D]$ the 
associated Weil divisor and we denote $(D\cap 
W)_{red}$ by $D'$. The following equality is
what we want to prove
\begin{equation}\label{step2mainequation}
	\fu(i_D)(\cl(W,X)) = \cl(D\cap W,D),
\end{equation} 
where $i_D:(D,D\cap W) \to (X,W)$ in $\vu$ is the map 
induced by the closed immersion $D \subset X$.

Let $U$ be an open 
subset of $X$ that contains all the generic points of 
$D'$. 
The following diagram in $\vu$ commutes
$$
\xymatrix{
	(U\cap D,(W\cap D)\cap U) \ar[r]^-{\hat{\imath}_D}
	\ar[d]_-{\hat{\jmath}}  & (U,U\cap W) \ar[d]_-{j}\\
	(D,W\cap D) \ar[r]_-{i_D} & (X,W),
}
$$
where 
\begin{itemize}
	\item $j:(U,U\cap W) \to (X,W)$ is induced by
	the inclusion $U \subset X$,
	\item $\hat{\jmath}: (U\cap D,(W\cap D)\cap U) \to
	(D,W\cap D)$ is induced by the inclusion 
	$U\cap D \subset D$,  and
	\item $\hat{\imath}_D:(U\cap D,(W\cap D)\cap U) 
	\to 
	(U,U\cap D)$ is induced by the inclusion 
	$U\cap D \to U$.
\end{itemize}
Applying the contravariant functor $\fu$ gives us
a commutative diagram
\begin{equation} \label{propstep2square}
	\xymatrix{
		\fu(X,W) \ar[r]^-{\fu(j)} \ar[d]^-{\fu(i_D)}
		& \fu(U,W\cap U) \ar[d]^-{\fu(\hat{\imath}_D)} \\
		\fu(D,W\cap D) \ar[r]^-{\fu(\hat{\jmath})} 
		& \fu(U\cap D, (W\cap D)\cap U).
	}
\end{equation}

%\begin{lem} \label{classelementopenlemma}
%	Let $X$ be an $\NS$-scheme and $W \subseteq X$ be
%	an integral closed subscheme. Let $U\subseteq X$
%	be an open subscheme such that $U\cap W \neq
%	\emptyset$. Then
%	$$
%	\fu(j)(\cl(W,X)) = \cl(U\cap W,U).
%	$$
%\end{lem}
%\begin{proof}
%	We know since $W$ is an integral scheme over
%	an excellent base scheme $S$ that it is generically regular. The same 
%	is true for the open subset $ U\cap W \subset 
%	W$.
%	We can thus find and open subset $V \subset U$ 
%	such that $V\cap(U\cap W) = V \cap W$
%	is non-empty and regular. Consider 
%	the map induced by inclusion
%	$j_V:(V,V\cap W) \to (U,U\cap W)$.
%	Notice that since $U \cap W$ is irreducible and 
%	$V\cap W$
%	is a non-empty subset of $U \cap W$ the generic 
%	point of 
%	$U \cap W$  is contained in $V\cap W$. We also have that
%	$\fu(j)(\cl(W,X)),$ and  $\cl(U\cap W,U)$ are in   
%	$\f_{2\dim_S (U \cap W)}(U,U\cap W)$,
%	so in order to prove $\fu(j)(\cl(W,X)) = \cl(U\cap W,U),$
%	it suffices by semi-purity to prove 
%	$$
%	\fu(j_V)(\fu(j)(\cl(W,X)))
%	=\fu(j_V)(\cl(U\cap W,U)).
%	$$
%	Since $V \cap W$ is regular, condition 
%	\ref{axiomopenimmersion} gives us that
%	$\fu(j_V)(\cl(U\cap W,U)) = \cl(V\cap W,V),$
%	and since $\fu(j_V)\circ \fu(j) = \fu(j\circ j_V)$
%	where $j\circ j_V:(V,V\cap W) \to (X,W)$
%	is the morphism induced by the open immersion 
%	$V \subset X$, we have again by condition 
%	\ref{axiomopenimmersion}
%	\begin{align*}
%		\fu(j_V)(\fu(j)(\cl(W,X))) &= 
%		\fu(j\circ j_V)(\cl(W,X)) \\
%		& = \cl(V\cap W,V).
%	\end{align*}
%\end{proof}
%

We have 
$\fu(j)(\cl(W,X)) = \cl(W\cap U,U)$
and 
$\fu(\hat{\jmath})(\cl(W\cap D,D))  = \cl((W\cap D)\cap 
U,U\cap D)$ by Lemma \ref{classelementopenlemma},
so if we can prove 
$$
\fu(\hat{\imath}_D)(\cl(W\cap U,U)) = \cl((W\cap D)\cap U,U\cap D),
$$
then 
$\fu(i_D)(\cl(W,X)) = \cl(D\cap W,D)$
follows from the commutativity of the square 
\eqref{propstep2square} and by semi-purity. 
This shows that we may restrict to any open 
subset that contains all the generic points of
$D'$. Furthermore, since $X$ is 
Noetherian (being of finite type over the
Noetherian scheme $S$) we see that 
$D'$ has finitely many irreducible
components. Therefore the set $A$ of all points 
lying in an intersection of connected components
is a finite union of closed sets and is thus closed.
The set $A$ contains no generic point of $D'$
and we can therefore look
at $U \setminus A$ instead of $U$ and reduce
to the case where the irreducible components
are disjoint. Let $V_1,\ldots, V_r$ be the 
irreducible components of $D'$,
then by the definition of WCTS we have
$$
\bigoplus_{i=1}^r\f(D,V_i) \cong \f(D,W\cap D).
$$
Therefore we may assume $r=1$, i.e. that
$D'$ is irreducible with a generic point
$\eta$.

If $W$ is regular in codimension 1 then (since $D$ intersects
$W$ properly) $\sO_{W,\eta}$ is regular, i.e. $D'$
is generically regular. Then there exists some dense open 
$\tilde{U} \subset D'$ that is regular, i.e. there
exists some open $U \subset X$ such that $U \cap 
D'$
is nonempty and regular. Furthermore we may assume that $U \cap W$
is regular, since $W$ is regular in codimension 1.
By construction $\eta \in U$, so it suffices by the above 
discussion to
prove the equality for $U$, i.e. we can reduce to the 
case 
where $W$ and $D'$ are regular and 
irreducible in $X$. 
By condition (\ref{axiomdivisor})we then have
$\fu(i_D)(\cl(W,X)) = n \cdot \cl(D',D),$
where $n$ is the multiplicity of $D$ along $W$.
Furthermore, we have $n\cdot \cl(D',D) = 
\cl(D\cap W,D)$
so we finally have 
$$ 
\fu(i_D)(\cl(W,X)) = \cl(D\cap W,D).
$$

Recall that normal schemes are regular in 
codimension 1.
We take $W$ that is not necessarily 
normal, look
at its normalization which is regular in 
codimension 1, and deduce
the equation we want to show from that case.

Notice that we can find an affine open $U \subset X$ such that 
$U \cap D' \neq \emptyset$. In this case 
$U\cap D'$ is a non-empty
open subset of $D'$ and thus contains the 
generic point $\eta$.
We can therefore restrict to looking at this $U$, i.e. 
we may assume $X$ is affine. 

The normalization morphism
$\tilde{W} \to W$ is finite, hence projective,  since $W$ is excellent, see for example
\cite[Tag: 035R]{Stacks}.\footnote{Note that this lemma states this for 
	Japanese schemes, but excellent rings are Nagata rings, and Nagata rings 
	are universally Japanese. See for example \cite[Tag: 0334]{Stacks}.}
Since $W$ is closed in  $X$ and we assume $X$ is affine, we can assume $W$ is affine. Therefore there exists some $n$ so that 
we can find a closed immersion
	$$
	\tilde{W} \to W\times_S \P_S^n
	$$
where $\tilde{W}$ denotes the normalization of $W$.

see \cite[Def. 5.5.2]{EGA2}). 

We now set $\tX := X \times_S \P_S^n$, $\tD := D \times_S \P_S^n$,
$\ti:(\tD,\tW \cap \tD) \to (\tX,\tW)$ 	and 
consider the morphism $pr_1:(\tX,\tW)\to (X,W)$ 
induced by
the projection $\tX \to X$. The projection $\P^n_S \to S$ is 
always proper, so $pr_1$ is a morphism in $\vl$. By Step 1  
we have 
$$
\phi'_{(X,W)} \circ pr_{1\ast} = \fl(pr_1)\circ \phi'_{(\tX,\tW)}.
$$
We now evaluate both sides at $[\tW]$ and get 
\begin{align*}
	\cl(X,W) &= \phi'_{(X,W)}([W]) \\
	%&= \phi'_{(X,W)}([pr_1(\tW)]) \\
	&= \phi'_{(X,W)}\circ pr_{1\ast}([\tW]) \\
	&= \fl(pr_1)\circ \phi'_{(\tX,\tW)}([\tW]) \\
	&= \fl(pr_1)(\cl(\tW,\tX)).\\
\end{align*}
Applying $\fu(i)$ to both sides gives
\begin{equation}\label{propstep2eq1}
	\fu(i)(\cl(W,X)) = \fu(i)(\fl(pr_1)(\cl(\tW,\tX))).
\end{equation}
%\vspace{1em}
We have a Cartesian diagram
$$
\xymatrix{
	(\tD,\tD\cap \tW) \ar[r]^-{pr_1|_{\tD}} \ar[d]_-{\ti} & (D,W\cap D) 
	\ar[d]_-{i} \\
	(\tX,\tW) \ar[r]^-{pr_1} & (X,W).
}
$$		
The morphism $i$ is a closed immersion 
and it is transversal to $pr_1$ 
so the definition of WCTS tells us that
$$
\fu(i)\circ\fl(pr_1) = \fl(pr_1|_{\tD})\circ \fu(\ti).
$$
If we now evaluate both sides at $\cl(\tW,\tX)$ we get
\begin{equation} \label{propstep2eq2}
	\fu(i)(\fl(pr_1)(\cl(\tW,\tX))) = \fl(pr_1|_{\tD})( \fu(\ti)(\cl(\tW,\tX))).
\end{equation}
From the first case discussed, where we have 
an integral closed subscheme that is regular in
codimension 1, we have
\begin{equation} \label{propstep2eq3}
	\fu(\ti)(\cl(\tW,\tX)) = \cl(\tW\cap \tD,\tD).
\end{equation}
Note that $pr_1(\tW\cap\tD) = W \cap D$ so
since the projection
$pr_1|_{\tD}$ is proper we have
\begin{equation} \label{propstep2eq4}
	\phi'_{(D,W\cap D)}(pr_{1 \ast}([\tW \cap \tD])) 
	=
	\phi'_{(D,W\cap D)}([W\cap D]).
\end{equation}	
Combining equations \eqref{propstep2eq1}--\eqref{propstep2eq4} we 
obtain

\begin{align*}
	\fu(i)(\cl(W,X)) 
	%&= \fu(i)(\fl(pr_1)(\cl(\tW,\tX))) \\
	%&=\fl(pr_1|_{\tD})(\fl(\tilde{i})(\cl(\tD,\tX)))
	%\\
	&= \fl(pr_1|_{\tD})(\cl(\tW\cap\tD,\tD)) \\
	&= \phi'_{(D,W\cap D)}(pr_{1 \ast}([\tW \cap \tD])) \\
	&= \phi'_{(D,W\cap D)}([W\cap D]) \\
	&= \cl(W\cap D,D).
\end{align*}

\subsection*{\textbf{\textit{Step 3:}}}

Our aim here is to prove that 
$$
\phik(Rat_{\Phi}(X)) = 0.
$$
We use the ``homotopy'' definition of rational 
equivalences, see \cite[Proposition 1.6.]{Fultonbook}.
The additivity of $\phi'$ allows us to see that we can 
reduce to showing that for an irreducible 
closed subset $W \subset X\times_S \P_S^1$ such that 
$pr_1(W) \in \Phi$ and $W\to \P^1_{S_W}$ is dominant,
where $S_W$ is the closure of the image $\pi_X(W)$
in $S$, we 
have
\begin{equation} \label{aimofsection3}
	\phi{'}_{(X,pr_1(W))}([pr_1(W_0)])  
	=
	\phi{'}_{(X,pr_1(W)}([pr_1(W_{\infty})]),
\end{equation}
where we write 
$\we := W \cap (X\times_S \{\epsilon\})$, for $\epsilon \in \{0, \infty\}$.

We introduce the maps
\begin{align*}
	\ie&: (X,pr_1(W)) \to (X\times_S \P_S^1,pr_1(W)\times_S \P_S^1),  \,\,\,\text{and} \\
	\aep&: (X,pr_1(\we)) \to (X\times_S \P_S^1,W), 
\end{align*}
that are both induced by the map $X \to X\times_S \P_S^1$ given 
by the composition
$$
X \xrightarrow{\cong} X\times_S \{\epsilon\} 
\xrightarrow[immersion]{closed} X\times_S \P_S^1.
$$
The maps $\ie$ and $\aep$ are morphisms in both
$\vl$ and $\vu$.
We also define the map 
$$
\be:(X,pr_1(\we)) \to (X,pr_1(W))
$$
in $\vl$ that is induced by $id_X$.

By definition of $\phic$ we have
$\phic_{(X,pr_1(W))}([pr_1(\we)]) = \fl(\be)(\cl(pr_1(\we),X))$
and by \eqref{step2mainequation} we have 
$\fu(\aep)(\cl(W,X\times_S \P_S^1)) = \cl(pr_1(\we),X).$
Combining this we have 
\begin{equation} \label{preliminaryequation}
	\fl(\be)\circ \fu(\aep)(\cl(W,X\times_S \P_S^1)) = \phic_{(X,pr_1(W))}([pr_1(\we)]).
\end{equation}
The following square is Cartesian 
$$
\xymatrix{
	(X,pr_1(\we)) \ar[r]^{\be} \ar[d]_{\aep} & (X,pr_1(W)) \ar[d]^{\ie}\\
	(X\times_S \P_S^1,W) \ar[r]^-{\xi} & (X\times_S \P_S^1,pr_1(W)\times_S \P_S^1),
}
$$	
where $\xi$ is induced by the identity.
The map $\ie$ is a closed immersion and transversal to
the bottom identity morphism.The definition of WCTS thus gives
\begin{equation} \label{propstep3eq1}
	\fl(\be)\circ \fu(\aep) = \fu(\ie)\circ\fl(\xi), 
\end{equation}
and \eqref{preliminaryequation} becomes
$$
\fu(\ie)\circ\fl(\xi)(\cl(W,X\times_S \P_S^1)) = \phic_{(X,pr_1(W))}([pr_1(\we)]).
$$
To prove \eqref{aimofsection3} it is therefore 
sufficient to show that
$$
\fu(i_0)\circ \fl(\xi) = \fu(\iinf)\circ \fl(\xi)
$$
as maps $\f(X\times_S \P_S^1,W) \to \f(X,pr_1(W))$.

We apply the first projection formula, 
with $f_1 =: \ie', 
f_2 =: \aep$ where $\ie': X \to X\times_S \P_S^1$ is induced by the same
closed immersion as $\aep$. This makes $f_3 =: \aep$ 
as well. Now letting 
$b \in \f(X\times_S\P^1_S,pr_1(W))$ be arbitrary and $ a = 1_X$ we 
get 
$$
\fl(\aep)\circ\fu(\aep)(b) = \fl(\ie')(1_X) \cup b.
$$
If $\fl(i_0')(1_X) = \fl(\iinf')(1_X)$ then 
we have shown that 
$$
\fl(\alpha_0)\circ\fu(\alpha_0) = \fl(\alpha_{\infty})\circ \fu(\alpha_{\infty}),
$$
as maps $\f(X\times_S \P_S^1, W) \to \f(X\times_S\P_S^1,W)$.
We know that
$\xi \circ \aep = \ie\circ \be,$ 
and all these maps are in $\vl$, so we have 
$$
\fl(\xi)\circ \fl(\aep) = \fl(\ie)\circ \fl(\be).
$$
We see that
\begin{align*}
	\fl(\xi)\circ \fl(\aep)\circ\fu(\aep) &= \fl(\ie) \circ \fl(\be)\circ \fu(\aep) \\
	& = \fl(\ie)\circ\fu(\ie) \circ \fl(\xi), 
\end{align*}
by \eqref{propstep3eq1}.
So what we have shown is that 
$$
\fl(i_0)\circ\fu(i_0)\circ \fl(\xi) = \fl(\iinf)\circ\fu(\iinf) \circ \fl(\xi)
$$
follows from 
$$
\fl(i_0')(1_X) = 
\fl(\iinf')(1_X).
$$

The diagram
$$
\xymatrix{
	(X,pr_1(W))\ar[r]^-{\ie} \ar[dr]_{id} 
	& (X\times_S \P_S^1,pr_1(W)\times_S \P_S^1) \ar[d]^{pr_1}\\
	& (X,pr_1(W)),
}
$$
in $\vl$ is commutative, and we obtain
$\fl(pr_1)\circ\fl(\ie) = \fl(id)=id.$
Notice that $\fl(pr_1)$ is completely independent
of $\epsilon$ so we can apply this to both sides of 
$
\fl(i_0)\circ\fu(i_0) \circ\fl(\xi) = \fl(\iinf)\circ\fu(\iinf) \circ \fl(\xi)
$
to obtain what we want
$$
\fu(i_0) \circ \fl(\xi) = \fu(\iinf)\circ\fl(\xi).
$$

What remains to be shown in this step is the equality 
$$
\fl(i_0')(1_X) = \fl(\iinf')(1_X).
$$
First we recall that if $\pi_X:X \to S$ is the structure
morphism of $X$ then 
$
\fl(\ie')(1_X) = \fl(\ie')\circ\fu(\pi_X)\circ e(1).
$
Consider the following Cartesian diagram
$$
\xymatrix{
	X \ar[r]^-{\ie'}\ar[d]_{\pi_X} & X \times_S \P_S^1
	\ar[d]^{pr_2} \\
	S \ar[r]_-{p_{\epsilon}} & \P_S^1
}
$$
where $p_{\epsilon}:S\to \P_S^1$ is the zero- or
infinity section. Furthermore
$pr_2$ is smooth, being the base change of $\pi_X$ along 
$\pi_{\P^1}$. It follows from the definition of WCTS that
$\fl(\ie')\circ \fu(\pi_X) = \fu(pr_2)\circ \fl(p_{\epsilon}).$
We recall that condition (\ref{condunit}) says
that$
\fl(p_0)\circ e = \fl(p_{\infty}) \circ e.$
Combining this we obtain
\begin{align*}
	\fl(i_0')(1_X) &= \fl(i_0')\circ\fu(\pi_X)\circ e(1) \\
	&= \fu(pr_2)\circ \fl(p_0) \circ e(1) \\
	&= \fu(pr_2)\circ \fl(p_{\infty}) \circ e(1) \\
	&= \fl(\iinf')\circ\fu(\pi_X)\circ e(1) \\
	&= \fl(\iinf')(1_X).
\end{align*}

\subsection*{\textbf{\textit{Step 4:}}}

We want to show that 
\begin{align*}
	\phi \circ 1 &= e, \,\,\, \text{and}\\
	\phi \circ \times_S & = T\circ(\phi \otimes \phi).
\end{align*}

It is enough to show that these equations hold
on the level of cycles, i.e. to show the equations
\begin{align*}
	\phic \circ 1 &= e, \,\,\, \text{and}\\
	\phic \circ \times_S & = T\circ(\phic \otimes \phic).
\end{align*}
The first equation follows directly from the definition.
For any $n \in \Z$ we have 
\begin{align*}
	(\phic_S \circ 1)(n) &= \phic_S(n\cdot [S]) \\
	&= n\cdot \phic_S([S]) \\
	%&=n \cdot \cl(S,S) \\
	&= n \cdot 1_S \\
	&= n \cdot e(1) \\
	&= e(n).
\end{align*}

Now consider the second equation. What we want to show 
precisely is that for  $\NS$-schemes with 
supports $(X,\Phi)$ and $(Y,\Psi)$ and integral 
closed subschemes 
$W \in \Phi, V \in \Psi$ we have 
\begin{equation}\label{propstep4whattoshowfirst}
	\phic_{(X\times_S Y, \Phi\times_S \Psi)}([W]\times_S [V])
	= T(\phic_{(X,\Phi)}([W])\otimes_S\phic_{(Y,\Psi)}([V]))
\end{equation}
Let $i_W: (X,W) \to (X,\Phi)$ and $i_V:(Y,V) \to (Y,\Psi)$ be the 
maps in $\vl$
induced by the identities. Then so is 
$i_W\times_S i_V:(X\times_S Y,W\times_S V)
\to (X\times_S Y, \Phi\times_S \Psi),$
and by naturality of $T$ we have 
\begin{align*} 
	T(\phic_{(X,\Phi)}([W])\otimes_S\phic_{(Y,\Psi)}([V])) & = T(\fl(i_W)(\cl(W,X))
	\otimes_S \fl(i_V)(\cl(V,Y)))\\
	&= \fl(i_W\times_S i_V)(T(\cl(W,X)\otimes_S \cl(V,Y)) .
\end{align*}
If neither $W$ nor $V$ is flat over $S$, then $[W] \times_S [V] = 0$ by definition
and $T(\cl(W,X)\otimes_S \cl(V,Y)) = 0$ by condition (\eqref{axiomproduct}), and
therefore both sides of \eqref{propstep4whattoshowfirst} are 0. Without loss of 
generality we may assume that $W$ is flat over $S$. Then $[V]\times_S[W] = [V\times_S 
W]$ and since 
$$
\phic_{(X\times_S Y, \Phi\times_S \Psi)}([W\times_S V]) = 
\fl(i_W\times_S i_V)(\cl(W\times_S V,X\times_SY)),\footnotemark
$$
we see from the above equation 
that it is enough to show 
\begin{equation} \label{propstep4whattoshow}
	\cl(W\times_S V,X\times_SY)= T(\cl(W,X)\otimes_S \cl(V,Y)).
\end{equation}
\footnotetext{This equation hold by definition if $W\times_S V$ is integral, and it is clear
	from the definition of the class element in the non-integral case that the equation
	extends to that case to.}

We first consider the case when $W\times_S V = \emptyset$. Then both 
$\cl(W\times_S V,X\times_SY)$ and
$T(\cl(W,X)\otimes_S \cl(V,Y))$ lie in 
$F(X\times_S Y,\emptyset) = 0$ so they trivially
agree.

Now assume that $W\times_S V$ is not empty.
We can find some open $U_X \subset X$ and 
$U_Y \subset Y$
such that 
\begin{align*}
	\emptyset &\neq W \cap U_X \,\, \text{is regular, and 
	} \\
	\emptyset &\neq V \cap U_Y \,\, \text{is regular}
\end{align*}
(because $S$ is excellent and thus in particular 
J-2).Then $U_X\times_S U_Y \subset X \times_S Y$ is open 
and 
the non-empty $(W\times_S V) \cap (U_X\times_S U_Y)$
is regular, if $W\times_S V \neq \emptyset$.
Denote by 
\begin{align*}
	j_X:(U_X,U_X\cap W) &\to (X,W),\,\,\,\text{and} \\
	j_Y: (U_Y,U_Y \cap V) &\to (Y,V)
\end{align*}
the maps in $\vu$ 
induced by the
open immersions $U_X \hookrightarrow X$ and $U_Y \hookrightarrow Y$
respectively. Then 
$$
j_{X\times_S Y}:= j_X\times_S j_Y: (U_X\times_S U_Y,(W\times_S V)\cap (U_X\times_S U_Y))
\to (X\times_S Y, W\times_S V)
$$
is the map in $\vu$ induced
by the open immersion $U_X\times_S U_Y \hookrightarrow X \times_S Y$.
Again we use the naturality of $T$ to see

\begin{align*}
	\fu(j_{X\times_S Y})(T(\cl(W,X)&\otimes \cl(V,Y)) \\
	&= T(\fu(j_X)(\cl(W,X))\otimes \fu(j_Y)(\cl(V,Y))).
\end{align*}
By condition (\ref{axiomopenimmersion}) we have that
\begin{align*}
	\fu(j_X)(\cl(W,X)) &= \cl(U_X\cap W,U_X), 
	\,\,\, \text{and} \\
	\fu(j_Y)(\cl(V,Y)) &= \cl(U_Y\cap V,U_Y),
\end{align*}
so
$$
\fu(j_{X\times_S Y})(T(\cl(W,X)\otimes \cl(V,Y))
= T(\cl(U_X\cap W,U_X) \otimes \cl(U_Y\cap V,U_Y)).
$$
Condition (\ref{axiomproduct}) tells us that
$T(\cl(U_X\cap W,U_X) \otimes \cl(U_Y\cap V,U_Y))
= \cl((U_X\cap W)\times_S(U_Y\cap V,U_X\times_S U_Y))$
and condition (\ref{axiomopenimmersion}) says that
$\cl((U_X\cap W)\times_S(U_Y\cap V),U_X\cap U_Y)
= 
\fu(j_{X\times_S Y})(\cl(W\times_S V,X\times_S Y),$
so we have
$$
\fu(j_{X\times_S Y})(\cl(W\times_S V,X\times_S Y)
=\fu(j_{X\times_S Y})(T(\cl(W,X)\otimes_S \cl(V,Y)))
$$
and \eqref{propstep4whattoshow} follows by semi-purity
if $W\times_S V$ is of pure $S$-dimension, but this follows from
Proposition \ref{SuslinVoevodsky}.
\bigskip

We have constructed a natural transformation of right-lax
symmetric monoidal functors
$$
\phi: (\CHl,\times_S,\one) \to (\fl,T,e),
$$	
that we wish to extend to a morphism $\phi\in \Hom_{\bfT}(\CH,\f)$,
i.e. we want to show that the following diagram commutes for 
all $f:(X,\Phi) \to (Y,\Psi)$ in $\vu$:
\begin{equation} \label{123step1diagram}
	\xymatrix{
		\CH(Y/S,\Psi) \ar[d]_-{\phi_{(Y,\Psi)}} \ar[r]^-{\CHu(f)} & \CH(X/S,\Phi) 
		\ar[d]^-{\phi_{(X,\Phi)}} \\
		\f(Y,\Psi) \ar[r]^-{\fu(f)} & \f(X,\Phi).
	}
\end{equation}
The proof proceeds in 5 steps.  In Step 5
we show that diagram \eqref{123step1diagram} commutes when $f$ is 
a smooth morphism. Steps 6 and 
7 are technical steps that 
we use in Step 8 in which we 
prove that diagram \eqref{123step1diagram}
commutes when $f$ is a closed immersion.
In Step 9 we deduce the
general case from steps 5 and 
8, since a general 
morphism $f$ will be an l.c.i. morphism.

\subsection*{\textbf{\textit{Step 5:}}} \label{step5}
In this step we assume we are given a 
\textit{smooth}  $f: (X,\Phi) \to 
(Y,\Psi)$ in 
$\vu$. We want to show that diagram 
\eqref{123step1diagram} commutes for this $f$.

\vspace{1em}

The additivity of all maps tells us that it 
is enough to show that \eqref{123step1diagram} is 
commutative when evaluated
at $[W]$ for $W \in \Psi$ irreducible.
Secondly, to show \eqref{123step1diagram} is 
commutative when evaluted at $[W]$, 
for $W \in \Psi$ irreducible, it is enough to 
show that the following 
diagram is commutative when evaluated at 
$[W]$\footnotemark.
$$ \label{124step1simplerdiagram}	
\xymatrix{
	\CH(Y/S,W) \ar[rr]^-{\CHu(f_1)} 
	\ar[d]_-{\phi_{(Y,W)}} && 
	\CH(X/S,f_1^{-1}(W)) 
	\ar[d]^-{\phi_{(X,f_1^{-1}(W))}} \\
	\f(Y,W) \ar[rr]^-{\fu(f_1)} && 
	\f(X,f_1^{-1}(W)).
}
$$ 
\footnotetext{
	The map $f_1: (X,f^{-1}(W)) \to (Y,W)$ is induced by the same smooth map as
	$f:(X,\Phi) \to (Y,\Psi)$ is. We also denote this underlying map by $f$.} 
This is easy diagram chasing.

Since $X$ and $Y$ are $\NS$-schemes
and $S$ is regular, they are themselves regular and therefore the disjoint unions 
of irreducible $\NS$-schemes.
If $X = \amalg X_i$ and $Y=\amalg Y_j$ then
for any $i$ there is some $j$ such that 
$f(X_i)\subset Y_j$. We can thus 
reduce to the case where $X$ and $Y$ are 
irreducible $\NS$-schemes. 

We are reduced to showing that 
for irreducible $\NS$-schemes $X$ and $Y$, 
a smooth morphism $f:X\to Y$ and an integral
closed subscheme $W \subseteq Y$ we have
$$ 
\fu(f)(\phi_{(Y,W)})[W]) = \phi_{(X,f^{-1}(W))}(\CHu(f)([W])).
$$
Since $\phi_{(Y,W)}([W]) = \cl(W,Y),$
and	 
\begin{align}
	\phi_{(X,f^{-1}(W))}(\CHu(f)([W])) &= \phi_{(X,f^{-1}(W))}([f^{-1}(W)]) \\ \notag
	&= \phi_{(X,f^{-1}(W))}(\sum_i n_i[V_i]) \\ \notag
	&= \sum_i n_i \phi_{(X,f^{-1}(W))}([V_i])\\\notag 
	&= \sum_i n_i \cl(V_i,X) \\ \notag
	& = \cl(f^{-1}(W),X),		\notag
\end{align}
where $\sum_i n_i [V_i]$ is the fundamental class
of $f^{-1}(W)$,
we are reduced to showing that

\begin{equation} \label{thmstep1whatIwanttoshow}
	\fu(f)(\cl(W,Y)) = \cl(f^{-1}(W),X).	
\end{equation}

By Lemma \ref{genericpointslemma} we can find an open $U \subset Y$ such that
$U$ contains all the generic points of $W$, $U\cap W$ is regular, $f^{-1}(U)$ 
contains all generic points of $f^{-1}(W)$, and $f^{-1}(U) \cap f^{-1}(W)$ is regular. Take such a $U$, and  denote by 
\begin{itemize}
	\item $\jmath_1: (U,U\cap W) \to (Y,W)$, 
	\, and
	\item $\jmath_2: (f^{-1}(U),f^{-1}(U\cap W) \to (X,f^{-1}(W))$
\end{itemize}
the maps in $\vu$ induced by the open immersions $U \hookrightarrow Y$ and 
$f^{-1}(U)\hookrightarrow X$ respectively.
By condition (\ref{axiomsmoothmorphism}) we have 
\begin{equation} \label{thmstep1pullbackfromaxiom}
	\fu(f)(\cl(U\cap W,U)) = \cl(f^{-1}(U\cap W,f^{-1}(U)))
\end{equation}
and we want to deduce \eqref{thmstep1whatIwanttoshow} from this.

By condition (\ref{axiomopenimmersion}) we have 
$\fu(j_1)(\cl(W,Y)) = \cl(U\cap W,U),$
and if, as before, the irreducible 
compontents of $f^{-1}(W)$ are $V_i$ then the 
irreducible 
components of $f^{-1}(U\cap W)$ are 
$f^{-1}(U)\cap V_i$ (since $f^{-1}(U)$ 
contains all the generic points of 
$f^{-1}(W)$). Therefore by condition
(\ref{axiomopenimmersion}) we have
\begin{align*}
	\fu(j_2)(\cl(f^{-1}(W),X)) &= 
	\fu(j_2)(\sum_i n_i \cl(V_i,X)) \\ \notag
	&= \sum_i n_i \fu(j_2)(\cl(V_i,X)) \\ \notag 
	&= \sum_i n_i \cl(f^{-1}(U)\cap V_i,f^{-1}(U)) \\ \notag
	& = \cl(f^{-1}(U\cap W),f^{-1}(U)). \notag
\end{align*}
Substitute this into \eqref{thmstep1pullbackfromaxiom} to obtain
\begin{equation} \label{thmstep1middlestep}
	\fu(f)(\fu(j_1)(\cl(W,Y))) = \fu(j_2)(\cl(f^{-1}(W),X))
\end{equation}
By applying the contravariant functor $\fu$ to the following diagram 
$$
\xymatrix{
	f^{-1}(U) \ar[r]^-{j_2} \ar[d]_-{f} &
	X \ar[d]^-{f} \\
	U \ar[r]^-{j_1} &
	Y,
}
$$

and substituting  into \eqref{thmstep1middlestep} we obtain
\begin{equation} \label{thmstep1panulteq}
	\fu(j_2)(\fu(f)(\cl(W,Y))) = \fu(j_2)(\cl(f^{-1}(W),X)).
\end{equation}
By construction $f^{-1}(U)$ contains all the generic points of
$f^{-1}(W)$. By assumption, $X$ and $Y$ are irreducible
and so our smooth morphism $f:X\to Y$ is 
smooth of relative $S$-dimension $r=\dim_S(X)
-\dim_S(Y)$. This is stable under 
base change so $f^{-1}(W) \to W$ is 
smooth of relative $S$-dimension $r$. In
particular, it has pure $S$-dimension so that
$\fu(f)(\cl(W,Y))$ and 
$\cl(f^{-1}(W),X)$
both lie in $\f_{2\dim_S(f^{-1}(W))}(X,f^{-1}(W))$. 
Therefore by 
semi-purity, equation 
\eqref{thmstep1panulteq} implies
$$
\fu(f)(\cl(W,Y)) = \cl(f^{-1}(W),X).
$$

\subsection*{\textbf{\textit{Step 6:}}} \label{step6}

Consider a vector bundle
$$
p:E \to X
$$
and let $s:X \to E$ be the zero-section. We want to prove that the following diagram commutes
for any closed subscheme $W \hookrightarrow X$:

\begin{equation}\label{step2whatwewanttoshow}
	\xymatrix{ 
		\CH(E/S,p^{-1}(W)) \ar[r]^-{\CHu(s)} \ar[d]_-{\phi} &
		\CH(X/S,W) \ar[d]^-{\phi} \\
		\f(E,p^{-1}(W)) \ar[r]^-{\fu(s)} &
		\f(X,W).
	}
\end{equation} 
This diagram makes 
sense since that $E$ is smooth over $S$.

Proposition \cite[Theorem 3.3.]{Fultonbook} says that
if $p:E \to X$ and $s:X \to E$ are as above, then the flat pullback
$$
\CHu(p)=:p^{*}:\CH_k(X/S) \to \CH_{k+n}(E/S)
$$
is an isomorphism for all $k$ (where $n$ is the rank of the 
vector bundle  $p:E\to X$). Let $a \in 
\CH(E/S,p^{-1}(W))$, which we can write as $a = \CHu(p)(b)$ 
for some 
$b \in \CH(X/S,W)$ by the above isomorphism we get:
\begin{align} \label{thmstep2firstalign}
	\fu(s)\circ \phi_{(E,p^{-1}(W))}(a) & = \fu(s)\circ \phi_{(E,p^{-1}(W))} \circ \CHu(p)(b) \\  \notag
	& = \fu(s)\circ \fu(p) \circ \phi_{(X,W)}(b), \,\,\, 
	\text{by Step 1} 
	\\ \notag
	& = \fu(p\circ s)(\phi_{(X,W)}(b)) \\ \notag
	& = \phi_{(X,W)}(b). 
\end{align}
On the other hand
\begin{align} \label{thmstep2secondalign}
	\phi_{(X,W)}\circ \CHu(s)(a) &= \phi_{(X,W)}\circ \CHu(s) \circ \CHu(p)(b) \\ \notag
	& = \phi_{(X,W)} \circ \CHu(p\circ s)(b) \\ \notag
	& = \phi_{(X,W)}(b).
\end{align}
Combining equations \eqref{thmstep2firstalign} and 
\eqref{thmstep2secondalign}, gives the commutativity of\eqref{step2whatwewanttoshow}.

\subsection*{\textbf{\textit{Step 7:}}} \label{step7}
Now let $W\subset X$ be a closed subscheme  and consider the 
morphisms in both
$\vl$ and $\vu$
\begin{align*}
	& i_0: (X,W) \to (X\times_S\P^1_S,W\times_S \P^1_S), \,\,\,\text{and} \\
	& i_{\infty}:(X,W) \to (X\times_S\P^1_S,W\times_S \P^1_S)  
\end{align*}
induced by the inclusions $(X\times_S \{0\}) \subset X\times_S \P^1_S$ and 
$(X\times_S\{\infty\}) \subset X\times_S\P^1_S$ respectively.
We want to show that
$$
\fu(i_0) = \fu(i_{\infty}).
$$
First of all we notice that 
$$
pr_1 \circ i_{\epsilon} = id: (X,W) \to (X,W)
$$
where $i_{\epsilon}$ denotes either $i_0$ or $i_{\infty}$ and
$pr_1:(X\times_S\P^1_S,W\times_S\P^1_S) \to (X,W)$ is the morphism in $\vl$
induced by the first projection $X\times_S \P^1_S \to X$. Furthermore, in $\CH$ we 
have $1_X = [X]$ and since $\phi(1_X)= 1_X$ we have $\phi([X]) = 1_X$. We can 
therefore write for any $a \in \f(X\times_S\P^1_S,W\times_S\P^1_S)$
\begin{equation}\label{thmstep3firstequation}
	\fu(i_{\epsilon})(a) =\fl(pr_1)\fl(i_{\epsilon})(\phi([X])\cup \fu(i_{\epsilon})(a)).
\end{equation}
The first projection formula
tells us
that
\begin{equation} \label{thmstep3secondequation}
	\fl(i_{\epsilon})(\phi([X])\cup \fu(i_{\epsilon})(a)) = \fl(i_{\epsilon})(\phi([X]))\cup a.
\end{equation}
Combining \eqref{thmstep3firstequation} and 
\eqref{thmstep3secondequation} we obtain 
\begin{equation}
	\fu(i_{\epsilon})(a) = \fl(pr_1)(\fl(i_{\epsilon})(\phi([X]))\cup a).
\end{equation} 		
Now $\phi$ is a natural transformation $\CHl \to \fl$ so we have 
$$
\fl(i_{\epsilon})(\phi([X])) = \phi(\fl(i_{\epsilon})([X])) = \phi([X\times_S \{\epsilon\}]),
$$
but $[X\times_S\{0\}] \sim [X\times_S\{\infty\}]$ as cycles
which shows that 
$$
\fu(i_0) = \fu(i_{\infty}).
$$

\subsection*{\textbf{\textit{Step 8:}}}\label{step8}	

In this step we want to show that $\phi$ commutes with $\fu(f)$ when $f$ is 
a closed immersion. Namely, let $f:X\to Y$ be a closed immersion of smooth $S$-schemes
and $V\subset Y$ be a closed subscheme. Denote the preimage of $V$ by 
$W:=f^{-1}(V) := V\times_Y X$. The immersion $f$ induces a morphism 
$f:(X,W) \to (Y,V)$ in $\vu$ and we want to show that
\begin{equation} \label{123step4whattoshowsquare}
	\xymatrix{
		\CH(Y/S,V) \ar[r]^-{\CHu(f)} \ar[d]^-{\phi_{(Y,V)}} &
		\CH(X/S,W) \ar[d]^-{\phi_{(X,W)}} \\
		\f(Y,V) \ar[r]^-{\fu(f)} &
		\f(X,W)
	}
\end{equation}
commutes.
Because of additivity, we are reduced to
showing
\begin{equation} \label{whattoshow}
	\fu(f)(\phi([V])) = \phi(\CHu(f)([V]))
\end{equation}
for integral $V$. 
Consider the deformation to the normal cone, i.e. consider the following schemes:
\begin{align*}
	M^{0} &:= Bl_{X\times_S \{\infty_S\}}(Y\times_S \P^1_S)\setminus Bl_{X\times_S \{\infty_S\}}(Y\times_S \{\infty_S\}), \,\,\, \text{and} \\
	\tilde{M}^0 &:= Bl_{W\times_S\{\infty_S\}}(V\times_S \P^1_S)\setminus Bl_{W\times_S \{\infty_S\}}(V\times_S \{\infty_S\}).
\end{align*}		
$\tilde{M}^0$ is closed in $M^0$.
We have 
a dominant morphism
$\rho^0: M^0\to \P_S^1,$
such that 
\begin{align*}
	(\rho^0)^{-1}(\A_S^1 &:= \P_S^1\setminus\{\infty_S\}) = 
	Y\times_S\A_S^1, \,\,\, \text{and} \\
	(\rho^0)^{-1}(\infty_S) &= C_X Y := C_{X\times_S\{\infty_S\}} 
	Y\times_S \{\infty_S\},
\end{align*}
and closed immersions
\begin{align*}
	i_X:X\times_S \P_S^1 &\to M^0 \,\,\, \text{and} \\
	i_W:W\times_S \P_S^1 & \to \tilde{M}^0,
\end{align*}
that deform the immersions $X\to Y$ and $W\to V$ respectively 
over $\A_S^1$ to the zero section of the respective normal cones 
$C_X Y$ and $C_W V$. We have $ W \times_S \P_S^1 = \tilde{M}^0 
\cap (X\times_S \P_S^1)$
as closed subschemes of $M^0$, and therefore we obtain a 
morphsm 
in $\vu$ induced by $i_X$
$$
g: (X\times_S \P_S^1, W\times_S \P_S^1) \to (M^0,\tilde{M}^0).
$$
We define for $\epsilon \in \{0_S,\infty_S\}$ 
morphisms
$$
\ie: (X\times_S\{\epsilon\}, W\times_S\{\epsilon\}) \to (M^0,\tilde{M}^0)
$$ 
in $\vu$ by the composition
$$
(X\times_S\{\epsilon\}, W\times_S\{\epsilon\}) \xrightarrow{j_{\epsilon}} 
(X\times_S \P_S^1, W\times_S \P_S^1) \xrightarrow{g} (M^0,\tilde{M}^0),
$$
where $j_{\epsilon}$ is induced by the inclusions $X\times_S \{\epsilon\} \to X\times_S
\P_S^1$. In Step 7, we
showed that  $\fu(j_0) = \fu(j_{\infty})$ so we have 
\begin{equation} \label{123step4eqninulliinfty}
	\fu(i_0) =\fu(i_{\infty}).
\end{equation}
Consider the open immersion $Y\times_S \A_S^1 \to M^0$
and let 
$j: (Y\times_S \A_S^1, V\times_S \A_S^1)
\to (M^0,\tilde{M}^0)$ 
be the induced morphism in $\vu$. Consider also the morphism
$p:(Y\times_S \A_S^1, V\times_S \A_S^1) \to
(Y,V)$
in $\vu$ induced by the first projection $Y\times_S \A_S^1 \to Y$,
We have 
$$
\CHu(p)([V]) = [V\times_S \A_S^1] = \CHu(j)([\tilde{M}^0]).
$$
Combining this with Step 5 (the commutativity of \eqref{123step1diagram} for smooth $f$) gives us
\begin{equation*}\label{123step4eqnpandj}
	\fu(p)(\phi([V])) = \phi(\CHu(p)([V])) = \phi(\CHu(j)([\tilde{M}^0])).
\end{equation*}
The morphism $i_0$ has a factorization in 
$\vu$ 
$$
\xymatrix{
	(X\times_S\{0_S\},W\times_S\{0_S\}) \ar[r]^-{i_0} \ar[d]_-{f} &
	(M^0, \tilde{M}^0) \\
	(Y\times_S \{0_S\},V\times_S\{0\}) \ar[r]_-{\alpha} &
	(Y\times_S\A_S^1,V\times_S\A_S^1)\ar[u]_-{\beta}
}
$$
where $\beta$ is an open immersion and 
$\alpha$ is the closed immersion of the 
effective Cartier divisor $Y\times_S \{0_S\}$
into $Y\times_S \A_S^1$. 
By Step 1 we have 
$$
\fu(\beta)(\cl(\tilde{M}^0,M^0)) = 
\cl(V\times_S\A_S^1,Y\times_S\A_S^1),
$$
and since $Y\times_S \{0_S\}$ is a smooth
Cartier divisor in $Y\times_S \A_S^1$ 
intersecting $V\times_S\A^1_S$ properly we
have by Step 2 that
$$
\fu(\alpha)(\cl(V\times_S\A_S^1,Y\times_S\A_S^1)) = \cl(V\times_S \{0_S\},Y\times_S\{0_S\}).
$$
Furthermore, $\phi([V]) = \cl(V,Y)$ and $\phi([\tilde{M}^0]) = \cl(\tilde{M}^0,M^0)$,
and therefore we have 
$$
\fu(f)(\phi([V])) = \fu(i_0)(\phi([\tilde{M}^0])),
$$
and \eqref{123step4eqninulliinfty} gives us
\begin{equation} \label{fandiinf}
	\fu(f)(\phi([V])) = \fu(i_{\infty})(\phi([\tilde{M}^0])).
\end{equation}
The normal bundle $N_X Y$ is a smooth effective 
Cartier divisor in $M^0$ and
$N_X Y$ intersects $\tilde{M}^0$
properly since
$$
N_X Y \cap \tilde{M}^0 = C_W V.
$$
The morphism $i_{\infty}$ has a factorization in $\vu$
\begin{equation} \label{factorizationiinf}
	\xymatrix{
		(X,W) \ar@/_3pc/[rr]^-{i_{\infty}}\ar[r]^-{s} & (N_X Y, C_W V) \ar[r]^-{t} & (M^0, \tilde{M}^0)
	}
\end{equation}
where $s$ is induced by the zero-section $X\to N_X Y$ of the normal bundle of $X$ in
$Y$, and $t$ is induced by the closed immersion $N_X Y = C_X Y \to M^0$. Step 2 
tells us that
\begin{equation*}
	\fu(t)(\phi_{(M^0, \tilde{M}^0)}([\tilde{M}^0]) = \phi_{(N_X Y, C_W V)}([C_W V]).
\end{equation*} 
Consider the fiber diagram
$$
\xymatrix{
	N_X Y \times_X W \ar[d] \ar[r] &
	W \ar[d] \ar[r]^{f|_W} &
	V\ar[d]\\
	N_X Y \ar[r] &
	X \ar[r]^f &
	Y.
}
$$
Since $W \to X$ is a closed immersion, $N_X Y\times_X W \to N_X Y$ is a closed 
immersion as well and the zero section $X \to N_X Y$ also induces a morphism
$$
s':(X,W) \to (N_X Y, N_X Y \times_X W)
$$
in $\vu$. The identity morphism $N_X Y \to N_X Y$ induces a morphism 
$$
\tau: (N_X Y, C_W V) \to (N_X Y, N_X Y \times_X W)
$$
in $\vl$ and we have a Cartesian diagram
$$
\xymatrix{
	(X,W) \ar[r]^-{id} \ar[d]_-{s} &
	(X,W) \ar[d]^-{s'} \\
	(N_X Y, C_W V) \ar[r]^-{\tau} &
	(N_X Y, N_X Y \times_X W).
}
$$
The morphism $s'$ is induced by the closed immersion $X\to N_X Y$ (this is a closed immersion
since $N_X Y\to X$ is affine and hence separated) and 
$\tau$ is clearly transversal to $s'$ so from the definition
of WCTS we have 
\begin{equation} \label{pushpullzerosection}
	\fu(s) = \fu(s')\circ \fl(\tau).
\end{equation}
We then have
\begin{align*}
	\fu(f)(\phi([V]))&=\fu(i_{\infty})(\phi([\tilde{M}^0])) \,\,\,\text{by}\,\,\, 
	\eqref{fandiinf} \\
	&= \fu(s)\circ \fu(t)(\phi([\tilde{M}^0])) \,\,\,\text{by}\,\,\, \eqref{factorizationiinf} \\
	&= \fu(s')\circ \fl(\tau) \fu(t)(\phi([\tilde{M}^0])) \,\,\,\text{by}\,\,\,
	\eqref{pushpullzerosection} \\
	&= \fu(s')\circ \fl(\tau)(\phi(\CHu(t)([\tilde{M}^0]))) \,\,\,\text{by Step 2}\,\,\,
	\\
	&= \fu(s')(\phi(\CHl(\tau)\circ\CHu(t)([\tilde{M}^0]))) \,\,\,\phi\,\,\text{commutes with pushforwards}\\
	&= \phi(\CHu(s')\circ\CHl(\tau)\circ\CHu(t)([\tilde{M}^0]))) 
	\,\,\,\text{by Step 6}\,\,\,  \\
	&= \phi(\CHu(i_{\infty})(\tilde{M}^0)) \,\,\, \text{by} \,\,\, \eqref{pushpullzerosection} \\
	&= \phi(\CHu(i_0)(\tilde{M}^0))\\
	&= \phi(\CHu(f)([V])).
\end{align*}

\subsection*{\textbf{\textit{Step 9:}}} \label{step9}
To finish the proof we let $f:(X,\Phi) \to (Y,\Psi)$ 
be any morphism in $\vu$. Any morphism between $\NS$-schemes
is an l.c.i. morphism so we can factor 
$f$ as 
$$
(X,\Phi) \xrightarrow{i} (Z,\Omega) \xrightarrow{g} (Y,\Psi)
$$
for some $S$-scheme $Z$ and a some family of supports
$\Omega$ on $Z$. Here $g: (Z,\Omega) \to (Y,\Psi)$ is 
induced by a smooth morphism and $i: (X,\Phi) \to 
(Z,\Omega)$ is induced by a regular closed immersion.

We want to show that
$$
\phi \circ \CHu(f) = \fu(f)\circ \phi
$$
It is enough to show that this holds for any 
$[V]$ where $V\in \Psi$ is irreducible. But then
\begin{align*}
	\phi\circ\CHu(f) & = \phi \circ \CHu(i)\circ \CHu(g) \\
	&= \fu(i)\circ \phi \circ \CH(g) \,\,\, \text{by Step 4}  \\
	& = \fu(i)\circ \fu(g) \circ \phi \,\,\, \text{by Step 1} \\
	&= \fu(f)\circ \phi.
\end{align*}
\end{proof}

\begin{lem}\label{equaldimlemma}
	If $X$ is an $S$-scheme locally of finite 
	type and  $W \subset X$ is an irreducible 
	closed subset, $Y$ is a locally 
	Notherian, locally of finite type 
	$S$-scheme $f:X\to Y$ is a morphism 
	of $S$-schemes such that the restriction
	$f|W:W\to Y$ 
	is proper and $\dim_S(W) = \dim_S(f(W))$, then
	there exists an open $U \subset Y$ such 
	that 
	\begin{itemize}
		\item $U \cap f(W) \neq \emptyset$,
		\item $U\cap f(W)$ is regular,
		\item $f^{-1}(U)\cap W$ is regular, and
		\item The map induced from $f$ by 
		restriction
		$$
		f': f^{-1}(U) \cap W \to U\cap f(W)
		$$
		is finite. 
	\end{itemize}
\end{lem}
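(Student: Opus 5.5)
We give each closed irreducible set its reduced structure, so that $W$ and $T:=f(W)$ are integral. Since $f|_W$ is proper, $T$ is closed in $Y$, and $g:=f|_W:W\to T$ is a proper surjective morphism of integral schemes. The plan is to first show that $g$ is generically finite, then spread finiteness out over a dense open of $T$, and finally shrink further to reach the regular loci.

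\emph{Step 1: generic finiteness.} Let $\eta$ be the generic point of $T$. By \cite[Proposition 2.1.3]{AndreasWeberThesis}, the dimension formula gives $\dim_S(W)=\dim_S(T)+\dim_T(W)$. The hypothesis $\dim_S(W)=\dim_S(T)$ then forces $\dim_T(W)=0$, i.e.\ $\trdeg(R(W)/R(T))=0$. Hence the generic fibre $W_\eta$ is a scheme of finite type over $k(\eta)$ of dimension $0$, so it is finite over $\Spec k(\eta)$.

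\emph{Step 2: spreading out.} Since $g$ is proper, it is of finite type. Its fibre over $\eta$ is finite, so by upper semicontinuity of fibre dimension (Chevalley) there is a dense open $T_1\subset T$ over which $g$ is quasi-finite. A proper quasi-finite morphism is finite (EGA IV 8.11.1 / Zariski's Main Theorem), so $g^{-1}(T_1)\to T_1$ is finite.

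\emph{Step 3: regularity.} $W$ and $T$ are integral and of finite type over the excellent scheme $S$, hence excellent. Their regular loci are therefore open and nonempty, since each contains the generic point. Let $W^{\rm reg}$ and $T^{\rm reg}$ denote these loci. The singular locus $W\setminus W^{\rm reg}$ is closed in $W$, and because $g$ is proper, its image $g(W\setminus W^{\rm reg})$ is closed in $T$. This image does not contain $\eta$: the fibre over $\eta$ consists only of points of $W$ lying over $\eta$, each of which is a generic point of $W$, because $W_\eta$ is finite and $\dim_T W=0$. Hence
\[T_2:=T_1\cap T^{\rm reg}\setminus g(W\setminus W^{\rm reg})\]
is a dense open subset of $T$.

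Now choose an open $U\subset Y$ with $U\cap T=T_2$; this is possible since $T$ carries the subspace topology. Then $U\cap f(W)=T_2\neq\emptyset$, and $T_2$ is regular. Next, $f^{-1}(U)\cap W=g^{-1}(T_2)$, which is contained in $W^{\rm reg}$ by construction and is therefore regular. Finally, the restriction $f'$ is the base change of the finite morphism $g^{-1}(T_1)\to T_1$ to the open $T_2\subset T_1$, so it is finite.

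The main obstacle is Step 1, namely justifying the dimension formula relating $\dim_S$ to the transcendence degree in the mixed/arithmetic setting; this is where the cited properties of relative dimension from \cite{AndreasWeberThesis} are needed. A related subtlety is that the points of $W_\eta$ are generic points of $W$, which holds because $W$ is irreducible and $\dim_T W=0$.
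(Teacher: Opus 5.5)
Your proof is correct and follows essentially the same route as the paper. Both arguments get transcendence degree $0$ from Weber's dimension formula, then find a dense open of $f(W)$ over which $f|_W$ is finite (you derive this from semicontinuity of fibre dimension and Zariski's Main Theorem, where the paper cites Stacks Tag 02NX), remove $f(W_{\mathrm{sing}})$, intersect with $f(W)_{\mathrm{reg}}$, and conclude by base change. Your check that the generic point of $f(W)$ avoids $f(W_{\mathrm{sing}})$ also supplies the nonemptiness of that open set, which the paper leaves implicit.
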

\begin{proof}
	By part\cite[Proposition 2.1.3.(iii)]{AndreasWeberThesis} we have 
	$$
	\dim_S(W)  = \dim_S(f(W)) + \trdeg(R(W)/R(f(W)))
	$$
	We assume that $\dim_S(W) = \dim_S(f(W))$ 
	so we have
	$$
	\trdeg(R(W)/R(f(W))) = 0
	$$
	It is known, see for example \cite[Tag: 02NX]{Stacks}, that if $f:X\to Y$ is a dominant morphism, locally of 
	finite type between integral schemes, then 
	the following are equivalent
	\begin{enumerate}
		\item The extension $R(Y) \subseteq R(X)$
		has transcendence degree 0,
		\item There exists a nonempty affine open
		$V \subseteq Y$ such that 
		$$
		f^{-1}(V) \to V
		$$
		is finite.
	\end{enumerate}
	This allows us 
	(since $f|_W$ is proper
	and hence separated)
	to obtain 
	a nonempty affine open subset $U_1 \subset f(W)$ such
	that the restriction 
	$$
	f: f|_W^{-1}(U_1) = f^{-1}(U_1)\cap W \to U_1
	$$
	is finite.
	Now consider the singular locus
	in $W$ (i.e. the locus of points in 
	$W$ that are not regular). Since $S$
	is excellent it is in particular J-2. 
	Any scheme
	that is locally of finite type over
	$S$ is J-2, so given our assumptions,
	$Y$ is J-2 so $W_{reg}$ is open
	and hence the singular locus is closed.
	The restriction $f|_W$ is proper, so $f(W_{sing})$ is closed
	in $f(W)$.
	Let $\sO := f(W)\setminus f(W_{sing})$
	and define $\tilde{U} := U_1\cap \sO \cap f(W)_{reg}$.
	This is a nonempty open subset of $f(W)$ and there exists 
	an open $U \subseteq Y$ such that $U \cap f(W) = \tilde{U}$.
	Now we have already seen that $U\cap f(W) = \tilde{U}$ is nonempty,
	and it is an open subscheme 
	of $f(W)_{reg}$ so it is regular.
	Consider $f^{-1}(U) \subseteq X$.
	We have
	$$
	f^{-1}(U)\cap W \subseteq f^{-1}(U\cap f(W)) 
	= f^{-1}(\tilde{U}) \subseteq 
	f^{-1}(\sO).
	$$
	Since $\sO= f(W)\setminus f(W_{sing}),$
	we see that $f^{-1}(U)\cap W \subset W_{reg}.$
	Note that $f^{-1}(U\cap f(W))\cap W = f^{-1}(U)\cap W,$
	so we see that the map 
	$$
	f': f^{-1}(U)\cap W \to U\cap f(W)
	$$
	is finite as it is obtained from 
	the finite map $f^{-1}(U_1) \cap W \to U_1$
	by base change along $U\cap f(W) \subset U_1$.
\end{proof}

\begin{lem} \label{classelementopenlemma}
	Let $X$ be an $\NS$-scheme and $W \subseteq X$ be
	an integral closed subscheme. Let $U\subseteq X$
	be an open subscheme such that $U\cap W \neq
	\emptyset$. Let $\f$ be a WCTS satisfying semi-purity and condition (2i) from Theorem \ref{themaintheorem}. Then
	$$
	\fu(j)(\cl(W,X)) = \cl(U\cap W,U).
	$$
\end{lem}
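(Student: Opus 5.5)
The strategy is to reduce to condition (\ref{axiomopenimmersion}), which only applies when the intersection with the open set is regular. We do this by shrinking to a smaller open subset on which $W$ becomes regular, and then using semi-purity to lift the resulting equality back to $U$.

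First we find such a subset. Since $S$ is excellent, it is J-2, and hence so is any scheme of finite type over $S$; in particular $X$ is J-2. Therefore the regular locus $W_{reg}$ of the integral scheme $W$ is open in $W$. It is also nonempty, because the local ring at the generic point of $W$ is a field. The intersection $U\cap W$ is a nonempty open subset of the irreducible space $W$, so it meets $W_{reg}$ and contains the generic point of $W$. Choose an open $V\subseteq U$ with $V\cap W = U\cap W\cap W_{reg}\neq\emptyset$; this $V\cap W$ is regular. Write $j_V:(V,V\cap W)\to (U,U\cap W)$ for the map in $\vu$ induced by the inclusion, so that $j\circ j_V:(V,V\cap W)\to (X,W)$ is induced by $V\subseteq X$.

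Next we compare the two sides after pulling back to $V$. Applying condition (\ref{axiomopenimmersion}) twice gives
$$
\fu(j_V)(\fu(j)(\cl(W,X))) = \fu(j\circ j_V)(\cl(W,X)) = \cl(V\cap W,V),
$$
using functoriality of $\fu$ for the first equality, and
$$
\fu(j_V)(\cl(U\cap W,U)) = \cl(V\cap U\cap W, V) = \cl(V\cap W,V).
$$
The second application requires $\cl(U\cap W,U)$ to be defined, which it is: $U\cap W$ is an integral closed subscheme of the $\NS$-scheme $U$.

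Finally we conclude by semi-purity. Both $\fu(j)(\cl(W,X))$ and $\cl(U\cap W,U)$ lie in $\f_{2\dim_S(U\cap W)}(U,U\cap W)$. This uses that $\dim_S(U\cap W)=\dim_S(W)$, since $S$-dimension is computed at the generic point, and that $\fu(j)$ preserves the covariant grading. The latter holds because $U$ is open in $X$ and so has the same $S$-dimension on each connected component, and the two gradings are linked by axiom (3) of a WCTS. Moreover, $V$ contains the generic point of the irreducible closed subset $U\cap W$. Hence semi-purity (2) makes $\fu(j_V)$ injective on this group, and the two displayed equalities give the lemma.

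The step needing the most care is this last grading bookkeeping: checking that both classes really sit in the degree where semi-purity applies, which, as the paper does elsewhere, reduces to connected components.
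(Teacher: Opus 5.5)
Your proposal is correct and follows the paper's proof essentially step for step. You shrink to an open $V\subseteq U$ with $V\cap W$ nonempty and regular, apply condition (2i) to both $\cl(W,X)$ and $\cl(U\cap W,U)$ to obtain $\cl(V\cap W,V)$, and conclude by the injectivity part of semi-purity, since $V$ contains the generic point of $U\cap W$. Your added justifications (openness of the regular locus via J-2, and why $\fu(j)$ lands in covariant degree $2\dim_S(U\cap W)$, checked on connected components) fill in points the paper only asserts.
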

\begin{proof}
	We know since $W$ is an integral scheme over
	an excellent base scheme $S$ that it is generically regular. The same 
	is true for the open subset $ U\cap W \subset 
	W$.
	We can thus find and open subset $V \subset U$ 
	such that $V\cap(U\cap W) = V \cap W$
	is non-empty and regular. Consider 
	the map induced by inclusion
	$j_V:(V,V\cap W) \to (U,U\cap W)$.
	Notice that since $U \cap W$ is irreducible and 
	$V\cap W$
	is a non-empty subset of $U \cap W$ the generic 
	point of 
	$U \cap W$  is contained in $V\cap W$. We also have that
	$\fu(j)(\cl(W,X)),$ and  $\cl(U\cap W,U)$ are in   
	$\f_{2\dim_S (U \cap W)}(U,U\cap W)$,
	so in order to prove $\fu(j)(\cl(W,X)) = \cl(U\cap W,U),$
	it suffices by semi-purity to prove 
	$$
	\fu(j_V)(\fu(j)(\cl(W,X)))
	=\fu(j_V)(\cl(U\cap W,U)).
	$$
	Since $V \cap W$ is regular, condition 
	(\ref{axiomopenimmersion}) gives us that
	$\fu(j_V)(\cl(U\cap W,U)) = \cl(V\cap W,V),$
	and since $\fu(j_V)\circ \fu(j) = \fu(j\circ j_V)$
	where $j\circ j_V:(V,V\cap W) \to (X,W)$
	is the morphism induced by the open immersion 
	$V \subset X$, we have again by condition 
	(\ref{axiomopenimmersion})
	\begin{align*}
		\fu(j_V)(\fu(j)(\cl(W,X))) &= 
		\fu(j\circ j_V)(\cl(W,X)) \\
		& = \cl(V\cap W,V).
	\end{align*}
\end{proof}

\begin{lem}\label{genericpointslemma}
	Consider a smooth morphism $f:X\to Y$ between $\NS$-schemes. Let $W\subset Y$ be an 
	irreducible closed subscheme such that
	$f^{-1}(W) \neq \emptyset$.
	Then there exists an open subset $U \subset Y$ such that
	\begin{itemize}
		\item $U$ contains the generic point of $W$,
		\item $U\cap W$ is regular,
		\item $f^{-1}(U)$ contains all the generic points of $f^{-1}(W)$, and
		\item $f^{-1}(U) \cap f^{-1}(W)$ is regular.
	\end{itemize}
\end{lem}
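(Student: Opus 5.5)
Let $\eta$ denote the generic point of $W$. The plan is to remove from $Y$ two closed sets: the singular locus of $W$, and the closure in $Y$ of the image of the singular locus of $f^{-1}(W)$. Each removal must avoid the relevant generic points.

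\emph{Regular locus of $W$.} Since $S$ is excellent, every scheme locally of finite type over $S$ is J-2, as already used in Lemma \ref{equaldimlemma}. Hence the regular locus $W_{reg}$ is open in $W$. It is nonempty because $W$ is integral with local ring at $\eta$ a field, so $\eta \in W_{reg}$. Let $W_{sing} = W \setminus W_{reg}$, which is closed in $W$ and therefore closed in $Y$, and does not contain $\eta$.

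\emph{Regular locus of $f^{-1}(W)$.} The map $f^{-1}(W) = W\times_Y X \to W$ is smooth, being a base change of $f$. Over the open set $W_{reg}$, the scheme $f^{-1}(W_{reg})$ is smooth over a regular scheme, hence regular. The key point is that every generic point $\xi$ of $f^{-1}(W)$ maps to $\eta$. This holds because $f$ is smooth, hence flat, and so $f|_{f^{-1}(W)}$ is flat. Flat morphisms locally of finite presentation are open and satisfy going-down, so generic points of the source map to generic points of the target, and $\eta$ is the only generic point of $W$. Thus every generic point of $f^{-1}(W)$ lies in $f^{-1}(W_{reg})$.

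\emph{Choice of $U$.} Set $U := Y \setminus W_{sing}$. Then:
\begin{enumerate}[(i)]
\item $U$ is open and contains $\eta$.
\item $U \cap W = W_{reg}$ is regular.
\item $f^{-1}(U)$ contains all generic points of $f^{-1}(W)$, by the previous paragraph.
\item $f^{-1}(U)\cap f^{-1}(W) = f^{-1}(W_{reg})$ is smooth over the regular scheme $W_{reg}$, hence regular.
\end{enumerate}
For (iv) we use the standard fact that smooth over regular is regular, e.g.\ \cite[Tag: 036D]{Stacks}. The hypothesis $f^{-1}(W)\neq\emptyset$ only ensures the statement is not vacuous.

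\emph{Main obstacle.} The step needing care is showing that generic points of $f^{-1}(W)$ lie over $\eta$. This uses openness (or going-down) for the flat base change $f^{-1}(W)\to W$. An irreducible component of $f^{-1}(W)$ mapping into a proper closed subset of $W$ would contradict openness of the restriction to a neighbourhood of its generic point missing the other components. Everything else is a direct application of the J-2 property and of regularity ascending along smooth morphisms.
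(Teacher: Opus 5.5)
Your proof is correct and essentially the same as the paper's. You take $U$ with $U\cap W = W_{reg}$ (open by J-2), use flatness of $f$ to see that every generic point of $f^{-1}(W)$ lies over the generic point of $W$, and get regularity of $f^{-1}(W_{reg})$ because it is smooth over the regular scheme $W_{reg}$; the only blemish is that your opening plan also announces removing the image of the singular locus of $f^{-1}(W)$, which your own argument shows is unnecessary and which you never actually carry out.
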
	
\begin{proof}
	$W$ is generically regular so there exists an open $U \subset Y$ such that
	$$
	U \cap W = W_{reg},
	$$
	which is open (and hence dense) in $W$. This $U$ satisfies the first 
	two conditions, namely that $U\cap W$ is regular and $U$ contains the 
	generic point of $W$ (any open subset of $W$ does). Furthermore, $f$ is 
	smooth, in particular flat, so any 
	irreducible component of $f^{-1}(W)$
	dominates $W$,
	% (see \cite[Proposition 2.3.4.(iii)]{EGA4})
	i.e. $f$ sends any generic 
	point of $f^{-1}(W)$ to the generic point of $W$. Therefore $f^{-1}(U)$ contains
	all the generic points of $f^{-1}(W)$.
	
	Finally we have that $f:f^{-1}(U\cap W)
	\to U\cap W$ is the base change of
	the smooth morphism $f:X \to Y$ along 
	$U\cap W$
	so $f^{-1}(U\cap W) \to U\cap W$ is smooth. 
	Furthermore $U\cap W$ is regular and locally Noetherian so $f^{-1}(U\cap W)$
	is regular.
	% (see \cite[Theorem 3.36]{Liu}). 
\end{proof}

\section{Cycle Class to Hodge Cohomology} \, \\

We recall some notation 
\begin{nota} \label{notation2}
	\,
	\begin{itemize}
		\item $\eta_i : \sE xt^n_Y(i_{\ast}\sO_X,\sF) \to \omega_{X/Y}\otimes i^{\ast}(\sF)$ is the 
		Fundamental Local Isomorphism, for an l.c.i. morphism $i:X\to Y$ of pure codimension
		$n$. \,\,\,\,\, \text{(See \cite[\S 2.5.]{Conrad})}
		\item $\zeta_{f,g}': \omega_{X/Z}\to \omega_{X/Y}\otimes f^{\ast}\omega_{Y/Z}$ are 
		isomorphisms for morphisms $f:X\to Y$ and $g:Y\to Z$ such that each og $f$, $g$,
		and $g\circ f$ is either separated smooth, or an l.c.i. morphism.\footnote{The precise 
			definition depends on the cases, i.e. whether all are smooth, all are l.c.i. morphisms, etc. 
			The definition in \cite{Conrad} lists the different cases and gives a precise definition in 
			each case.} \,\,\,\,\, \text{(See \cite[\S 2.2.]{Conrad})}
		\item $d_f: f^{\flat} \xrightarrow{\cong} f^!$ is and isomorphism for any finite map $f$.
		\,\,\,\,\, \text{(See \cite[(3.3.19.)]{Conrad})}
		\item $\psi_{g,f}: (f\circ g)^{\sharp} \to
		g^{\flat}\circ f^{\sharp}$ is an isomorphism defined for $f:Y\to Z$ a separated smooth
		morphism, $g: X\to Y$ is a finite morphism, and $f\circ g$ is a smooth separated 
		morphism. \,\,\,\,\, \text{(See \cite[(2.7.5.)]{Conrad})}

	\end{itemize}
\end{nota}
We begin by defining a cycle class for regular, irreducible closed subschemes. Let
$X$ be an $\NS$-scheme and let $i:Z \hookrightarrow X$ be a closed immersion 
of a regular, irreducible closed subscheme $Z$ to $X$ and denote by 
$c := \codim(Z,X)$. Then $i$ is a regular closed immersion of codimension $c$. Let 
$\sI$ be the ideal sheaf of $i$. We have a well defined map
\begin{align*}
	\sI/\sI^2 &\to i^{\ast}(\Omega_{X/S}^1) = \frac{\Omega_{X/S}^1}{\sI} \\
	\bar{a} &\mapsto da,
\end{align*}
and by taking the wedge product $\bigwedge^c :=\bigwedge^c_{\sO_Z}$ and tensoring 
with the inverse $\omega_{Z/X}$ we get a map

\begin{equation*} \label{cycleclassssecondequation}
	\sO_Z \cong	\bigwedge^c\sI/\sI^2 \otimes_{\sO_Z} \omega_{Z/X}  \xrightarrow{\phi \otimes id} 
	i^{\ast}\Omega_{X/S}^c\otimes_{\sO_Z} \omega_{Z/X}.
\end{equation*} 
Since $i$ is a regular closed immersion (so in particular an l.c.i. morphism) we know that
$\omega_{Z/X} \cong i^{!}\sO_X[c]$ and we furthermore have
$$
i^{\ast}\Omega_{X/S}^c\otimes_{\sO_Z} i^{!}\sO_X[c] \cong i^{!}(\Omega_{X/S}^c)[c],
$$
see for example \cite[\S 2.5]{Conrad}, and we therefore have a morphism
\begin{equation}\label{cycleclassthirdequation}
	\sO_Z \to i^{!}(\Omega_{X/S}^c)[c].
\end{equation}
By adjunction of $Ri_{\ast}$ and $i^!$, \eqref{cycleclassthirdequation} gives a map
\begin{equation*}
	i_{\ast}\sO_Z \to \Omega_{X/S}^c[c].
\end{equation*}
Applying $\rugz$ to this and taking the zeroth cohomology gives us 
\begin{equation*} \label{cycleclassfinalmapregular}
	H^0(Z,\sO_Z) \xrightarrow{\gamma_Z} H^{c}_Z(X,\Omega_{X/S}^c),
\end{equation*}
and we define
$$
\cl(Z,X) := \gamma_Z(1).
$$
If the ideal sheaf $\sI$ of $i:Z\hookrightarrow X$ is globally generated by a 
regular sequence $s_1,\ldots,s_c$ then equivalently 
the class element $\cl(Z,X)$ is explicitly defined as the 
image of the map $1 \mapsto \bar{s}^{\vee}_1 \wedge \cdots \wedge \bar{s}^{\vee}_c 
\otimes i_X^{\ast}(
ds_c\wedge\cdots \wedge ds_1) \in \Hom(\sO_Z,\omega_{Z/X}\otimes_{\sO_Z} 
i_X^{\ast}\Omega_{X/S}^c )$ under the composition
\begin{align} \label{spreadingoutclasscomposition}
	\Hom(\sO_Z,\omega_{Z/X}\otimes_{\sO_Z} i_X^{\ast}\Omega_{X/S}^c ) &=
	\Gamma(Z,H^0(R\sH om(\sO_Z,\omega_{Z/X}\otimes_{\sO_Z} i_X^{\ast}\Omega_{X/S}^c 
	)))  \\ \notag
	&\xrightarrow{\eta_{i_X}^{-1}}
	\Gamma(Z,\sE xt^c(\sO_Z,i_X^!\Omega_{X/S}^c)) \\ \notag
	&\cong \Ext^c((i_X)_{\ast}\sO_Z,\Omega_{X/S}^c) \\ \notag
	&\to H_Z^c(X,\Omega_{X/S}^c), \notag
\end{align}
where the map $\eta_{i_X}$ is the Fundamental 
Local Isomorphism, see Notation \ref{notation2}.\footnote{Here we use  adjunction and 
	that $\sO_Z$ is a locally free $\sO_Z$-module to get the 
	isomorphism $\Gamma(X,\sE xt^c(\sO_Z,i_X^!\Omega_{X/S}^c)) \cong 
	\Ext^c((i_X)_{\ast}\sO_Z,\Omega_{X/S}^c)$.}

The following proposition tells us that we can define a cycle class on all irreducible 
closed subschemes $Z$ in $X$ by spreading out from the regular locus.

\begin{prop} \label{spreadingoutclassproposition}
	Let $X$ be an $\NS$-scheme and let $Z\subset X$ be an irreducible closed subset
	of codimension $c$. There is a class $\cl(Z,X) \in H^c_Z(X,\Omega_{X/S}^c)$ such that
	$$
	\hu(j)(\cl(Z,X)) = \cl(U\cap Z,U)
	$$
	for every open $U \subset X$ such that $U\cap Z$ is regular and non-empty and
	$j:(U,U\cap Z) \to(X,Z)$ is the map in $\vu$ induced by the open immersion
	$U \hookrightarrow X$.  This class is unique by semi-purity. 
\end{prop}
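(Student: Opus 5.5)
The plan is to spread the class out from the regular locus using a purity/excision argument for Hodge cohomology with supports.

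Since $S$ is excellent, $Z$ (with reduced structure) is J-2, so its regular locus $Z_{reg}$ is a dense open of $Z$. Choose an open $U_0\subset X$ with $U_0\cap Z = Z_{reg}$. Put $T := Z\setminus Z_{reg}$, closed in $X$ of codimension $\ge c+1$. Consider the localization sequence for cohomology with supports attached to $T\subset Z$ and $X\setminus T$:
\begin{equation*}
H^c_T(X,\Omega^c_{X/S}) \to H^c_Z(X,\Omega^c_{X/S}) \to H^c_{Z\setminus T}(X\setminus T,\Omega^c_{X/S}) \to H^{c+1}_T(X,\Omega^c_{X/S}).
\end{equation*}
The key input is the vanishing $H^i_T(X,\sF)=0$ for $i<\codim(T,X)$ and $\sF$ locally free on the regular (hence Cohen--Macaulay) scheme $X$. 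This follows from the local cohomology spectral sequence $H^p(X,\sH^q_T(\sF))\Rightarrow H^{p+q}_T(X,\sF)$ together with the depth bound $\sH^q_T(\sF)=0$ for $q<\operatorname{depth}_T\sF=\codim(T,X)$ (Grothendieck's depth criterion, since $X$ is Cohen--Macaulay and $\Omega^c_{X/S}$ is locally free). With $\codim T\ge c+2$ the restriction $H^c_Z(X)\to H^c_{Z\setminus T}(X\setminus T)$ is an isomorphism.

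If $\codim T$ is only $c+1$, the restriction is merely injective, which is the same as the semi-purity injectivity. Then one must check that the regular class $\cl(Z_{reg},X\setminus T)$ maps to zero in $H^{c+1}_T(X,\Omega^c_{X/S})$. This is the main obstacle. The plan is to stratify $T$ and work locally. Near a generic point $t$ of a codimension-$(c+1)$ component of $T$, localize to $\Spec\sO_{X,t}$, a regular local ring of dimension $c+1$. There one needs the local class in $H^c_{Z\setminus t}$ to lift to $H^c_Z$.

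To do this, pick $f_1,\dots,f_c\in \sI_Z$ forming a regular sequence; this is possible since the ring is Cohen--Macaulay and $Z$ has height $c$. Let $Z'=V(f_1,\dots,f_c)\supseteq Z$, a complete intersection. Then $Z'$ carries an explicit class given by the Koszul/\v{C}ech construction $\left[\frac{df_c\wedge\cdots\wedge df_1}{f_1\cdots f_c}\right]$, defined globally on the localization, in the spirit of the explicit formula \eqref{spreadingoutclasscomposition}. Its restriction to the component $Z$, computed away from the other components, agrees with the regular class up to the multiplicity, which is $1$ generically. After removing the other components, which by the depth argument does not change anything, this exhibits a lift. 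By induction on the dimension of the bad locus, peeling off strata of $T$ one at a time using the localization sequence, the class extends to all of $X$.

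Uniqueness and independence of $U_0$ then follow. The restriction $H^c_Z(X)\to H^c_{U\cap Z}(U)$ is injective for any $U$ containing the generic point of $Z$, by the depth vanishing $H^c_{Z\setminus U}(X)=0$. Moreover, regular classes are compatible with restriction to smaller opens, since the construction of $\gamma_Z$ is local and commutes with flat (open) base change of the fundamental local isomorphism. Hence the class restricts correctly to every $U$ with $U\cap Z$ regular: compare both classes on $U\cap U_0$ and use injectivity.
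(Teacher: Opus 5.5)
Your handling of the case where $Z$ is regular in codimension one is fine. The localization sequence for $T\subset Z$, together with the depth vanishing $H^i_T(X,\Omega^c_{X/S})=0$ for $i<\codim(T,X)$, gives $H^c_Z(X,\Omega^c_{X/S})\cong H^c_{Z\setminus T}(X\setminus T,\Omega^c_{X/S})$ when $\codim(T,X)\ge c+2$. This replaces the Cousin-complex Steps 2--3 of the paper. The same argument gives uniqueness and reduces everything to the generic points $t$ of the codimension-$(c+1)$ components of $T$.

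The gap is the local step at such a $t$, specifically the claim that the other components of $Z'=V(f_1,\dots,f_c)=Z\cup Z''$ can be removed ``by the depth argument''. In $\Spec A$ with $A=\sO_{X,t}$, every component of $Z''$ passes through the closed point $t$. So nothing can be removed by shrinking, and $Z\cap Z''=\{t\}$. Mayer--Vietoris reads
\[
0=H^c_t\to H^c_Z\oplus H^c_{Z''}\to H^c_{Z'}\to H^{c+1}_t(\Spec A,\Omega^c_{X/S}),
\]
and since $\dim A=c+1$ the last group is in general nonzero. Depth therefore gives injectivity, not a splitting of the Koszul class $\kappa$ into a $Z$-part and a $Z''$-part. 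What $\kappa$ actually gives you is only $\partial\bigl(\cl(Z\setminus t)\bigr)=-\partial\bigl(\kappa|_{Z''\setminus t}\bigr)$ in $H^{c+1}_t$. In words, the obstruction for $Z$ equals minus the obstruction for the residual curves, which also pass through $t$ and are in general just as singular there, so the argument is circular.

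Nor can you arrange $Z''=\emptyset$. A complete intersection that is generically reduced along $Z$ and has no other component is Cohen--Macaulay and generically reduced, hence equal to $Z$. That would force $Z$ to be a complete intersection at $t$, which fails for non-l.c.i.\ $Z$ (e.g.\ the monomial curve $(s^3,s^4,s^5)$ times $\A^1$, along its singular line). Iterating links would at best handle $Z$ that are licci at $t$, not the general case.

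The missing idea, which the paper uses, is to remove codimension-one singularities by normalizing rather than linking, and then push forward. Work locally with $X$ affine, which your localization argument permits. Since $S$ is excellent, the normalization $\tilde Z\to Z$ is finite, so it factors through a closed immersion $\tilde Z\hookrightarrow\P^n_X$. Because $\tilde Z$ is normal, it is regular in codimension one, so your depth argument produces $\cl(\tilde Z,\P^n_X)$, and one sets $\cl(Z,X):=\hl(pr_1)(\cl(\tilde Z,\P^n_X))$.

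The restriction property then follows from three facts:
\begin{enumerate}
\item base change of $\hl(pr_1)$ along the open immersion $U\subset X$;
\item normalization commutes with restriction to opens, so $\tilde Z\times_X U\cong U\cap Z$ when $U\cap Z$ is regular;
\item the compatibility $\hl(\pi)(\cl(Z,\P^n_X))=\cl(Z,X)$ for regular $Z$, obtained via a section of $\pi$ and Proposition~\ref{pushforwardofclass}.
\end{enumerate}
This trades the local reciprocity statement you would need at $t$ for functoriality of the duality-theoretic pushforward.
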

\begin{proof}  
	\begin{itemize}
		\item[\underline{Step 1}:] Let $\eta$ be the generic point of $Z$. Define 
		$$
		H_{\eta}^c(X,\Omega_{X/S}^c) := \varinjlim_{U \ni \eta} H^c_{U\cap Z}(U,\Omega^c_{C/S})
		$$
		where the limit runs over all open subschemes $U \subset X$ such that 
		$\eta \in U$. Choose $U$ such that $U\cap Z$ is regular, then the 
		image of $\cl(U\cap Z,U)$ in $H_{\eta}^c(X,\Omega_{X/S}^c)$ is 
		independent of the choice of $U$ by Proposition \ref{Hodgepushpullproposition}. 
		We denote this local class by $\cl(Z,X)_{\eta}$ or $\cl(Z)_{\eta}$.
		\item[\underline{Step 2}:] A class $\alpha \in H_{\eta}^c(X,\Omega_{X/S}^c)$ 
		extends to a global class, i.e. is in 
		the image of 
		$$
		H^c_Z(X,\Omega_{X/S}^c) \to H_{\eta}^c(X,\Omega_{X/S}^c),
		$$
		if and only if for any $1$-codimensional point $x \in Z$ there exists an open 
		subset $U \subset X$ containing $x$ so that $\alpha$ lies in the image 
		of 
		$$
		H^c_{Z\cap U}(U,\Omega_{U/S}^c) \to H_{\eta}^c(X,\Omega_{X/S}^c).
		$$ 
		This is proven with the Cousin resolution, exactly as in 
		\cite[Proposition 3.1.1., Step 2]{KayAndre}.
		\item[\underline{Step 3}:]  If $Z$ is normal, then $\cl(Z)_{\eta}$ extends 
		uniquely to a class in $H_Z^c(X,\Omega_{X/S}^c)$. 
		This is exactly like \cite[Proposition 3.1.1., Step 3]{KayAndre}, 
		except of course that we are looking an open $U\subset X$ such that 
		$U\cap Z$ is regular and $U\cap Z$ contains all points of codimension 1 of $Z$.
		\item[\underline{Step 4}:] 
		We may assume, by the preceding steps, that $X$ is affine. 
		We are working over an excellent base scheme $S$, so the normalization $\tilde{Z}
		\to Z$ is a finite, and hence a projective map. Therefore the
		normalization factors as 
		$$
		\tilde{Z} \to \P^n_Z\xrightarrow{pr} Z,
		$$
		for some $n$.
		Step 3 gives us a class
		$\cl(\tilde{Z},\P_X^n) \in H^{n+c}_{\tilde{Z}}( 
		\P_X^n,\Omega^{n+c}_{
			\P^n})$ and we consider 
			$$
			\hl(pr_1)(\cl(\tilde{Z},\P_X^n)) \in 
			H^c_Z(X,\Omega_{X/S}^c).
			$$ 
		To show that $\hl(pr_1)(\cl(\tilde{Z},\P_X^n))$ is the class we are looking for, we want to
		show that for any open $U \subset X$ such that $U\cap Z \neq \emptyset$ and 
		$U\cap Z$ is regular we have 
		$$
		\hu(j)\hl(pr_1)(\cl(\tilde{Z},\P_X^n)) = \cl(U\cap Z,U),
		$$
		where $j:(U,U\cap Z) \to (X,Z)$ is induced by the open immersion. Consider the 
		Cartesian square
		$$
		\xymatrix{
			\P_U^n \ar[d]_-{j^{\prime}} \ar[r]^-{pr_1^{\prime}} 
			&  U\ar[d]^-{j} \\
			\P_X^n \ar[r]^-{pr_1}
			& X }
		$$
		Since $j$ is smooth, it follows directly from the 
		definition of WCTS that
		\begin{align*}
			\hu(j)\hl(pr_1)(\cl(\tilde{Z})) &= \hl(pr_1^{\prime})\hu(j^{\prime})(\cl(\tilde{Z},\P_X^n)) \\
			&= \hl(pr_1^{\prime})(\cl(\tilde{Z}\cap  \P_U^n,\P_U^n)),
		\end{align*}
		and what is left to be shown is that
		$$
		\hl(pr_1^{\prime})(\cl(\tilde{Z}\cap \P_U^n,\P_U^n)) 
		= \cl(U\cap Z,U).
		$$
		Notice that
		\begin{align*}
			\tilde{Z}\cap \P_U^n &= \tilde{Z}\times_{\P_X^n}
			\P_U^n \\
			&= \tilde{Z} \times_{\P_X^n} \P_X^n \times_X U \\
			&=\tilde{Z} \times_X U.
		\end{align*}
		Furthermore, normalization respects smooth base 
		change, see for example \cite[Tag: 07TD]{Stacks},
		so if we denote the normalization of $Z_U := Z\cap U$
		by $Z_U^{\nu}$, then we have
		\begin{align*}
			Z_U^{\nu} &= Z_U \times_Z \tilde{Z} \\
			&= U\times_X Z \times_Z \tilde{Z} \\
			&= U \times_X \tilde{Z}.
		\end{align*}
		Therefore, $\tilde{Z}\cap \P_U^n \to Z\cap U$ is the
		normalization map, and since $Z\cap U$ is regular
		it is an isomorphism.
		
		We can thus, without loss of generality,
		consider the commutative 
		triangle
		$$
		\xymatrix{
			& \P_X^n \ar[d]^{\pi} \\
			Z \ar@{^{(}->}[r]^{i_X} \ar@{^{(}->}[ur]^{i_P} &
			X,
		}
		$$
		where $X$ is an $\NS$-scheme of $S$-dimension $d_X$,
		$Z$ is an integral regular 
		closed subscheme in $X$ of codimension $c$
		and a regular closed subscheme in $\P_X^n$ of 
		codimension $n+c$ and
		$\pi:\P_X^n \to X$ is the
		projection map. It suffices to show that
		\begin{equation}\label{spreadingoutclasswhattoshow}
			\hl(\pi)(\cl(Z,\P_X^n)) 
			= \cl(Z,X),
		\end{equation}
		in $H^c_Z(X,\Omega_{X/S}^c)$. Let $\sigma: X \to \P_X^n$ be a section 
		of $\pi$. In order to show \eqref{spreadingoutclasswhattoshow} it suffices to
		show
		\begin{equation} \label{spreadingoutclasswhattoshowsection}
			\hl(\sigma)(\cl(Z,X)) = \cl(Z,\P_X^n),
		\end{equation}
		because if \eqref{spreadingoutclasswhattoshowsection} holds then we have
		\begin{align*}
			\cl(Z,X) & = \hl(\pi\circ\sigma)(\cl(Z,X)) \\
			& = \hl(\pi)(\hl(\sigma)(Z,X)) \\
			& = \hl(\pi)(\cl(Z,\P_X^n)).
		\end{align*}
		This follows from Proposition \ref{pushforwardofclass} below.	
	\end{itemize}
\end{proof}

\begin{prop} \label{pushforwardofclass}
	Let $X,Y$ be $\NS$-schemes of 
	$S$-dimensions $d_X$ and $d_Y$ respectively, and $Z$
	a regular, separated $S$-scheme such that the diagram
	$$
	\xymatrix{
		&Y \\
		Z \ar@{^{(}->}[r]_-{i_X} \ar@{^{(}->}[ur]^-{i_Y} 
		& X, \ar@{^{(}->}[u]_-i
	}
	$$
	commutes, where the maps $i_X,  \, i,$ and $i_Y$ are regular closed immersions of codimensions
	$c,\, n,$ and $n+c$ respectively. Then
	\begin{equation}  \label{spreadingoutclasswhattoshowsectiongeneral}
		\hl(i)(\cl(Z,X)) = \cl(Z,Y).
	\end{equation}
\end{prop}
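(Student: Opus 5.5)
We reduce the statement to a local computation with Grothendieck duality for the regular closed immersion $i$, using the explicit description \eqref{spreadingoutclasscomposition} of the class through the Fundamental Local Isomorphism.

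\textbf{Reduction to the local case.} Both sides of \eqref{spreadingoutclasswhattoshowsectiongeneral} lie in $H^{n+c}_Z(Y,\Omega^{n+c}_{Y/S})$. We check that restriction to opens $V\subset Y$ covering $Z$ is compatible with both sides. For $\cl$ this is the defining property, i.e. condition (2i), since $Z$ is regular. For $\hl(i)$ it follows from Proposition \ref{Hodgepushpullproposition}, applied to the Cartesian square given by an open immersion $V\to Y$ and its pullback to $X$.

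Local cohomology with supports in $Z$ is a sheaf-theoretic object. Concretely, $\rug_Z$ of a complex of sheaves on $Y$ computes $H_Z$, and an element of $H^{n+c}_Z(Y,\Omega^{n+c}_{Y/S})$ coming from the map $i_{Y\ast}\sO_Z\to\Omega^{n+c}_{Y/S}[n+c]$ in $D(Y)$ is determined by that map. It therefore suffices to prove the equality of the two morphisms $i_{Y\ast}\sO_Z\to\Omega^{n+c}_{Y/S}[n+c]$, and this is local on $Y$. So we may assume $Y$ is affine, the ideal of $X$ in $Y$ is generated by a regular sequence $t_1,\dots,t_n$, and the ideal of $Z$ in $X$ is generated by the images of a regular sequence $s_1,\dots,s_c$, which we lift to $Y$. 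Then $t_1,\dots,t_n,s_1,\dots,s_c$ is a regular sequence generating the ideal of $Z$ in $Y$.

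\textbf{Computing $\hl(i)$.} Next we make $\hl(i)$ explicit for the closed immersion $i$, which is proper, so no compactification is needed and we can use \eqref{definitionofproperpush} directly. Under the self-duality isomorphisms $m_X,l_X$ and $m_Y,l_Y$, the pushforward on $\Omega^c_{X/S}[c]$ should become the composite
\[
i_\ast\Omega^c_{X/S}\to i_\ast\big(\Omega^c_{X/S}\otimes\omega_{X/Y}\big)[-n]\cong i_\ast i^!\Omega^{c+n}_{Y/S}[n]\xrightarrow{\Tr_i}\Omega^{n+c}_{Y/S}[n].
\]
Here the middle isomorphism uses $\omega_{X/Y}\cong\bigwedge^n(\sI/\sI^2)^\vee$ together with the isomorphism $i^\ast\Omega^{n+c}_{Y/S}\cong\Omega^c_{X/S}\otimes\bigwedge^n\sI_X/\sI_X^2$, which is induced by wedging with $dt_n\wedge\dots\wedge dt_1$ (up to sign). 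This is the analogue of \cite[Lemma 2.2.16, Proposition 2.2.19]{KayAndre}, which Section 2 asserts extends to our setting, and we invoke it.

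\textbf{Comparing with $\cl(Z,Y)$.} Composing the class $\sO_Z\to i_X^!\Omega^c_{X/S}[c]$ with $i_\ast$, and using $i_Y^!=i_X^!i^!$ via $c_{i_X,i}$, transitivity of traces, and the compatibility $\zeta'_{i_X,i}$ of the Fundamental Local Isomorphism with composition $\omega_{Z/Y}\cong\omega_{Z/X}\otimes i_X^\ast\omega_{X/Y}$ (see \cite[\S2.5, 2.2]{Conrad}), the left side becomes the image of
\[
\bar s_1^\vee\wedge\cdots\wedge\bar s_c^\vee\otimes\bar t_1^\vee\wedge\cdots\wedge\bar t_n^\vee\otimes ds_c\wedge\cdots\wedge ds_1\wedge dt_n\wedge\cdots\wedge dt_1
\]
under \eqref{spreadingoutclasscomposition} for $i_Y$. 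This is exactly the explicit formula for $\cl(Z,Y)$ with respect to the concatenated regular sequence, once the orderings are matched.

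\textbf{Main obstacle.} The hard step is the sign bookkeeping: checking that the explicit description of $\hl(i)$, the isomorphism $\zeta'$ for a composite of l.c.i. maps, and the sign in $\eta$ for a concatenated Koszul sequence all agree in \cite{Conrad}'s conventions. The factors $(-1)^{nc}$ arising from reordering $dt$ and $ds$, and from the shifts, must cancel. I would first check this in the model case $X=\A^c_S\subset Y=\A^{n+c}_S$, $Z=S$, where both sides are explicit Čech classes $\big[\frac{ds\,dt}{s\,t}\big]$.
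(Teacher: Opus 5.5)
Your plan is essentially the paper's proof. After localizing, the paper writes $\hl(i)$ via the duality map $f$ of Lemma \ref{spreadingoutclasscommutativitylemma} (built from $\zeta'_{i,\pi_Y}$, $(\zeta'_{i_X,i})^{-1}$ and $(i^\ast)^\vee$), proves that the square relating $\nu_{i_X}$, $\nu_{i_Y}$ and $\hl(i)$ commutes using Conrad's compatibilities of the fundamental local isomorphism with the trace (including the erratum that removes the sign in \cite[Lemma 3.5.3]{Conrad}), and checks that $f$ sends $\bar s^\vee\otimes i_X^\ast ds$ exactly to the explicit generator for the regular sequence $(r_1,\dots,r_c,t_1,\dots,t_n)$, so the signs you flag as the main obstacle come out with no stray sign.

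One correction: wedging with $dt_n\wedge\cdots\wedge dt_1$ gives only an injection $\Omega^c_{X/S}\otimes\bigwedge^n\sI/\sI^2\hookrightarrow i^\ast\Omega^{n+c}_{Y/S}$, not an isomorphism (the ranks differ unless $n=0$ or $c=d_X$). Only this map is needed in your description of $\hl(i)$, so the argument is unaffected.
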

\begin{proof}
	By steps $(1)-(3)$ of the proof of Proposition \ref{spreadingoutclassproposition}
	we may without loss of generality assume that $S= \Spec(R)$, $Y = \Spec(A)$, 
	$X=\Spec(B)$, and $Z= \Spec(C)$. Furthermore, there exist ideals 
	$I\subset A$, $I_Y \subset A$, and $I_X \subset B$ such that
	$$
	B = \frac{A}{I}, \,\,\, \text{and}\,\,\, C = \frac{B}{I_X} = \frac{A}{I_Y}.
	$$
	As $X$ and $Y$ are smooth over $S$, we can assume that there exists 
	an \'etale map of $R$-algebras
	$$
	R[t_1,\ldots,t_{d_Y}] \to A,
	$$
	s.t. $I = (t_1,\ldots,t_n)$ and
	$$
	R[t_{n+1},\ldots, t_{d_Y}] \to B
	$$
	is \'etale. Furthermore, since $Z\hookrightarrow X$ is a regular embedding
	we may assume there exists a regular sequence $s_1,\ldots, s_c$ in $A$ s.t.
	$I_X = (s_1,\ldots,s_c)$. Let $r_1,\ldots,r_c \in B$ be any lifts of $s_1,\ldots,s_c$
	and then $(t_1,\ldots,t_n, r_1,\ldots, r_c)$ is a regular sequence generating $I_Y$. 
	Again, we may shrink $X,Y$ and $Z$ so we can without loss of generality 
	assume that $B$ is a local ring. For a Noetherian local ring, any permutation
	of a regular sequence is again a regular sequence, so we may assume that
	$(r_1,\ldots,r_c,t_1,\ldots,t_n)$ is a regular sequence generating $I_Y$. 
	
	To show \eqref{spreadingoutclasswhattoshowsectiongeneral} it suffices to 
	show that
	\begin{enumerate}
		\item $f(\bar{s}^{\vee}_1 \wedge \cdots \wedge \bar{s}^{\vee}_c \otimes 
		i_X^{\ast}(ds_c\wedge\cdots \wedge ds_1)) = \bar{r}^{\vee}_1 \wedge \cdots \wedge \bar{r}^{\vee}_c \wedge \bar{t}^{\vee}_1\wedge
		\cdots \wedge \bar{t}^{\vee}_n \otimes i_Y^{\ast}(dt_n\wedge\cdots \wedge dt_1 \wedge 
		dr_c\wedge\cdots\wedge dr_1)$, and
		\item the following square commutes
		$$
		\xymatrix{
			\Hom(\sO_Z,\omega_{Z/X}\otimes_{\sO_Z} i_X^{\ast}\Omega_{X/S}^c )
			\ar[d]_-{Hom(\sO_Z,f(-))} \ar[r]^-{\nu_{i_X}} &
			H_Z^c(X,\Omega_{X/S}^c)
			\ar[d]^-{\hl(i)} \\
			\Hom(\sO_Z,\omega_{Z/Y}\otimes_{\sO_Z} i_Y^{\ast}\Omega_{Y/S}^{n+c})
			\ar[r]^-{\nu_{i_Y}} &
			H_Z^{n+c}(Y,\Omega_{Y/S}^{n+c}),
		}
		$$
		where $$
		f: \omega_{Z/X}\otimes_{\sO_Z} i_X^{\ast}\Omega_{X/S}^c 
		\to \omega_{Z/Y}\otimes_{\sO_Z} i_Y^{\ast}\Omega_{Y/S}^{n+c}
		$$ 
		is the map
		described in \eqref{spreadingoutclassmapdescription}, and
		where $\nu_{i_X}$ and $\nu_{i_Y}$ are the compositions defined in 
		\eqref{spreadingoutclasscomposition}.
	\end{enumerate}
	The commutativity of the square is given in Lemma
	\ref{spreadingoutclasscommutativitylemma}.		
	
	If $b_1,\ldots,b_n$ is a basis for $\Omega_{X/S}^{d_X-c}$ then the map
	$\Omega_{X/S}^c  \to \omega_{X/S}\otimes_{\sO_X} \sH om(\Omega_{X/S}^{d_X-c},\sO_X)$ can be explicitly given as
	$$
	\alpha \mapsto \sum_{i=1}^{n}(\alpha \wedge b_i) \otimes b_i^{\vee}.
	$$
	A $B$-basis of $\Omega_{X/S}^{d_X-c} = \Omega_{B/R}^{d_X-c}$ is given by
	\begin{equation} \label{classspreadingoutBbasis}
		dt_I, \,\,\, I = (i_1 < \ldots < i_{d_X-c}) \,\,\, \text{with}\,\,\, i_j \in \{n+1,\ldots, d_Y\},
	\end{equation}
	and an $A$-basis of $\Omega_{Y/S}^{d_X-c} = \Omega_{A/R}^{d_Y-(n+c)}$ is given by
	\begin{equation} \label{classspreadingoutAbasis}
		dt_J, \,\,\, J = (i_1 < \ldots < i_{d_X-c}) \,\,\, \text{with}\,\,\, i_j \in \{1,\ldots, d_Y\}.
	\end{equation}
	Now we compute the image of $\bar{s}^{\vee}_1 \wedge \cdots \wedge \bar{s}^{\vee}_c \otimes i_X^{\ast}(
	ds_c\wedge\cdots \wedge ds_1) =: \bar{s}^{\vee} \otimes i_X^{\ast}ds$ 
	under the composition 
	\eqref{spreadingoutclassmapdescription}.
	\begin{align*}
		\bar{s}^{\vee} \otimes i_X^{\ast}ds 
		&\xrightarrow{\cong}\bar{s}^{\vee}\otimes \sum_I i_X^{\ast}(ds\wedge dt_I\otimes 
		(dt_I)^{\vee})\\
		&\xrightarrow{\zeta_{i,\pi_Y}'} \bar{s}^{\vee} \otimes \sum_I 
		i_X^{\ast}(\bar{t}_1^{\vee}\wedge\cdots \wedge \bar{t}_n^{\vee} \otimes 
		i^{\ast}(dt_n\wedge\cdots\wedge dt_1 \wedge dr_c\wedge\cdots \wedge dr_1 
		\wedge dt_I\otimes (dt_I)^{\vee}) \\
		&\xrightarrow{(\zeta_{i_X,i}')^{-1}} \bar{r}^{\vee} \wedge \bar{t}^{\vee}
		\otimes \sum_I i_Y^{\ast}(dt_n\wedge\cdots\wedge dt_1 \wedge dr_c\wedge\cdots \wedge dr_1 
		\wedge dt_I\otimes (dt_I)^{\vee}) \\
		&\xrightarrow{(i^{\ast})^{\vee}} \bar{r}^{\vee} \wedge \bar{t}^{\vee}
		\otimes \sum_I i_Y^{\ast}(dt_n\wedge\cdots\wedge dt_1 \wedge dr_c\wedge\cdots \wedge dr_1 
		\wedge dt_I\otimes (dt_I)^{\vee}) \\
		&\to \bar{r}_1^{\vee} \wedge\ldots\wedge\bar{r}_c^{\vee}
		\wedge \bar{t}_1^{\vee}\wedge\ldots \wedge\bar{t}_n^{\vee} 
		\otimes i_Y^{\ast}(dt_n\wedge\cdots\wedge dt_1 \wedge dr_c\wedge\cdots \wedge dr_1),
	\end{align*}
	where $\bar{r}^{\vee} := \bar{r}_1^{\vee} \wedge \cdots \wedge \bar{r}_c^{\vee}$ and 
	$\bar{t}^{\vee} := \bar{t}_1^{\vee}\wedge\cdots\wedge\bar{t}_n^{\vee}$.
\end{proof}

\begin{lem} \label{spreadingoutclasscommutativitylemma}
	Let $X,Y$ be $\NS$-schemes of $S$-dimensions $d_X$ and $d_Y$ respectively and
	$Z$ a regular, separated $S$-scheme of finite type 
	such that we have a commutative diagram of $S$-schemes and $S$-morphisms
	$$
	\xymatrix{
		&Y \\
		Z \ar@{^{(}->}[r]_-{i_X} \ar@{^{(}->}[ur]^-{i_Y} 
		& X \ar@{^{(}->}[u]_-i
	}
	$$
	where $i_X$, $i$ and $i_Y$  are regular closed immersions of codimensions
	$c$, $n$ and $n+c$ respectively. Let \linebreak $f: \omega_{Z/X}\otimes_{\sO_Z} i_X^{\ast} 
	(\Omega_{X/S}^c) \to \omega_{Z/Y}\otimes_{\sO_Z} i_Y^{\ast}(\Omega_{Y/S}^{n+c})$
	be the map given by the composition 
	\begin{align} \label{spreadingoutclassmapdescription}
		\omega_{Z/X}\otimes_{\sO_Z} i_X^{\ast}\Omega_{X/S}^c 
		&\cong 
		\omega_{Z/X}\otimes_{\sO_Z} i_X^{\ast}(\sH om(\Omega_{X/S}^{d_X-c},\omega_{X/S})) \\ \notag
		&\cong \omega_{Z/X}\otimes_{\sO_Z} i_X^{\ast}(\omega_{X/S}\otimes_{\sO_X} \sH om(\Omega_{X/S}^{d_X-c},\sO_X)) \\ \notag
		&\xrightarrow{\zeta'_{i,\pi_Y}}
		\omega_{Z/X}\otimes_{\sO_Z} i_X^{\ast}(\omega_{X/Y}\otimes_{\sO_X}i^{\ast}\omega_{Y/S}
		\otimes_{\sO_X} \sH om(\Omega_{X/S}^{d_X-c},\sO_X)) \\ \notag
		&\xrightarrow{(\zeta'_{i_X,i})^{-1}}
		\omega_{Z/Y}\otimes_{\sO_Z} i_Y^{\ast}\omega_{Y/S} 
		\otimes_{\sO_Z} i_X^{\ast}(\sH om(\Omega_{X/S}^{d_X-c},\sO_X)) \\ \notag
		&\xrightarrow{(i^{\ast})^{\vee}}
		\omega_{Z/Y}\otimes_{\sO_Z} i_Y^{\ast}\omega_{Y/S} 
		\otimes_{\sO_Z}i_Y^{\ast}(\sH om(\Omega_{Y/S}^{d_X-c},\sO_{Y})) \\ \notag
		&\cong 
		\omega_{Z/Y}\otimes_{\sO_Z} i_Y^{\ast}(\sH om(\Omega_{Y/S}^{d_Y-(n+c)},
		\omega_{Y/S})) \\ \notag
		&\cong \omega_{Z/Y}\otimes_{\sO_Z} i_Y^{\ast}\Omega_{Y/S}^{n+c},
	\end{align}
	where $i^{\ast}:i^{\ast}\Omega_{Y/S}^{d_X-c}
	\to  \Omega_{X/S}^{d_X-c}$ is the canonical map, and $\zeta'_{i,\pi_Y}: \omega_{X/S} \to \omega_{X/Y}\otimes_{\sO_X}i^{\ast}\omega_{Y/S}$ and 
	\linebreak $\zeta'_{i_X,i}: \omega_{Z/Y} \to \omega_{Z/X}\otimes_{\sO_Z}i_X^{\ast}\omega_{X/Y}$ 
	are isomorphisms, see Notation \ref{notation2}. 
	Then the following
	square commutes
	$$
	\xymatrix{
		\Hom(\sO_Z,\omega_{Z/X}\otimes_{\sO_Z} i_X^{\ast}\Omega_{X/S}^c )
		\ar[d]_-{Hom(\sO_Z,f(-))} \ar[r] &
		H_Z^c(X,\Omega_{X/S}^c)
		\ar[d]^-{\hl(i)} \\
		\Hom(\sO_Z,\omega_{Z/Y}\otimes_{\sO_Z} i_Y^{\ast}\Omega_{Y/S}^{n+c})
		\ar[r] &
		H_Z^{n+c}(Y,\Omega_{Y/S}^{n+c}).
	}  
	$$
\end{lem}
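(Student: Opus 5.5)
The plan is to unwind both horizontal maps (the compositions $\nu_{i_X}$, $\nu_{i_Y}$ of \eqref{spreadingoutclasscomposition}) and the pushforward $\hl(i)$ into Grothendieck duality statements, and then reduce commutativity to compatibilities of the fundamental local isomorphism with composition of l.c.i. maps. Since $i$ is a closed immersion, hence proper, no Nagata compactification is needed. The pushforward $\hl(i)$ is then the composition of the self-duality isomorphism $\omxs^c\cong D_X(\omxs^{d_X-c})[-d_X]$, the isomorphism $c_{i,\pi_Y}$, the dual of the pullback $(i^{\ast})^{\vee}$, the trace $\Tr_i$, and the inverse self-duality on $Y$. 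For a finite map we rewrite $\Tr_i$ via $d_i:i^{\flat}\cong i^!$ as the evaluation-at-$1$ map $i_{\ast}\sH om(i_{\ast}\sO_X,-)\to(-)$. After this rewriting, both paths in the square are maps $\Ext^c(i_{X\ast}\sO_Z,\Omega^c_{X/S})\to H^{n+c}_Z(Y,\Omega^{n+c}_{Y/S})$ built from $\Ext$ and $\sE xt$ groups.

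The steps are as follows. First, as in the proof of Proposition \ref{pushforwardofclass}, reduce to the affine local situation with coordinates $t_1,\dots,t_{d_Y}$, where $I=(t_1,\dots,t_n)$ and $I_X=(s_1,\dots,s_c)$. We may do this because both compositions are compatible with restriction to opens: $\hl(i)$ commutes with $\hu$ of open immersions by Proposition \ref{Hodgepushpullproposition}, and the target $H^{n+c}_Z$ injects into the local cohomology at the generic point by semi-purity.

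Second, factor the top-left to bottom-right path through $\sE xt^{c}_X(\sO_Z,i_X^!\Omega^c_{X/S})$. Then apply $i_{\ast}$ and use the transitivity $c_{i_X,i}:i_Y^!\cong i_X^!i^!$, together with $i^!\Omega^{n+c}_{Y/S}\cong \omega_{X/Y}\otimes i^{\ast}\Omega^{n+c}_{Y/S}[-n]$ from the fundamental local isomorphism $\eta_i$. The key input is the compatibility of $\eta$ with composition,
$$\eta_{i_Y}\ \text{corresponds to}\ (\eta_{i_X}\otimes\eta_i)\ \text{under}\ \zeta'_{i_X,i},$$
as proven in Conrad's book (\cite[\S 2.5, \S 3.3]{Conrad}). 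This identifies the $\zeta'_{i_X,i}$ step in $f$ with transitivity of $(-)^!$. Next, the $\zeta'_{i,\pi_Y}$ step matches $c_{i,\pi_Y}$ together with $e_{\pi}$ and $l_X, l_Y$, that is, the compatibility of the smooth and finite dualities $\psi_{i,\pi_Y}$. Finally, the step $(i^{\ast})^{\vee}$ matches the dual of pullback that appears in the definition \eqref{definitionofproperpush} of the pushforward.

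Third, check that the trace $\Tr_i$, in its $i^{\flat}$ incarnation, composed with adjunction gives exactly the identification $\Ext^{c}_X(i_{X\ast}\sO_Z,i^!-)\cong\Ext^{c}_Y(i_{Y\ast}\sO_Z,-)$ used in $\nu_{i_Y}$. This follows because the adjunction $Ri_{\ast}\dashv i^!$ for finite $i$ is defined via $\Tr_i$. The forgetting-supports maps to $H_Z$ are natural, so they commute with all of the above.

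I expect the main obstacle to be sign bookkeeping. The identifications via $\zeta'$ and $\eta$ carry sign conventions (ordering of the $\bar t^{\vee}$ versus $\bar r^{\vee}$, and the shifts $[c]$, $[n]$). These must agree exactly with the order $dt_n\wedge\cdots\wedge dt_1\wedge dr_c\wedge\cdots\wedge dr_1$ in Proposition \ref{pushforwardofclass}. My plan is to verify this explicitly with Koszul complexes for the regular sequences $(r,t)$, computing each map on the generator $\bar s^{\vee}\otimes ds$ and tracking Conrad's sign conventions for the fundamental local isomorphism and the trace of a finite map.
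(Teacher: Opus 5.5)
Your proposal is correct and follows essentially the same route as the paper. Both arguments factor the horizontal maps through $\Gamma(Z,\sE xt)$ and $\Ext$ via the fundamental local isomorphisms. Both treat the $\zeta'_{i_X,i}$ step with Conrad's compatibility of $\eta$ under composition of closed immersions (his Diagram 2.5.7 and Theorem 2.5.1). Both treat the $\zeta'_{i,\pi_Y}$ step by comparison with $\psi_{i,\pi_Y}$, $c_{i,\pi_Y}$ and $e_{\pi}$ (Conrad's Lemma 3.5.3 and RD III.8.7). Both finish by reading $\Tr_i$ as the counit of the adjunction $(Ri_{\ast},i^!)$.

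The only differences are your preliminary reduction to local coordinates and your planned explicit Koszul sign check. The paper does without both: it works globally and takes the signs from the errata-corrected form of Conrad's Lemma 3.5.3, in which the relevant triangle commutes with no sign.
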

\begin{proof}
	We first notice that $i$ and $i_X$ are 
	l.c.i. morphisms and $\pi_{Y}$ is a separated smooth morphism. So the definitions 
	of $\zeta'_{i,\pi_Y}$ and $\zeta'_{i_X,i}$ are different. Namely, in \cite[\S 2.2.]{Conrad}
	the map $\zeta'_{i,\pi_Y}$ is defined in case $(c)$ and $\zeta'_{i_X,i}$ is defined 
	in case $(b)$.
	
	We break the square into the following two squares
	\begin{enumerate}
		\item $$
		\xymatrix{
			\Hom(\sO_Z,\omega_{Z/X}\otimes_{\sO_Z} i_X^{\ast}\Omega_{X/S}^c )
			\ar[d]_-{Hom(\sO_Z,f(-))} \ar[r]^-{\eta_{i_X}^{-1}} 
			& 	
			\Gamma(Z,\sE xt^c(\sO_Z,i_X^!\Omega_{X/S}^c))
			\ar[d]^-{\Sigma'} \\
			\Hom(\sO_Z,\omega_{Z/Y}\otimes_{\sO_Z} i_Y^{\ast}\Omega_{Y/S}^{n+c})
			\ar[r]^-{\eta_{i_Y}^{-1}} 
			&
			\Gamma(Z,\sE xt^{n+c}(\sO_Z,i_Y^!\Omega_{Y/S}^{n+c})), \,\,\, \text{and}
		}
		$$
		\item $$
		\xymatrix{
			\Ext^c((i_X)_{\ast}\sO_Z,\Omega_{X/S}^c) 
			\ar[d]_-{\Sigma}  \ar[r] 
			&
			H_Z^c(X,\Omega_{X/S}^c) 
			\ar[d]^-{\hl(i)} \\
			\Ext^{n+c}((i_Y)_{\ast}\sO_Z,\Omega_{Y/S}^{n+c})
			\ar[r] 
			&
			H_Z^{n+c}(Y,\Omega_{Y/S}^{n+c}),
		}
		$$
	\end{enumerate}
	where we define $\Sigma'$ such that the first square commutes, which we can do
	since $\eta^{-1}_{i_X}$ is an isomorphism,  and $\Sigma$ is 
	the corresponding map after making the identifications
	\begin{align*}
		\Gamma(Z,\sE xt^c(\sO_Z,i_X^!\Omega_{X/S}^c)) &\cong  \Ext^c((i_X)_{\ast}\sO_Z,\Omega_{X/S}^c), \,\,\, \text{and} \\
		\Gamma(Z,\sE xt^{n+c}(\sO_Z,i_Y^!\Omega_{Y/S}^{n+c})) &\cong \Ext^{n+c}((i_Y)_{\ast}\sO_Z,\Omega_{Y/S}^{n+c}).
	\end{align*}
	We can make these identifications since $\Omega_{X/S}^c$ 
	is a locally free, so in particular a coherent,  $\sO_X$-module and $\Omega_{Y/S}^{n+c}$
	is a locally free, so in particular a coherent, $\sO_Y$-module. 
	The schemes $X$ and $Y$ are regular, hence Cohen-Macaulay so
	we know that for any point $z \in Z$ we have 
	\begin{align*}
		depth_{\sO_{X,z}}((\Omega_{X/S}^c)_z) &\geq \dim(\sO_{X,z}) \geq \codim(Z,X) = c, \,\,\, \text{and} \\
		depth_{\sO_{Y,z}}((\Omega_{Y/S}^{n+c})_z) &\geq \dim(\sO_{Y,z}) \geq \codim(Z,Y) = n+c,
	\end{align*}
	so \cite[Expos\'e III, Proposition 3.3]{SGA2} tells us that
	\begin{align*}
		\sE xt^j(\sO_Z,i_X^!\Omega_{X/S}^c) &= \sE xt^j((i_X)_{\ast}\sO_Z,\Omega_{X/S}^c)
		= 0, \,\,\, \text{for all } j < c, \,\,\,\text{and} \\
		\sE xt^j(\sO_Z,i_Y^!\Omega_{Y/S}^{n+c}) &= \sE
		xt^j((i_Y)_{\ast}\sO_Z,\Omega_{Y/S}^{n+c})
		= 0, \,\,\, \text{for all } j < n+c.
	\end{align*}

	We then consider the square
		$$
		\xymatrix{
			\omega_{Z/X}\otimes_{\sO_Z} i_X^{\ast}\sG 
				\ar[r]^-{\eta_{i_X}^{-1}} \ar[d]_-{(\zeta_{i,i_X}')^{-1}}
				&
				i_X^!\sG[c] \ar[d]^-g\\
				\omega_{Z/Y}\otimes_{\sO_Z} i_Y^{\ast}\omega_{Y/S} 
				\otimes_{\sO_Z}i_X^{\ast}\sH om(\Omega_{X/S}^{d_X-c},\sO_X)
				\ar[r]^-{\eta_{i_Y}^{-1}}
				&
				i_Y^!\omega_{Y/S} 
				\otimes_{\sO_Z}i_X^{\ast}\sH om(\Omega_{X/S}^{d_X-c},\sO_X)[n+c],
			}
		$$
		where $g$ is given by the composition 
		\begin{align*}
				i_X^!(\omega_{X/Y}\otimes_{\sO_X}i^{\ast}\omega_{Y/S}
				&\otimes_{\sO_X} \sH om(\Omega_{X/S}^{d_X-c},\sO_X))[c] \\
				&\cong i_X^!(\omega_{X/Y}\otimes_{\sO_X}i^{\ast}\omega_{Y/S})
				\otimes_{\sO_Z}i_X^{\ast} \sH om(\Omega_{X/S}^{d_X-c},\sO_X)[c] \\
				&\xrightarrow{\eta_i^{-1}}
				i_X^!i^!\omega_{Y/S}[n]\otimes_{\sO_Z}i_X^{\ast} \sH 
				om(\Omega_{X/S}^{d_X-c},\sO_X)[c] \\
					&\cong
					i_Y^!\omega_{Y/S} 
					\otimes_{\sO_Z}i_X^{\ast}\sH om(\Omega_{X/S}^{d_X-c},\sO_X)[n+c].
				\end{align*}
			It is a combination of two commutative squares, see diagram 
			\cite[Diagram 2.5.7]{Conrad} and \cite[Theorem 2.5.1]{Conrad}, and thus commutes.
			The square
			$$
			\xymatrix{
					\omega_{Z/Y}\otimes_{\sO_Z} i_Y^{\ast}\omega_{Y/S} 
					\otimes_{\sO_Z}i_X^{\ast}\sH om(\Omega_{X/S}^{d_X-c},\sO_X)
					\ar[r]^-{\eta_{i_Y}^{-1}} \ar[d]_-{(i^{\ast})^{\vee}}
					&
					i_Y^!\omega_{Y/S} 
					\otimes_{\sO_Z}i_X^{\ast}\sH om(\Omega_{X/S}^{d_X-c},\sO_X)[n+c] 
					\ar[d]_-{(i^{\ast})^{\vee}}\\
					\omega_{Z/Y}\otimes_{\sO_Z} i_Y^{\ast}(\omega_{Y/S} 
					\otimes_{\sO_Y}\sH om(\Omega_{Y/S}^{d_Y-(n+c)},\sO_Y))
					\ar[r]^-{\eta_{i_Y}^{-1}}
					&
					i_Y^!(\omega_{Y/S} 
					\otimes_{\sO_Y}\sH om(\Omega_{Y/S}^{d_Y-(n+c)},\sO_Y))[n+c]
				}
			$$
			is commutative because of functoriality of the fundamental local isomorphism and again by 
			\cite[diagram  2.5.7]{Conrad}. Finally the square
			$$
			\xymatrix{
					\omega_{Z/Y}\otimes_{\sO_Z} i_Y^{\ast}(\omega_{Y/S} 
					\otimes_{\sO_Y}\sH om(\Omega_{Y/S}^{d_Y-(n+c)},\sO_Y))
					\ar[r]^-{\eta_{i_Y}^{-1}} \ar[d]
					&
					i_Y^!(\omega_{Y/S} 
					\otimes_{\sO_Y}\sH om(\Omega_{Y/S}^{d_Y-(n+c)},\sO_Y))[n+c]
					\ar[d] \\
					\omega_{Z/Y}\otimes_{\sO_Z} i_Y^{\ast}\Omega_{Y/S}^{n+c} \ar[r]^-{\eta_{i_Y}^{-1}}
					&
					i_Y^!\Omega_{Y/S}^{n+c}[n+c],
				}	
			$$
			where the vertical maps are given by the composition 
			\begin{align*}
					\omega_{Z/Y}\otimes_{\sO_Z} i_Y^{\ast}(\omega_{Y/S} 
					&\otimes_{\sO_Y}\sH om(\Omega_{Y/S}^{d_Y-(n+c)},\sO_Y)) \\ \notag
					&\cong 
					\omega_{Z/Y}\otimes_{\sO_Z} i_Y^{\ast}(\sH om(\Omega_{Y/S}^{d_Y-(n+c)},
					\omega_{Y/S})) \\ \notag
					&\cong \omega_{Z/Y}\otimes_{\sO_Z} i_Y^{\ast}\Omega_{Y/S}^{n+c},
				\end{align*}
			is commutative because of functoriality of the fundamental local isomorphism.
			
			This shows that square $(1)$ commutes and that $\Sigma$ is 
			given by the composition
			\begin{align*}
					\Ext^c((i_X)_{\ast}\sO_Z,\Omega_{X/S}^c) &\cong 
					\Ext^c((i_X)_{\ast}\sO_Z,\sH om(\Omega_{X/S}^{d_X-c},\omega_{X/S})) \\ \notag
					&\xrightarrow{\zeta'_{i,\pi_{Y}}}
					\Ext^c((i_X)_{\ast}\sO_Z,
					\sH om(\Omega_{X/S}^{d_X-c},\omega_{X/Y}\otimes_{\sO_X}i^{\ast}\omega_{Y/S})) \\ 
					\notag
					&\xrightarrow{\eta_i^{-1}}
					\Ext^{n+c}((i_X)_{\ast}\sO_Z, \sH om(\Omega_{X/S}^{d_X-c},i^!\omega_{Y/S})) \\\notag
					&\xrightarrow{(i^{\ast})^{\vee}}
					\Ext^{n+c}((i_X)_{\ast}\sO_Z,\sH om(i^{\ast}\Omega_{Y/S}^{d_Y-(n+c)},i^!\omega_{Y/S})) 	
					\\ 
					\notag
					&\cong 
					\Ext^{n+c}((i_X)_{\ast}\sO_Z,i^!\sH om(\Omega_{Y/S}^{d_Y-(n+c)},\omega_{Y/S})) \\ 
					\notag
					&\cong
					\Ext^{n+c}((i_Y)_{\ast}\sO_Z, \sH om(\Omega_{Y/S}^{d_Y-(n+c)},\omega_{Y/S})) \\ \notag
					&\cong
					\Ext^{n+c}((i_Y)_{\ast}\sO_Z, \Omega_{Y/S}^{n+c}).
				\end{align*}
	Now we show that square $(2)$ is commutative. The maps $\Ext^c((i_X)_{\ast}\sO_Z,\sF) \to
	H^c_Z(X,\sF)$ and \linebreak $\Ext^{n+c}((i_Y)_{\ast}\sO_Z,\sF) \to
	H^{n+c}_Z(Y,\sF)$ are respectively induced by the natural transformations \linebreak $R\sH om((i_X)_{\ast}\sO_Z,-) \to \rugz(-)$ and $R\sH om((i_Y)_{\ast}\sO_Z,-) \to \rugz(-)$.

	We expand the left vertical map $\Sigma$ in square $(2)$, and we 
	expand the right vertical map $\hl(i)$ as the definition of the pushforward. We get a diagram 
	whose commutativity boils down to the commutativity of
	\begin{equation*} 
		\xymatrix{
			\Ext^{n+c}((i_X)_{\ast}\sO_Z,i^!\sH om(\Omega_{Y/S}^{d_Y-(n+c)},\omega_{Y/S}))
			\ar[d] \ar[r]
			&
			H_Z^{c+n}(Y,i_{\ast}i^!\sH om(\Omega_{Y/S}^{d_Y-(n+c)},\omega_{Y/S})) \ar[d]^-{\Tr_i}
			\\
			\Ext^{n+c}((i_Y)_{\ast}\sO_Z, \sH om(\Omega_{Y/S}^{d_Y-(n+c)},\omega_{Y/S}))
			\ar[r]
			&
			H_Z^{n+c}(Y,\sH om(\Omega_{Y/S}^{d_Y-(n+c)},\omega_{Y/S}))
		}
	\end{equation*}	
	which follows from the definition of $\Tr_i$ as the counit of adjunction for the adjoint pair 
	$(Ri_{\ast},i^!)$;

\begin{equation*} 
	\xymatrix{
		\pi_X^{\sharp}\sO_S \ar[d]_-{\psi_{i,\pi_Y}} 
		& \pi_X^!\sO_S \ar[l]_-{e^{-1}_{\pi_X}} \ar[d]^-{c_{i,\pi_Y}} 
		\\ i^{\flat}\pi_Y^{\sharp}\sO_S
		& i^!\pi_Y^!\sO_S \ar[l]_-{d^{-1}_i\circ e^{-1}_{\pi_Y}} 
	}
\end{equation*}
where $e_f$, $c_{f,g}$ are maps defined in Notation \ref{notation1},  see 
 see \cite[III. Theorem 8.7, Var 5).]{ResAndDual}; and

	\begin{equation*}\label{spreadingoutclasslemmafinaldiagramtriangle}
		\xymatrix{
			\omega_{X/S} \ar[d]_-{\zeta_{i,\pi_Y}'} \ar[r]^-{\psi_{i,\pi_Y}} 
			& i^{\flat}\pi_Y^{\sharp}\sO_S[-d_X] \ar[dl]^-{\eta_i}
			\\ \omega_{X/Y} \otimes_{\sO_X} i^{\ast}\omega_{Y/S}.
		}
	\end{equation*}
	 see \cite[Lemma 3.5.3.]{Conrad}.\footnote{Note that in
	 	\cite[Lemma 3.5.3.]{Conrad} it is claimed that this triangle commutes up to a sign
	 	depending on the relative dimension of $\pi_Y$ and the codimension of $i$. This is 
	 	however not true and is corrected in the online errata to  \cite{Conrad}.}
\end{proof}

The following lemma gives a useful description of the 
local cycle class at generic point of a regular integral closed
subscheme in terms of symbols.
\begin{lem} \label{Classsymbolnotation}
	Let $Y$ be an $\NS$-scheme and $X \subset Y$ be a regular integral  closed subscheme of codimension
	$c$ in $Y$, $U\subset Y$ and open affine subscheme such that the ideal $I$ of $X\cap U$ in $\sO_U$ is generated by
	global sections $t_1,\ldots,t_c$ on $Y$, and let $\eta$ be the generic point of $X$. Then
	$$
	\cl(X,Y)_{\eta} = (-1)^c \begin{bmatrix}
		dt_1\wedge\cdots\wedge dt_c \\
		t_1,\ldots,t_c
	\end{bmatrix}
	$$
	in $H^c_{\eta}(Y,\Omega_{Y/S}^c)$.
\end{lem}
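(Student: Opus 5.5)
Since both sides live in $H^c_{\eta}(Y,\Omega_{Y/S}^c)$, which is a direct limit over open neighbourhoods of $\eta$, I will first restrict to $U$. By Proposition \ref{spreadingoutclassproposition} (compatibility of $\cl$ with restriction to opens on which $X$ is regular), $\cl(X,Y)_\eta$ is the image of $\cl(X\cap U,U)$. On $U$ the ideal is globally generated by the regular sequence $t_1,\ldots,t_c$; this holds because $Y$ is regular and $X\cap U$ is regular of codimension $c$, possibly after shrinking $U$ around $\eta$. So we may use the explicit description \eqref{spreadingoutclasscomposition}: $\cl(X\cap U,U)$ is the image of
$$
1\mapsto \bar t_1^{\vee}\wedge\cdots\wedge\bar t_c^{\vee}\otimes i^{\ast}(dt_c\wedge\cdots\wedge dt_1)
$$
under $\eta_i^{-1}$, the identification $\Gamma(\sE xt^c(\sO_X,i^!\Omega^c))\cong\Ext^c(i_\ast\sO_X,\Omega^c_{U/S})$, and the natural map to $H^c_{X\cap U}(U,\Omega^c_{U/S})$.

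The main step is to compute this composite in terms of the Koszul complex and compare it with the Grothendieck residue symbol. The plan is as follows. First, recall from \cite{Conrad} that the fundamental local isomorphism $\eta_i$ is computed via the Koszul resolution $K_\bullet(t_1,\ldots,t_c)$ of $\sO_X$. Concretely, a class in $\sE xt^c(\sO_X,\sF)$ represented by $\varphi\colon K_c=\sO_U\to\sF$ corresponds to $(\bar t_1\wedge\cdots\wedge\bar t_c)^{\vee}\otimes\varphi(1)$, up to the sign convention fixed in \cite[\S 2.5]{Conrad}. Second, recall that the natural map $\Ext^c(\sO_U/(t),\sF)\to H^c_{X\cap U}(U,\sF)\cong \varinjlim_n \Ext^c(\sO_U/(t^n),\sF)$ sends the class of $\varphi$ to the symbol $\left[\begin{smallmatrix}\varphi(1)\\ t_1,\ldots,t_c\end{smallmatrix}\right]$. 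This is the standard description of local cohomology via the \v{C}ech/Koszul direct limit, with the conventions of \cite{ResAndDual}/\cite{Conrad}. Combining these, $\cl(X,Y)_\eta$ equals $\epsilon\cdot\left[\begin{smallmatrix}dt_c\wedge\cdots\wedge dt_1\\ t_1,\ldots,t_c\end{smallmatrix}\right]$, where $\epsilon$ is the sign in the fundamental local isomorphism.

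Finally, I will account for the signs. Reversing $dt_c\wedge\cdots\wedge dt_1$ to $dt_1\wedge\cdots\wedge dt_c$ contributes $(-1)^{c(c-1)/2}$. The Koszul/$\eta_i$ conventions in \cite{Conrad} contribute a sign, which I expect also to be of the form $(-1)^{c(c+1)/2}$, coming from the dual-basis ordering and the identification of $\sE xt^c$ with the top Koszul term. The product of these two signs should be $(-1)^c$, giving the claimed formula. This sign bookkeeping is the main obstacle, since everything else is formal. I would try to settle it by reducing to $c=1$ and then using multiplicativity. For $c=1$ with $t\colon\sO\to\sO$, one checks directly that $\eta_i$ sends the class of $\varphi$ to $-\bar t^{\vee}\otimes\varphi(1)$ under Conrad's convention. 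For general $c$ one uses compatibility of $\eta$ and of residue symbols with composing the codimension-one immersions cut out by $t_c$, then $t_{c-1}$, and so on. This is exactly the situation of Lemma \ref{spreadingoutclasscommutativitylemma} and Proposition \ref{pushforwardofclass}, which control how the class behaves under composing regular immersions.
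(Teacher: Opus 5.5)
\textbf{Where the proposal stops short.} Your reduction to a neighbourhood of $\eta$ and the shape of the computation agree with the paper. There, $\cl(X,Y)_\eta$ is the image of $\bar t_1^{\vee}\wedge\cdots\wedge\bar t_c^{\vee}\otimes dt_c\wedge\cdots\wedge dt_1$ under the fundamental local isomorphism followed by the trace (equivalently, the map $\Ext^c\to H^c_X$). Reordering the forms then costs $(-1)^{c(c-1)/2}$.

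The gap is that the remaining sign, which is the only non-formal content of the lemma, is never determined: you only say you \emph{expect} it to be $(-1)^{c(c+1)/2}$. The paper supplies this sign as follows. It factors $i_\ast\omega_{X/Y}\to i_\ast i^!\sO_Y[c]\xrightarrow{\Tr_i}\sO_Y[c]$ through $\rug_X\sO_Y[c]$ and invokes \cite[Lemma A.2.5]{KayAndre}. That lemma gives an isomorphism $\rug_X\sO_Y[c]\cong\sH^c_X(\sO_Y)$ under which the composite is $a\,t_1^{\vee}\wedge\cdots\wedge t_c^{\vee}\mapsto(-1)^{c(c+1)/2}\left[\begin{smallmatrix}\tilde a\\ t_1,\ldots,t_c\end{smallmatrix}\right]$. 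Then $(-1)^{c(c+1)/2}(-1)^{c(c-1)/2}=(-1)^{c}$.

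Your allocation of signs is also not safe as stated. You assert that $\Ext^c\to H^c_X$ sends the Koszul cochain $\varphi$ to $\left[\begin{smallmatrix}\varphi(1)\\ t_1,\ldots,t_c\end{smallmatrix}\right]$ with no sign. In the proof of Lemma \ref{Cupproductsymbolnotation}, however, the paper's convention is that the symbol corresponds to the cochain $e_1\wedge\cdots\wedge e_c\mapsto(-1)^{c(c+1)/2}m$. Where the sign sits depends on conventions, so it cannot be guessed factor by factor. It has to be computed with one fixed set of conventions.

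\textbf{The fallback does not close the gap.} Settling $c=1$ and then using multiplicativity fails as described, for two reasons.
\begin{enumerate}
\item Lemma \ref{spreadingoutclasscommutativitylemma} and Proposition \ref{pushforwardofclass} compare $\hl(i)(\cl(Z,X))$ with $\cl(Z,Y)$ for closed immersions between $\NS$-schemes. They say nothing about residue symbols. Moreover, the intermediate schemes $V(t_j,\ldots,t_c)$ are in general not smooth over $S$ (the statement only assumes $X$ regular), so $\hl$ is not even defined for them.
\item The sign you want from $\eta_i$ alone, $(-1)^{c(c+1)/2}$, is not the $c$-th power of the $c=1$ sign $-1$; they already differ for $c=2$. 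An induction along codimension-one steps would therefore have to produce an extra $(-1)^{c(c-1)/2}$ from the ordering conventions in Conrad's isomorphisms $\zeta'$ for composite regular immersions and in iterated Koszul symbols. That bookkeeping is exactly what is missing.
\end{enumerate}
To finish, either carry out that computation explicitly with fixed conventions, or cite \cite[Lemma A.2.5]{KayAndre} as the paper does.
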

\begin{proof}
	We can without loss of generality assume that $Y = \Spec(A)$ is affine, and that 
	$X = \frac{\Spec(A)}{(t_1,\ldots,t_c)}$.
	By definition we have that $\cl(X,Y)_{\eta}$ is the image of $1$ under the composition
	\begin{align*}
		i_{\ast}\sO_X &\xrightarrow{\phi} i_{\ast}\omega_{X/Y} \otimes_{\sO_Y} \Omega_{Y/S}^c \\
		&\xrightarrow{\eta_i} i_{\ast}i^!(\sO_Y[c]) \otimes_{\sO_Y} \Omega_{Y/S}^c \\
		&\xrightarrow{Tr_i} \Omega_{Y/S}^c[c] \\
		&\to H^c_{\eta}(Y,\Omega_{Y/S}^c),
	\end{align*}
	where $i:X\hookrightarrow Y$ is the closed immersion, 
	$\eta_i$ is the Fundamental Local Isomorphism, see Notation \ref{notation2}, and 
	$\phi$ is the map sending $1$ to $t_1^{\vee}\wedge\cdots\wedge t_c^{\vee} \otimes dt_c\wedge\cdots\wedge dt_1$.
	By applying $\rug_X$ we see that the composition 
	$$
	i_{\ast}\omega_{X/Y} \xrightarrow{\eta_i} i_{\ast}i^!(\sO_Y[c]) \xrightarrow{Tr_i} \sO_Y[c]
	$$
	factors through
	\begin{equation} \label{classsymbolnotationcomposition}
		i_{\ast}\omega_{X/Y} \xrightarrow{\eta_i} i_{\ast}i^!(\sO_Y[c]) \xrightarrow{Tr_i} \rug_X\sO_{Y}[c],
	\end{equation}
	and by \cite[Lemma A.2.5.]{KayAndre} there is a natural isomorphism $\rug_X\sO_{Y}[c] \cong 
	\sH^c_X(\sO_{Y})$ in $D^b_{qc}(Y)$ s.t. \eqref{classsymbolnotationcomposition} composed 
	with this isomorphism is 
	given by
	\begin{align} \label{classsymbolnotationcompositionaftercohomology}
		i_{\ast}\omega_{X/Y} &\to \sH^c_X(\sO_{Y}), \\
		at_1^{\vee}\wedge\cdots\wedge t_c^{\vee} &\mapsto (-1)^{c(c+1)/2}\begin{bmatrix}
			\tilde{a} \\
			t_1,\ldots,t_c 
		\end{bmatrix} \notag
	\end{align}
	where $\tilde{a} \in A$ is any lift of $a \in \frac{A}{(t_1,\ldots,t_c)}$. We therefore have 
	\begin{align}
		\cl(X,Y)_{\eta} & = (-1)^{c(c+1)/2}\begin{bmatrix}
			1 \\
			t_1,\ldots,t_c 
		\end{bmatrix} \otimes dt_c\wedge\cdots\wedge dt_1  \\ \notag
		&=(-1)^{c(c+1)/2} \begin{bmatrix}
			dt_c\wedge\cdots\wedge dt_1 \\
			t_1,\ldots,t_c 
		\end{bmatrix} \\ \notag
		&= (-1)^{c(c+1)/2}(-1)^{c(c-1)/2} \begin{bmatrix}
			dt_1\wedge\cdots\wedge dt_c \\
			t_1,\ldots,t_c 
		\end{bmatrix} \\ \notag
		&= (-1)^c \begin{bmatrix}
			dt_1\wedge\cdots\wedge dt_c \\
			t_1,\ldots,t_c 
		\end{bmatrix} \notag
	\end{align}
\end{proof}

\begin{lem} \label{injectivityoflocalizationlemma}
	Let $X$ be a regular scheme, $V\subset X$ an irreducible closed subset of codimension $c$ with a generic point 
	$\eta$, and let $\sF$ be a finite locally free $\sO_X$-module. Then the localization 
	$$
	H^c_V(X,\sF) \to H^c_{\eta}(X,\sF)
	$$
	is injective.
\end{lem}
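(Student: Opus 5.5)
The plan is to reduce to a statement about local cohomology at the generic point, where depth considerations give the vanishing. Recall that $H^c_\eta(X,\sF) = \varinjlim_{U\ni\eta} H^c_{V\cap U}(U,\sF)$. By excision we may replace $X$ by any open $U$ containing $\eta$. For each such $U$ with complement $Z := X\setminus U$, set $V' := V\cap Z$, a closed subset of $V$ not containing $\eta$, hence of codimension $\geq c+1$ in $X$. The long exact sequence of local cohomology for the triple $V' \subset V$ (with $V\setminus V' = V\cap U$) reads
$$
H^c_{V'}(X,\sF) \to H^c_V(X,\sF) \to H^c_{V\cap U}(U,\sF).
$$
So it suffices to show $H^c_{V'}(X,\sF)=0$ whenever $V'$ is closed of codimension $\geq c+1$. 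Injectivity into the colimit then follows, since every transition map is injective.

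For the vanishing, use the local-to-global spectral sequence $H^p(X,\sH^q_{V'}(\sF)) \Rightarrow H^{p+q}_{V'}(X,\sF)$. It then suffices to show $\sH^q_{V'}(\sF)=0$ for $q\le c$. This is local. At a point $x$, $\sF_x$ is free over the regular, hence Cohen–Macaulay, local ring $\sO_{X,x}$. Hence the depth of $\sF_x$ along the ideal of $V'$ equals the height of that ideal, which is $\geq c+1$. By \cite[Exposé III, Proposition 3.3]{SGA2}, the same result the paper uses in Lemma \ref{spreadingoutclasscommutativitylemma}, $\sH^q_{V'}(\sF)=0$ for $q<c+1$. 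Therefore all terms with $p+q=c$ vanish, and $H^c_{V'}(X,\sF)=0$.

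The main obstacle is checking the depth/codimension bookkeeping when $X$ is not equidimensional or not catenary-friendly. Here "codimension" must be taken locally. At a point $x\in V'$, we have $\dim \sO_{V',x}$-codimension $= \operatorname{ht}$ of the ideal of $V'$ in $\sO_{X,x}$, and we need this to be $\geq c+1$. Since $V'\subsetneq V$ near $x$ and $V$ is irreducible with $\operatorname{codim}(V,X)=c$, every component of $V'$ through $x$ is strictly contained in $V$. Then $\operatorname{ht}\ge \operatorname{codim}_x(V,X)+1 \ge c+1$ holds because regular local rings are catenary domains. The remaining care is that the SGA2 criterion is stated with depth at points of $V'$ as the infimum over $x\in V'$ of $\operatorname{depth}\sF_x$. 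This equals $\dim\sO_{X,x}$ for locally free $\sF$, and we get $\dim \sO_{X,x}\ge c+1$ for $x\in V'$. That is exactly what is needed.
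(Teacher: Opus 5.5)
Your proof is correct and follows the paper's argument. Both use the long exact sequence for $V'=V\cap(X\setminus U)\subset V$, show that $H^c_{V'}(X,\sF)$ vanishes by the depth criterion of SGA2 Exp.~III, Prop.~3.3 because $V'$ has codimension $\geq c+1$, and pass to the filtered colimit over neighborhoods $U$ of $\eta$. Your detour through the local-to-global spectral sequence and your codimension bookkeeping only make explicit what the paper leaves implicit. One small remark: the bound $\dim\sO_{X,x}\geq c+1$ for $x\in V'$ needs only $\dim A\geq \operatorname{ht}\mathfrak p+\dim A/\mathfrak p$, which holds in any Noetherian local ring, so catenarity is not required.
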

\begin{proof}
	Let $U\subset X$ be an open subscheme such that $U\cap Z \neq \emptyset$, i.e. $U$ is an open neighborhood
	of $\eta$. Then we have a long exact sequence
	\begin{equation} \label{localizationinjectiveexaxtsequence}
		\cdots \to H^c_{V'}(X,\sF) \to H^c_V(X,\sF) \to H^c_{V\cap U}(U,\sF)\to \cdots,
	\end{equation}
	where $V' := V\setminus(V\cap U) = V\cap (X\setminus U)$.
	This is obtained from the standard long exact sequence for local cohomology
	$$
	\cdots H^i_{X\setminus U}(X,\sG) \to H^i(X,\sG) \to H^i(U,\sG)\to \cdots,
	$$
	applied to $\sG = \ug_V(\sF)$ which is quasicoherent since $X$ is Noetherian, see \cite[Tag: 07ZP]{Stacks}. Since
	$X$ is regular, hence Cohen-Macaulay, we know that for any point $x \in V'$ we have 
	$$
	depth_{\sO_{X,x}}(\sF_x) \geq \dim(\sO_{X,x}) \geq \codim(V',X) = c+1,
	$$
	so \cite[Expos\'e III, Proposition 3.3]{SGA2} tells us that $H^c_{V'}(X,\sF) = 0$ and thus the long exact 
	sequence
	\eqref{localizationinjectiveexaxtsequence} tells us that 
	$H^c_V(X,\sF) \to H^c_{V\cap U}(U,\sF)$
	is injective. The map $H^c_V(X,\sF) \to H^c_{\eta}(X,\sF)$ is then obtained by taking the direct
	limit over all such neighborhoods $U$ of $\eta$ and is also injective.
\end{proof}

\begin{prop} \label{purehodgesemipurity}
	The weak cohomology theory with 
	supports $(\hpl, \hpu, T, e)$ satisfies semi-purity.
\end{prop}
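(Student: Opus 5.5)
We verify the two conditions of Definition \ref{semipurity} directly, using that for an $\NS$-scheme $X$ and a closed subset $W$ the groups in question are local cohomology groups of locally free sheaves on a regular scheme. We may assume $X$ connected, of $S$-dimension $d_X$. Let $W\subset X$ be irreducible of codimension $c$, so $\dim_S W = d_X - c$ (Lemma \ref{Chowrelationbetweengradings}). By the definition of the covariant grading,
$$
\hp_{i}(X,W) = \hp^{2d_X - i}(X,W),
$$
which vanishes for odd $2d_X - i$. For $2d_X - i = 2n$ it equals $H^n_W(X,\Omega^n_{X/S})$. The condition $i > 2\dim_S W = 2(d_X - c)$ translates to $n < c$.

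\textbf{Condition (1).} We must show $H^n_W(X,\Omega^n_{X/S})=0$ for $n<c$. Since $X$ is smooth over the regular scheme $S$, it is regular and hence Cohen--Macaulay, and $\Omega^n_{X/S}$ is locally free. At every point $x\in W$ we therefore have $\mathrm{depth}\,(\Omega^n_{X/S})_x = \dim \sO_{X,x} \ge \codim(W,X)=c$. \cite[Expos\'e III, Proposition 3.3]{SGA2} (the same depth argument used in Lemma \ref{injectivityoflocalizationlemma} and Lemma \ref{spreadingoutclasscommutativitylemma}) then gives $\sH^j_W(\Omega^n_{X/S})=0$ for $j<c$. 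The local-to-global spectral sequence $H^p(X,\sH^q_W)\Rightarrow H^{p+q}_W$ yields $H^j_W(X,\Omega^n_{X/S})=0$ for $j<c$, in particular for $j=n<c$.

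\textbf{Condition (2).} Now let $W$ be an arbitrary closed subset with irreducible components $W_1,\dots,W_m$, and let $U$ contain their generic points. The group $\hp_{2\dim_S W}(X,W)$ is $H^c_W(X,\Omega^c_{X/S})$, where $c$ is the minimal codimension of a component. It suffices to show that the restriction $H^c_W(X,\Omega^c)\to H^c_{W\cap U}(U,\Omega^c)$ is injective. Set $W' = W\setminus U$. By assumption $W'$ contains no generic point of any component, so every point of $W'$ has codimension at least $c+1$ in $X$. The long exact sequence
$$
H^c_{W'}(X,\Omega^c)\to H^c_W(X,\Omega^c)\to H^c_{W\cap U}(U,\Omega^c),
$$
obtained as in the proof of Lemma \ref{injectivityoflocalizationlemma} from applying $\underline{\Gamma}_W$ and the excision sequence for the closed subset $X\setminus U$, reduces injectivity to the vanishing $H^c_{W'}(X,\Omega^c)=0$. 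This vanishing follows from the same depth bound, now with depth $\ge c+1$ at all points of $W'$. Components of codimension larger than $c$ cause no difficulty, since only the pointwise depth bound on $W'$ is needed. The case of disconnected $X$ follows by taking the direct sum over connected components.

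\textbf{Main obstacle.} The part needing care is the bookkeeping in condition (2) when $W$ is not equidimensional. There one must check that the graded piece $\hp_{2\dim_S W}$ on each connected component really is $H^c_W$ with $c$ the minimal codimension, so that the depth bound $c+1$ holds along all of $W'$. Once that identification is made, both conditions reduce to the Grothendieck depth vanishing already used earlier in the paper.
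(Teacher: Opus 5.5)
Your proof is correct and follows the same route as the paper. Condition (1) is Grothendieck's depth vanishing \cite[Expos\'e III, Proposition 3.3]{SGA2} for a locally free sheaf on a regular scheme, and condition (2) is exactly the localization-sequence argument in the proof of Lemma \ref{injectivityoflocalizationlemma}, which the paper simply cites; your explicit treatment of reducible $W$ (with $c$ the minimal codimension of a component, so every point of $W\setminus U$ has codimension at least $c+1$) spells out what the paper compresses into ``without loss of generality $W$ is irreducible'', and your range $n<c$ in condition (1) is the right one (the paper's ``$p>c$'' there should read $p<c$).
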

\begin{proof}
	Without loss of generality we may assume that we have a connected $\NS$-scheme $X$ 
	and an irreducible closed subset $W \subset X$. 
	We denote by c := $\codim(W,X) = \dim_S(X)-\dim_S(W)$.
	Then we need to prove the following
	\begin{enumerate}
		\item $H^p_W(X,\Omega_{X/S}^p) = 0$ when $p > c$, and
		\item The map $\hu(j): H^c_W(X,\Omega_{X/S}^c)\to  H^c_{U\cap W}(U,\Omega_{X/S}^c)$ is injective
		where $U$ is an open subscheme of $X$ that intersects $W$ and $j: (U,W\cap U) \to (X,W)$ is 
		induced by the open immersion $U \hookrightarrow X$.
	\end{enumerate}
	Condition $(1)$ is well known, see \cite[Exp. III, Proposition 3.3]{SGA2},
	and condition $(2)$ has been proven as part of the proof of Lemma \ref{injectivityoflocalizationlemma}. 
\end{proof}

\begin{lem} \label{Cupproductsymbolnotation}
	Let $X = \Spec(A)$ be  an affine $\NS$-scheme and let
	$V \subset X$ and $W \subset X$ be regular integral closed subschemes 
	of codimensions $c$ and $e$ respectively. Furthermore we write 
	$I_V = (t_1,\ldots,t_c)$ and $I_W = (s_1,\ldots,s_e)$ where $t_1,\ldots,t_c$
	and $s_1,\ldots,s_e$ are regular sequences in $A$. 
	Let $M$ and $N$ be $A$-modules, then for any $m \in M$
	and $n \in N$ we have 
	\begin{equation} \label{cupproductofsymbolswhattoshow}
			\begin{bmatrix}
					m \\
					t_1,\ldots,t_c
				\end{bmatrix} \otimes_A 
			\begin{bmatrix}
					n \\
					s_1,\ldots,s_e
				\end{bmatrix} =
			\begin{bmatrix}
					m \otimes n \\
					t_1,\ldots,t_c, s_1,\ldots, s_e
				\end{bmatrix}.
		\end{equation}
\end{lem}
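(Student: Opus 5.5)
The identity lives in $H^{c+e}_{V\cap W}(X, M\otimes_A N)$, where the product is the map \eqref{Kunnethequation3}. Both sides are computed with Čech-type (Koszul) complexes, so the plan is a direct computation there. Recall that for $\sF$ quasi-coherent on $X=\Spec(A)$ and $t=(t_1,\ldots,t_c)$, the local cohomology $H^c_V(X,\tilde M)$ is the top cohomology of the extended Čech complex
\[
C^\bullet(t;M): \; M \to \bigoplus_i M_{t_i} \to \cdots \to M_{t_1\cdots t_c}.
\]
The symbol $\left[\begin{smallmatrix} m\\ t_1,\ldots,t_c\end{smallmatrix}\right]$ is the class of $m/(t_1\cdots t_c)$ in $H^c(C^\bullet(t;M)) = M_{t_1\cdots t_c}/\sum_i \mathrm{im}(M_{t_1\cdots\hat{t_i}\cdots t_c})$, up to the sign convention fixed in \cite[Lemma A.2.5]{KayAndre}. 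I will use the convention implicit in Lemma \ref{Classsymbolnotation}.

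\textbf{Step 1.} Identify $\rug_V$ with $C^\bullet(t;-)$ functorially, and likewise $\rug_W$ with $C^\bullet(s;-)$ and $\rug_{V\cap W}$ with $C^\bullet(t,s;-)$. The ideal $I_V+I_W=(t,s)$ defines $V\cap W$ set-theoretically, which is all that local cohomology sees. The point of this step is that each $C^\bullet(t;A)$ is a complex of flat $A$-modules. Hence the derived tensor product in \eqref{Kunnethequation3} can be computed as the ordinary tensor product
\[
C^\bullet(t;M)\otimes_A C^\bullet(s;N).
\]

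\textbf{Step 2.} Use the canonical isomorphism of complexes
\[
C^\bullet(t;A)\otimes_A C^\bullet(s;A)\cong C^\bullet(t,s;A),
\]
which is the standard fact that the extended Čech complex of a concatenated sequence is the tensor product of the individual ones. Tensoring with $M\otimes_A N$ gives $C^\bullet(t;M)\otimes_A C^\bullet(s;N)\cong C^\bullet(t,s;M\otimes_A N)$.

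\textbf{Step 3.} Check that under these identifications the map \eqref{Kunnethequation3} becomes this isomorphism. The map was built by identifying $C^\bullet$ with $\rug_{V\cap W}(C^\bullet)$ and then applying the augmentations $\rug_V(\sF)\to\sF$ and $\rug_W(\sG)\to\sG$. Both operations are compatible with the Čech models, since the augmentation is the inclusion of the degree-$0$ term. In top degree the element $m/(t_1\cdots t_c)\otimes n/(s_1\cdots s_e)$ maps to $(m\otimes n)/(t_1\cdots t_c s_1\cdots s_e)$. This is exactly \eqref{cupproductofsymbolswhattoshow}.

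\textbf{Main obstacle: signs.} The bookkeeping is routine except for the signs. The Koszul sign rule on a tensor product of complexes introduces $(-1)^{pq}$ factors. In top degree $c\cdot 0$ contributes nothing, but one must also check how the symbol convention (the $(-1)^{c(c+1)/2}$ appearing in Lemma \ref{Classsymbolnotation}) behaves under concatenation. I would first test the smallest case $c=e=1$ directly. In general, if the conventions produce a discrepancy $(-1)^{ce}$ or a ratio of the quadratic signs, I would absorb it by checking the conventions of \cite[Lemma A.2.5]{KayAndre}. The Čech identification there is defined precisely so that ordered concatenation is multiplicative, and I expect the identity to hold on the nose.
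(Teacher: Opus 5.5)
Your route differs from the paper's. The paper moves the symbols into $\Ext^c(A/I_V,M)=H^0\Hom_{K(A)}(K^{-\bullet}(t),M[c])$ and represents them by chain maps out of the Koszul resolution, normalized by $e_1\wedge\cdots\wedge e_c\mapsto(-1)^{c(c+1)/2}m$. It then concludes from $K^{-\bullet}(t)\otimes_A K^{-\bullet}(s)=K^{-\bullet}(t,s)$. You instead model ${\rm R}\underline{\Gamma}_V$, ${\rm R}\underline{\Gamma}_W$, ${\rm R}\underline{\Gamma}_{V\cap W}$ by extended \v{C}ech complexes and compute \eqref{Kunnethequation3} itself. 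This has advantages. You avoid the compatibility of $\Ext^c(A/I_V,M)\to H^c_V(X,\tilde M)$ with products, which the paper uses tacitly when it passes from the primed to the unprimed identity. You also avoid needing $(t,s)$ to be regular, which the Ext interpretation of $K^{-\bullet}(t,s)$ requires. But your argument stops exactly where the lemma has content: the sign is never determined.

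Your Steps 1--3 show that on cocycles the product is $x\otimes y\mapsto x\otimes y$, when $K[c]\otimes L[e]\cong(K\otimes L)[c+e]$ is normalized by $(-1)^{ep}$ with $p$ the degree in $K[c]$. So if the symbol equals $\lambda_c$ times the class of $m/(t_1\cdots t_c)$, the lemma is \emph{equivalent} to $\lambda_c\lambda_e=\lambda_{c+e}$. This is not automatic. Since $\tfrac{c(c+1)}{2}+\tfrac{e(e+1)}{2}=\tfrac{(c+e)(c+e+1)}{2}+ce$, the normalization $\lambda_c=(-1)^{c(c+1)/2}$ combined with your sign-free product makes the statement false by $(-1)^{ce}$. ``Absorbing'' a discrepancy is not an option for an exact equality. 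Also, the $(-1)^{c(c+1)/2}$ in Lemma~\ref{Classsymbolnotation} is not a convention for the symbol. It comes from comparing $\Tr_i\circ\eta_i$ with symbols via \cite[Lemma A.2.5]{KayAndre}, so that is the wrong place to look.

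To close the gap, start from the paper's definition: the symbol is the image of $m$ under $M\to M/I_VM\cong H^c(t,M)\to H^c_V(X,\tilde M)$. Compute $H^c(t,M)$ with $K^\bullet(t;A)=\bigotimes_i[A\xrightarrow{t_i}A]$, and use $C^\bullet(t;A)=\varinjlim_n K^\bullet(t_1^n,\ldots,t_c^n;A)=\bigotimes_i[A\to A_{t_i}]$, with transition maps given by multiplication by $t_i$ in degree one. Then $\bar m\mapsto[m/(t_1\cdots t_c)]$ exactly, so $\lambda_c=1$. The isomorphism of Step 2 is plain associativity, since the degree-$0$ factors $M$, $N$ move past $C^\bullet(s;A)$ without sign. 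Your computation then gives the identity on the nose. This agrees with the paper's Ext description: with the usual sign convention for $\Hom$ complexes, $K^\bullet(t;A)\cong\Hom^\bullet(K^{-\bullet}(t),A)$ multiplies the top basis element by $(-1)^{c(c+1)/2}$. The resulting $(-1)^{ce}$ is cancelled there by the Koszul sign in the tensor product of the two chain maps (equivalently, in $M[c]\otimes N[e]\cong(M\otimes N)[c+e]$).

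Two smaller corrections. First, flatness of $C^\bullet(t;A)$ does not make $C^\bullet(t;M)\otimes_A C^\bullet(s;N)$ compute the derived tensor product unless $M$ or $N$ is flat. What you compute is \eqref{Kunnethequation3} followed by $M\otimes^L_A N\to M\otimes_A N$, which is what the right-hand side needs anyway. Second, the augmentation $C^\bullet(t;M)\to M$ is the projection onto degree $0$, not an inclusion. Step 3 is cleanest via the universal property of ${\rm R}\underline{\Gamma}_{V\cap W}$. The complex $C^\bullet(t;M)\otimes_A C^\bullet(s;N)$ is supported on $V\cap W$, so any map from it to ${\rm R}\underline{\Gamma}_{V\cap W}(M\otimes_A N)$ is determined by its composite with the augmentation. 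Both the paper's map and your concatenation isomorphism compose to the tensor product of the two augmentations, so they agree.
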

\begin{proof}
	Recall that we construct $\begin{bmatrix}
			m \\
			t_1,\ldots,t_c
		\end{bmatrix}$
	as the image of $m$ under the composition
	$$
	M \to \frac{M}{I_V M} \xrightarrow{\cong} H^c(t,M) \to H^c(X,\tilde{M}),
	$$
	where $\tilde{M}$ is the $\sO_X$-module associated with $M$. 
	We furthermore know that 
	$$
	H^c(t,M) \cong \Ext^c(A/I_V,M),
	$$
	so we can consider the class 
	$$
	\begin{bmatrix}
			m \\
			t_1,\ldots,t_c
		\end{bmatrix}' \in \Ext^c(A/I_V,M),
	$$
	as the image of $m\in M$ under the composition 
	$$
	M \to \frac{M}{I_V M} \xrightarrow{\cong} H^c(t,M)  \xrightarrow{\cong}
	\Ext^c(A/I_V,M),
	$$
	and if we can prove
	\begin{equation} \label{cupproductssymbolswhattoshowreally}
			\begin{bmatrix}
					m \\
					t_1,\ldots,t_c
				\end{bmatrix}' \otimes_A
			\begin{bmatrix}
					n \\
					s_1,\ldots,s_e
				\end{bmatrix}' =
			\begin{bmatrix}
					m \otimes n \\
					t_1,\ldots,t_c, s_1,\ldots, s_e
				\end{bmatrix}',
		\end{equation}
	then \eqref{cupproductofsymbolswhattoshow} follows.
	
	We note that 
	$$
	\Ext^c(A/I_V,M) = \Hom_{D(A)}(A/I_V,M[c]) = H^0(\Hom_{K(A)}(K^{-\bullet}(t),M[c])),
	$$
	where $K(A)$ is the homotopy category of the category $A-Mod$ and 
	the second equality follows from the fact that $K^{-\bullet}(t)$ is a free
	resolution of $A/I_V$ in $A-Mod$. In $H^0(\Hom_{K(A)}(K^{-\bullet}(t),M[c])),$ the symbol
	$\begin{bmatrix}
			m \\
			t_1,\ldots,t_c
		\end{bmatrix}'$ corresponds to the map that is the zero map 
	in all degrees except degree $-c$ and is the map 
	\begin{align*}
			\bigwedge^c(A^c) &\to M, \\
			e_1\wedge\cdots\wedge e_c &\mapsto  (-1)^{\frac{c(c+1)}{2}} m,
		\end{align*} 
	in degree $-c$.  Similarly $\begin{bmatrix}
			n \\
			s_1,\ldots,s_e
		\end{bmatrix}'$ corresponds to the map in 
	$H^0(\Hom^{\bullet}_{K(A)}(K^{-\bullet}(s),N[e]))$ that is the zero map in all 
	degrees except $-e$ and is 
	\begin{align*}
			\bigwedge^e(A^e) &\to N, \\
			f_1\wedge\cdots\wedge f_e &\mapsto (-1)^{\frac{e(e+1)}{2}} n,
		\end{align*}
	in degree $-e$, and $\begin{bmatrix}
			m \otimes n \\
			t_1,\ldots,t_c,  s_1,\ldots,  s_e
		\end{bmatrix}'$ corresponds to the map in \linebreak
	$H^{0}(\Hom^{\bullet}_{K(A)}(K^{-\bullet}(t,s),M\otimes_A N[c+e]))$ 
	that is the zero map in all degrees except $-c-e$ and is the map
	\begin{align*}
			\bigwedge^{c+e}(A^{c+e}) &\to M\otimes N, \\
			e_1\wedge\cdots\wedge e_c \wedge f_1\wedge 
			\cdots \wedge  f_e &\mapsto \mapsto (-1)^{\frac{(c+e)(c+e+1)}{2}}m\otimes n,
		\end{align*}
	in degree $-c-e$, where $t,s$ denotes the regular sequence  
	$t_1,\ldots,t_c,s_1,\ldots,s_e$. But then \eqref{cupproductssymbolswhattoshowreally} follows from 
	the definition of Koszul complexes as tensor products, see for example
	\cite[\S 18.D]{matscommalg}.
\end{proof}

\begin{lem} \label{pushforwardfinitemaphodgecohomology}
	Let $f: X \to 
	Y$ be a 
	morphism of $\NS$-schemes. Let $W\subset X$ be a 
	regular closed integral
	subscheme such that the restricted map
	$$
	f|_W:W \to f(W)
	$$
	is  finite of degree $d$. Then 
	$$
	\hl(f)(\cl(W,X)) = d \cdot \cl(f(W),Y).
	$$
\end{lem}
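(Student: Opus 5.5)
We reduce to a local computation at generic points via semi-purity, then compare residue symbols. Both sides lie in $H^{c'}_{f(W)}(Y,\Omega^{c'}_{Y/S})$ with $c'=\codim(f(W),Y)$; note $\dim_S W=\dim_S f(W)$ since $f|_W$ is finite. By Proposition \ref{purehodgesemipurity} (semi-purity, proven via Lemma \ref{injectivityoflocalizationlemma}) it suffices to check the equality after restriction to any open $U\subset Y$ meeting $f(W)$. Here we use Proposition \ref{Hodgepushpullproposition} for the smooth (open) base change $U\to Y$ to commute $\hu(j)$ with $\hl(f)$, and Proposition \ref{spreadingoutclassproposition} to identify the restricted classes. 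Shrinking as in Lemma \ref{equaldimlemma}, we may take $Y$ affine, $f(W)$ regular, $f^{-1}(U)\cap W$ regular, and $W\to f(W)$ finite. The finite map is then flat of degree $d$ after further shrinking, since $f(W)$ is regular and $W$ is Cohen–Macaulay over it.

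Next we factor $f$ as $X\xrightarrow{\Gamma_f} X\times_S Y\xrightarrow{pr_2} Y$. Functoriality of $\hl$ (\cite[Proposition 2.3.3]{KayAndre}) and Proposition \ref{pushforwardofclass} applied to the closed immersion $\Gamma_f$ (with $W$ regular) give $\hl(\Gamma_f)\cl(W,X)=\cl(W,X\times_SY)$. So we reduce to a smooth projection $p:P=X\times_SY\to Y$ with $W\subset P$ regular, finite over $f(W)$. Working locally on $X$, we may replace $P$ by an open set étale over $\A^r_Y$ and compactify along $\P^r_Y$. By the compatibility of the pushforward with étale maps (Proposition \ref{properpushpropositionkayandre}(c)) and the proper pushforward, the problem becomes the case $P=\P^r_Y$ or $\A^r_Y$ with $W$ finite over $f(W)$.

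Now compute with symbols. At the generic point $\xi$ of $f(W)$ we work over the local ring $\sO_{Y,\xi}$, regular of dimension $c'$, with a regular system of parameters $t_1,\dots,t_{c'}$. Lemma \ref{Classsymbolnotation} gives $\cl(f(W),Y)_\xi=(-1)^{c'}\bigl[\begin{smallmatrix}dt\\ t\end{smallmatrix}\bigr]$. Over $\sO_{Y,\xi}$, $W$ is a finite set of closed points in $\A^r$ cut out, after a generic linear change of coordinates, by $t_1,\dots,t_{c'}$ together with equations $g_1(x_1),\,g_2(x_1,x_2)-\dots$ in triangular (Shape-lemma) form. By Lemma \ref{Cupproductsymbolnotation}, $\cl(W,P)$ factors as the cup product of $p^*$ of the $t$-symbol with the relative symbol $\bigl[\begin{smallmatrix}dg_1\wedge\cdots\wedge dg_r\\ g_1,\dots,g_r\end{smallmatrix}\bigr]$. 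The projection formula then reduces everything to the trace of the relative symbol, which is the Grothendieck residue $\mathrm{Res}\bigl[\begin{smallmatrix}dg\\ g\end{smallmatrix}\bigr]=\dim_{k(\xi)}(\text{algebra})=d$. This is the classical residue-counting property, a sum of local residues of $dg/g$ equal to multiplicities.

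The main obstacle is this last step: identifying the trace map $\Tr_{\bar p}$ used in our pushforward with the residue symbol, with the correct signs from \cite{Conrad}. I would first check it for $r=1$, $g$ monic in $x$, using the explicit trace of $\P^1$ and the degree-$d$ extension $\sO_{Y,\xi}[x]/(g)$. I would then induct on $r$ via the factorization $\P^r\to\P^{r-1}$. Signs are fixed by compatibility with the trivial case $W=f(W)$, $d=1$.
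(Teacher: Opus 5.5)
Your skeleton matches the paper's: localize at the generic point $\xi$ of $f(W)$, write the class of $W$ as $\hu(p)\cl(f(W),Y)$ cupped with a relative symbol, then apply the projection formula. However, the reduction to $\A^r_Y$ or $\P^r_Y$ is not justified as written. Proposition~\ref{properpushpropositionkayandre}(c) only says that a proper pushforward commutes with \'etale base change of the \emph{target}. It gives no way to push $\cl(W,P)$ along an \'etale map $u\colon P\to\A^r_Y$; that would be another instance of the lemma, with $u|_W$ not known to be an isomorphism onto a regular image. Also, $P$ does not sit inside $\P^r_Y$ to be compactified there, and $\P^r_Y\to\P^{r-1}_Y$ is not a morphism. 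The paper avoids all of this. Shrink $X$ around the generic point $\eta$ of $W$ (the only point of $W$ over $\xi$) so that $X=\Spec(R)$ and $Y=\Spec(A)$. A surjection $A[x_1,\dots,x_n]\to R$ gives a closed immersion $i\colon X\hookrightarrow\A^n_Y$, which is regular because both schemes are $S$-smooth. Proposition~\ref{pushforwardofclass} then gives $\hl(i)\cl(W,X)=\cl(W,\A^n_Y)$. One then applies the lemma to $\A^n_Y\to\A^{n-1}_Y$ and to $\A^{n-1}_Y\to Y$ separately, after shrinking $Y$ so the intermediate image is regular, using $d=d_1d_2$. This reduces to $W=V(s_1,\dots,s_e,g)\subset\P^1_Y$ with $g\in A[t]$ monic of degree $d$. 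It also removes your appeal to the Shape lemma, which needs a primitive element for $R(W)/k(\xi)$; that is not available for every inseparable extension.

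The genuine gap is the last step, which carries all the content: that $\hl(p)$ sends the relative symbol $\bigl[\begin{smallmatrix}dg\\ g\end{smallmatrix}\bigr]$ to $d$. You defer this to identifying $\Tr_{\bar p}$ (Conrad's trace, entering through a Nagata compactification) with Grothendieck residues, signs included, and leave it open. Saying ``signs are fixed by the trivial case $d=1$'' is not an argument, since nothing in your proposal relates a point of degree $d$ to one of degree $1$. That relation is the missing idea, and with it no trace or residue is ever computed. After the projection formula it suffices to show $\hl(p)\cl(V(g),\P^1_Y)=d$ in $H^0(Y,\sO_Y)$, which may be checked over the function field $K$ of $Y$. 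On $\P^1_K$, apply the Cousin complex to $\dlog(g)\in\Omega^1_{K(t)/K}$. Its polar parts sit at $z=V(g)$ and, writing $g=\mu^{-d}\cdot(\text{unit})$ with $\mu=1/t$, at $\infty$. This gives $\cl(z,\P^1_K)=d\cdot\cl(\infty,\P^1_K)$ in $H^1(\P^1_K,\Omega^1_{\P^1_K/K})$, which is the linear equivalence $[z]\sim d[\infty]$ seen in Hodge cohomology. By enlarging supports, $\hl(p)$ on each $H^1_x(\P^1_K,\Omega^1_{\P^1_K/K})$ factors through this group. Finally, $\hl(p)\cl(\infty,\P^1_K)=1$ by Proposition~\ref{pushforwardofclass} applied to the $\infty$-section $\Spec(K)\to\P^1_K$. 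So the degree-one case comes from the closed-immersion compatibility you already use. Your residue route could be completed only by importing the residue--trace formula $\Tr\bigl[\begin{smallmatrix}h\,dg\\ g\end{smallmatrix}\bigr]=\pm\Tr_{B/A}(h)$ for $B=A[x]/(g)$ from duality theory and matching its sign to the conventions of \cite{Conrad}. As written, that is the whole proof, and it is missing.
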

\begin{proof}
	We write $c := \codim(W,X)$ and $e := \codim(f(W),Y)$. By Lemma \ref{injectivityoflocalizationlemma}
	we know that 
	$$
	H^e_{f(W)}(Y,\Omega_{Y/S}^e) \to H^e_{\xi}(Y,\Omega_{Y/S}^e)
	$$
	is injective, where $\xi$ is the generic point of $f(W)$, so we can shrink $Y$ around $\xi$. Furthermore, if 
	$\eta$ is the generic point of $W$ then $\eta \in f^{-1}(\xi)$ is a closed point, since $f$ is finite. Therefore
	we can shrink $X$ around $\eta$.  Without loss of generality we may therefore assume that we can factor
	$f$ through some $\A^n_Y$, i.e. we may assume that $Y = \Spec(A)$ and 
	$X= \Spec(R)$ are affine and then $R$ is a finitely generated $A$-algebra 
	so there exists some $n$ such that we have a surjection 
	$$
	A[x_1,\ldots, x_n] \twoheadrightarrow R,
	$$
	i.e. a closed immersion $i:X \to \A^n_Y$ such
	that the following diagram commutes
	$$
	\xymatrix{
		W \ar@{^{(}->}[r] \ar[d]_-{f|_W}
		&
		X \ar[d]^-{f} \ar@{^{(}->}[r]^-i 
		&
		\A^n_Y \ar[dl]^-{p},
		\\
		f(W) \ar@{^{(}->}[r] 
		&
		Y
	}
	$$
	where $p$ denotes the projection $\A^n_Y \to Y$.
	Since $i:X\to  \A^n_Y$ is a separated morphism of finite type over 
	the Noetherian base $Y$, and since 
	$W$ is proper over $Y$,  we can view $W$ as a regular integral 
	closed subscheme of $\A^n_Y$.
	Furthermore
	$$
	\xymatrix{
		H_W^c(X,\Omega_{X/S}^c) \ar[r]^-{\hl(i)} \ar[d]^-{\hl(f)} 
		&
		H_W^{n+e}( (\A^n_Y, \Omega_{ \A^n_Y/S}^{n+e}) \ar[dl]^-{\hl(p)}
		\\
		H^e_{f(W)}(Y,\Omega_{Y/S}^e)
	}
	$$
	commutes because of the functoriality of the pushforward, and by Proposition \ref{pushforwardofclass}
	we have 
	$$
	\hl(i)(\cl(W,X)) = \cl(W,\A^n_Y).
	$$ 
	We can therefore without loss of generality reduce to the situation
	\begin{equation} \label{finiteconditionwhattoshowprojective1}
		\xymatrix{
			W \ar@{^{(}->}[r] \ar[d]_-{p|_W} 
			&
			\A^n_Y \ar[d]^-p
			\\
			p(W) \ar@{^{(}->}[r]
			&
			Y,
		}
	\end{equation}
	where $p$ is the projection, and $p|_W:W \to p(W)$ is finite of degree $d$. Furthermore if $n\geq 2$, we can factor 
	\eqref{finiteconditionwhattoshowprojective1} as 
	\begin{equation}
		\xymatrix{
			W \ar@{^{(}->}[r] \ar[d]_-{(q_1)|_W} 
			&
			\A^n_Y \ar[d]^-{q_1}
			\\
			q_1(W) \ar@{^{(}->}[r]\ar[d]_-{(q_2)|_{q_1(W)}} 
			&
			\A^{n-1}_Y \ar[d]^-{q_2}
			\\
			p(W)\ar@{^{(}->}[r]
			&
			Y,
		}
	\end{equation}
	where $q_1: \A^n_Y  \to \A^{n-1}_Y$ and $q_2: \A^{n-1}_Y \to Y$ are the projections 
	and then $(q_1)|_W$ and $(q_2)|_{q_1(W)}$ will be finite of degrees $d_1$ and $d_2$ respectively, with
	$d = d_1d_2$. Furthermore via an imbedding $\A^1_Y \to \P^1_Y$ we can view 
	$W$ as a regular integral closed subscheme of $\P^1_Y$ and we can therefore without loss of generality further reduce to the case
	\begin{equation}
		\xymatrix{
			W \ar@{^{(}->}[r] \ar[d]_-{p|_W} 
			&
			\P^1_S \times_S Y \ar[d]^-p
			\\
			p(W) \ar@{^{(}->}[r]
			&
			Y,
		}
	\end{equation}
	where  $p$ is the projection, and $p|_W: W \to p(W)$ is finite
	of degree $d$. 
	We can further shrink $Y$ around $\xi$ to assume 
	$p(W)$ is cut out by a regular sequence in $A$, $p(W) = V(s_1,\ldots, s_e)$, and $W = V(s_1,\ldots,s_e, g)$ where $g$ is a monic irreduble polynomial in $A[t]$
	of degree $d$. By Lemmas \ref{Classsymbolnotation} and \ref{Cupproductsymbolnotation} we see that 
	$$
	\cl(W,\P^1_Y)_{\eta} = (-1)^{e+1}\begin{bmatrix}
		ds_1\wedge\cdots\wedge ds_e\wedge g \\
		s_1,\ldots,s_e,g
	\end{bmatrix} = (-1)^e\begin{bmatrix}
		ds_1\wedge\cdots\wedge ds_e \\
		s_1,\ldots,s_e
	\end{bmatrix} \cup(-1) \begin{bmatrix}
		dg \\
		g
	\end{bmatrix}.
	$$
	But by  \cite[Exp. II, Proposition 5.]{SGA2} and  Lemma \ref{Classsymbolnotation} we have that
	$$
	(-1)^e\begin{bmatrix}
		ds_1\wedge\cdots\wedge ds_e \\
		s_1,\ldots,s_e
	\end{bmatrix} = \hu(p)(\cl(p(W),Y)_{\xi})
	$$
	and 
	$$
	(-1) \begin{bmatrix}
		dg \\
		g
	\end{bmatrix} = \cl(Z,\P_Y)_{\zeta},
	$$
	where $Z = V(g) \subset \P^1_Y$ is a divisor,
	and $\zeta$ is its generic point, i.e. we have 
	$$
	\cl(W,\P^1_Y)_{\eta} = \hu(p)(\cl(p(W),Y)_{\xi}) \cup \cl(Z,\P_Y)_{\zeta},
	$$
	and using the second projection formula,  we see that
	\begin{align*}
		\hl(p)(\cl(W,\P^1_Y)_{\eta}) &= \hl(p)(\hu(p)(\cl(p(W),Y)_{\xi}) \cup \cl(Z,\P_Y)_{\zeta}) \\
		&= \cl(p(W),Y)_{\xi} \cup \hl(p)(\cl(Z,\P_Y)_{\zeta}).
	\end{align*}
	This shows that to prove the lemma, it suffices to prove
	\begin{equation} \label{finitepushforwardwhattoshowdivisor}
		\hl(p)(\cl(Z,\P^1_Y)) = d \in H^0(Y,\sO_Y),
	\end{equation}
	where $Y = \Spec(A)$, $Z = V(g) \subset \P_Y^1$, and $g$ is a monic irreducible polynomial in $A[t]$.
	We can base-change to the function field $K = \kappa(Y)$ and without loss of generality it suffices 
	to show that
	\begin{equation} \label{finitepushforwardwhattoshowfield}
		\hl(p)(\cl(z,\P_K^1)) = d \in K,
	\end{equation}
	where $K$ is a field, $z$ is a closed point of $\P^1_K$ of degree $d$ and $p:\P^1_K \to K$ is the projection.
	Locally we write $z = (g) \in K[t]$ where $g$ is monic, irreducible and of degree $d$. 
	
	Now let $x \in \P^1_K$ be any closed point, say $x = (h) \in K[t]$. Write $R$ for the regular local ring
	$\sO_{\P_K^1,x}$, $\mathfrak{m} = (h) \subset R$ for the maximal ideal, and $P_{(x)} := \Spec(R)$. 
	The standard long exact sequence in local cohomology for $P_{(x)}, U := P_{(x)}\setminus \{x\}$ 
	and $\sF := \Omega_{P_{(x)}/S}^1$ 
	is 
	\begin{align} \label{pushforwardfinitelongexact}
		0\to H^0_x(P_{(x)},\sF) \to H^0(P_{(x)}, \sF)& \to \\
		\to H^0(U,\sF) &\to H^1_x(P_{(x)},\sF) \to H^1(P_{(x)}, \sF)\to \ldots  \notag
	\end{align} 
	We note that 
	$$
	H^0_x(P_{(x)},\sF) = 0,
	$$
	since any section of the locally free $\sF$ that vanishes everywhere
	except possibly at $x$ must also vanish at $x$, and 
	$$
	H^1(P_{(x)}, \sF)\ = 0,
	$$
	since $P_{(x)}$ is affine.
	So we have a short exact sequence
	$$
	0 \to H^0(P_{(x)}, \sF) \to 
	H^0(U,\sF) \to H^1_x(P_{(x)},\sF) \to 0,
	$$
	and so 
	\begin{equation*}
		H^1_x(P_{(x)},\Omega_{P_{(x)}/S}^1) = \frac{H^0(U,\Omega_{P_{(x)}/S}^1)}{H^0(P_{(x)}, \Omega_{P_{(x)}/S}^1)}.
	\end{equation*}
	We futhermore note that 
	\begin{align*}
		H^0(U,\Omega_{P_{(x)}/S}^1) &= \Omega^1_{K(t)/S}, \,\,\, \text{and} \\
		H^0(P_{(x)}, \Omega_{P_{(x)}/S}^1) &= \Omega^1_{\P^1_K/S, x}.
	\end{align*}
	Consider the  commutative diagram
	$$
	\mathclap{
		\begin{tikzcd}[ampersand replacement=\&]
			H^1_x(P_{(x)},\Omega_{P_{(x)}/S}^1) = \frac{\Omega^1_{K(t)/S}}{\Omega^1_{\P^1_K/S, x}} \arr[d,equal] 
			\&\& \Omega^1_{R/S}[\frac{1}{h}] \arr[ll] 
			\\
			\lim\limits_{\to} \frac{\Omega_{R/S}^1}{h^n\Omega_{R/S}^1} 
			\&\& \Omega_{R/S}^1 \arr[ll,"\begin{bmatrix}
				\alpha \\
				h
			\end{bmatrix} \mapsfrom \alpha "'] \arr[dl] \arr[u,"\alpha \mapsto \alpha/h"']
			\\ \& \frac{\Omega_{R/S}^1}{h\Omega_{R/S}^1}. \arr[ul] 
		\end{tikzcd}
	}
	$$
	We consider this specifically for $x = z$ and $\alpha = dg$, i.e. we are considering
	\begin{align*}
		\Omega_{R/S}^1 \to \Omega_{R/S}^1[\frac{1}{g}] &\to H^1_z(\P^1_K,\Omega_{K/S}^1) \\
		dg \mapsto \frac{dg}{g} &\mapsto \begin{bmatrix}
			dg \\
			g
		\end{bmatrix}.
	\end{align*}
	The Cousin complex yields an exact sequence
	\begin{equation} \label{finitepushforwardcousin}
		\Omega^1_{K(t)/K} \to \bigoplus_{x\in \P_K^1}H^1_x(\P_K^1,\Omega_{\P_K^1/S}^1) \to
		H^1(\P_K^1,\Omega_{\P_K^1/S}^1) \to 0,
	\end{equation}
	where the sum is taken over all closed points $x$ in $\P_K^1$. We clearly have a commutative triangle
	\begin{equation} \label{finitepushforwardcommutativetriangle}
		\xymatrix{ \bigoplus_{x\in \P_K^1}H^1_x(\P_K^1,\Omega_{\P_K^1/S}^1) 
			\ar[r]^-{\Sigma} \ar[dr]_-{\oplus \hl(p)} 
			&  H^1(\P_K^1,\Omega_{\P_K^1/S}^1) \ar[d]^-{\hl(p)} 
			\\&K
		}
	\end{equation}
	Now $g \in K[t]$ is an irreducible monic polynomial of degree $d$, say 
	$$
	g(t) = t^d+a_{d-1}t^{d-1}+ \cdots + a_1 t + a_0
	$$ and $\dlog(g) \in \Omega^1_{K(t)/L}$ where $L$ is the image of $K$ in $S$. We have 
	$$
	\dlog(g) \in \Omega^1_{\P^1_K/L,x},
	$$
	for all $x \in \P^1_K \setminus \{z,\infty\}$. Write $\mu = \frac{1}{t}$ and then
	$$
	g = \mu^{-d}(1+a_{d-1}\mu+\cdots+a_0\mu^d),
	$$
	and note that $1+a_{d-1}\mu+\cdots+a_0\mu^d \in \sO_{\P_K^1,\infty}^{\times}$ and this implies
	that 
	\begin{equation}
		\dlog(g) = -\dlog(\mu) + \dlog(1+a_{d-1}\mu+\cdots+a_0\mu^d) = -\dlog(\mu),
	\end{equation}
	in $\Omega_{\P_K^1,\infty}[\frac{1}{\mu}] /\Omega_{\P_K^1,\infty}$. The Cousin complex
	\eqref{finitepushforwardcousin} gives
	\begin{align}
		\Omega^1_{K(t)/K} \to &\bigoplus_{x\in \P_K^1}H^1_x(\P_K^1,\Omega_{\P_K^1/S}^1) \to
		H^1(\P_K^1,\Omega_{\P_K^1/S}^1) \\
		\dlog(g) \mapsto &(\alpha_x)_{x\in\P_K^1} \mapsto 0, \notag
	\end{align}
	where 
	$$
	\alpha_x = \begin{cases}
		0 & x\neq z, \infty, \\
		\cl(z,\P_K^1) & x=z,\\
		d\cdot \cl(\infty, \P_K^1) & x = \infty.\\
	\end{cases} 
	$$
	By \eqref{finitepushforwardcommutativetriangle}, this means that
	$$
	\hl(p)(\cl(z,\P_K^1)) = d\cdot \hl(p)(\cl(\infty,\P_K^1))
	$$
	in $K$. Note that we have a commutative triangle
	$$
	\xymatrix{
		& \P_K^1 
		\\ \infty \ar@{^{(}->}[ur] \ar[r]^-{\cong} 
		& \Spec(K) \ar@{^{(}->}[u]
	}
	$$
	so by the proof of Proposition \ref{spreadingoutclassproposition},
	specifically \eqref{spreadingoutclasswhattoshow} we see that 
	$$
	\hl(p)(\cl(\infty,\P_K^1)) = \cl(\Spec(K),\Spec(K)) = 1,
	$$
	i.e. 
	$$
	\hl(p)(\cl(z,\P_K^1)) = d.
	$$
\end{proof}

\begin{lem} \label{pullbacksmoothhodgecohomology}
	If $f:X 
	\to Y$ is a smooth morphism between
	$\NS$-schemes $X$ and $Y$, and 
	$W \subset Y$ 
	is a regular integral closed subscheme, then
	$$
	\hu(f)(\cl(W,Y)) = \cl(f^{-1}(W),X).
	$$
\end{lem}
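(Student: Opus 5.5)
The plan is to reduce to a local computation at the generic points of $f^{-1}(W)$ and then compare explicit symbols. Since $f$ is smooth and $W$ is regular, $f^{-1}(W)$ is regular and, as in Lemma \ref{genericpointslemma}, every irreducible component of $f^{-1}(W)$ dominates $W$. On a connected piece of $X$, $f$ has constant relative dimension, so each component of $f^{-1}(W)$ has codimension $c:=\codim(W,Y)$ in $X$. Both sides of the claimed identity therefore lie in $H^c_{f^{-1}(W)}(X,\Omega^c_{X/S})$. Because $f^{-1}(W)$ is regular, its irreducible components are disjoint. By the decomposition of supports (axiom (1) of a WCTS) together with Lemma \ref{injectivityoflocalizationlemma}, it then suffices to compare the two classes in $H^c_{\eta}(X,\Omega^c_{X/S})$ for each generic point $\eta$ of $f^{-1}(W)$.

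Next I would shrink around $\xi=f(\eta)$, the generic point of $W$. Take an affine open $U=\Spec(A)\subset Y$ on which the ideal of $W$ is generated by a regular sequence $t_1,\dots,t_c$. The same argument as in Step 1 of Proposition \ref{spreadingoutclassproposition}, using the smooth base change of Proposition \ref{Hodgepushpullproposition}, shows that the pullback is compatible with the localization maps $H^c_W(Y,-)\to H^c_{U\cap W}(U,-)$ and with their counterparts on $f^{-1}(U)$. Since $f$ is flat, the pulled-back sequence $f^{\ast}t_1,\dots,f^{\ast}t_c$ is regular on an affine neighbourhood $\Spec(B)\subset f^{-1}(U)$ of $\eta$ and generates the ideal of $f^{-1}(W)$ there. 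By Lemma \ref{Classsymbolnotation},
$$
\cl(W,Y)_{\xi}=(-1)^c\begin{bmatrix} dt_1\wedge\cdots\wedge dt_c\\ t_1,\ldots,t_c\end{bmatrix},\qquad
\cl(f^{-1}(W),X)_{\eta}=(-1)^c\begin{bmatrix} d f^{\ast}t_1\wedge\cdots\wedge df^{\ast}t_c\\ f^{\ast}t_1,\ldots,f^{\ast}t_c\end{bmatrix}.
$$

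It then remains to show that $\hu(f)$ sends the symbol $\begin{bmatrix}\omega\\ t\end{bmatrix}$ to $\begin{bmatrix}f^{\ast}\omega\\ f^{\ast}t\end{bmatrix}$. This is the main obstacle. The pullback is built from $\Omega^c_{Y/S}\to f_{\ast}\Omega^c_{X/S}$ and the identification $\rugps \rfpush=\rfpush\rugpsinv$, while the symbol comes from the Koszul/\v{C}ech description $M/I M\cong H^c(t,M)\to H^c_V(\Spec A,\tilde M)$. I would verify the claim directly on \v{C}ech complexes for the cover $\{D(t_i)\}$ and its preimage $\{D(f^{\ast}t_i)\}$. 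The map $A\to B$ induces a morphism of \v{C}ech complexes computing local cohomology, and this morphism sends $\omega/(t_1\cdots t_c)$ to $f^{\ast}\omega/(f^{\ast}t_1\cdots f^{\ast}t_c)$. One must also check that the \v{C}ech computation agrees with the derived-functor definition of $\hu(f)$, which follows from the naturality of the comparison of \v{C}ech and derived local cohomology for affine covers. The signs $(-1)^c$ and $(-1)^{c(c+1)/2}$ occur identically on both sides and cancel, so no Grothendieck duality input is needed; duality enters only through the definition of $\cl$, and that has already been absorbed into Lemma \ref{Classsymbolnotation}. Combining this with the localization argument gives $\hu(f)(\cl(W,Y))=\cl(f^{-1}(W),X)$.
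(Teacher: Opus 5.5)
Your proposal is correct and follows essentially the paper's own proof: reduce to a single generic point, localize using Lemma \ref{injectivityoflocalizationlemma}, write both classes as symbols via Lemma \ref{Classsymbolnotation}, and use that the pullback sends the symbol with numerator $dt_1\wedge\cdots\wedge dt_c$ and denominators $t_1,\ldots,t_c$ to the corresponding symbol in $f^{\ast}t_1,\ldots,f^{\ast}t_c$. The only differences are that the paper obtains this last functoriality by citing \cite[Exp.~II, Proposition~5]{SGA2} together with the definition of the pullback, whereas you check it by hand on \v{C}ech complexes, and that the compatibility of the pullback with restriction to opens is simply functoriality of $\hu$, not smooth base change.
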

\begin{proof}
	Without loss of generality we may assume that $f^{-1}(W)$ has a unique generic point. 
	Denote the generic point of $W$ by $\eta$ and the generic point of $f^{-1}(W)$
	by $\nu$.	By Lemma \ref{injectivityoflocalizationlemma}, it suffices to show that 
	$$
	\hu(f)(\cl(W)_{\eta}) = \cl(f^{-1}(W))_{\nu}.
	$$
	From the definition of the pullback we have a commutative diagram
	$$
	\xymatrix{
		H_{\eta}^c(Y,\fpush\Omega_{X/S}^c) \ar[r] 
		& H_{\eta}^c(Y,\rfpush\Omega_{X/S}^c) \ar[r]
		&H_{\nu}^c(X,\Omega_{X/S}^c) 
		\\H_{\eta}^c(Y,\Omega_{Y/S}^c), \ar[u]\ar[rru]
	}
	$$   
	and  \cite[Exp. II, Proposition 5.]{SGA2} tells us that the square
	$$
	\xymatrix{
		\fpush\Omega_{X/S}^c \ar[r] 
		& H_{\eta}^c(Y,\fpush\Omega_{X/S}^c)
		\\ \Omega_{Y/S}^c \ar[r] \ar[u] 
		&H_{\eta}^c(Y,\Omega_{Y/S}^c) \ar[u]
	}	
	$$
	commutes. Combining these diagrams with Lemma \ref{Classsymbolnotation} then shows
	that
	$$
	\hu(f)(\cl(W)_{\eta}) = \cl(f^{-1}(W))_{\nu}.
	$$
\end{proof}
We have the following corollary to Proposition \ref{pushforwardofclass}
and Lemma \ref{pullbacksmoothhodgecohomology} that shows that when the integral closed 
subscheme
$X \subset Y$ is smooth, our class element is defined in an analogous manner to the definition
in \cite{KayAndre}. In particular, when the base scheme $S$ is $\Spec(k)$, where 
$k$ is perfect field of positive characteristic, then our definitions coincide. 

\begin{cor} \label{corollarysmoothclass}
	For any $\NS$-scheme $Y$ we have 
	$$
	1_X = \cl(Y,Y).
	$$
	Furthermore,
	let $Y$ be an $\NS$-scheme and let $i:X\hookrightarrow Y$ be an 
	integral closed subscheme of 
	$Y$ that is smooth over $S$. Then 
	$$
	\hl(i)(1_X) = \cl(X,Y).
	$$
\end{cor}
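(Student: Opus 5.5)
The statement has two parts, and I would prove them separately using the explicit description of the class on the regular locus. For the first part, $\cl(Y,Y)=1_Y$, I would assume without loss of generality that $Y$ is connected, hence integral because it is regular. Then $\cl(Y,Y)\in H^0_Y(Y,\Omega^0_{Y/S})=H^0(Y,\sO_Y)$, and $Y$ is already regular, so the class is given directly by the construction with $c=0$. In that case $i=\mathrm{id}$, the ideal is $\sI=0$, $\bigwedge^0\sI/\sI^2=\sO_Y$, and $\omega_{Y/Y}=\sO_Y$. I would check that the fundamental local isomorphism $\eta_{\mathrm{id}}$ and the trace $\Tr_{\mathrm{id}}$ are the identity in Conrad's conventions. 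The map $\sO_Y\to \Omega^0_{Y/S}=\sO_Y$ is then the identity, so $\gamma_Y(1)=1$. Alternatively, I would pass to the generic point using Lemma \ref{injectivityoflocalizationlemma} and read off the $c=0$ case of Lemma \ref{Classsymbolnotation}, where the symbol with no entries is just $1$.

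For the second part, $i:X\hookrightarrow Y$ is a closed immersion of $\NS$-schemes, so it is a regular immersion of some codimension $n$, and $X$ is regular. I would apply Proposition \ref{pushforwardofclass} with $Z:=X$, $i_X:=\mathrm{id}_X$ (codimension $c=0$), $i:=i$, and $i_Y:=i$. The commutativity hypothesis $i\circ \mathrm{id}_X=i$ holds trivially. The proposition then gives $\hl(i)(\cl(X,X))=\cl(X,Y)$, and combining this with the first part, $\cl(X,X)=1_X$, gives $\hl(i)(1_X)=\cl(X,Y)$.

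The main obstacle is making sure Proposition \ref{pushforwardofclass} really allows the degenerate case $c=0$. Its proof uses the explicit computation via Lemma \ref{spreadingoutclasscommutativitylemma} with a regular sequence $s_1,\dots,s_c$. When $c=0$ that sequence is empty, and $\zeta'_{\mathrm{id},i}$ and $\eta_{\mathrm{id}}$ should be canonical identities. I would reread the local computation with $c=0$ and confirm that the reordering of $(r,t)$ and the sign bookkeeping introduce no stray sign. If that looks delicate, I would fall back on the local symbol description. By Lemma \ref{injectivityoflocalizationlemma} it suffices to compare the two classes in $H^n_\eta(Y,\Omega^n_{Y/S})$ at the generic point $\eta$ of $X$. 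Near $\eta$, I would choose étale coordinates on $Y$ so that $X=V(t_1,\dots,t_n)$. Then I would compute $\hl(i)(1_X)$ from the definition of the pushforward (through $\Tr_i$ and the self-duality isomorphisms) and match it with $(-1)^n\begin{bmatrix} dt_1\wedge\cdots\wedge dt_n\\ t_1,\dots,t_n\end{bmatrix}$ as given by Lemma \ref{Classsymbolnotation}.
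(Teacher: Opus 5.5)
Your proof is correct. The second part is exactly the paper's argument: apply Proposition \ref{pushforwardofclass} with $Z=X$, $i_X=\mathrm{id}_X$, $i_Y=i$, and then use $\cl(X,X)=1_X$. The degenerate case $c=0$ that worries you is precisely the case the paper itself invokes. With an empty sequence $s_1,\dots,s_c$, the local computation in that proposition just sends $1\otimes 1$ to $\bar t_1^{\vee}\wedge\cdots\wedge\bar t_n^{\vee}\otimes i_Y^{\ast}(dt_n\wedge\cdots\wedge dt_1)$, with no reordering of the regular sequence. So your fallback through Lemma \ref{Classsymbolnotation} is not needed.

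For the first part you take a slightly different route. The paper performs the trivial $c=0$ evaluation only on the base, asserting $\cl(S,S)=1_S$ ``from the definition''. It then transports this along the smooth structure map using Lemma \ref{pullbacksmoothhodgecohomology}: $1_Y:=\hu(\pi_Y)(e(1))=\hu(\pi_Y)(\cl(S,S))=\cl(\pi_Y^{-1}(S),Y)=\cl(Y,Y)$. You instead carry out the same $c=0$ evaluation directly on $Y$, after reducing to $Y$ connected and hence integral. This works and avoids the smooth-pullback lemma altogether.

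You should, however, add one line that the paper's route provides automatically. In this framework the unit $1_Y$ is defined as $\hu(\pi_Y)(e(1))$, not as the constant section. It does equal $1\in H^0(Y,\sO_Y)$, because the pullback on $H^0(-,\sO)$ is induced by the ring map $\sO_S\to\pi_{Y\ast}\sO_Y$. With that observation, your computation $\gamma_Y(1)=1$ gives exactly the statement.
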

\begin{proof}
	It is clear from the definition that $1_S = \cl(S,S)$. Therefore Lemma
	\ref{pullbacksmoothhodgecohomology} applied to the smooth structure morphism
	$\pi_Y: Y\to S$ tells us that 
	\begin{align*}
		1_Y &:= \hu(\pi_Y)(e(1))\\
		&= \hu(\pi_Y)(\cl(S,S)) \\
		&= \cl(\pi_Y^{-1}(S),Y) \\
		&= \cl(Y,Y).
	\end{align*}
	Now let $Y$ be an $\NS$-scheme and let $i:X\hookrightarrow Y$ be an 
	integral closed subscheme of 
	$Y$ that is smooth over $S$. Then $X$ is an $\NS$-scheme and by letting $Z = X$, 
	$i_X = id_X$ and $i_Y = i$ in Proposition \ref{pushforwardofclass}, the result follows 
	immediately.
\end{proof}

\begin{lem} \label{injectivitylemmafordivisorcondition}
	Let $X$ be an $\NS$-scheme and $\imath:D\subset X$ be the inclusion of a smooth divisor over $S$.
	Let $\Phi$ be a family of supports on $D$ and denote by $\imath_1:(D,\Phi) \to (X,\Phi)$ the 
	map in $\vl$ induced by $\imath$. Then $\hl(\imath_1): H_{\Phi}^i(D,\Omega_{D/S}^j) \to 
	H_{\Phi}^{i+1}(X,\Omega_{X/S}^{j+1})$ is the connecting homomorphism of the long
	exect cohomology sequence associated to the short exact sequence
	$$
	0 \to \Omega_{X/S}^{j+1} \to \Omega_{X/S}^{j+1}(log D) \xrightarrow{Res} \imath_{\ast}\Omega_{D/S}^j\to 0,
	$$
	where $Res(\frac{dt}{t}\alpha) =\imath^{\ast}(\alpha)$ for $t \in \sO_X$ a regular element defining 
	$D$ and $\alpha \in \Omega_{X/S}^j$. In particular, if $\Phi\subset X$ is supported in codimension 
	$\geq i+1$ in $X$, then $\hl(\imath_1)$ is injective on $H_{\Phi}^i$. 
\end{lem}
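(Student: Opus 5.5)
The proof has two parts. First, identify the pushforward $\hl(\imath_1)$ along the closed immersion $\imath$ of codimension $1$ with the residue connecting map. Second, deduce injectivity from a depth/vanishing argument. The whole question is local on $X$ and compatible with excision and colimits over supports. So, as in the rest of the paper, I will first reduce to the case where $\Phi$ is a single closed subset $V\subset D$. Since $\imath$ is proper, the Nagata compactification in the definition of the pushforward is unnecessary: $\hl(\imath_1)$ is the composite of the self-duality isomorphism $\Omega^j_{D/S}\cong D_D(\Omega^{d_D-j}_{D/S})[-d_D]$, the proper pushforward \eqref{definitionofproperpush} for $\imath$ (dual of $\imath^\ast$ followed by $\Tr_\imath$), and the self-duality isomorphism on $X$.

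\textbf{Step 1: compute on the sheaf level.} Unwinding this composite yields a morphism in $D(X)$
$$\imath_\ast\Omega^j_{D/S}\to \Omega^{j+1}_{X/S}[1].$$
Any such morphism is an element of $\Ext^1_X(\imath_\ast\Omega^j_{D/S},\Omega^{j+1}_{X/S})$. The residue sequence gives a second element of the same group, its extension class. I will show that the two agree. The plan is to compute both locally where $D=V(t)$ and $X\to\A^{d}_S$ is \'etale with $t$ one of the coordinates. The key ingredients are:
\begin{itemize}
\item the fundamental local isomorphism $\eta_\imath$, which identifies $\imath^!\mathcal{G}\cong\omega_{D/X}\otimes\imath^\ast\mathcal{G}[-1]$ with $\omega_{D/X}$ generated by $\bar t^\vee$;
\item the isomorphism $\zeta'_{\imath,\pi_X}$, which relates $\omega_{D/S}$ to $\omega_{D/X}\otimes\imath^\ast\omega_{X/S}$;
\item the explicit description of $\Tr_\imath$ on the Koszul resolution, with the sign $(-1)^{c(c+1)/2}$ of Lemma \ref{Classsymbolnotation} for $c=1$.
\end{itemize}
This is the same kind of computation as in Lemma \ref{spreadingoutclasscommutativitylemma} and Proposition \ref{pushforwardofclass}. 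The result is that $\alpha\in\Omega^j_D$ with lift $\tilde\alpha$ is sent to the local symbol $\pm\left[\begin{smallmatrix} dt\wedge\tilde\alpha\\ t\end{smallmatrix}\right]$.

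On the other hand, the connecting map of the residue sequence sends $\alpha$ to the class of $\frac{dt}{t}\wedge\tilde\alpha$ in $\Omega^{j+1}_X[1/t]/\Omega^{j+1}_X=\sH^1_D(\Omega^{j+1}_X)$, which is the same symbol. The local computations are compatible with the canonical (choice-free) isomorphisms. They therefore glue, because an element of $\Ext^1$ with $\sE xt^0=0$ is determined locally by the local-to-global spectral sequence, and both classes come from global sections of $\sE xt^1$. Applying $R\Gamma_V(X,-)$ then yields the claimed equality on cohomology with supports.

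\textbf{Step 2: injectivity.} By the long exact sequence, injectivity on $H^i_\Phi$ is equivalent to the vanishing of the image of $H^{i}_\Phi(X,\Omega^{j+1}_{X/S}(\log D))$ in $H^i_\Phi(D,\Omega^j_{D/S})$. It suffices to show $H^i_\Phi(X,\Omega^{j+1}_{X/S}(\log D))=0$. The sheaf $\Omega^{j+1}_{X/S}(\log D)$ is locally free, since $D$ is smooth over $S$, and $X$ is regular, hence Cohen--Macaulay. So for $\Phi$ of codimension $\ge i+1$ the depth bound of \cite[Expos\'e III, Proposition 3.3]{SGA2}, exactly as in Lemma \ref{injectivityoflocalizationlemma}, gives the vanishing. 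Passage to colimits over $\Phi$ is harmless.

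The main obstacle is the sign and identification bookkeeping in Step 1. The pushforward is built from several Conrad isomorphisms ($c$, $e$, $\eta$, $\zeta'$, $\Tr$), and one must check that their composite is exactly the residue extension class and not its negative. I would fix local coordinates and trace $1\otimes\bar t^\vee$ through each map, using the corrected (sign-free) version of \cite[Lemma 3.5.3]{Conrad}. The injectivity statement, which is what is used later, is insensitive to the sign.
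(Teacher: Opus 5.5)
Your proposal is correct and follows essentially the same route as the argument the paper imports from \cite[Lemma 2.3.8]{KayAndre}. That route has three parts. First, express the pushforward along the smooth divisor, via $\eta_\imath$, $\zeta'$ and $\Tr_\imath$, as $\alpha\mapsto\pm\left[\begin{smallmatrix} dt\wedge\tilde\alpha\\ t\end{smallmatrix}\right]$ in $\sH^1_D(\Omega^{j+1}_{X/S})$, and match it with the residue boundary map, gluing via $\sE xt^0=0$. Second, deduce injectivity from the depth vanishing of $H^i_\Phi(X,\Omega^{j+1}_{X/S}(\log D))$ for this locally free sheaf. Third, the sign you leave open is convention-dependent (cone and symbol conventions both enter), and, as you note, it does not affect the injectivity statement, which is what the divisor condition in Lemma \ref{hodgedividorcondition} uses.
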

\begin{proof}
	This is \cite[Lemma 2.3.8.]{KayAndre} and the proof there works in our situation as well.
\end{proof}

\begin{lem} \label{hodgedividorcondition} \rm{(}cf.\cite[Lemma 3.1.5.]{KayAndre})
	Let $i:X\to Y$ be the closed immersion 
	of an irreducible, regular, closed 
	$S$-subscheme $X$ into an 
	$\NS$-scheme $Y$. For any 
	effective smooth divisor $D \subset Y$
	such that 
	\begin{itemize}
		\item $D$ meets $X$ properly, thus 
		$D\cap X := D\times_Y X$ is a divisor
		on $X$,
		\item $D' := (D\cap X)_{\rm red}$ 
		is regular and irreducible, so 
		$D\cap X = n\cdot D'$ as divisors
		(for some $n\in \Z, n\geq 1$).
	\end{itemize}
	We define
	$g:(D,D') \to (Y,X)$ in $\vu$ as the 
	map induced by the
	inclusion $D\subset Y$. Then the following
	equality holds:
	\begin{equation} \label{hodgedivisorcondtitionwhattoshow}
		\hu(g)(\cl(X,Y)) = n\cdot\cl(D',D).
	\end{equation}
\end{lem}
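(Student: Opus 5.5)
We reduce to a computation at the generic point of $D'$ and then compare explicit local symbols. Both sides of \eqref{hodgedivisorcondtitionwhattoshow} lie in $H^c_{D'}(D,\Omega^c_{D/S})$, where $c=\codim(X,Y)=\codim(D',D)$; this equality of codimensions holds because $D$ meets $X$ properly. By Lemma \ref{injectivityoflocalizationlemma}, applied to the regular scheme $D$, the irreducible closed subset $D'$ and the locally free sheaf $\Omega^c_{D/S}$, the localization $H^c_{D'}(D,\Omega^c_{D/S})\to H^c_{\zeta}(D,\Omega^c_{D/S})$ is injective. Here $\zeta$ is the generic point of $D'$. It therefore suffices to prove the equality after localizing at $\zeta$.

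Pullback along $g$ is compatible with restriction to opens of $Y$ around $\zeta$, by Proposition \ref{Hodgepushpullproposition} for open immersions. Classes of regular subschemes also restrict to classes (Proposition \ref{spreadingoutclassproposition}). So we may replace $Y$ by an affine open neighbourhood $\Spec(A)$ of $\zeta$, which we then shrink further. Since $X$ is regular at $\zeta$, and $D$ is smooth over $S$ and hence regular, we can arrange the following:
\begin{itemize}
\item $D=V(t)$ for a regular element $t\in A$;
\item $X=V(t_1,\ldots,t_c)$ for a regular sequence $t_1,\ldots,t_c$;
\item $D'=V(t,t_1,\ldots,t_c)_{\rm red}$.
\end{itemize}
Moreover $\sO_{X,\zeta}$ is a DVR, because $D'$ has codimension $1$ in the regular scheme $X$, and $t$ restricts on it to $u\pi^n$ with $\pi$ a uniformizer and $u$ a unit. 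We would like to choose $t_1,\ldots,t_c$ together with lifts $\bar t_1,\ldots,\bar t_c$ on $D$ so that $D'\subset D$ is cut out by $\bar t_1,\ldots,\bar t_{c-1}$ and some $\bar\pi$ with $\bar t_c=\bar u\bar\pi^n$. Equivalently, we want $D\cap X=V(\bar t_1,\ldots,\bar t_c)$ inside $D$ with multiplicity $n$ along the last coordinate. Arranging this requires some commutative algebra in the regular local ring $\sO_{Y,\zeta}$: we choose a regular system of parameters of $\sO_{Y,\zeta}$ containing $t$ and adapted to $X$, then spread it out over an open neighbourhood of $\zeta$.

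With these coordinates, Lemma \ref{Classsymbolnotation} gives
\[
\cl(X,Y)_\eta=(-1)^c\begin{bmatrix} dt_1\wedge\cdots\wedge dt_c\\ t_1,\ldots,t_c\end{bmatrix}.
\]
Pullback along the closed immersion $D\to Y$ acts on such generalized fractions by restricting numerator and denominators, as in the compatibility \cite[Exp. II, Proposition 5]{SGA2} used in Lemma \ref{pullbacksmoothhodgecohomology}. This compatibility requires the restricted denominators $\bar t_1,\ldots,\bar t_c$ to remain a regular sequence on $D$, which is exactly proper intersection. We therefore obtain the symbol with numerator $d\bar t_1\wedge\cdots\wedge d\bar t_c$ and denominators $\bar t_1,\ldots,\bar t_c$. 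Substituting $\bar t_c=\bar u\bar\pi^n$, we have $d\bar t_c=n\bar u\bar\pi^{n-1}d\bar\pi+\bar\pi^n d\bar u$. The transformation law for generalized fractions, $[\,\omega\,/\,\ldots,\bar u\bar\pi^n]=[\,\bar u^{-1}\bar\pi^{\,n-1}\omega\,/\,\ldots,\bar\pi^n]$, together with the vanishing of symbols whose numerator lies in the ideal of the denominators, then yields
\[
n(-1)^c\begin{bmatrix} d\bar t_1\wedge\cdots\wedge d\bar t_{c-1}\wedge d\bar\pi\\ \bar t_1,\ldots,\bar t_{c-1},\bar\pi\end{bmatrix}.
\]
By Lemma \ref{Classsymbolnotation} applied to $D'\subset D$, this is $n\,\cl(D',D)_\zeta$.

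The main obstacle is the first step: producing a regular sequence for $X$ near $\zeta$ whose restriction to $D$ has the normalized form above. If a direct choice is awkward, there is a fallback. Use Lemma \ref{Cupproductsymbolnotation} to write $\cl(X,Y)_\eta$ as a cup product of divisor-type symbols. Alternatively, compute $\hu(g)$ via the description of pullback along a divisor in Lemma \ref{injectivitylemmafordivisorcondition}: the $\dlog$ residue sequence turns the factor $\bar u\bar\pi^n$ directly into $n\,\dlog\bar\pi$, modulo terms regular along $D'$.
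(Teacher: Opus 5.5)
Your proof is correct in outline but takes a different route from the paper, and three points need fixing.

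\textbf{Comparison with the paper.} The paper gives no self-contained argument here; it transports \cite[Lemma 3.1.5]{KayAndre}. The appeal to Corollary~\ref{corollarysmoothclass} and the adjacent Lemma~\ref{injectivitylemmafordivisorcondition} indicate that this argument pushes forward along the smooth divisor $\imath\colon D\hookrightarrow Y$:
\begin{itemize}
\item since $D'$ has codimension $c+1$ in $Y$, $\hl(\imath)$ is injective on $H^c_{D'}(D,\Omega^c_{D/S})$;
\item the projection formula gives $\hl(\imath)\hu(g)(\cl(X,Y))=\cl(X,Y)\cup\hl(\imath)(1_D)=\cl(X,Y)\cup\cl(D,Y)$;
\item the comparison with $n\,\hl(\imath)(\cl(D',D))=n\,\cl(D',Y)$ (Proposition~\ref{pushforwardofclass}) then takes place on the smooth ambient $Y$.
\end{itemize}
You instead stay on $D$. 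You localize at $\zeta$ via Lemma~\ref{injectivityoflocalizationlemma}, pull the generalized fraction back along $D\hookrightarrow Y$ by naturality of the \v{C}ech/Koszul description, and conclude with the transformation law. The paper's route is formal up to a single comparison on $Y$. Yours uses only the local tools of Section 4 and never needs a pushforward along $X\hookrightarrow Y$ or $D'\hookrightarrow D$, which are not morphisms of $\NS$-schemes here. So it is visibly valid for merely regular $X$ and $D'$, at the price of an explicit choice of coordinates.

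\textbf{Fixes.}

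(1) The generic point $\eta$ of $X$ does not lie on $D$, so the germ $\cl(X,Y)_\eta$ cannot be pulled back to $D$. What you need is $\cl(X\cap U,U)=(-1)^c[dt_1\wedge\cdots\wedge dt_c/t_1,\ldots,t_c]$ in $H^c_{X\cap U}(U,\Omega^c_{U/S})$ on the affine neighbourhood $U$ of $\zeta$. This follows from Lemma~\ref{Classsymbolnotation} combined with Lemma~\ref{injectivityoflocalizationlemma} applied on $U$.

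(2) Your displayed transformation law is wrong. The correct identities are $[\omega/\ldots,\bar u\bar\pi^n]=[\bar u^{-1}\omega/\ldots,\bar\pi^n]$ and $[\bar\pi^{n-1}\theta/\ldots,\bar\pi^n]=[\theta/\ldots,\bar\pi]$. With these, after discarding the term $\bar\pi^n d\bar u$, your final expression is right.

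(3) The normalization you flag as the main obstacle does hold, but not as phrased. Let $R=\sO_{Y,\zeta}$ with maximal ideal $\mathfrak m$, and let $\mathfrak p$ be the prime of $X$. When $n\ge 2$ one has $t\in\mathfrak p+\mathfrak m^2$, so no regular system of parameters containing $t$ can contain generators of $\mathfrak p$ cutting out $X$. Instead:
\begin{itemize}
\item Since $R/\mathfrak p$ is a DVR, $\mathfrak p$ has $c$-dimensional image in $\mathfrak m/\mathfrak m^2$. Choose $t_1,\ldots,t_{c-1}\in\mathfrak p$ with $t,t_1,\ldots,t_{c-1}$ part of a regular system of parameters.
\item $\mathfrak p/(t_1,\ldots,t_{c-1})$ is a height-one prime in the two-dimensional regular, hence factorial, ring $R/(t_1,\ldots,t_{c-1})$. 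So $\mathfrak p=(t_1,\ldots,t_c)$ for some $t_c$.
\item In the DVR $R/(t,t_1,\ldots,t_{c-1})$ the image of $t_c$ has valuation $\ell(\sO_{X,\zeta}/t)=n$. Hence $t_c\equiv uy^n$ modulo $(t,t_1,\ldots,t_{c-1})$, with $u$ a unit and $t,t_1,\ldots,t_{c-1},y$ a regular system of parameters.
\item Subtracting an $R$-linear combination of $t_1,\ldots,t_{c-1}$ from $t_c$ gives $\bar t_c=\bar u\bar y^n$ exactly in $R/tR$. Then $\bar t_1,\ldots,\bar t_{c-1},\bar y$ generate the ideal of $D'$ in $\sO_{D,\zeta}$.
\end{itemize}
Spreading these out to a neighbourhood of $\zeta$ completes the argument, and your fallbacks are unnecessary.
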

\begin{proof}
	In light of Corollary \ref{corollarysmoothclass}, the proof of \cite[Lemma 3.1.5.]{KayAndre}
	carries over without change. 
\end{proof}

The following lemma tells us what happens when $V \subset
X$ is a regular integral closed subscheme that \textit{lies 
entirely in the fibre over a closed point}. This is of course a 
new phenomenon that we don't see in the case over a field.
\begin{lem} \label{cycleclasseslivinginfibresvanish}
	Assume $\dim(S)=1$. Let $X$ be an $\NS$-scheme and $V \subset X$ be a regular integral closed subscheme. If $V$
	lies in the fiber over a closed point $s\in S$, then 
	$$
	\cl(V,X) = 0.
	$$
\end{lem}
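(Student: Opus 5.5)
The plan is to reduce to a computation at the generic point of $V$, where the class becomes an explicit symbol. One of the generators in that symbol can be taken to be a uniformizer of $S$ at $s$, and its differential vanishes in $\Omega_{X/S}$.

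Let $c=\codim(V,X)$ and let $\eta$ be the generic point of $V$. By Lemma \ref{injectivityoflocalizationlemma}, applied with $\sF=\Omega^c_{X/S}$ (locally free because $X$ is smooth over $S$), the localization map
$$
H^c_V(X,\Omega^c_{X/S})\to H^c_\eta(X,\Omega^c_{X/S})
$$
is injective. It therefore suffices to show $\cl(V,X)_\eta=0$. By Proposition \ref{spreadingoutclassproposition} we may replace $X$ by any open neighbourhood of $\eta$, so $X$ can be assumed affine as small as we like around $\eta$.

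Next we choose good generators of the ideal of $V$ near $\eta$. Since $S$ is regular of dimension $1$, the local ring $\sO_{S,s}$ is a DVR. Shrinking $S$ around $s$, we pick $\pi\in\Gamma(S,\sO_S)$ generating the maximal ideal at $s$. Because $V$ lies in the fibre $X_s$, the pullback $\pi$ (we use the same letter for $\pi_X^\ast(\pi)$) lies in the ideal of $V$, so $\pi\in\mathfrak m_\eta\subset\sO_{X,\eta}$.

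The key point is that $\pi\notin\mathfrak m_\eta^2$, and this is the step I expect to need care. The fibre $X_s$ is smooth over $k(s)$, so $\sO_{X_s,\eta}=\sO_{X,\eta}/(\pi)$ is regular. Since $\sO_{X,\eta}$ is regular of dimension $c$ and $\pi$ is a nonzerodivisor (the map $X\to S$ is flat), the quotient has dimension $c-1$. Regularity of the quotient then forces $\pi$ to be part of a regular system of parameters $\pi,t_2,\dots,t_c$ of $\sO_{X,\eta}$. These $t_i$ spread out to sections on an affine open $U\ni\eta$. After shrinking $U$, the elements $\pi,t_2,\dots,t_c$ form a regular sequence that generates the ideal of $V\cap U$; the support of the quotient of the two ideals is closed and misses $\eta$, and $V\cap U$ stays regular.

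Now Lemma \ref{Classsymbolnotation}, applied to this $U$ with $t_1=\pi$, gives
$$
\cl(V,X)_\eta=(-1)^c\begin{bmatrix} d\pi\wedge dt_2\wedge\cdots\wedge dt_c\\ \pi,t_2,\ldots,t_c\end{bmatrix}.
$$
Since $\pi$ is pulled back from $S$, we have $d\pi=0$ in $\Omega^1_{X/S}$. Hence the numerator vanishes, the symbol is $0$, and by the injectivity above $\cl(V,X)=0$.
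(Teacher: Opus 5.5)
Your proposal is correct and follows essentially the same argument as the paper: reduce to the generic point via Lemma \ref{injectivityoflocalizationlemma}, take a regular sequence that starts with a local parameter $\pi$ of $S$ at $s$, and apply Lemma \ref{Classsymbolnotation}, where the symbol vanishes because $d\pi=0$ in $\Omega^1_{X/S}$. Your argument that $\pi\notin\mathfrak m_\eta^2$, using regularity of the smooth fibre $X_s$ and flatness, fills in a step the paper asserts without justification.
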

\begin{proof}
	In light of Lemma \ref{injectivityoflocalizationlemma} we note that it suffices to show
	that 
	$$
	\cl(V,X)_{\eta} = 0,
	$$
	where $\eta$ is the generic point of $V$. Without loss of generality we 
	may restrict to the case where $S = \Spec(R)$ for some ring $R$ and 
	then $s = \Spec(R/(\sigma)$ for some $\sigma \in R$. Furtheremore we may
	assume that $V$ is globally cut out by a regular sequence. Then we may
	choose that regular sequence to be $\sigma, t_1, \ldots, t_{c-1}$ for some
	$t_1, \ldots, t_{c-1}$ where $c = \codim(V,X)$. But then Lemma 
	\ref{Classsymbolnotation} we have 
	$$
	\cl(V,X)_{\eta} = (-1)^c\begin{bmatrix}
		d\sigma\wedge dt_1\wedge \cdots \wedge dt_{c-1}\\
		\sigma, t_1, \ldots, t_{c-1}
	\end{bmatrix} = 0,
	$$
	since $d\sigma = 0$.
\end{proof}

\begin{lem} \label{Tcondition}
	Let $X$ and $Y$ be $\NS$-schemes and $V$ and $W$ be regular integral closed 
	subschemes in $X$ and $Y$ respectively.  Then
	\begin{equation}\label{Tconditionwhattoshow}
		T(\cl(V,X)\otimes \cl(W,Y)) = \cl(V\times_S W,X\times_S Y).
	\end{equation}
\end{lem}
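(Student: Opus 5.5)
The plan is to check \eqref{Tconditionwhattoshow} locally at the generic points of $V\times_S W$, using symbols. Both sides lie in $H^{c+e}_{V\times_S W}(X\times_S Y,\Omega^{c+e}_{X\times_S Y/S})$, where $c=\codim(V,X)$ and $e=\codim(W,Y)$. By Lemma \ref{SuslinVoevodsky}, $V\times_S W$ has pure $S$-dimension, so each of its irreducible components has codimension $c+e$. By Lemma \ref{injectivityoflocalizationlemma} (and the disjoint-support axiom to split over components, after removing the pairwise intersections of components as in Step 2 of Theorem \ref{existencetheorem}), it suffices to compare the images in $H^{c+e}_{\xi}$ for each generic point $\xi$ of $V\times_S W$. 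We may therefore shrink $X=\Spec(A)$ and $Y=\Spec(B)$ to affine neighbourhoods on which $V=V(t_1,\ldots,t_c)$ and $W=V(s_1,\ldots,s_e)$ for regular sequences.

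First I split into cases according to dominance. If neither $V$ nor $W$ dominates $S$, then either they lie over distinct closed points, in which case $V\times_S W=\emptyset$ and both sides vanish, or both lie over the same closed point $s$. In the latter case the left side is $0$ by Lemma \ref{cycleclasseslivinginfibresvanish}. For the right side, each component $Z$ of $V\times_S W$ lies in the fibre over $s$, so on an open set where $Z$ is regular its class vanishes, again by Lemma \ref{cycleclasseslivinginfibresvanish}. By the uniqueness in Proposition \ref{spreadingoutclassproposition} (semi-purity), $\cl(Z,X\times_S Y)=0$, and hence the right side is $0$ as well.

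Now assume, say, that $V$ is flat over $S$. Then $p_1^*t_1,\ldots,p_1^*t_c,p_2^*s_1,\ldots,p_2^*s_e$ is a regular sequence cutting out $V\times_S W$ locally; this follows from flatness of $V\times_S W\to W$ and of $X\times_S Y\to Y$. By Lemma \ref{pullbacksmoothhodgecohomology}, together with \cite[Exp. II, Prop. 5]{SGA2} and Lemma \ref{Classsymbolnotation}, the pullbacks $\hu(p_1)\cl(V,X)$ and $\hu(p_2)\cl(W,Y)$ are the symbols $(-1)^c\left[\begin{smallmatrix} dt\\ t\end{smallmatrix}\right]$ and $(-1)^e\left[\begin{smallmatrix} ds\\ s\end{smallmatrix}\right]$. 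Lemma \ref{Cupproductsymbolnotation} combines these under $t'$ into $(-1)^{c+e}\left[\begin{smallmatrix} dt\otimes ds\\ t,s\end{smallmatrix}\right]$, and the wedge map $m$ turns this into $(-1)^{c+e}\left[\begin{smallmatrix} dt\wedge ds\\ t,s\end{smallmatrix}\right]$. The Künneth sign is $(-1)^{(c+c)e}=1$. If $V\times_S W$ is regular at $\xi$, Lemma \ref{Classsymbolnotation} identifies this symbol with $\cl(V\times_S W,X\times_S Y)_\xi$, which completes this case.

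The main obstacle is that $V\times_S W$ need not be reduced or regular at $\xi$; for example, inseparable residue field extensions over $S$ can occur. In that case $\cl(V\times_S W,\cdot)$ means $\sum n_i\cl(Z_i,\cdot)$ with the multiplicities $n_i$ of the fundamental cycle, and I must show that the symbol of the complete intersection $(t,s)$ equals $n_i$ times $\cl(Z_i)_{\xi}$. To do this I would try a length argument. Choose a regular sequence $u_1,\ldots,u_{c+e}$ generating the prime of $Z_i$ at $\xi$ on the regular locus, which is possible after restricting to a neighbourhood where $(Z_i)_{\rm red}$ is regular. Express $(t,s)=M\cdot u$ with a matrix $M$ over $\sO_{\xi}$. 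Then apply the transformation law for local symbols, $\left[\begin{smallmatrix} \omega\\ t,s\end{smallmatrix}\right]=\left[\begin{smallmatrix} \det(M)\,\omega\\ u\end{smallmatrix}\right]$, and compare $d(t,s)$ with $\det(M)\,du$ modulo $(u)$ in $\Omega^{c+e}$. If this computation becomes unmanageable, the fallback is to reduce to the case $\dim Z_i$ generic via the projection-formula argument of Lemma \ref{pushforwardfinitemaphodgecohomology}: pass to the normalization and apply the finite pushforward axiom with degree equal to the multiplicity.
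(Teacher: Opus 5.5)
Your regular case is the paper's argument: localize at generic points, apply Lemma~\ref{Classsymbolnotation} and Lemma~\ref{Cupproductsymbolnotation}, with Künneth sign $+1$. The proposal is not yet a proof, though. The non-reduced case that you flag is left open, and the tool you propose for it is wrong.

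\textbf{The non-reduced case.} Your transformation law is backwards. If $(t,s)=Mu$, the correct rule is $\left[\begin{smallmatrix}\omega\\ u\end{smallmatrix}\right]=\left[\begin{smallmatrix}\det(M)\,\omega\\ t,s\end{smallmatrix}\right]$. Your version, for $t=u^2$ and $M=u$, would give $\left[\begin{smallmatrix}\omega\\ u^2\end{smallmatrix}\right]=\left[\begin{smallmatrix}u\,\omega\\ u\end{smallmatrix}\right]=0$ for every $\omega$. In fact $\left[\begin{smallmatrix}d(u^2)\\ u^2\end{smallmatrix}\right]=2\left[\begin{smallmatrix}du\\ u\end{smallmatrix}\right]$. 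Even with the correct rule, the congruence $d(t,s)\equiv\det(M)\,du$ only fixes the numerator modulo $(u)\Omega^{c+e}$. A symbol over $(t,s)$ depends on its numerator modulo the smaller submodule $(t,s)\Omega^{c+e}$, so the congruence gives you nothing. The fallback cannot produce $n_i$ either. Here $n_i$ is the length of a non-reduced local ring, while the normalization of $(Z_i)_{\rm red}$ is birational, so no finite map of degree $n_i$ appears.

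\textbf{Why the paper avoids part of this.} Part of the obstacle comes from your case split. The paper disposes of every case in which \emph{either} factor is non-dominant. Then $\cl(V,X)$ or $\cl(W,Y)$ vanishes by Lemma~\ref{cycleclasseslivinginfibresvanish}, and $V\times_S W$ lies in a closed fibre, so your own spreading-out argument kills the right side. You discard only the case where neither factor is dominant. So your symbol computation must handle, for example, $V=V(x^2-p)\subset\Spec\mathbb{Z}_{(p)}[x]$ against $W=V(p,y)\subset\Spec\mathbb{Z}_{(p)}[y]$. There $V\times_S W=\Spec\mathbb{F}_p[x]/(x^2)$ is non-reduced, already over a mixed-characteristic base.

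When both factors are dominant and $k(S)$ has characteristic $0$, flatness puts the generic points of $V\times_S W$ on $V_{k(S)}\times_{k(S)}W_{k(S)}$. That scheme is smooth, so the product is regular with multiplicity one there. The paper's short proof tacitly uses this.

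\textbf{Characteristic $p$: the missing idea.} If $k(S)$ has characteristic $p$, non-reducedness is genuine. Over $\mathbb{F}_p[u]$, $V=V(x^p-u)$ and $W=V(y^p-u)$ give $\mathbb{F}_p[x,y]/((x-y)^p)$. The paper does not address this either. What closes it is a generic-smoothness dichotomy, not a length formula.
\begin{itemize}
\item Suppose one factor, say $V$, is not smooth over $S$ at its generic point $\eta$. The Jacobian criterion gives $dt_1\wedge\cdots\wedge dt_c\in(t)\,\Omega^c_{X/S,\eta}$, so $\cl(V,X)_\eta=0$. This generalizes Lemma~\ref{cycleclasseslivinginfibresvanish}. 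Since both factors are dominant, every component $Z_i$ of $V\times_S W$ dominates $V$ by flatness. Hence $k(Z_i)\supseteq k(V)$ is not separable over $k(S)$. By the same argument and Lemma~\ref{injectivityoflocalizationlemma}, $\cl(Z_i,X\times_S Y)=0$, so both sides vanish.
\item Suppose both $V$ and $W$ are generically smooth over $S$. Then the generic points of $V\times_S W$ lie in the product of the smooth loci. That product is smooth over $S$, hence regular, and your computation finishes the proof.
\end{itemize}
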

\begin{proof}
	By Lemma \ref{cycleclasseslivinginfibresvanish} we see that if either of $V$ or $W$ are not 
	dominant over $S$, then both sides of \eqref{Tconditionwhattoshow} vanish and the statement
	holds trivially. So we may assume that both $V$ and $W$ are dominant over $S$. 
	Let $\codim(V,X) = c$ and $\codim(W,Y) = e,$ and note that since the 
	construction of the cycle class is a local question, we may assume 
	that $X$ and $Y$ are affine. The statement follows 
	directly from writing the cycle class in symbol notation, see 
	Lemma \ref{Classsymbolnotation}, and the cup product of 
	symbols, see Lemma \ref{Cupproductsymbolnotation}.
\end{proof}

\begin{lem} \label{condition1}
	Let $i_0: S\to \P_S^1$ and $i_{\infty}: S\to \P_S^1$ be the zero-section and 
	the infinity-section respectively. Let $e: \Z\to H(S,S)$ be the unit. Then
	$$
	\hl(i_0)\circ e = \hl(i_{\infty}) \circ e.
	$$
\end{lem}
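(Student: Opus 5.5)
We reduce the statement to a comparison of cycle classes and then to a single cohomology computation on $\P^1_S$.

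\emph{Reduction to cycle classes.} Since $e(1) = 1_S = \cl(S,S)$ by Definition \ref{definitionofeforhodge} and Corollary \ref{corollarysmoothclass}, it suffices to show
$$
\hl(i_0)(\cl(S,S)) = \hl(i_\infty)(\cl(S,S)) \in H^1(\P^1_S,\Omega^1_{\P^1_S/S}).
$$
Both sections are closed immersions of $\NS$-schemes. Corollary \ref{corollarysmoothclass}, applied to $i_\epsilon: S \hookrightarrow \P^1_S$, identifies the left side with $\cl(\{0\},\P^1_S)$ and the right side with $\cl(\{\infty\},\P^1_S)$, after enlarging supports to all of $\P^1_S$. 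We are therefore reduced to proving that the divisors $0$ and $\infty$ have equal classes in $H^1(\P^1_S,\Omega^1)$.

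\emph{The Cousin argument.} We imitate the end of the proof of Lemma \ref{pushforwardfinitemaphodgecohomology}, but globally over $S$ rather than over a field. Work with supports on the divisor $D = \{0\}\cup\{\infty\}$. By Lemma \ref{Classsymbolnotation}, near $0$ we have $\cl(\{0\})$ equal to the image of $-\begin{bmatrix} dt\\ t\end{bmatrix}$, which is the local-cohomology image of $-\dlog t$. Near $\infty$, with $\mu = 1/t$, we have $\cl(\{\infty\})$ equal to the image of $-\dlog\mu = \dlog t$. Now consider the Mayer--Vietoris (localization) sequence
$$
H^0(\P^1_S\setminus D,\Omega^1) \xrightarrow{\partial} H^1_D(\P^1_S,\Omega^1) = H^1_{0}\oplus H^1_\infty \to H^1(\P^1_S,\Omega^1).
$$
The global section $\dlog t$ on $\P^1_S\setminus D = \mathbb{G}_{m,S}$ maps under $\partial$ to $(\cl(0), -\cl(\infty))$, up to one overall sign, because of the two local computations above. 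Exactness then gives $\cl(0) - \cl(\infty) = 0$ in $H^1(\P^1_S,\Omega^1)$.

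To justify the boundary computation, note that the map $H^0(U,\Omega^1)\to H^1_x(\P^1_S,\Omega^1)$ sends $\alpha/h$ to $\begin{bmatrix}\alpha\\ h\end{bmatrix}$, exactly as in the commutative diagram in the proof of Lemma \ref{pushforwardfinitemaphodgecohomology}. That diagram was written over a local ring, but the same identification holds over the affine opens $\A^1_S$ and $\P^1_S\setminus\{0\}$, since local cohomology along the principal divisor $V(h)$ on an affine is $M_h/M$. We then apply excision to split $H^1_D$ as $H^1_0\oplus H^1_\infty$, using that the two sections are disjoint.

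\emph{Main obstacle.} The delicate point is the signs: checking that the sign conventions in Lemma \ref{Classsymbolnotation} and in the connecting map match at both points. The value $(-1)^c$ with $c=1$ is the same at $0$ and at $\infty$, and $\dlog\mu = -\dlog t$ supplies exactly the relative sign needed. So only consistency of the boundary map is required, not its absolute sign.

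A cleaner alternative is to use Lemma \ref{injectivitylemmafordivisorcondition}. There $\hl(i_\epsilon)(1)$ is the connecting image of $1$ under the residue sequence for $\Omega^1(\log\{\epsilon\})$. The connecting map sends $1$ to the Čech cocycle $\dlog t$ (or $\dlog\mu$) on the overlap, and these two cocycles differ by a sign that cancels against the orientation of the residue.
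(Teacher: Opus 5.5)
Your proof is correct, but its central step follows a different route from the paper.

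\textbf{Common reduction.} Both arguments first reduce to $\cl(0,\P^1_S)=\cl(\infty,\P^1_S)$. The paper gets this from Lemma \ref{pushforwardfinitemaphodgecohomology} with degree $1$, you from Corollary \ref{corollarysmoothclass}; both are fine.

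\textbf{Paper's route.} The paper assumes $S=\Spec(R)$ and shows by a \v{C}ech computation that $\lambda\mapsto\lambda\,\dlog(t)$ identifies $H^0(S,\sO_S)$ with $H^1(\P^1_S,\Omega^1_{\P^1_S/S})$. It deduces that the restriction $H^1(\P^1_S,\Omega^1_{\P^1_S/S})\to H^1(\P^1_K,\Omega^1_{\P^1_K/K})$ to the generic fibre is injective. It then quotes the Cousin computation over the field $K$ from Lemma \ref{pushforwardfinitemaphodgecohomology} with $g=t$. Its advantage is that it reuses a computation already written down.

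\textbf{Your route.} You run the localization sequence with supports in $D=0\cup\infty$ directly on $\P^1_S$. So you need neither the structure of $H^1(\P^1_S,\Omega^1_{\P^1_S/S})$ nor the passage to $K$. Since $\dlog t$ is a global section over $\mathbb{G}_{m,S}$ for every $S$, you do not even need $S$ affine. For that, verify the two local identities for the boundary of $\dlog t$ in $H^1_{i_\epsilon(S)}(\P^1_S,\Omega^1_{\P^1_S/S})$ at the generic point of $i_\epsilon(S)$, using Lemmas \ref{injectivityoflocalizationlemma} and \ref{Classsymbolnotation}, rather than through the description $M_h/M$ on an affine.

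\textbf{Signs.} Your explicit relative-sign check is the heart of the matter. By Lemma \ref{Classsymbolnotation}, the image of $\dlog t$ is $-\cl(0,\P^1_S)$ at $0$ and $+\cl(\infty,\P^1_S)$ at $\infty$, up to one common sign, and this relative sign is what forces $\cl(0)=\cl(\infty)$. The field computation the paper cites lists the local components as $\cl(z,\P^1_K)$ and $d\cdot\cl(\infty,\P^1_K)$. Taken literally for $g=t$, that would give the opposite sign, so your bookkeeping is the more reliable version.

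\textbf{Your alternative.} The argument via Lemma \ref{injectivitylemmafordivisorcondition} also works. The cancellation there comes from the \v{C}ech orientation, not from the residue. The local lift of $1$ is $\dlog t$ on $\A^1_S$ for $\epsilon=0$, but $\dlog\mu=-\dlog t$ on $\P^1_S\setminus\{0\}$ for $\epsilon=\infty$. Hence the two \v{C}ech differences coincide.
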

\begin{proof}
	It is enough to show that 
	$$
	\hl(i_0)\circ e(1) = \hl(i_{\infty}) \circ e(1),
	$$
	and since $e(1) = \cl(S,S)$, it follows from Lemma \ref{pushforwardfinitemaphodgecohomology}
	that it suffices to show that
	\begin{equation} \label{condition1whattoshow}
		\cl(0,\P_S^1) = \cl(\infty,\P_S^1).
	\end{equation}
	Furthermore we note that by assumption $S$ is integral, and we may without loss
	of generality assume it is affine, say $S = \Spec(R)$. Denote by $K$ the fraction
	field of $R$. A \v{C}ech cohomology computation shows that  
	$$
	H^1(\P_S^1, \Omega^1_{\P_S^1/S}) \cong 
	\frac{\Omega^1_{R[t,1/t]/R}}{\{a-b|a\in \Omega^1_{R[t]/R}, b\in \Omega^1_{R[1/t]/R}\}},
	$$
	and the map 
	\begin{align*}
		R &\to \Omega^1_{R[t,1/t]/R},  \\
		\lambda &\mapsto \lambda\cdot \dlog(t),
	\end{align*}
	induces an isomorphism
	$$
	H^0(S,\sO_S) \cong H^1(\P_S^1, \Omega^1_{\P_S^1/S}).
	$$
	We have a commutative square
	$$
	\xymatrix{
		H^0(S,\sO_S)  \ar[d]^-{\cong} \ar@{^{(}->}[r] 
		&K \ar[d]^-{\cong}
		\\
		H^1(\P_S^1, \Omega^1_{\P_S^1/S}) \ar[r]
		&H^1(\P_K^1, \Omega^1_{\P_K^1/K}),
	}
	$$
	and since the map $H^0(S,\sO_S) \to K$ is injective, this implies that the map
	$$
	H^1(\P_S^1, \Omega^1_{\P_S^1/S}) \to H^1(\P_K^1, \Omega^1_{\P_K^1/K})
	$$ 
	is
	also injective. Therefore, it suffices to show \eqref{condition1whattoshow}
	holds in $H^1(\P_K^1, \Omega^1_{\P_K^1/K})$, i.e. to show
	$$
	\cl(0,\P_K^1) = \cl(\infty,\P_K^1).
	$$
	This follows directly from the Cousin argument in the proof of 
	Lemma \ref{pushforwardfinitemaphodgecohomology}, for $g=t$.
\end{proof}

\begin{thm} \label{ExistenceofmorphismfromChowtoHodge}
	There exists a morphism $\cl: \CH \to H$ in $\TT$.
\end{thm}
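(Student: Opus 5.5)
The plan is to apply the Existence Theorem (Theorem \ref{existencetheorem}) to the pure part $\hp = (\hpl,\hpu,T,e)$ and then compose the resulting morphism $\CH \to \hp$ with the natural inclusion $\hp \to H$. That inclusion was noted at the end of Section 2 to be a morphism in $\TT$, so the composite is a morphism $\cl:\CH\to H$ in $\TT$. Everything therefore reduces to checking the hypotheses of Theorem \ref{existencetheorem} for $\hp$.

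Semi-purity of $\hp$ is Proposition \ref{purehodgesemipurity}. Condition (\ref{condunit}) is Lemma \ref{condition1}. For condition (\ref{condonclasselement}) we take as cycle class the element $\cl(W,X)\in H^c_W(X,\Omega^c_{X/S}) = \hp_{2\dim_S W}(X,W)$ given by Proposition \ref{spreadingoutclassproposition}. Here $c=\codim(W,X)$, and the equality of gradings uses $X$ connected with $c = \dim_S X - \dim_S W$, which holds because $X$ is smooth over the catenary base. The sub-axioms are then checked one at a time:
\begin{enumerate}[i)]
\item This is the defining property in Proposition \ref{spreadingoutclassproposition}, together with uniqueness via Lemma \ref{injectivityoflocalizationlemma}.
\item Lemma \ref{pullbacksmoothhodgecohomology}, applied to each irreducible component of $W$. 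Since $f$ is smooth and $W$ is regular, $f^{-1}(W)$ is regular with reduced fundamental cycle, so its components are disjoint and each has multiplicity one.
\item Lemma \ref{hodgedividorcondition}.
\item Lemma \ref{pushforwardfinitemaphodgecohomology}, after writing the regular closed $W$ as a disjoint union of integral components.
\item This splits into two cases. If $W$ or $V$ is dominant over $S$, use Lemma \ref{Tcondition}. If neither is dominant, both classes vanish by Lemma \ref{cycleclasseslivinginfibresvanish}, so the $T$-product vanishes.
\item Corollary \ref{corollarysmoothclass}, or directly the construction, since $\cl(S,S)=\gamma_S(1)=1$.
\end{enumerate}

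The main obstacle is the mismatch between axiom (\ref{axiomproduct}) and Lemma \ref{Tcondition}. The lemma states the unconditional equality $T(\cl(V,X)\otimes\cl(W,Y))=\cl(V\times_S W,X\times_S Y)$. The axiom instead requires $\cl(W\times_S V,\ldots)$ only in the dominant case and $0$ otherwise. In the non-dominant case we never need to identify $\cl(W\times_S V)$; we only need that the left side vanishes, and Lemma \ref{cycleclasseslivinginfibresvanish} gives this because one factor is already $0$.

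A second point to check is that in the dominant case $W\times_S V$ may be reducible or non-reduced, while axiom (\ref{axiomproduct}) uses the fundamental-cycle definition of $\cl$. Here Lemma \ref{SuslinVoevodsky} shows that $W\times_S V$ has pure $S$-dimension. Comparing generic points via Lemma \ref{injectivityoflocalizationlemma} then reduces the claim to the symbol computation of Lemma \ref{Cupproductsymbolnotation} on the regular locus, where flatness of the dominant factor makes the product regular. I also need to confirm that the sign $(-1)^{(n+i)m}$ in $T$ matches the symbol sign $(-1)^{c}$ from Lemma \ref{Classsymbolnotation}. Since both classes sit in even total degree, I expect $n+i$ to be even so that the sign is trivial, and the resulting $(-1)^{c+e}$ from the two classes then combines correctly.
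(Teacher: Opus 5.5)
Your proposal is correct and is exactly the paper's argument: verify the hypotheses of Theorem~\ref{existencetheorem} for $\hp$ using Proposition~\ref{purehodgesemipurity} and Lemmas~\ref{condition1}, \ref{pullbacksmoothhodgecohomology}, \ref{hodgedividorcondition}, \ref{pushforwardfinitemaphodgecohomology} and \ref{Tcondition}, with Lemma~\ref{cycleclasseslivinginfibresvanish} handling the non-dominant case of the product axiom, and then compose with the inclusion $\hp\subset H$. One caution about your extra remark: flatness of the dominant factor does not make $W\times_S V$ generically regular when the fraction field of $S$ is imperfect. For example, with $S=\Spec \mathbb{F}_p[t]$, $W=V(x^p-t)$ and $V=V(y^p-t)$ in $\A^1_S$, the product is $\Spec \mathbb{F}_p[x,y]/((x-y)^p)$, which is nowhere reduced. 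In that example both sides of the axiom vanish for a different reason: $d(x^p-t)=0$ in $\Omega^1_{\A^1_S/S}$, and the multiplicity is $p$. So that re-derivation would need another argument, but you do not need it, since you, like the paper, can simply cite Lemma~\ref{Tcondition}.
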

\begin{proof}
	By Proposition \ref{purehodgesemipurity}, $HP$ satisfies 
	semi-purity, and Lemmas \ref{condition1}, \ref{pullbacksmoothhodgecohomology},
	\ref{hodgedividorcondition}, \ref{pushforwardfinitemaphodgecohomology}, and
	\ref{Tcondition}, show that there exists a morphism $\CH \to HP$ in 
	$\TT$ and we obtain the desired morphism by composing this with
	the inclusion $HP \subset H$.
\end{proof}

\section{Proof of the Main Theorem}

Having set up the machinery of the actions of correspondences on Hodge
cohomology with supports in the previous four sections, the proof of Theorem \ref{maintheorem} is the same as the proof of the case 
over a perfect field of positive characteristic, i.e. the proof of \cite[Theorem 
3.2.8.]{KayAndre}.  We record it here for completeness. 

We start with an important vanishing theorem for 
correspondence actions.

\begin{thm}(cf. \cite[Proposition 3.2.2.]{KayAndre}) \label{applicationvanishing}
	Let $X$ and $Y$ be connected $\NS$-schemes and let 
	$$
	\alpha \in \Hom_{\Cor_{\CH}}(X,Y)^0 = \CH^{d_X}(X\times_S Y, P(\Phi_X,\Phi_Y))
	$$
	be a correspondence from $X$ to $Y$, where $d_X := \dim_S(X)$.
	\begin{enumerate}
		\item If the support of $\alpha$ projects to an $r$-codimensional subset in $Y$, then the restriction of 
		$\rho_{H}\circ \Cor(\cl)(\alpha)$ to $\oplus_{j < r,i}H^i(X,\Omega_{X/S}^j)$ vanishes.
		\item If the support of $\alpha$ projects to an $r$-codimensional subset in $X$, then the restriction of 
		$\rho_{H}\circ \Cor(\cl)(\alpha)$ to $\oplus_{j\geq \dim_SX-r+1,i}H^i(X,\Omega_{X/S}^j)$ vanishes.
	\end{enumerate}
\end{thm}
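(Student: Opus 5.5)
The plan is to follow \cite[Proposition 3.2.2]{KayAndre}. The point is that the action of $\Cor(\cl)(\alpha)$ on Hodge cohomology is the composition
\[
a \mapsto \hl(p_2)\bigl(\hu(p_1)(a)\cup \cl(\alpha)\bigr),
\]
with $\cl(\alpha)\in H^{d_X}_{|\alpha|}(X\times_S Y,\Omega^{d_X}_{X\times_S Y/S})$ the cycle class from Theorem \ref{ExistenceofmorphismfromChowtoHodge}. By additivity I first reduce to $\alpha=[V]$ with $V$ integral, of codimension $d_X$ in $X\times_S Y$. In (1), $p_2(V)$ lies in a closed $T\subset Y$ of codimension $r$; in (2), $p_1(V)$ lies in a closed $T'\subset X$ of codimension $r$. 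Both reductions only use that the support $|\alpha|$ maps into these closed subsets, and that the support families $P(\Phi_X,\Phi_Y)$ make $p_2|_V$ and $p_1|_V$ proper.

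For (1), let $a\in H^i(X,\Omega^j_{X/S})$ with $j<r$. The product $\hu(p_1)(a)\cup\cl(\alpha)$ has Hodge type $(i+d_X,\,j+d_X)$, with support in $V\cap p_2^{-1}(T)$. The pushforward $\hl(p_2)$ lowers both degrees by $d_X$, so it lands in $H^i_T(Y,\Omega^j_{Y/S})$, or in a colimit over closed subsets of $T$. I then show this group vanishes when $j<r$ and $i$ is arbitrary. The clean route is to factor $p_2$ through the support-enlarged object $(Y,T)$, whose Hodge groups are $H^i_T(Y,\Omega^j)$. I then use the Cousin/depth argument: $H^i_T(Y,\sF)$ with $\sF$ locally free vanishes for $i<r$ by \cite[Exp.~III, Prop.~3.3]{SGA2}, since $Y$ is regular. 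That only handles $i<r$, though, so for the form degree I would instead argue as in loc.\ cit. The class $\cl(\alpha)$ can be moved, via the projection formula and the factorization $V\to X\times_S T$, so that the whole action factors through $H(X\times_S \tilde T)$ for a resolution-free substitute: use the semi-purity/injectivity Lemma \ref{injectivityoflocalizationlemma} to restrict to the generic points of $T$. There $T$ is regular and $\Omega^j_Y|_T$ is killed by the conormal directions. Concretely, at the generic point the local class is a symbol $\left[\begin{smallmatrix} \omega\wedge dt_1\wedge\cdots\wedge dt_r\\ t_1,\ldots,t_r\end{smallmatrix}\right]$ by Lemma \ref{Classsymbolnotation}, so every class supported on $T$ in Hodge degree $j$ must contain $dt_1\wedge\cdots\wedge dt_r$, which is impossible if $j<r$.

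For (2) I use duality. Transposing $\alpha$ exchanges the roles of $X$ and $Y$. The action on $H^i(X,\Omega^j)$ for $j\ge d_X-r+1$ pairs, via the pushforward to $S$ and the cup product, with the transposed action of $\alpha^t$ on forms of degree $d_X-j<r$ supported near $T'\subset X$. Alternatively, I argue directly. Here $\hu(p_1)(a)$ gets restricted to $p_1^{-1}(T')$, and $\cl(\alpha)$ factors through pushforward from a neighborhood of $T'\times_S Y$. By the projection formula (1), $\hu(p_1)(a)\cup\cl(\alpha)=\hl(\iota)(\hu(\iota)\hu(p_1)(a)\cup\beta)$. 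At generic points of $T'$ this becomes the restriction of a $j$-form to the codimension-$r$ subscheme, which has relative dimension $d_X-r$. Hence it vanishes for $j>d_X-r$: after the generic-point reduction via Lemma \ref{injectivityoflocalizationlemma}, $\Omega^j_{T'}=0$ there.

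The main obstacle is making the "factor through $T$" step rigorous without resolution of singularities, since $T$ need not be smooth over $S$ and hence is not an object of $\vl$. I would handle it exactly as in Section 4. By semi-purity and Lemma \ref{injectivityoflocalizationlemma}, shrink to the regular locus of $T$ and argue with local symbols there. The genuinely new point over a Dedekind base is components of $T$ lying in closed fibres. For those, Lemma \ref{cycleclasseslivinginfibresvanish} should give vanishing outright.
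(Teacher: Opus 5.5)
There is a genuine gap in the key vanishing step. In (1) you try to get the vanishing at the level of the target $H^i_T(Y,\Omega^j_{Y/S})$, but that group is not zero for $j<r$. Take $S=\Spec(R)$, $Y=\A^1_S$, $T=\{t=0\}$, $r=1$, $j=0$: then $H^1_T(Y,\sO_Y)\cong R[t,t^{-1}]/R[t]\neq 0$, and the class of $1/t$ involves no $dt$.

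So your claim that every class supported on $T$ in form degree $j$ must contain $dt_1\wedge\cdots\wedge dt_r$ is false. Lemma \ref{Classsymbolnotation} describes only the cycle class $\cl(T,Y)_\eta$, and the output of the correspondence is not a multiple of it.

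Moreover, Lemma \ref{injectivityoflocalizationlemma} gives injectivity of localization at a generic point only in cohomological degree equal to the codimension of the support. So you cannot ``restrict to the generic points of $T$'' in $H^i_T$ for $i\neq r$, nor localize $\hu(p_1)(a)$, which lives in arbitrary degree $i$.

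The same objection kills your direct argument for (2). You would have to detect the pullback of $a$ to $T'$ at generic points, or to $T'\times_S Y$, which is not an $\NS$-scheme, so $\hl(\iota)$ is not even defined. Also, on a singular $T'$ the sheaf $\Omega^j_{T'/S}$ need not vanish for $j>d_X-r$. The duality route would need properness of $X,Y$ over $S$ and a perfect pairing, and neither is available here.

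The missing idea is that the vanishing is a property of one K\"unneth component of $\cl(V,X\times_S Y)$, established before pushing forward.

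\begin{itemize}
\item Since $X,Y$ are smooth over $S$, $\Omega^{d_X}_{X\times_S Y/S}=\bigoplus_{a+b=d_X}p_1^*\Omega^a_{X/S}\otimes p_2^*\Omega^b_{Y/S}$.
\item By \cite[Lemma 2.3.4]{KayAndre}, $\hl(p_2)$ factors through the projection onto $p_1^*\Omega^{d_X}_{X/S}\otimes p_2^*\Omega^j_{Y/S}$.
\item Cupping with $\hu(p_1)(\beta)$, for $\beta\in H^i(X,\Omega^j_{X/S})$, raises the $X$-degree by $j$. So the action on $\beta$ depends only on the $(d_X-j,j)$-component of $\cl(V)$, which lies in $H^{d_X}_V(X\times_S Y,p_1^*\Omega^{d_X-j}_{X/S}\otimes p_2^*\Omega^j_{Y/S})$.
\item That group sits in degree $d_X=\codim V$ with locally free coefficients. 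Hence Lemma \ref{injectivityoflocalizationlemma} does apply, and you may localize at the generic point $\eta$ of $V$.
\item There a regular system of parameters of $\sO_{X\times_S Y,\eta}$ can be chosen as $1\otimes t_1,\ldots,1\otimes t_r,s_{r+1},\ldots,s_{d_X}$, with $t_k$ parameters of $\sO_{Y,p_2(\eta)}$.
\item By Lemma \ref{Classsymbolnotation}, the local symbol therefore contains $d(1\otimes t_1)\wedge\cdots\wedge d(1\otimes t_r)$. So its $Y$-degree is at least $r$, and the $(d_X-j,j)$-component vanishes for $j<r$.
\end{itemize}

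For (2) the same component is the relevant one. Now parameters pulled back from $\sO_{X,p_1(\eta)}$ force $X$-degree at least $r$, so the component vanishes when $d_X-j<r$. This is the paper's argument. It needs neither a substitute resolution of $T$ nor a separate treatment of closed fibres.
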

\begin{proof}
	\begin{enumerate}
		\item Without loss of generality we can assume that $\alpha = [V]$ where $V\subset X\times_S Y$ is 
		an integral closed subscheme of $S$-dimension $\dim_S(V) = \dim_S(Y) =: d_Y$, and such that
		$pr_2(V) \subset Y$ has codimension $r$, where $pr_2: X\times_S Y \to Y$ is the projection morphism. 
		Recall that by definition 
		$$
		\rho_{H}\circ \Cor(\cl)([V])(\beta) = \hl(pr_2)(\hu(pr_1)(\beta)\cup \cl(V,X\times_S Y)),
		$$
		for $\beta \in H(X,\Phi_X)$.  Without loss of generality we can assume $\beta \in H^i(X,\Omega_{X/S}^j)$
		and so $\hu(pr_1)(\beta) \in H^{i}(X\times_S Y,\Omega_{X\times_SY/S}^j)$. Consider the diagram
		$$
		\xymatrix{
			\bigoplus\limits_{a+b=d_X} H^{d_X}_V(pr_1^{\ast}\Omega_{X/S}^a\otimes_{\sO_{X\times_S Y}} 
			pr_2^{\ast}\Omega_{Y/S}^b) \ar[d]^-{proj.} \ar[r]^-{\cong}
			&
			H^{d_X}_V(\Omega_{X\times_S Y/S}^{d_X}) \ar[r]^-{\hu(pr_1)(\beta)\cup} 
			&
			H^{d_X+i}_V(\Omega_{X\times_S Y/S}^{d_X+j})\ar[d]^-{proj.}
			\\
			H^{d_X}_V(pr_1^{\ast}\Omega_{X/S}^{d_X-j}\otimes_{\sO_{X\times_S Y}}pr_2^{\ast}\Omega_{Y/S}^{j}) 
			\ar[rr]^-{\hu(pr_1)(\beta)\cup} 
			&&
			H_V^{d_X+i}(pr_1^{\ast}\Omega_{X/S}^{d_X}\otimes_{\sO_{X\times_S Y}} pr_2^{\ast}\Omega_{Y/S}^{j}) \ar[d]
			\\&&
			H^i_{pr_2(V)}(Y,\Omega_{Y/S}^j),
		}
		$$
		where we write $H^p_V(\sF)$ for $H^p(X\times_S Y, \sF)$ for readability. First of all, we notice that 
		the lower vertical map on the right is chosen so that the composition is exactly $\hl(pr_2)$
		which we know we can do by \cite[Lemma 2.3.4]{KayAndre}. Secondly we notice that 
		the square commutes. This is because the projection on the left is precisely the one such that 
		cupping with $\hu(pr_1)(\beta)$ lands in 
		$H_V^{d_X+i}(pr_1^{\ast}\Omega_{X/S}^{d_X}\otimes_{\sO_{X\times_S Y}} pr_2^{\ast}\Omega_{Y/S}^{j})$,
		i.e. if the left arrow projects to  $H^{d_X}_V(pr_1^{\ast}\Omega_{X/S}^a\otimes_{\sO_{X\times_S Y}} 
		pr_2^{\ast}\Omega_{Y/S}^b)$, then cupping with $\hu(pr_1)(\beta)$ maps to 
	$H^{d_X+i}_V(pr_1^{\ast}\Omega_{X/S}^{a+j}\otimes_{\sO_{X\times_S Y}} 
		pr_2^{\ast}\Omega_{Y/S}^b)$ forcing $a = d_X-j$ to hold. 
		
		To show that this vanishes it thus suffices to show that 
		$\cl(V,X))$ vanishes under the map
		$$
		H^{d_X}_V(X\times_S Y,\Omega_{X\times_S Y/S}^{d_X}) \xrightarrow{proj.}
		H^{d_X}_V(X\times_S Y, pr_1^{\ast}\Omega_{X/S}^{d_X-j} \otimes pr_2^{\ast}\Omega_{Y/S}^{j}),
		$$
		for any $0 \leq j \leq r-1$.
		Furthermore, by Lemma \ref{injectivityoflocalizationlemma} we may localize to the generic point
		$\eta$ of $V$ and thus it suffices to show that $\cl(V,X)_{\eta}$
		vanishes under the projection map
		$$
		H^{d_X}_{\eta}(X\times_S Y, \Omega_{X\times_SY/S}^{d_X}) \xrightarrow{proj.} 
		H^{d_X}_{\eta}(X\times_S Y, pr_1^{\ast}\Omega_{X/S}^{d_X-j} \otimes pr_2^{\ast}\Omega_{Y/S}^{j}),
		$$
		for all $0 \leq q \leq r-1$. 
	
		We write $B = \sO_{X\times_S Y,\eta}$ and $\sO_{Y,pr_2(\eta)}$. $A$ is a regular local ring of dimension 
		$r$ and $B$ is formally smooth over $A$. Let $t_1,\ldots, t_r \in A$ be a regular sequence of parameters.
		$B/(1\otimes t_1,\ldots,1\otimes t_r)$ is a regular local ring so there exist elements $s_{r+1},\ldots,s_{d_X}
		\in B$ such that $1\otimes t_1,\ldots, 1\otimes t_r, s_{r+1},\ldots,s_{d_X}$ is a regular sequence of 
		parameters for $B$. The explicit description of the cycle class given in Lemma \ref{Classsymbolnotation} 
		gives 
		$$
		\cl(V,X)_{\eta} = (-1)^{d_X} \begin{bmatrix}
			d(1\otimes t_1)\wedge \cdots \wedge d(1\otimes t_r) \wedge ds_{r+1}\wedge\cdots\wedge ds_{d_X} \\
			1\otimes t_1, \ldots, 1\otimes t_r, s_{r+1}, \ldots, s_{d_X}
		\end{bmatrix}.
		$$
		The construction of the element $\begin{bmatrix}
			m \\ t
		\end{bmatrix}$  is functorial, see
		\cite[EXP. II, Proposition 5]{SGA2},  
		and therefore in order to show that $\cl(V,X\times_S Y)_{\eta}$ vanishes under 
		$$
		H^{d_X}_{\eta}(X\times_S Y, \Omega_{X\times_SY/S}^{d_X}) \xrightarrow{proj.} 
		H^{d_X}_{\eta}(X\times_S Y, pr_1^{\ast}\Omega_{X/S}^{d_X-j} \otimes pr_2^{\ast}\Omega_{Y/S}^{j}),
		$$ it suffices to 
		show that $d(1\otimes t_1)\wedge \cdots \wedge d(1\otimes t_r) \wedge ds_{r+1}\wedge\cdots\wedge ds_{d_X}$
		vanishes under the corresponding projection 
 	$$
		\Omega_{B/R}^{d_X} \to \Omega_{C/R}^{d_X-j}\otimes_R \Omega_{A/R}^j,
		$$
		where $R = \sO_S(S)$, and $C = \sO_{X,pr_1(\eta)}$. Since $0 \leq j \leq r-1$ this is clear; every 
		term of the image must have at least one $d(1) = 0$ occuring in the $ \Omega_{C/R}^{d_X-j}$ part
		and hence all terms are zero.
		\item The proof of this part is by symmetry the same as in part $(1)$. It suffices to show that 
		$\cl(V,X\times_S Y)$ vanishes under the projection map
		$$
		H^{d_X}_{\eta}(X\times_S Y, \Omega_{X\times_SY/S}^{d_X}) \xrightarrow{proj.} 
		H^{d_X}_{\eta}(X\times_S Y, pr_1^{\ast}\Omega_{X/S}^{j} \otimes pr_2^{\ast}\Omega_{Y/S}^{d_X-j}),
		$$
		and from here the argument is the same.
	\end{enumerate}
\end{proof}

Let $S'$ be a separated $S$-scheme and $f:X\to S'$ and $g:Y \to S'$ be 
integral $S'$-schemes that are $\NS$-schemes. Let $Z \subset 
X\times_{S'} Y $ be a closed integral subscheme s.t. $\dim_S(Z) = 
\dim_S(Y)$ and s.t. $pr_2|_Z:Z\to Y$ is proper, where $pr_2:X\times_{S'} Y
\to Y$ is the projection. For an open subscheme $U \subset S'$, we write
$Z_U$ for the pullback of $Z$ over $U$ inside $f^{-1}(U)\times_U g^{-1}(U)$.
This gives a correspondence $[Z_U] \in \Hom_{\Cor_{\CH}}(f^{-1}(U),
g^{-1}(U))^0$, which induces a morphism of $\sO_S$-modules
$$
\rho_{H}\circ \Cor(\cl)([Z_U]): H^i(f^{-1}(U),\Omega^j_{f^{-1}(U)/S})
\to H^i(g^{-1}(U),\Omega^j_{g^{-1}(U)/S}),
$$
for all $i,j$. 

In this situation we have the following proposition.
\begin{prop} \label{rhoOSmodulemap}
	The set $\{\rho_{H}\circ Cor(\cl)([Z_U]) | U \subset Z \,\,\, 
	\text{open}\}$ induces a morphism of quasi-coherent 
	$\sO_{S'}$-modules
	$$
	\rho_{H}(Z/S'): R^i\fpush\Omega_{X/S}^j \to R^i\gpush\Omega_{Y/S}^j,
	$$
	for all $i,j$.
\end{prop}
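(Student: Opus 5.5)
The plan is to build the map of sheaves from its values on a basis of open subsets of $S'$, namely the affine opens. For affine $U\subset S'$ (note that the statement's ``$U\subset Z$'' should read $U\subset S'$), the sheaves $R^i\fpush\Omega^j_{X/S}$ and $R^i\gpush\Omega^j_{Y/S}$ are quasi-coherent, because $f$ and $g$ are separated of finite type and $\Omega^j$ is coherent. Their sections over $U$ are $H^i(f^{-1}(U),\Omega^j_{f^{-1}(U)/S})$ and $H^i(g^{-1}(U),\Omega^j_{g^{-1}(U)/S})$ respectively. So the maps $\rho_H\circ\Cor(\cl)([Z_U])$ give homomorphisms on sections over every affine $U$. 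Two things then remain: these homomorphisms must be compatible with restriction along inclusions $V\subset U$ of affine opens, and they must be $\sO_{S'}(U)$-linear. Once both hold, the maps glue uniquely to a morphism of sheaves of $\sO_{S'}$-modules, since a morphism of sheaves is determined by its values on a basis.

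Compatibility with restriction is the main step. Write $j_X\colon f^{-1}(V)\to f^{-1}(U)$, $j_Y\colon g^{-1}(V)\to g^{-1}(U)$ and $j\colon f^{-1}(V)\times_V g^{-1}(V)\to f^{-1}(U)\times_U g^{-1}(U)$ for the open immersions. By definition the action of $[Z_U]$ is
$$\beta\mapsto \hl(pr_2)\big(\hu(pr_1)(\beta)\cup\cl(Z_U)\big).$$
I need $\hu(j_Y)$ of this to equal the same expression built from $Z_V$, applied to $\hu(j_X)\beta$. I would argue in three steps.
\begin{enumerate}
\item The square formed by $j$, $j_Y$ and $pr_2$ is Cartesian, because $j$ is the preimage of $g^{-1}(V)$ under $pr_2$. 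Since $j_Y$ is smooth, axiom (4) of a WCTS (Proposition \ref{Hodgepushpullproposition}) gives $\hu(j_Y)\circ\hl(pr_2)=\hl(pr_2')\circ\hu(j)$.
\item The pullback $\hu(j)$ is compatible with cup products and with pullbacks, so $\hu(j)\hu(pr_1)=\hu(pr_1')\hu(j_X)$.
\item The cycle class satisfies $\hu(j)\cl(Z_U)=\cl(Z_V)$, by Lemma \ref{classelementopenlemma}, or more directly because $\cl\colon\CH\to H$ is a morphism in $\TT$ and flat pullback of $[Z_U]$ is $[Z_V]$.
\end{enumerate}
One point needs care: the fibre product over $U$ sits inside the fibre product over $S$. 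The correspondence is really taken in $X\times_S Y$ with supports in $Z$, and $Z\subset X\times_{S'}Y\subset X\times_S Y$ is closed because $S'$ is separated. So I interpret $\Cor$ inside $X\times_S Y$, where the projections restricted to $Z$ are proper as needed. I would check that the open $f^{-1}(V)\times_S g^{-1}(V)$ meets $Z$ exactly in $Z_V$, so that nothing changes.

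For linearity, take $a\in\sO_{S'}(U)$; it acts on the source via $f^\ast a$ and on the target via $g^\ast a$. On $Z$ the two pullbacks $pr_1^\ast f^\ast a$ and $pr_2^\ast g^\ast a$ agree, because $Z$ lies over the diagonal of $S'$. Multiplication by a function is a cup product with a class in $H^0$, so I would use the projection formula for $pr_2$ together with the fact that cupping against a class supported on $Z$ only depends on the function restricted to $Z$. This moves $a$ from the source to the target. The one delicate point is that last restriction-to-support claim for cohomology with supports in $Z$, and I expect it to hold because it is a statement about the $\sO$-module structure of $\rugz$.
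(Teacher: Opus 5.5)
Your outline matches the paper's. Restriction-compatibility comes from base change along the open immersion $j_Y$ plus compatibility of pullback with cup products and cycle classes. Linearity comes from the projection formula for $pr_2$. However, the linearity step rests on a claim whose justification is wrong. You need
\[
\hu(pr_2)(g^{\ast}a)\cup\cl(Z)=\hu(pr_1)(f^{\ast}a)\cup\cl(Z)\quad\text{in } H^{d_X}_Z(X\times_S Y,\Omega^{d_X}_{X\times_S Y/S}),
\]
and you derive it from the principle that cupping a class supported on $Z$ with a function depends only on the function's restriction to $Z$, ``because it is a statement about the $\sO$-module structure of $\rugz$''.

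That principle is false for general classes. Cohomology with supports in $Z$ is not an $\sO_Z$-module: sections are killed by \emph{some power} of the ideal $I_Z$, not by $I_Z$ itself. Already on $\A^1_S$ with $S$ affine and coordinate $t$, the class of $dt/t^2$ in $H^1_{\{t=0\}}(\A^1_S,\Omega^1_{\A^1_S/S})$ satisfies $t\cdot[dt/t^2]=[dt/t]\neq 0$. So the identity is not formal; it holds because of the specific shape of $\cl(Z)$.

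The paper argues as follows. Put $r:=pr_1^{\ast}f^{\ast}a-pr_2^{\ast}g^{\ast}a$. Since $S'$ is separated, $X\times_{S'}Y$ is closed in $X\times_S Y$ and $r$ vanishes on it, so $r\in I_Z$. By Lemma~\ref{injectivityoflocalizationlemma} ($Z$ has codimension $d_X$ and $\Omega^{d_X}$ is locally free), it suffices to show $r\cup\cl(Z)=0$ at the generic point $\eta$ of $Z$. There $Z$ is cut out by a regular sequence $t_1,\dots,t_{d_X}$, and Lemma~\ref{Classsymbolnotation} gives
\[
\cl(Z)_\eta=(-1)^{d_X}\begin{bmatrix} dt_1\wedge\cdots\wedge dt_{d_X}\\ t_1,\ldots,t_{d_X}\end{bmatrix}.
\]
Lemma~\ref{Cupproductsymbolnotation} shows that cupping with $r$ multiplies the numerator by $r$. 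A symbol whose numerator lies in $(t_1,\dots,t_{d_X})$ vanishes. Conceptually, generically $\cl(Z)$ comes from $\Ext^{d_X}(\sO_Z,\Omega^{d_X})$, which \emph{is} killed by $I_Z$, and the injectivity lemma spreads this from $\eta$ to all of $Z$. This is the missing idea in your proposal.

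There is also a smaller imprecision in the restriction step. You must work in $X\times_S Y$, since $f^{-1}(U)\times_U g^{-1}(U)$ need not be smooth over $S$ and the WCTS axioms do not apply to it. But over $S$ the base change of $pr_2$ along $j_Y$ is $f^{-1}(U)\times_S g^{-1}(V)$, not $f^{-1}(V)\times_S g^{-1}(V)$, so the square you wrote is not Cartesian.

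What rescues it is that $Z_U\cap(f^{-1}(U)\times_S g^{-1}(V))=Z_V$, because $Z$ lies over the diagonal of $S'$. This set is closed there and contained in the open $f^{-1}(V)\times_S g^{-1}(V)$, so one can shrink the ambient space by excision. The paper does this via the projection formula for the open immersion $\tau$, which is a morphism in $\vl$ since it is proper on $Z_V$, together with $\hl(pr'_{2,V})\hl(\tau)=\hl(pr_{2,V})\hl(id')$. Your check that $f^{-1}(V)\times_S g^{-1}(V)$ meets $Z$ in $Z_V$ handles the restriction of the cycle class, but not this base-change issue.
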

\begin{proof}

	We have to show two statements:
	\begin{enumerate}
		\item The maps $\rho_{H}\circ \Cor(\cl)([Z_U])$ are compatible 
		with restrictions to opens sets.
		\item The maps $\rho_{H}\circ \Cor(\cl)([Z_U])$ are $\sO(U)$-linear. 
	\end{enumerate}
	We denote by
	$$
	pr_{1,U}: f^{-1}(U)\times g^{-1}(U) \to f^{-1}(U)
	$$
	the map in $\vu$ induced by the first projection 
	$f^{-1}(U)\times g^{-1}(U) \to f^{-1}(U)$ and by
	$$
	pr_{2,U}:(f^{-1}(U)\times_{S'} g^{-1}(U), P(\Phi_{f^{-1}(U)},\Phi_{g^{-1}(U)})) 
	\to g^{-1}(Y)
	$$
	the map in $\vl$ induced by the first projection 
	$f^{-1}\times g^{-1}(U) \to g^{-1}(U)$, and denote by
	\begin{align*}
		j_f:f^{-1}(V) &\to f^{-1}(U) \,\,\, \text{and} \\
		j_g:g^{-1}(V) &\to g^{-1}(U) 
	\end{align*}
	the morphisms in $\vu$ induced by  
	an open immersion  $j:V\hookrightarrow U$.
	To show $(1)$ we have to show that for any $\alpha \in 
	H^i(f^{-1}(U),\Omega^j_{f^{-1}(U)/S})$ we have 
	\begin{align} \label{applicationstep1whattoshow}
		\hu(j_g)\hl(pr_{2,U})(\hu(&pr_{1,U})(\alpha)\cup \Cor(\cl)([Z_U])) \\
		&= \hl(pr_{2,V})(\hu(pr_{1,V})(\hu(j_f)(\alpha)\cup 
		\Cor(\cl)([Z_V])). \notag
	\end{align}
	Consider the Cartesian square
	$$
	\xymatrix{
		(f^{-1}(U)\times_{S'} g^{-1}(V),\Phi) \ar[d]_-{id_{f^{-1}(U)}\times j_g}
		\ar[r]^-{pr'_{2,V}} 
		& g^{-1}(V) \ar[d]^-{j_g} 
		\\(f^{-1}(U)\times_{S'} g^{-1}(U), P(\Phi_{f^{-1}(U)},\Phi_{g^{-1}(U)})) 
		\ar[r]^-{pr_{2,U}}
		&g^{-1}(U),
	}
	$$
	where $\Phi$ is defined as $(id_{f^{-1}(U)}\times j)^{-1}(P(\Phi_{f^{-1}(U)},\Phi_{g^{-1}(U)}))$ and 
	$$
	pr'_{2,V}: (f^{-1}(U)\times_{S'} g^{-1}(V),\Phi) \to
	g^{-1}(V) 
	$$
	is the map in $\vl$ induced by the first projection 
	$ f^{-1}(U)\times_{S'} g^{-1}(V) \to g^{-1}(V)$.
	Since $j_g$ is induced by a smooth morphism we see that 
	\begin{equation} \label{applicationpushpullstep}
		\hu(j_g)\hl(pr_{2,U}) = \hl(pr_{2,V}')\hu(id_{f^{-1}(U)}\times j_g).
	\end{equation}
	Denote by
	$pr_{1,U}':f^{-1}(U)\times_{S'} g^{-1}(V) \to f^{-1}(U)$
	the morphism in $\vu$ induced by the first projection, then 
	applying \eqref{applicationpushpullstep} to the left-hand 
	side of 
	\eqref{applicationstep1whattoshow} gives
	\begin{align*}
		\hu(j_g)\hl(pr_{2,U})(\hu(pr_{1,U})(\alpha)&\cup \Cor(\cl)([Z_U])) \\&=
		\hl(pr_{2,V}')\hu(id_{f^{-1}(U)}\times j_g)(\hu(pr_{1,U})(\alpha)\cup \Cor(\cl)([Z_U])) \\
		&=  \hl(pr_{2,V}')(\hu(pr_{1,U}')(\alpha)\cup \Cor(\cl)([Z_V])),
	\end{align*}
	where the last equality follows from the fact that $pr_{1,U}'
	= j_f \circ pr_{1,V}$, $\hu(id_{f^{-1}(U)}\times j_g)(\Cor(\cl)([Z_U])) =
	\Cor(\cl)([Z_V])$ and pullbacks commute with cup products. We introduce
	the morphisms 
	$$
	j_f\times id_{g^{-1}(V)}; f^{-1}(V)\times_{S'}g^{-1}(V) \to 
	f^{-1}(U)\times_{S'} g^{-1}(V)
	$$
	in $\vu$, and
	$$
	\tau: (f^{-1}(V)\times_{S'}g^{-1}(V),Z_V) \to (f^{-1}(U)\times_{S'} g^{-1}(V), \Phi),
	$$
	and 
	$$
	id':(f^{-1}(V)\times_{S'}g^{-1}(V),Z_V) \to
	(f^{-1}(V)\times_{S'} g^{-1}(V), P(\Phi_{f^{-1}(V)},\Phi_{g^{-1}(V)}))
	$$ 
	in $\vl$, where $\tau$ is induced by $j\times id_{g^{-1}(V)}$ and 
	$id'$ is induced by the identity. Applying the second 
	projection
	formula to 
	$\hl(pr_{2,V}')(\hu(pr_{1,U}')(\alpha)\cup \Cor(\cl)([Z_V]))$
	gives
	\begin{align*}
		\hl(pr_{2,V}')(\hu(pr_{1,U}')(\alpha)&\cup \Cor(\cl)([Z_V])) =\\
		&= \hl(pr_{2,V}')(\hu(pr_{1,U}')(\alpha)\cup \Cor(\cl(\CH_{\ast}(\tau))([Z_V])) \\
		&= \hl(pr_{2,V}') \hl(\tau) (\hu(j_f\times id_{g^{-1}(V)})\hu(pr_{1,U}')(\alpha)\cup \Cor(\cl)([Z_V])),
	\end{align*}
	and the equalities 
	\begin{align*}
		\hl(pr'_{2,V})\hl(\tau) &= \hl(pr_{2,V})\hl(id') \,\,\, \text{and}, \\
		\hu(j_f\times id_{g^{-1}(V)})\hu(pr_{1,U}') &= \hu(pr_{1,V})\hu(j_f)
	\end{align*}
	imply that \eqref{applicationstep1whattoshow} holds. 
	
	To show $(2)$ we note that it suffices to consider the case $U = S' = \Spec(R')$.
	We have to show that the following equality holds for all $r' \in R'$ and all
	$a \in H^i(X,\Omega_{X/S}^j)$:
	\begin{align}\label{prop7.2whattoshow1}
		g^{\ast}(r')\cup \hl(pr_2)(\hu(pr_1)(a)&\cup \cl([Z])) \\
		&= 
		\hl(pr_2)(\hu(pr_1)(f^{\ast}(r')\cup a)\cup \cl([Z]), \notag
	\end{align}
	where $g^{\ast}:R'\to H^0(X,\sO_X)$ and $f^{\ast}: R'\to H^0(Y,\sO_Y)$ are 
	the ring homomorphisms inducing a $R'$-module structures on $H(X)$ and $H(Y)$, 
	respectively. Notice that if we have 
	\begin{equation} \label{prop7.2whattoshow2}
		\hu(pr_2)(g^{\ast}(r')) \cup \cl([Z]) =
		\hu(pr_1)(f^{\ast}(r'))\cup \cl([Z]),
	\end{equation}
	in $H_Z^{d_X}(X\times_S Y, \Omega_{X\times_S Y}^{d_X})$, where $d_X := \dim_S(X)$,
	then 
	\begin{align*}
		\hl(pr_2)(\hu(pr_1)(f^{\ast}(r')&\cup a)\cup \cl([Z])) \\
		&= \hl(pr_2)(\hu(pr_1)(f^{\ast}(r'))\cup \hu(pr_1)(a)\cup \cl([Z])) \\
		&= \hl(pr_2)(\hu(pr_2)(g^{\ast}(r')) \cup \cl([Z])  \cup \hu(pr_1)(a)) \\
		&=g^{\ast}(r')\cup \hl(pr_2)(\hu(pr_1)(a)\cup \cl([Z])),
	\end{align*}
	where the first equality holds since the pullback commutes with the cup product,  the second equality is simply 
	\eqref{prop7.2whattoshow2}, and the final equality follows from the second projection 
	formula.
	
	To finish the proof, it suffices to show that \eqref{prop7.2whattoshow2}
	holds for any $r' \in R'$. By  Lemma \ref{injectivityoflocalizationlemma}, we see that
	it suffices to check this locally around  the generic point $\eta \in Z$. We can, without
	loss of generality,  further shrink the open set around $\eta$ and assume $Z$ is 
	regular and such that the ideal of $X$ is generated by a regular sequence
	$t_1,\ldots,t_{d_X}$. We can shrink further around $\eta$ and 
	assume $X\times_{S'} Y$ and $X\times_S Y$ are affine. 
	By Lemma \ref{Classsymbolnotation}, we see that 
	$$
	\cl(Z,X\times_S Y)_{\eta} = (-1)^{d_X}\begin{bmatrix}
		dt_1\wedge\cdots\wedge dt_{d_X} \\
		t_1,\ldots, t_{d_X}
	\end{bmatrix},
	$$
	and we write
	\begin{align*}
		r'\otimes 1 &:= \hu(pr_1)(f^{\ast}(r')), \,\,\, \text{and} \\
		1 \otimes r' &:= \hu(pr_2)(g^{\ast}(r')).
	\end{align*}
	Then it suffices to show that 
	\begin{equation} \label{prop7.2whattoshow3}
		r' \otimes 1 \cup (-1)^{d_X}\begin{bmatrix}
			dt_1\wedge\cdots\wedge dt_{d_X} \\
			t_1,\ldots, t_{d_X}
		\end{bmatrix} = 1\otimes r' \cup (-1)^{d_X}\begin{bmatrix}
			dt_1\wedge\cdots\wedge dt_{d_X} \\
			t_1,\ldots, t_{d_X}
		\end{bmatrix}.
	\end{equation}
	It follows from Lemma \ref{Cupproductsymbolnotation} that 
	\begin{align*}
		r' \otimes 1 \cup (-1)^{d_X}\begin{bmatrix}
			dt_1\wedge\cdots\wedge dt_{d_X} \\
			t_1,\ldots, t_{d_X}
		\end{bmatrix} &= (-1)^{d_X}\begin{bmatrix}
			(r' \otimes 1 )dt_1\wedge\cdots\wedge dt_{d_X} \\
			t_1,\ldots, t_{d_X}
		\end{bmatrix}, \,\,\, \text{and} \\
		1\otimes r' \cup (-1)^{d_X}\begin{bmatrix}
			dt_1\wedge\cdots\wedge dt_{d_X} \\
			t_1,\ldots, t_{d_X}
		\end{bmatrix} &= (-1)^{d_X}\begin{bmatrix}
			(1\otimes r')dt_1\wedge\cdots\wedge dt_{d_X} \\
			t_1,\ldots, t_{d_X}
		\end{bmatrix},
	\end{align*}
	So the Equation \eqref{prop7.2whattoshow3} follows if we can proof 
	\begin{equation} \label{prop7.2whattoshow4}
		\begin{bmatrix}
			rdt_1\wedge\cdots\wedge dt_{d_X} \\
			t_1,\ldots, t_{d_X}
		\end{bmatrix} = 0,
	\end{equation}
	where $r:= r' \otimes 1 - 1\otimes r'$. Note that since $S' \to S$ is separated
	by assumption,  $X\times_{S'} Y \to X\times_S Y$ is a 	
	closed immersion, 
	and if we pull $r$ back to $H_Z^0(X\times_{S'} Y, \sO_{X\times_{S'} Y})$
	it clearly vanshes. In particular it lies in the ideal of $Z$ in $X\times_{S'} Y$
	which is a subset of the ideal of $Z$ in $X\times_S Y$. Then Equation 
	\eqref{prop7.2whattoshow4} follows from the additivity of
	symbols.
\end{proof}

Recall the following definition.

\begin{defn}
	Two integral schemes $X$ and $Y$ over a base scheme $S$ are called 
	properly birational over $S$ if there exists an integral scheme $Z$ over
	$S$ and proper birational $S$-morphisms
	$$
	\xymatrix{
		&Z \ar[dr]\ar[dl] 
		\\X 
		&& Y.
	}
	$$
\end{defn}

\begin{thm}(cf. \cite[Theorem 3.2.8.]{KayAndre}) \label{Thefinaltheorem}
	Let $S$ be a 
	Noetherian, excellent, regular, separated, irreducible scheme of dimension at 
	most 1.  Let $S'$ be a separated $S$-scheme of finite type,
	and let $X$ and $Y$ be integral $\NS$-schemes, and
	$f:X\to S'$ and $g:Y\to S'$ be morphisms of $S$-schemes
	such that $X$ and $Y$ are
	properly birational over $S'$. Let $Z$
	be an integral scheme and let $Z\to X$ and $Z\to Y$ be
	proper birational morphisms
	such that 
	$$
	\xymatrix{
		&Z \ar[dr]\ar[dl] 
		\\X\ar[dr]_-f 
		&& Y\ar[dl]^-g
		\\&S'
	}	
	$$
	commutes. We denote the image of $Z$ in $X\times_{S'} Y$ by 
	$Z_0$. Then $\rho(Z_0/S')$ induces isomorphisms of $\sO_{S'}$-
	modules
	\begin{align*}
		R^i\fpush\sO_X &\xrightarrow{\cong} R^i\gpush \sO_Y \,\,\, \text{and} \\
		R^i \fpush \Omega_{X/S}^{d} &\xrightarrow{\cong} R^i\gpush
		\Omega_{Y/S}^d,
	\end{align*}
	for all $i$, where $d := \dim_S(X) = \dim_S(Y)$. 
\end{thm}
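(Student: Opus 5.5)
We follow the strategy of \cite[Theorem 3.2.8]{KayAndre}, feeding in the machinery of Sections 1--4. Both sides are quasi-coherent $\sO_{S'}$-modules, and by Proposition \ref{rhoOSmodulemap} the maps $\rho_H(Z_0/S')$ and $\rho_H(Z_0^t/S')$ are morphisms of $\sO_{S'}$-modules compatible with restriction to opens of $S'$. Being an isomorphism is local on $S'$, so we may assume $S'$ is affine. Then $R^i\fpush\Omega^j_{X/S}$ is the sheaf associated to $H^i(X,\Omega^j_{X/S})$, and similarly for $Y$. It therefore suffices to show that $\rho_H(Z_0)$ and $\rho_H(Z_0^t)$ are mutually inverse on $H^i(X,\Omega^j)$ and $H^i(Y,\Omega^j)$ for $j=0$ and $j=d$.

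The support condition holds because $Z\to X$ and $Z\to Y$ are proper, so $Z_0\to X$ and $Z_0\to Y$ are proper. Since $S'\to S$ is separated, $X\times_{S'}Y\subset X\times_S Y$ is closed, so $[Z_0]$ and $[Z_0^t]$ are correspondences with proper support in the sense of Section 5. The morphism $\cl:\CH\to H$ of Theorem \ref{ExistenceofmorphismfromChowtoHodge} is a morphism in $\TT$, so $\Cor(\cl)$ is compatible with composition of correspondences. Hence
\[
\rho_H(Z_0^t)\circ\rho_H(Z_0)=\rho_H\bigl(\Cor(\cl)([Z_0^t]\circ[Z_0])\bigr),
\]
and the composite can be computed in Chow groups. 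By Fulton's intersection theory (refined Gysin maps), $[Z_0^t]\circ[Z_0]$ is represented by a cycle supported on the image $Z_0^t\circ Z_0$ of $Z_0\times_Y Z_0^t$ in $X\times_{S'}X$.

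Next we decompose this composite. Let $U\subset X$ be a dense open over which $Z\to X$ is an isomorphism, and $V\subset Y$ similarly. Over the open subset where both birational maps are isomorphisms, $Z_0^t\circ Z_0$ coincides with the graph of the identity. Intersection multiplicity one there (the generic intersection is transversal) gives
\[
[Z_0^t]\circ[Z_0]=\Delta_{X}+\varepsilon_X,
\]
with $\varepsilon_X$ supported on $(X\setminus U')\times X\cup X\times(X\setminus U')$ for a dense open $U'$. Every component of $\varepsilon_X$ that is contained in neither piece would have to meet $U'\times U'$, where the composite is already exactly $\Delta$. So each component of $\varepsilon_X$ projects onto a proper closed subset of one factor, of codimension $r\ge1$. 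By Theorem \ref{applicationvanishing}, part (1) kills those components on $H^i(X,\sO_X)$ (here $j=0<r$), and part (2) kills them on $H^i(X,\Omega^d)$ (here $d\ge d_X-r+1$). The identical argument applied to $[Z_0]\circ[Z_0^t]=\Delta_Y+\varepsilon_Y$ gives the other composite. We also use that $\Delta$ acts as the identity, via $\cl(\Delta,X\times X)$ and the projection formula.

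\textbf{Main obstacle.} The hardest step is justifying the decomposition $\Delta+\varepsilon$ over a Dedekind base. Components of $Z_0\times_Y Z_0^t$ may lie in closed fibres, and the dimension count in Fulton's framework must use the $S$-dimension of Lemma \ref{SuslinVoevodsky}. To handle this, we would first restrict to the generic point of $S$ to identify the diagonal with multiplicity one. We then argue that any remaining component supported in a closed fibre projects to a proper closed subset of $X$, so Theorem \ref{applicationvanishing} applies; alternatively its class vanishes outright by Lemma \ref{cycleclasseslivinginfibresvanish} combined with Proposition \ref{spreadingoutclassproposition}. Once the decomposition is established, the isomorphism statements follow immediately.
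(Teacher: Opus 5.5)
Your strategy matches the paper's: compose $[Z_0]$ with its transpose, split off the diagonal, and kill the remainder with Theorem \ref{applicationvanishing}. However, the decisive step does not follow from what you prove. Knowing that the composite equals $\Delta$ on $U'\times_S U'$ only tells you that each component of $\varepsilon_X$ lies in $(X\setminus U')\times_S X$ \emph{or} in $X\times_S(X\setminus U')$. That is, each component has image of codimension $\ge 1$ in \emph{one} factor. You then let both parts of Theorem \ref{applicationvanishing} act on every component, but the two parts concern different factors. Part (1), which you need for $j=0$, requires the projection to the \emph{target} factor to have codimension $r\ge 1$. Part (2), which you need for $j=d$, requires this of the \emph{source} factor. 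A component inside $(X\setminus U')\times_S X$ that dominates the second factor is killed on $H^i(X,\Omega^d_{X/S})$, but your argument gives no control over its action on $H^i(X,\sO_X)$; the case $X\times_S(X\setminus U')$ is symmetric. You need every component of $\varepsilon_X$ to lie in $(X\setminus X')\times_S(X\setminus X')$, and this is what the paper establishes.

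The missing input is a pointwise computation of the support of the composite. Choose opens $Z'\subset Z$, $X'\subset X$, $Y'\subset Y$ such that
\begin{itemize}
\item $Z\cap pr_1^{-1}(X')=Z'=Z\cap pr_2^{-1}(Y')$, and
\item $Z'\to X'$ and $Z'\to Y'$ are isomorphisms.
\end{itemize}
For instance, with your $U,V$, take $Z'=Z\cap pr_1^{-1}(U)\cap pr_2^{-1}(V)$, $X'=pr_1(Z')$ and $Y'=pr_2(Z')$.

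The composite is supported on the set of $(x_1,x_2)$ for which some $y$ satisfies $(x_1,y),(x_2,y)\in Z$. If $x_1\in X'$, then $(x_1,y)\in Z'$, so $y\in Y'$, hence $(x_2,y)\in Z'$ and $x_2=x_1$. The same holds if $x_2\in X'$. So the support meets $(X'\times_S X)\cup(X\times_S X')$ only in $\Delta_{X'}$.

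By \cite[Lemma 1.3.6]{KayAndre} and localization, $\varepsilon_X$ can be represented by a cycle supported off $X'\times_S X'$. Suppose one of its components met $X'\times_S X$ or $X\times_S X'$. That intersection would lie in $\Delta_{X'}\subset X'\times_S X'$, which is impossible. So every component lies in $(X\setminus X')\times_S(X\setminus X')$.

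This argument is set-theoretic and does not depend on the base. The closed-fibre issue you single out is therefore not the crux. Lemma \ref{cycleclasseslivinginfibresvanish} only disposes of vertical components, whereas the two-sided support statement is needed exactly for the components of $\varepsilon_X$ that dominate $S$.
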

\begin{proof} 
	First we recall that $\rho(Z_0/S')$ is defined as the sheafification of 
	the maps
	\begin{equation} \label{whatwesheafify}
		\rho_{H}\circ \Cor(\cl)([Z_{0,U}]) : H^i(f^{-1}(U),\Omega^j_{f^{-1}(U)/S})
		\to H^i(g^{-1}(U),\Omega^j_{g^{-1}(U)/S}),
	\end{equation}
	where $U$ runs over all open subsets of $S'$ and $Z_{0,U}$ is the
	restriction of $Z_0$ to $f^{-1}(U)\times_U g^{-1}(U)$. It clearly suffices 
	then to show that \eqref{whatwesheafify} is an isomorphism for 
	$j = 0, i = d,$ and every open $U \subset S'$. We can therefore 
	without loss of generality suppose that $U = S', \, f^{-1}(U) = X, \,
	g^{-1}(U) = Y,$ and  $Z_{0, U} = Z_0$, and we need to show that
	\begin{align*}
		\rho_{H}\circ \Cor(\cl)([Z_0]): H^i(X,\sO_{X}) &\to H^i(Y,\sO_Y) \,\,\, \text{and}\\
		\rho_{H}\circ \Cor(\cl)([Z_0]): H^i(X,\Omega^d_{X/S}) &\to 
   H^i(Y,\Omega^d_{Y/S})
	\end{align*}
	are isomorphisms for all $i$. None of the cohomology groups, 
	$H^i(X,\sO_{X}), H^i(Y,\sO_Y), H^i(X,\Omega^d_{X/S}),$ or 
	$H^i(Y,\Omega^d_{Y/S})$ depend on $S'$, and it follows from 
	the universal property of fibre products that \linebreak $\rho_{H}\circ \Cor(cl)([Z_0])$
	does not depend on $S'$. Furthermore, since $Z_0\subset X\times_{S'} Y$
	is closed, and $X\times_{S'} Y \subset X\times_S Y$ is closed because 
	we choose $S'$ to be separated over $S$, then $Z_0 \subset X\times_S Y$ is closed.
	We can therefore reduce to the case
	where $S' = S$. Furthermore, since it is clear that 
	$\rho_{H}\circ \Cor(\cl)([Z_0])$ only depends on the image of $Z$ in 
	$X\times_S Y$, we may assume that $Z\subset X\times_S Y$ and 
	$Z = Z_0$. 
	
	By assumption on $Z, X, Y$ there exist open subsets $Z' \subset Z,
	X' \subset X,$ and $Y' \subset Y$, s.t. $pr_1^{-1}(X') = Z'$ 
	and $pr_2^{-1}(Y') = Z'$ and such that 
	$pr_1|_{Z'}:Z' \to X'$ and $pr_2|_{Z'}:Z'\to Y'$ are
	isomorphisms, where $pr_1:X\times_S Y\to X$ and $pr_2:X\times_S Y
	\to Y$ denote the projections. 
	
	The subset $Z$ defines a correspondence $[Z]\in \Hom_{\Cor_{\CH}}(X,Y)^0$ and 
	we denote by $[Z^t]$ the transpose, i.e. the correspondence
	$[Z^t] \in \Hom_{\Cor_{\CH}}(Y,X)^0$ defined by viewing $Z$
	as a subset of $Y\times_S X$. 
	
	We claim that 
	\begin{align}
		[Z]\circ [Z^t] &= \Delta_{Y/S} + E_1, \,\,\, \text{and} \\
		[Z^t]\circ [Z] &= \Delta_{X/S} + E_2, \notag
	\end{align}
	where $E_1$ and $E_2$ are cycles supported in 
	$(Y\setminus Y') \times_S (Y\setminus Y')$ and 
	$(X\setminus X') \times_S (X\setminus X')$ respectively.
	
	By \cite[Lemma 1.3.4]{KayAndre} we see that 
	$[Z^t]\circ [Z]$ is naturally supported in 
	$$
	\supp(Z,Z') = \left\{(x_1,x_2)\in X\times_S X | (x_1,y)\in X, 
	(y,x_2)\in Z', \,\,\, \text{for some} \,\,\, y \in Y\right\}.
	$$
	Applying \cite[Lemma 1.3.6]{KayAndre}  for the open 
	subset $X' \subset X$ tells us that $[Z']\circ [Z]$ maps to
	$[\Delta_{X'/S}]$ via the localization map
	$$
	\CH(\supp(Z,Z^t)) \to \CH(\supp(Z,Z')\cap (X'\times_S X')).
	$$
	Therefore 
	$$
	[Z^t]\circ [Z] = \Delta_{X/S} + E_2
	$$
	where $E_2$ is supported in $\supp(Z,Z^t)\setminus (X'\times_S X')$.
	Furthermore, 
	$$
	\supp(Z,Z^t) \cap ((X'\times_S X)\cup (X\times_S X')) 
	= \Delta_{X'/S} = \supp(Z,Z^t)\cap (X'\times_S X'),
	$$
	and therefore $E_2$ is supported in $(X\times_S X) \setminus 
	((X'\times_S X)\cup (X\times_S X')) = (X\setminus X') 
	\times_S (X\setminus X')$. The same argument shows that 
	$[Z]\circ [Z^t] = \Delta_{Y/S} + E_1$ where $E_1$ is supported in 
	$(Y\setminus Y') \times_S (Y\setminus Y')$. 
	
	Theorem \ref{applicationvanishing} now tells us that 
	$\rho_{H}\circ \Cor(\cl)(E_2)$ vanishes
	on $H^i(X,\sO_X)$ and $H^i(X,\Omega^d_{X/S})$ for all $i$,
	and that $\rho_{H}\circ \Cor(\cl)(E_1)$ vanishes
	on $H^i(Y,\sO_Y)$ and $H^i(Y,\Omega^d_{Y/S})$ for all $i$.
	this implies that 
	\begin{align*}
		\rho_{H}\circ \Cor(\cl)([Z]): H^i(X,\sO_{X}) &\to H^i(Y,\sO_Y) \,\,\, \text{and}\\
		\rho_{H}\circ \Cor(\cl)([Z]): H^i(X,\Omega^d_{X/S}) &\to 
		H^i(Y,\Omega^d_{Y/S})
	\end{align*}
	are isomorphisms for all $i$.
\end{proof}

Let $X = Y, S' = Y$ and $g= id_Y$. Then the 
following corollary is immediately evident.

\begin{cor}
	Let $S$ be a 
	Noetherian, excellent, regular, separated, irreducible scheme of dimension at 
	most 1, let $X,Y$ be integral smooth $S$-schemes, and $f:X\to Y$ a 
	proper birational morphism. Then
	$$
	R^if_{\ast}\sO_X = 0,
	$$
	for all $i\geq 0$.
\end{cor}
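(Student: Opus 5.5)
We apply Theorem \ref{Thefinaltheorem} with a specific choice of data in which one side of the comparison is trivially acyclic. Note first that the statement should be read for $i>0$, as in Corollary \ref{maincorollary} of the introduction: for $i=0$ the argument below gives $f_{\ast}\sO_X\cong \sO_Y$, not $0$.

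\textbf{Choice of data.} We take $S' := Y$. Since $Y$ is an $\NS$-scheme, it is a separated $S$-scheme of finite type, so this is an allowed base. We take the structure maps to $S'$ to be $f:X\to Y$ and $g:=id_Y:Y\to Y$. Both $X$ and $Y$ are integral $\NS$-schemes by hypothesis. For the roof we take $Z:=X$, mapping to $X$ by $id_X$ and to $Y$ by $f$.

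\textbf{Checking the hypotheses of Theorem \ref{Thefinaltheorem}.}
\begin{enumerate}
\item $Z=X$ is integral.
\item $id_X$ is trivially proper and birational.
\item $f$ is proper and birational by assumption.
\item The square commutes, since $f\circ id_X = id_Y\circ f$.
\end{enumerate}
Hence $X$ and $Y$ are properly birational over $S'=Y$. The image $Z_0$ of $Z$ in $X\times_Y Y\cong X$ is the graph of $f$.

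\textbf{Conclusion.} Theorem \ref{Thefinaltheorem} gives, for every $i$, an isomorphism of $\sO_Y$-modules
$$
R^i f_{\ast}\sO_X \xrightarrow{\cong} R^i (id_Y)_{\ast}\sO_Y .
$$
The functor $(id_Y)_{\ast}$ is exact, so $R^i(id_Y)_{\ast}\sO_Y=0$ for all $i>0$, and it equals $\sO_Y$ for $i=0$. Therefore $R^if_{\ast}\sO_X=0$ for all $i>0$.

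The only step needing any care is checking that $S'=Y$ meets the separatedness and finite-type hypotheses on $S'$, which it does. All the substantive content is carried by Theorem \ref{Thefinaltheorem}.
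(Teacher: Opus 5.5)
Your proof is correct and is exactly the paper's argument. Apply Theorem~\ref{Thefinaltheorem} with $S'=Y$ and $g=\mathrm{id}_Y$, using the roof $Z=X$ mapping by $\mathrm{id}_X$ and $f$; this gives $R^if_{\ast}\sO_X\cong R^i(\mathrm{id}_Y)_{\ast}\sO_Y=0$ for $i>0$. You are also right that the statement must be read for $i>0$, as in Corollary~\ref{maincorollary} of the introduction, since for $i=0$ the same isomorphism gives $f_{\ast}\sO_X\cong\sO_Y\neq 0$.
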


The following corollary compares the cohomology of the structure sheaf of fibres over closed 
points of two 
models $\sX$ and $\sY$ of a smooth proper scheme $X$ over a 
number field $K$.

\begin{cor}
	Let $K$ be a number field and $\sO_K$ be its ring of integers. Let $X$ be a smooth and proper
	$K$-scheme. Let $\sX$ and $\sY$ be two integral, smooth, proper models of $X$ over some dense 
	open subscheme $U \subseteq S = \Spec(\sO_K)$. Then for any dense open $V\subseteq U$
	such that $\h^j(\sX,\sO_{\sX})$ is $\sO_S(V)$-torsion-free we have
		$$
		\h^j(\sX_t,\sO_{\sX_t}) = 	\h^j(\sY_t,\sO_{\sY_t})
		$$
	for all closed points $t \in V$. 
\end{cor}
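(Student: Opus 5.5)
Apply Theorem \ref{Thefinaltheorem} with $S' = U$ (separated and of finite type over $S$), $f:\sX\to U$, $g:\sY\to U$ the structure maps, and then pass to fibres by cohomology and base change.

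\textbf{Step 1: the global isomorphism.} Since $\sX$ and $\sY$ are models of the same $X$, their generic fibres are both identified with $X$. Let $Z$ be the closure of the graph of this identification inside $\sX\times_U\sY$, with its reduced structure. It is integral, and the two projections $Z\to\sX$ and $Z\to\sY$ are proper (both models are proper over $U$). They are birational, because over the generic point they are isomorphisms. So $\sX$ and $\sY$ are properly birational over $U$, and Theorem \ref{Thefinaltheorem} gives isomorphisms of $\sO_U$-modules
\[
R^jf_\ast\sO_{\sX}\cong R^jg_\ast\sO_{\sY}.
\]
Since $U$ is affine, taking global sections over $U$ and then restricting to $V$ gives isomorphisms $\h^j(\sX_V,\sO)\cong \h^j(\sY_V,\sO)$ of finitely generated $\sO_S(V)$-modules for every $j$. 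In particular, torsion-freeness of one side implies it for the other.

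\textbf{Step 2: base change to fibres.} Fix a closed point $t\in V$ with residue field $k(t)$. The morphisms $f,g$ are proper and flat, because they are smooth over a Dedekind base. Cohomology and base change (via the Grothendieck complex, or the universal coefficient sequence over the DVR $\sO_{V,t}$) gives a short exact sequence
\[
0\to \h^j(\sX_V,\sO)\otimes k(t)\to \h^j(\sX_t,\sO_{\sX_t})\to \operatorname{Tor}_1^{\sO_{V,t}}\bigl(\h^{j+1}(\sX_V,\sO),k(t)\bigr)\to 0,
\]
and similarly for $\sY$. The first term is determined by Step 1, since the modules $\h^j$ agree. So the fibre groups have equal dimension once the Tor terms agree. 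Those terms are computed from the $\h^{j+1}$ modules, which are isomorphic by Step 1, so their $\sigma$-torsion, and hence the Tor term, coincides. Therefore
\[
\dim_{k(t)}\h^j(\sX_t,\sO)=\dim_{k(t)}\h^j(\sY_t,\sO),
\]
which is the claimed equality of $k(t)$-vector spaces up to isomorphism. The torsion-freeness hypothesis lets one ignore the torsion in $\h^j$ and read off $\dim\h^j(\sX_t)=\operatorname{rank}\h^j(\sX_V)+\dim\operatorname{Tor}_1(\h^{j+1})$. This also explains why the hypothesis appears in the statement, presumably to cover the version that only uses the single degree $j$.

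\textbf{Main obstacle.} The only delicate part is checking the hypotheses of Theorem \ref{Thefinaltheorem}: that $Z$ is integral, and that both projections are proper and birational. This needs the generic fibres to be identified compatibly with the structure maps. Properness follows from properness of the models, and birationality from density of the generic fibre in the integral schemes $\sX$ and $\sY$. Everything after that is standard base change.
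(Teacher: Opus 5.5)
Your proof is correct, but it reaches the fibres by a different route from the paper's.

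Step 1 is the paper's argument. Both take $Z$ to be the closure of the diagonal of $X$ in $\sX\times_U\sY$ and apply Theorem \ref{Thefinaltheorem} with $S'=U$, which gives $R^jf_\ast\sO_{\sX}\cong R^jg_\ast\sO_{\sY}$ as $\sO_U$-modules for all $j$.

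The difference is in the passage to fibres. The paper shrinks to $V$ so that every $R^jf_\ast\sO_{\sX}$ is locally free, and hence so is every $R^jg_\ast\sO_{\sY}$. It then runs a descending induction from $j=\dim\sX+1$ with Mumford's cohomology and base change. This yields base change isomorphisms $R^jf_\ast\sO_{\sX}\otimes k(t)\cong\h^j(\sX_t,\sO_{\sX_t})$, so the fibre identification is the reduction mod $t$ of the correspondence isomorphism $\rho(Z_0/U)$.

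You instead use the universal coefficient sequence for the Grothendieck complex over the DVR $\sO_{U,t}$. This expresses $\dim\h^j(\sX_t,\sO_{\sX_t})$ purely through the isomorphism classes of the stalks at $t$ of $R^jf_\ast\sO_{\sX}$ and $R^{j+1}f_\ast\sO_{\sX}$.

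Your route gives strictly more. It never uses the torsion-freeness hypothesis, so the equality holds at every closed point of $U$. It also sidesteps a point the paper glosses over: the descending induction needs local freeness in all degrees above $j$, which is why the paper quietly reads the hypothesis as holding for all $j$. What your route gives up is canonicity. The universal coefficient sequence does not split naturally, so you only get an abstract isomorphism of $k(t)$-vector spaces, i.e. equal dimensions, rather than one induced by $Z$.

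Two small points:
\begin{itemize}
\item Your closing speculation about why the hypothesis appears is beside the point and can be dropped.
\item Affineness of $U$ holds here because the class group is finite, but you do not need it, since you can work with stalks at $t$ directly.
\end{itemize}
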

\begin{proof}
		Let $Z$ be the closure of the diagonal of $X$ in $\sX\times_U \sY$. Then we have 
		$$
		\xymatrix{
			&Z \ar[dr]\ar[dl] 
			\\ \sX\ar[dr]_-f 
			&& \sY\ar[dl]^-g
			\\&U,
		}	
		$$
		where $Z\to \sX$ and $Z\to \sY$ are proper birational, and $f$ and $g$ are the smooth 
		proper structure morphisms.  By Theorem \ref{maintheorem}, we have
		$$
		\rmR^jf_{\ast}(\sO_{\sX}) = \rmR^jg_{\ast}(\sO_{\sY})
		$$
		for all $j$. 
		
		It is clear that such a $V$ exists and by base change of $\sX$ and $\sY$ to it, we get 
		models of $X$ over $V$ and the fibres over closed points are unchanged. We can therefore
		without loss of generality assume $V = U$, i.e. that $\h^j(\sX,\sO_{\sX})$ is torsion-free over 
		$U$ for all $j$, and so $\rmR^jf_{\ast}(\sO_{\sX})$ and  $\rmR^jg_{\ast}(\sO_{\sY})$
		are locally free $\sO_U$-modules.  
		
		By starting from $j= \dim(\sX)+1$, we can use cohomology and base change, see for 
		example \cite[\S 5, Corollary 2]{AVar}, to work our way down and show
		$$
		\h^j(\sX_t,\sO_{\sX_t}) = 	\h^j(\sY_t,\sO_{\sY_t})
		$$
		for all $j \geq 0$.
\end{proof}

\bibliography{Dedekindbib}{}

\begin{thebibliography}{{Sta}18}

\bibitem[Con00]{Conrad}
Brian Conrad.
\newblock {\em Grothendieck Duality and Base Change}.
\newblock Lecture Notes in Mathematics. Springer Berlin Heidelberg, 2000.

\bibitem[Con07]{ConradNagata}
Brian Conrad.
\newblock Deligne's notes on {N}agata compactifications.
\newblock {\em J. Ramanujan Math. Soc.}, 22(3):205--257, 2007.

\bibitem[CR11]{KayAndre}
Andre Chatzistamatiou and Kay R\"{u}lling.
\newblock Higher direct images of the structure sheaf in positive
  characteristic.
\newblock {\em Algebra and Number Theory}, 5(6):693--775, 2011.

\bibitem[CR15]{KayAndreVanishing}
Andre Chatzistamatiou and Kay R\"{u}lling.
\newblock Vanishing of the higher direct images of the structure sheaf.
\newblock {\em Compositio Mathematica}, 151(11):2131--2144, 2015.

\bibitem[Cut90]{Cutkosky}
S.D. Cutkosky.
\newblock A new characterization of rational surface singularitues.
\newblock {\em Invent. Math.}, 102(1):157--177, 1990.

\bibitem[EZ78]{ElZein1978}
Fouad El~Zein.
\newblock Complexe dualisant et applications à la classe fondamentale d'un
  cycle.
\newblock {\em Mémoires de la Société Mathématique de France}, 58:1--66,
  1978.

\bibitem[Ful98]{Fultonbook}
William Fulton.
\newblock {\em Intersection Theory}.
\newblock Springer-Verlag, second edition, 1998.

\bibitem[GM02]{gelfman}
S.I. Gelfand and Y.I. Manin.
\newblock {\em Methods of Homological Algebra}.
\newblock Springer Monographs in Mathematics. Springer Berlin Heidelberg, 2002.

\bibitem[Gro61]{EGA2}
Alexander Grothendieck.
\newblock {\em \'El\'ements de g\'eom\'etrie alg\'ebrique : II. \'Etude globale
  \'el\'ementaire de quelques classes de morphismes}, volume~8.
\newblock Institut des Hautes \'Etudes Scientifiques, 1961.

\bibitem[Gro68]{SGA2}
Alexander Grothendieck.
\newblock {\em Cohomologie locale des faisceaux coh\'erents et th\'eor\`emes de
  {L}efschetz locaux et globaux {$(SGA$} {$2)$}}, volume~2 of {\em Advanced
  Studies in Pure Mathematics}.
\newblock North-{H}olland {P}ublishing {C}o., 1968.
\newblock Augment\'e d'un expos\'e par Mich\`ele Raynaud, S\'eminaire de
  G\'eom\'etrie Alg\'ebrique du Bois-Marie, 1962.

\bibitem[Har66]{ResAndDual}
Robin Hartsgorne.
\newblock {\em Residues and Duality}.
\newblock Lecture Notes in Mathematics. Springer Verlag, 1966.

\bibitem[Lod24]{Lodh}
R\'{e}mi Lodh.
\newblock Birational invariance of {$H^1(\mathcal{O})$}.
\newblock {\em Archiv der Mathematik}, 122:163--170, 2024.

\bibitem[Mat70]{matscommalg}
H.~Matsumura.
\newblock {\em Commutative Algebra}.
\newblock Mathematics lecture note series. Benjamin, 1970.

\bibitem[Mum70]{AVar}
David Mumford.
\newblock {\em Abelian varieties}, volume~5 of {\em Tata Institute of
  Fundamental Research Studies in Mathematics}.
\newblock Oxford University Press, 1970.

\bibitem[Nag63]{Nagata}
Masayoshi Nagata.
\newblock A generalization of the imbedding problem of an abstract variety in a
  complete variety.
\newblock {\em J. Math. Kyoto Univ.}, 3:89--102, 1963.

\bibitem[{Sta}18]{Stacks}
The {Stacks project authors}.
\newblock The stacks project.
\newblock online, 2018.

\bibitem[SV00]{SuslinVoevodsky}
Andrei Suslin and Vladimir Voevodsky.
\newblock Relative cycles and chow sheaves.
\newblock {\em Ann. of Math. Stud.}, 143, 02 2000.

\bibitem[Web15]{AndreasWeberThesis}
Andreas Weber.
\newblock Intersection theory on regular schemes via alterations and
  deformations to the normal cone, 2015.

\end{thebibliography}
\bibliographystyle{alpha}
\, \newline
\, \newline

\end{document}